\newtheorem{thm}{Theorem}[section]
\newtheorem{cor}[thm]{Corollary}
\newtheorem{prop}[thm]{Proposition}
\theoremstyle{definition}
\newtheorem{dfn}[thm]{Definition}
\newtheorem{ex}[thm]{Example}
\newtheorem{claim}[thm]{Claim}
\newtheorem{lem}[thm]{Lemma}
\newtheorem{fact}[thm]{Fact}
\newtheorem{cond}[thm]{Condition}
\theoremstyle{remark}
\newtheorem{rem}[thm]{Remark}
\newcommand{\Zbb}{\mathbb{Z}}        
\newcommand{\Hom}{\mathrm{Hom}}      
\newcommand{\Ob}{\mathrm{Ob}}        
\newcommand{\Cok}{\mathrm{Cok}}      
\newcommand{\id}{\mathrm{id}}         
\newcommand{\se}{\subseteq}           
\newcommand{\ppr}{^{\prime}}          
\newcommand{\pprr}{^{\prime\prime}}   
\newcommand{\Ker}{\mathrm{Ker}}       
\newcommand{\fa}{\forall}             
\newcommand{\co}{\colon}              
\newcommand{\ci}{\circ}               
\newcommand{\iv}{^{-1}}               
\newcommand{\lra}{\longrightarrow}    
\newcommand{\EQ}{\Leftrightarrow}     
\newcommand{\wt}{\widetilde}          
\newcommand{\afr}{\mathfrak{a}}  
\newcommand{\bfr}{\mathfrak{b}}  
\newcommand{\cfr}{\mathfrak{c}}  
\newcommand{\dfr}{\mathfrak{d}}  
\newcommand{\efr}{\mathfrak{e}}  
\newcommand{\ifr}{\mathfrak{i}}  
\newcommand{\nfr}{\mathfrak{n}}  
\newcommand{\sfr}{\mathfrak{s}}  
\newcommand{\Afr}{\mathfrak{A}}  
\newcommand{\Bfr}{\mathfrak{B}}  
\newcommand{\Cfr}{\mathfrak{C}}  
\newcommand{\Mfr}{\mathfrak{M}}  
\newcommand{\Pfr}{\mathfrak{P}}  
\newcommand{\Sfr}{\mathfrak{S}}  
\newcommand{\Tfr}{\mathfrak{T}}  
\newcommand{\Bp}{\mathfrak{B}_+}  
\newcommand{\Acal}{\mathcal{A}} 
\newcommand{\Ccal}{\mathcal{C}} 
\newcommand{\Kcal}{\mathcal{K}} 
\newcommand{\Mcal}{\mathcal{M}} 
\newcommand{\Ocal}{\mathcal{O}} 
\newcommand{\Pcal}{\mathcal{P}} 
\newcommand{\Scal}{\mathcal{S}} 
\newcommand{\und}{\underline}   
\newcommand{\ovl}{\overline}    
\newcommand{\ov}{\overset}      
\newcommand{\un}{\underset}     
\newcommand{\ti}{\times}        
\newcommand{\bsm}{\begin{smallmatrix}}
\newcommand{\esm}{\end{smallmatrix}}
\newcommand{\al}{\alpha}         
\newcommand{\be}{\beta}          
\newcommand{\gam}{\gamma}        
\newcommand{\vp}{\varphi}        
\newcommand{\kap}{\kappa}        
\newcommand{\lam}{\lambda}       
\newcommand{\om}{\omega}         
\newcommand{\thh}{\theta}        
\newcommand{\del}{\delta}        
\newcommand{\ep}{\varepsilon}    
\newcommand{\sig}{\sigma}        
\newcommand{\ze}{\zeta}          
\newcommand{\Del}{\Delta}        
\numberwithin{equation}{section}
\newcommand{\Nn}{\mathbb{N}}   
\newcommand{\Np}{\mathbb{N}_{>0}}   
\newcommand{\Char}{\mathrm{char}}   
\newcommand{\lcm}{\mathrm{lcm}}   
\newcommand{\type}{\mathrm{type}}     
\newcommand{\HH}{\mathrm{HH}}  
\newcommand{\ufr}{\mathfrak{u}}  
\newcommand{\vfr}{\mathfrak{v}}  
\newcommand{\dia}{^{\diamond}}  
\newcommand{\length}{\ell}  
\newcommand{\mn}{\langle\mu,\nu\rangle}  
\newcommand{\mnp}{\langle\mu\ppr,\nu\ppr\rangle}  
\newcommand{\emn}{\ep_{\mn}}  
\newcommand{\emnp}{\ep_{\mnp}}  
\newcommand{\proj}{\mathrm{proj}} 
\newcommand{\modd}{\mathrm{mod}}  
\newcommand{\End}{\mathrm{End}}   
\newcommand{\thick}{\mathrm{thick}} 
\newcommand{\HHom}{\mathbb{H}\mathrm{om}} 
\newcommand{\ust}{^{\star}}       
\newcommand{\din}{d_{\mathrm{in}}}   
\newcommand{\dout}{d_{\mathrm{out}}} 
\newcommand{\blossom}{\text{\ding{96}}} 
\newcommand{\bls}{^\blossom} 
\newcommand{\Ak}{A^{(k)}}   
\newcommand{\Akp}{A^{(k)\prime}}   
\begin{document}

\title[Finite gentle repetitions and AG-invariants]{Finite gentle repetitions of gentle algebras and their Avella-Alaminos--Geiss invariants}
%
%

\author{Hiroyuki Nakaoka}
\address{Research and Education Assembly, Science and Engineering Area, Research Field in Science, Kagoshima University, 1-21-35 Korimoto, Kagoshima, 890-0065 Japan}
\email{nakaoka@sci.kagoshima-u.ac.jp}

\thanks{The author wishes to thank Prof.~Yann Palu for introducing him this subject, Dr.~Yuya Mizuno for introducing him the notions related to tilting theory, Prof.~Takefumi Kondo for helpful discussions, Prof.~Hideto Asashiba and Prof.~Steffen Koenig for their encouragement and advices, Prof.~Osamu Iyama for questions and comments.}
\thanks{Part of this article was written during the author's stay at the University of St\"{u}ttgart. The author is grateful to all researchers he met there, for their hospitality and for offering him a very good research environment.}
\thanks{The author is supported by JSPS KAKENHI Grant Number 17K18727.}

\begin{abstract}
Among finite dimensional algebras over a field $K$, the class of gentle algebras is known to be closed by derived equivalences. Although a classification up to derived equivalences is usually a difficult problem, 
Avella-Alaminos and Geiss have introduced derived invariants for gentle algebras $A$, which can be calculated combinatorially from their bound quivers, applicable to such classification.

Ladkani has given a formula to describe the dimensions of the Hochschild cohomologies of $A$ in terms of some values of its Avella-Alaminos--Geiss invariants. This in turn implies a cohomological meaning of these values. Since most of the other values do not appear in this formula, it will be a natural question to ask if there is a similar cohomological meaning for such values. In this article, we construct a sequence of gentle algebras $A^{(k)}$ indexed by positive integers by a procedure which we call finite gentle repetitions, in order to relate these values of Avella-Alaminos--Geiss invariants of $A$ 
to the dimensions of Hochschild cohomologies of $A^{(k)}$.

On the way we will see that the Avella-Alaminos--Geiss invariants of $A^{(k)}$ are completely determined by those of $A$. Therefore one may expect that the finite gentle repetitions would preserve derived equivalences in a nice situation. In the latter part of this article, we deal with this problem under some assumptions.
\end{abstract}

\maketitle

\tableofcontents


\section{Introduction}
Among finite dimensional algebras over a field $K$, the class of gentle algebras is known to be closed by derived equivalences, as shown by Schr\"{o}er and Zimmermann in \cite{SZ}. Although a classification up to derived equivalences is usually a difficult problem, 
Avella-Alaminos and Geiss have introduced derived invariants $\phi_A\co\Nn\ti\Nn\to \Nn$ for gentle algebras $A=KQ/I$ in \cite{AG}, which can be calculated combinatorially from its bound quiver $(Q,I)$. These invariants are indeed used in classifications up to derived equivalences of gentle algebras satisfying some conditions, for example in \cite{Av},\cite{Bo},\cite{BM}.

In \cite{La}, Ladkani has given a formula to describe the dimensions of the Hochschild cohomologies $\HH^n(A)$ as the following $\Zbb$-linear sum of Avella-Alaminos--Geiss invariants. 
\begin{fact}\label{FactLadkani}(\cite[Corollary 1]{La})
\begin{itemize}
\item $\dim_K \HH^0(A)=1+\phi_A(1,0)$.
\item $\dim_K \HH^1(A)=1-\chi(Q)+\phi_A(1,1)%
+\begin{cases}
\phi_A(0,1)&\text{if}\ \Char K=2,\\
0&\text{otherwise}.\\
\end{cases}$\\
Here $\chi (Q)=\sharp\{\text{vertices in}\ Q\}-\sharp\{\text{arrows in}\ Q\}$ is the Euler characteristic of the underlying undirected graph of $Q$, which also satisfies
\[ 2\chi(Q)=\displaystyle\sum_{(q,l)\in\Nn\ti\Nn}\phi_A(q,l)(q-l). \]
\item For $n\ge2$,
\[ \dim_K \HH^n(A)=\phi_A(1,n)+a_n\displaystyle\sum_{d|n}\phi_A(0,d)+b_n\!\!\!\!\displaystyle\sum_{d|(n-1)}\phi_A(0,d), \]
where
\[ (a_n,b_n)=
\begin{cases}
(1,0)&\text{if}\ \Char K\ne 2\ \text{and}\ n\ \text{is even},\\
(0,1)&\text{if}\ \Char K\ne 2\ \text{and}\ n\ \text{is odd},\\
(1,1)&\text{if}\ \Char K=2.
\end{cases} \]
\end{itemize}
\end{fact}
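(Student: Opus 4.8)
The plan is to prove all three dimension formulas, together with the auxiliary identity, by matching two combinatorial descriptions attached to the bound quiver $(Q,I)$: one for $\dim_K\HH^n(A)$ extracted from an explicit minimal bimodule resolution, and one for the numbers $\phi_A(q,l)$ read off from the Avella-Alaminos--Geiss algorithm. Since $A=KQ/I$ is monomial (and quadratic), I would start from Bardzell's minimal projective resolution of $A$ over $A^e=A\otimes_K A^{\mathrm{op}}$, whose $n$-th term is $\bigoplus_{p\in\AP(n)}Ae_{s(p)}\otimes_K e_{t(p)}A$, where $\AP(0)$ is the set of vertices, $\AP(1)$ the set of arrows, and for $n\ge2$ the set $\AP(n)$ consists of the ``forbidden'' paths $\al_1\cdots\al_n$ with $\al_i\al_{i+1}\in I$ for every $i$. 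The gentle hypothesis makes forbidden paths deterministic in both directions, so $\AP(n)$ splits into the ``open'' forbidden paths, which do not wind around a cycle, and the ``cyclic'' ones, which wind some number of times around a cyclic configuration of $(Q,I)$ of some size $d$. Applying $\Hom_{A^e}(-,A)$ yields a complex whose $n$-th term has dimension $\sum_{p\in\AP(n)}\dim_K e_{t(p)}Ae_{s(p)}$ --- a count of paths parallel to each associated path --- and whose differentials are explicit alternating sums with integer coefficients; the crucial, and most delicate, point is that the reductions of these coefficients modulo $\Char K$ are what governs the characteristic-$2$ behaviour. Passing to cohomology, the open associated paths contribute to $\HH^n(A)$ a term counting the maximal forbidden paths of length $n$, whereas a cyclic configuration of size $d$ contributes to $\HH^n(A)$ precisely in those degrees $n$ with $d\mid n$ or $d\mid(n-1)$; in addition, in degrees $0$ and $1$ there are exceptional contributions coming directly from the vertices and arrows.

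Next I would set up a dictionary between these combinatorial quantities and the closed walks produced by the Avella-Alaminos--Geiss algorithm, which decomposes $(Q,I)$ into closed walks alternating between permitted and forbidden threads, together with degenerate closed walks consisting of a single oriented cycle; $\phi_A(q,l)$ counts the closed walks with a prescribed invariant. The matching should give: the contribution of the open forbidden paths of length $n$ to $\HH^n(A)$ equals $\phi_A(1,n)$ for $n\ge2$, equals $\phi_A(1,1)$ in degree $1$, and equals $\phi_A(1,0)$ in degree $0$ --- the last statement being $\dim_K\HH^0(A)=\dim_K Z(A)=1+\phi_A(1,0)$, with the summand $1$ contributed by the identity element; and the total contribution of the cyclic configurations to $\HH^n(A)$ equals $a_n\sum_{d\mid n}\phi_A(0,d)+b_n\sum_{d\mid(n-1)}\phi_A(0,d)$, where the sums range over the corresponding AG closed walks of size $d$ --- one should keep in mind that a single cyclic configuration of $(Q,I)$ may produce several AG closed walks, so this is genuine bookkeeping rather than a naive bijection. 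The coefficients $(a_n,b_n)$ then fall out of the coefficient analysis above: when $\Char K\ne 2$ the connecting differential between the two families of cocycles attached to a cyclic configuration is, up to sign, multiplication by $2$, hence invertible, so exactly one family survives in each degree according to the parity of $n$, giving $(1,0)$ or $(0,1)$; when $\Char K=2$ that map vanishes, both families survive --- giving $(1,1)$ --- and the same phenomenon produces the extra $\phi_A(0,1)$ summand in $\HH^1(A)$. To pin down the remaining $1-\chi(Q)$ in the $\HH^1$ formula, I would feed $\dim_K\HH^0(A)=1+\phi_A(1,0)$ together with $\dim_K\Hom_{A^e}(P_0,A)=\sum_{v}\dim_K e_vAe_v$ and $\dim_K\Hom_{A^e}(P_1,A)=\sum_{a}\dim_K e_{t(a)}Ae_{s(a)}$ --- in which every summand is $\ge1$, the excess over the number of vertices, respectively of arrows, being again a sum of $\phi_A$-values --- into the low-degree part of the complex and reduce arithmetically to $1-\chi(Q)+\phi_A(1,1)$ plus the characteristic-$2$ term.

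Finally, the identity $2\chi(Q)=\sum_{(q,l)}\phi_A(q,l)(q-l)$ lies entirely inside the algorithm, and I would prove it by a direct double count along the closed walks: each arrow of $Q$ lies on exactly one maximal permitted thread and on exactly one maximal forbidden thread, and each vertex is an endpoint of a controlled number of threads, so that the sum of the ``permitted'' coordinates and the sum of the ``forbidden'' coordinates over all closed walks each recover the number of arrows up to a correction counted by the vertices; subtracting leaves $2(\#Q_0-\#Q_1)=2\chi(Q)$. The main obstacle throughout is the resolution-level step: turning Bardzell's complex into closed formulas, and in particular tracking the integer coefficients of its differentials accurately enough to locate exactly where $\Char K=2$ changes ranks, together with correctly isolating the exceptional low-degree terms responsible for the $1-\chi(Q)$ summand and the $\phi_A(0,1)$ correction in $\HH^1(A)$.
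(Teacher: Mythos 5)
The paper does not prove this statement: it is quoted as a \emph{Fact} attributed to \cite[Corollary 1]{La}, with \cite[Theorem 2]{RR} cited right after as the place where a detailed proof (for the broader class of quadratic string algebras) is written out. There is therefore no in-paper argument for your sketch to be measured against.

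Judged on its own terms, your route --- Bardzell's minimal $A^e$-projective resolution indexed by the sets $\AP(n)$ of associated (forbidden) paths, the dichotomy between ``open'' and ``cyclic'' associated paths, the rank analysis of $\Hom_{A^e}(-,A)$ with attention to the integer coefficients modulo $\Char K$, and then a dictionary to the AG walks --- is exactly the strategy of \cite{La} and \cite{RR}, so the blueprint is sound. The caveat is that it stays entirely programmatic at the two places where the real work lives. First, your claim that the ``open forbidden paths of length $n$'' contribute $\phi_A(1,n)$ conflates two different counts: $\phi_A(1,n)$ counts $\Phi_A$-fixed indices with forbidden thread of length $n$ (in the blossom-walk permutation of Definition \ref{DefAGOrb}), not the set of length-$n$ maximal antipaths, and the passage from cocycles parallel to a forbidden path to $\Phi_A$-fixed AG-walk data is itself a combinatorial lemma that your sketch announces but does not establish. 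Second, the assertion that the connecting map on the pair of cocycle families attached to a cyclic configuration is multiplication by $2$ (hence the $(a_n,b_n)$ case split and the extra $\phi_A(0,1)$ term in $\HH^1$ when $\Char K=2$) is the heart of the coefficient bookkeeping in \cite{RR} and is simply asserted here. Likewise the double count yielding $2\chi(Q)=\sum_{(q,l)}\phi_A(q,l)(q-l)$ is correctly identified as elementary, but the two tallies (each arrow lying on one permitted and one forbidden thread, each blossom contributing to exactly one permitted and one forbidden thread) are not actually performed. None of this is a wrong turn; it is an outline of intent rather than a proof, which is appropriate given that the paper itself treats the statement as a black box.
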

A detailed proof for the above result can be found in \cite{RR}, in which Redondo and Rom\'{a}n have formulated for a more general class of string algebras (\cite[Theorem 2]{RR}). In the above formula, we observe that only the values $\phi_A(0,l)$ and $\phi_A(1,l)$ $(l\in\Nn)$ are involved while no $\phi_A(q,l)$ for $q\ge 2$ appears except for the description of $\chi(Q)$. In particular we see that if $(Q,I)$ has no anticycle, it means $\phi_A(0,l)=0$ for any $l\in\Nn$, and thus $\dim_K\HH^n(A)=\phi_A(1,n)$ holds for $n>1$.

It will be a natural question to ask, whether the values $\phi_A(q,l)$ for $q\ge 2$ can be also related to any invariant of cohomological nature. In this article, we construct a sequence of gentle algebras $\Ak$ indexed by $k\in\Np$, in order to relate these $\phi_A(q,l)$ to the dimensions of Hochschild cohomologies $\HH^{n}(\Ak)$.
Definition of AG-invariants will be reviewed in Section \ref{section_Review}. We introduce the construction of $\Ak$ in Section \ref{section_Repetition}, which we call {\it finite gentle repetitions} of $A$. In Section \ref{section_AGRepetition} we describe $\phi_{A^{(k)}}$ in terms of $\phi_A$, and use it to relate $\phi_A(q,l)$ with the dimensions of Hochschild cohomologies when $q\ge 2$. For example, the following is obtained as a corollary of our result.
\begin{cor}(=Corollary \ref{CorCoprime})
Assume that $(Q,I)$ has no anticycle.
Let $(q,l)\in(\Np\ti\Np)\setminus\{(1,1)\}$ be any element. If $q$ and $l$ are coprime, then we have
\[ \phi_A(q,l)=\frac{1}{q}\dim\HH^{q+l-1}(A^{(q)}). \]
\end{cor}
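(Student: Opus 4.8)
The plan is to combine the formula for $\HH^n$ from Fact~\ref{FactLadkani} with the explicit description of $\phi_{A^{(q)}}$ in terms of $\phi_A$ obtained in Section~\ref{section_AGRepetition}. First I would use the hypothesis that $(Q,I)$ has no anticycle to observe (as already remarked in the Introduction) that $\phi_A(0,l)=0$ for all $l\in\Nn$, and that the bound quiver of $A^{(q)}$ also inherits the absence of anticycles, so that $\phi_{A^{(q)}}(0,l)=0$ as well. Consequently, for the algebra $A^{(q)}$ and any integer $n>1$, the formula from Fact~\ref{FactLadkani} collapses to the single term $\dim_K\HH^n(A^{(q)})=\phi_{A^{(q)}}(1,n)$, with no dependence on $\Char K$ and no divisor sums. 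This reduces the statement to a purely combinatorial identity:
\[ \phi_{A^{(q)}}(1,q+l-1)=q\cdot\phi_A(q,l). \]

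The main work is then to extract this identity from the description of $\phi_{A^{(q)}}$ established in Section~\ref{section_AGRepetition}. The finite gentle repetition $A^{(q)}$ is built so that each (non-trivial) cycle of the AG-permitted/forbidden data of $A$ of "type" $(q',l')$ gets replaced, after going around the repeated copies, by a cycle whose contribution shifts the first coordinate and scales the length. I would track precisely how a pair $(q',l')$ in the support of $\phi_A$ transforms: going $q$ times around the repetition multiplies the relevant interval count, and the interaction between $q$ and $q'$ is governed by $\gcd(q,q')$. The coprimality hypothesis on $(q,l)$ is what forces the only pairs $(q',l')$ of $A$ that can land in the slot $(1,q+l-1)$ of $\phi_{A^{(q)}}$ to be exactly $(q',l')=(q,l)$ itself — any other pair would, after the repetition, produce a first coordinate different from $1$ or would fail a divisibility constraint. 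The multiplicity $q$ arises because the $q$ cyclic rotations of a length-$(q+l)$ object in $A^{(q)}$ coming from a single $(q,l)$-object of $A$ are counted separately, except one must exclude the degenerate overlap when $q=l$, which is why $(q,l)=(1,1)$ is removed from the statement.

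I expect the main obstacle to be the bookkeeping in this last step: making sure that under the coprimality assumption no \emph{other} value $\phi_A(q',l')$ with $(q',l')\ne(q,l)$ contributes to $\phi_{A^{(q)}}(1,q+l-1)$, and that the contribution of $(q,l)$ itself is exactly $q$ and not some other divisor of $q$. Concretely, the formula for $\phi_{A^{(q)}}$ will express each of its values as a sum over pairs $(q',l')$ with coefficients involving $\gcd(q,q')$ and conditions like $q' \mid q(\text{something})$; one must check that the equation "first coordinate $=1$" together with "second coordinate $=q+l-1$" and $\gcd(q,l)=1$ pins down $(q',l')=(q,l)$ uniquely and yields coefficient $q$. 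Once the description from Section~\ref{section_AGRepetition} is in hand this is a finite case analysis on the arithmetic of $(q,l,q',l')$, but it is the step where the coprimality hypothesis and the exclusion of $(1,1)$ are genuinely used, so it must be carried out carefully rather than invoked.
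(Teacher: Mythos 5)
Your proposal is correct and follows essentially the same route as the paper. The paper proves this by specializing Corollary~\ref{CorAGRepetHoch}~(1), which itself combines Fact~\ref{FactLadkani} (using the absence of anticycles, inherited by $A^{(k)}$ via Lemma~\ref{LemBasisSheets}) with Proposition~\ref{PropAGRepet2}, the M\"obius inversion of Proposition~\ref{PropAGRepet1}. When $\gcd(q,l)=1$ the divisor sum in Corollary~\ref{CorAGRepetHoch}~(1) collapses to the single term $c=1$, giving the stated formula. You bypass the M\"obius-inversion layer and argue directly from the forward formula of Proposition~\ref{PropAGRepet1}; that is a legitimate shortcut here, since the required arithmetic check is exactly that $S_{(q)}(1,q+l-1)=\{(q,l)\}$ with coefficient $\gcd(q,q)=q$: the condition $\lcm(q',q)=q$ forces $q'\mid q$, and $l'=q'l/q\in\Nn$ together with $\gcd(q,l)=1$ then forces $q'=q$, $l'=l$. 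One point to correct: the exclusion of $(1,1)$ is not caused by any ``degenerate overlap'' of cyclic rotations --- the multiplicity $\gcd(q,q)=q$ coming from Lemma~\ref{LemAGRepet}~(2) does not depend on $l$ at all. It is required so that $q+l-1\ge 2$, so that the $n\ge2$ branch of Fact~\ref{FactLadkani} applies (the $n=1$ formula carries the extra $1-\chi(Q)$ term, which is exactly what produces the different expression in Corollary~\ref{CorAGRepetHoch}~(2) when $q=l$). Under coprimality, $q+l-1\le1$ happens only for $(q,l)=(1,1)$, which is the excluded case.
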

In Section \ref{section_Graded}, for $A^{(k)}$, we also calculate the graded version of AG-invariants introduced by Lekili and Polishchuk in \cite{LP}.

Results in Section \ref{section_AGRepetition} in particular show that the AG-invariants of $A^{(k)}$ are completely determined by those of $A$. Thus one may expect that the procedure of finite gentle repetition preserves derived equivalences in some nice situations. The latter part of this article is dealing with this problem.

In Section \ref{section_APR} we show how generalized Auslander-Platzeck-Reiten reflections in \cite{BM} can be related with the gentle repetitions. In Section \ref{section_Associated}, using some semisimple algebra $V(A)$ associated to $A$, we show how $A^{(k)}$ can be realized as an upper triangular matrix algebra, similarly to the usual repetitive algebra. In Section \ref{section_Characterize} we characterize this $V(A)$ by some homological properties. In Section \ref{section_Preservation}, we will show the preservation of derived equivalences under an assumption of some conditions.

\section{Review on Avella-Alaminos--Geiss invariants}\label{section_Review}


Throughout this article, let $K=\ovl{K}$ be an algebraically closed field, and let $(Q,I)$ be a finite connected bound quiver, and let $A=KQ/I$ be the associated bound quiver algebra. For the basic notions around representations of quivers, we refer to \cite{ASS}. We denote by $e_a\in A$ the idempotent corresponding to a vertex $a\in Q_0$. To exclude the trivial case of $A=K$, we always assume $|Q_1|>0$ in this article. Let $s,t\co Q_1\to Q_0$ denote the maps taking the sources and targets of arrows, respectively.

We use the following notations and terminology.
\begin{dfn}
Let $\rho=\al_1\cdots\al_{l}$ be any path in $Q$, where  $l>0$ and $\al_1,\ldots,\al_{l}\in Q_1$. By definition, it satisfies $t(\al_i)=s(\al_{i+1})$ for any $1\le i<l$.
\begin{itemize}
\item We say $\rho$ {\it starts with} $\al_1$ and {\it ends with} $\al_{l}$.
\item Integer $l$ is called the {\it length} of $\rho$, and denoted by $\length(\rho)$.
\item Source and target vertices of $\rho$ are denoted by $s(\rho)=s(\al_1)$ and $t(\rho)=t(\al_{l})$, respectively.
\item $\rho$ is called a {\it path in} $A$ if $\rho\notin I$. 
\item $\rho$ is called an {\it oriented cycle} in $A$ if it is a path in $A$, 
which moreover satisfies $t(\al_l)=s(\al_1)$ and $\al_l\al_1\notin I$.
We also always assume it is primitive, in the sense that $t(\al_i)\ne a_1$ holds for any $1\le i<l$.
\item $\rho$ is called an {\it antipath in} $A$ if it satisfies $\al_i\al_{i+1}\in I$ for any $1\le i<l$. 
\item $\rho$ is called an {\it anticycle in} $A$ if it is an antipath in $A$, 
which moreover satisfies $t(\al_l)=s(\al_1)$ and $\al_{l}\al_1\in I$.
We also assume it is primitive, in the sense that $t(\al_i)\ne a_1$ holds for any $1\le i<l$.
\end{itemize}
\end{dfn}

Throughout this article, when we say $\al_1\cdots\al_l$ is an (anti)path or an (anti)cycle, it implicitly means that $\al_1,\ldots,\al_l$ are arrows, unless otherwise specified.

\begin{dfn}
Let $a\in Q_0$ be any vertex.
\begin{itemize}
\item $\din(a)=|\{\al\in Q_1\mid t(\al)=a\}|$ denotes the {\it in-degree} of $a$.
\item $\dout(a)=|\{\al\in Q_1\mid s(\al)=a\}|$ denotes the {\it out-degree} of $a$.
\item Idempotent element $e_a\in A$ is regarded as a path of length $0$.
\end{itemize}
\end{dfn}

\begin{dfn}\label{DefGentle}
Let $(Q,I)$ be a connected bound quiver as before.
\begin{enumerate}
\item $(Q,I)$ is called {\it locally gentle} if it satisfies the following conditions.
\begin{itemize}
\item[{\rm (i)}] The admissible ideal $I$ is quadratic monomial. Namely, it is generated by some set $R$ of paths of length $2$. 
\item[{\rm (ii)}] Any vertex $a\in Q_0$ satisfies $\din(a)\le2$ and $\dout(a)\le2$.
\item[{\rm (iii)}] For any arrow $\al\in Q_1$, it satisfies
\begin{eqnarray*}
&|\{\be\in Q_1\mid t(\be)=s(\al)\ \text{and}\ \be\al\in I \}|\le 1,&\\
&|\{\be\in Q_1\mid t(\be)=s(\al)\ \text{and}\ \be\al\notin I \}|\le 1,&\\
&|\{\be\in Q_1\mid s(\be)=t(\al)\ \text{and}\ \al\be\in I \}|\le 1,&\\
&|\{\be\in Q_1\mid s(\be)=t(\al)\ \text{and}\ \al\be\notin I \}|\le 1.&
\end{eqnarray*}
\end{itemize}
In this case, we call $A=KQ/I$ a {\it locally gentle algebra}.
\item $(Q,I)$ is called {\it gentle} if it is locally gentle and has no oriented cycle in $A$. In this case, we call $A=KQ/I$ a {\it gentle algebra}.
\end{enumerate}
\end{dfn}

\begin{rem}
By definition, locally gentle algebra $A$ is gentle if and only if it is finite dimensional over $K$.
\end{rem}

\begin{dfn}\label{DefTA}
Let $A=KQ/I$ be a locally gentle algebra.
\begin{enumerate}
\item A path $\rho$ in $A$ is said to be {\it maximal}, if neither $\al\rho$ nor $\rho\al$ is a path in $A$ for any arrow $\al\in Q_1$. This is also called a {\it non-trivial permitted thread} in \cite{AG}.
Let $\Mfr_A$ denote the set of maximal paths in $A$.
\item Define $\Tfr_A\se Q_0$ by
\[ \Tfr_A=\Set{ a\in Q_0 \ |
\begin{array}{l} \din(a)\le1,\ \dout(a)\le1,\ \text{and}\\
\be\gam\notin I\ \text{holds for any}\ \be,\gam\in Q_1\ \text{with}\ t(\be)=s(\gam)=a\end{array} }. \]
Then $e_a\in A$ for $a\in\Tfr_A$ is called a {\it trivial permitted thread} in \cite{AG}.
\item An antipath $\del$ in $A$ is said to be maximal, if neither $\al\del$ nor $\del\al$ is an antipath in $A$ for any arrow $\al\in Q_1$. This is also called a {\it non-trivial forbidden thread} in \cite{AG}.
Let $\Afr_A$ denote the set of maximal antipaths in $A$. 
Similarly as {\rm (2)}, if a vertex $v\in Q_0$ satisfies
\begin{itemize}
\item $\din(a)\le1$, $\dout(a)\le1$, and
\item $\be\gam\in I$ holds for any $\be,\gam\in Q_1$ with $t(\be)=s(\gam)=a$,
\end{itemize}
then $e_a\in A$ is called a {\it trivial forbidden thread} in \cite{AG}.
\end{enumerate}
\end{dfn}

\begin{rem}\label{RemCountTA}
If $A=KQ/I$ is gentle and if $\rho_1,\ldots,\rho_r\in\Mfr_A$ are the all maximal  paths in $A$, then we have
\[ |\Tfr_A|=2|Q_0|-\sum_{1\le i\le r}(\length(\rho_i)+1). \]
\end{rem}

\bigskip

We recall the definition of the {\it blossoming} quiver from \cite{PPP1},\cite{PPP2} which is also called the {\it fringed} quiver in \cite{BDMTY}. Asashiba had retrofitted the definition of Avella-Alaminos--Geiss invariants with this procedure, without a name in \cite{As2}.
\begin{dfn}\label{DefBlossom}
Let $(Q,I)$ be a locally gentle bound quiver.
\begin{enumerate}
\item Its blossoming quiver $(Q\bls,I\bls)$ is defined as follows. First, for each $v\in Q_0$ we add
\begin{itemize}
\item $2-\din(v)$ incoming arrows to $v$, from new (pairwise distinct) vertices,
\item $2-\din(v)$ incoming arrows to $v$, from new (pairwise distinct) vertices.
\end{itemize}
Then add relations to $I$, so that $(Q\bls,I\bls)$ becomes locally gentle.
Put $A\bls=KQ\bls/I\bls$.
\item A vertex in $Q_0\bls\setminus Q_0$ is called a {\it blossom vertex}.
Any blossom vertex is either a source or a sink in $Q\bls$.
Any maximal path in $(Q\bls,I\bls)$ should start at a source blossom vertex and end at a sink blossom vertex. This gives a bijection between the (necessarily finite) set of source blossom vertices and the set of sink blossom vertices.
\item Label the source blossom vertices as $s_1,\ldots,s_d$, and the sink blossom vertices as $t_1,\ldots,t_d$, so that $s_p$ and $t_p$ are connected by a maximal path, for each $1\le p\le d$. This $d=\dfr_A$ can be calculated as $\dfr_A=2|Q_0|-|Q_1|$.
\item Arrows in $Q_1\bls\setminus Q_1$ are called {\it blossom arrows}.
Let $\sig_p$ (respectively $\tau_p$) denote the unique blossom arrow with $s(\sig_p)=s_p$ (resp. $t(\tau_p)=t_p$).
For each $1\le p\le d$, the maximal path connecting $s_p$ to $t_p$ can be written as $\Pfr_p=\sig_p\wp_p\tau_p$ for some path $\wp_p$ in $A$, which is of one of the following forms.
\begin{itemize}
\item[{\rm (i)}] $\wp_p\in\Mfr_A$. Namely $\wp_p$ is a non-trivial permitted thread.
\item[{\rm (ii)}] $\wp_p=e_a$ for some $a\in\Tfr_A$, namely, it is a trivial permitted thread. Indeed, $\sig_p\tau_p$ is a maximal path in $A\bls$ if and only if $t(\sig_p)=s(\tau_p)\in\Tfr_A$ holds.
\end{itemize}

\item When we emphasize $A$, we denote the above $\Pfr_p$ and $\wp_p$ by $\Pfr_p(A)$ and $\wp_p(A)$ respectively, for $1\le p\le d$. As in {\rm (4)}, we have
\begin{equation}\label{MABSDT}
\Mfr_{A\bls}=\{\Pfr_p(A)=\sig_p\wp_p(A)\tau_p \mid 1\le p\le \dfr_A\}.
\end{equation}
\end{enumerate}
\end{dfn}

\begin{rem}\label{RemGentleUnique}
For any locally gentle bound quiver $(Q,I)$, its blossoming $(Q\bls,I\bls)$ is determined uniquely up to a re-labeling of source and sink vertices, by the following properties.
\begin{itemize}
\item[{\rm (i)}] $(Q\bls,I\bls)$ is a locally gentle bound quiver, which contains $(Q,I)$ as a {\it bound subquiver} in the following sense.
\begin{itemize}
\item $Q_0\se Q\bls_0$ and $Q_1\se Q\bls_1$ hold.
\item For any arrow $\al\in Q_1$, its source $s(\al)$ taken in $Q$ agrees with that in $Q\bls$. Similarly for the targets.
\item For any pair of arrows $\al,\be\in Q_1$, we have $\al\be\in I$ if and only if $\al\be\in I\bls$.
\end{itemize}
\item[{\rm (ii)}] Any vertex $v\in Q_0$ has in-degree $2$ and out-degree $2$ in $Q\bls$.
\item[{\rm (iii)}] Any vertex in $Q\bls_0\setminus Q_0$ is either a source with out-degree $1$ or a sink with in-degree $1$ in $Q\bls$.
\end{itemize}
\end{rem}

\begin{rem}
For any locally gentle algebra $A=KQ/I$, the following are equivalent.
\begin{enumerate}
\item $\Mfr_A\amalg \Tfr_A=\emptyset$.
\item Any vertex $a\in Q_0$ satisfies $\din(a)=\dout(a)=2$, and any arrow $\al\in Q_1$ is a part of an oriented cycle in $A$.
\item $d=0$. Namely, $(Q,I)$ agrees with its blossoming.
\end{enumerate}
In particular, we have $\Mfr_A\amalg \Tfr_A\ne\emptyset$ if $A$ is gentle.
\end{rem}

\begin{dfn}\label{DefAGOrb}
Let $(Q,I)$ be a locally gentle bound quiver, and let $(Q\bls,I\bls)$ be its blossoming, as in Definition \ref{DefBlossom}.
For any $1\le p\le d=\dfr_A$, there is a unique maximal antipath $\Del_p$ in $A\bls$ ending at $t_p$. It should start at some $s_{\Phi(p)}$. This gives a permutation $\Phi=\Phi_A\in\Sfr_d$, where $\Sfr_d$ is the symmetric group of degree $d$. Removing blossom arrows $\tau_p$ and $\sig_{\Phi(p)}$ from $\Del_p$, we obtain a maximal antipath $\del_p$ in $A$, or an idempotent element $\del_p=e_a$ if $s(\tau_p)=t(\sig_{\Phi(p)})=a\in Q_0$.
When we emphasize $A$, denote $\Del_p$ and $\del_p$ by $\Del_p(A)$ and $\del_p(A)$, respectively. By definition we have
\begin{equation}\label{Description_Delta}
\Del_p=\sig_{\Phi(p)}\del_p\tau_p.
\end{equation}
We may regard that $\Phi$ acts on the set $\{\del_p\mid 1\le p\le d\}$ by $\Phi(\del_p)=\del_{\Phi(p)}$. It gives an orbit decomposition
\begin{equation}\label{DecompOrb}
\{\del_p\mid1\le p\le d\}=\Ocal_1\amalg\cdots\amalg\Ocal_c.
\end{equation}
In this article, let us call such orbit an {\it AG-orbit} in $A$. We remark that $(\ref{DecompOrb})$ is an empty set if $d=0$.
\end{dfn}

\begin{dfn}\label{DefAGInv}
Assume that $A=KQ/I$ is a gentle algebra.
\begin{enumerate}
\item For each AG-orbit $\Ocal$, define its {\it type} $\type(\Ocal)=(q,l)$ by
\[ q=|\Ocal|\ \ \text{and}\ \ l=\sum_{\del_p\in\Ocal} \length(\del_p). \]
\item For $(q,l)\in\Nn\ti\Nn$, define $\phi_A(q,l)\in\Nn$ by the following.
\[ \phi_A(q,l)=
\begin{cases}
|\{ \text{anticycles in}\ A\ \text{of length}\ l \}|& \text{for}\ q=0,\\
|\{ \text{AG-orbits in}\ A\ \text{of type}\ (q,l) \}|&\text{for}\ q\ge 1.
\end{cases}\]
\end{enumerate}
\end{dfn}

\begin{rem}
The above definition is a slight paraphrase of the original definition in \cite{AG}, with the idea of Asashiba \cite{As2} to use blossoming and the observation by Bobi\'{n}ski \cite{Bo} involving bijections on the set of forbidden threads.
\end{rem}

\begin{rem}
Graded version of Definition \ref{DefAGInv} has been introduced by Lekili and Polishchuk in \cite{LP}. We will deal with this in Section \ref{section_Graded}.
\end{rem}

By Avella-Alaminos and Geiss, they are shown to be derived invariants, as follows.
\begin{fact}(\cite[Theorem A]{AG})
If $A$ and $B$ are gentle algebras which are derived equivalent, then $\phi_A=\phi_B$.
\end{fact}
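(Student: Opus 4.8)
This statement is \cite[Theorem A]{AG}, hence strictly a citation; I outline the strategy one would follow to prove it. First, by the theorem of Schr\"oer and Zimmermann \cite{SZ} recalled in the introduction, a derived equivalence preserves gentleness, so $B$ is again a gentle algebra and the invariant $\phi_B$ is defined; the content of the statement is then the equality of the two combinatorial recipes. The plan is to realize $\phi_A$ as data extracted from the triangulated category $D^b(A)$ itself, so that $\phi_A=\phi_B$ becomes automatic once $D^b(A)\simeq D^b(B)$.

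The tool for this is the explicit description of $D^b(A)$ for gentle $A$. Its indecomposable objects are the string complexes and band complexes of Bekkert and Merklen, indexed by homotopy strings and homotopy bands in the blossoming bound quiver $(Q\bls,I\bls)$, and the morphism spaces between them admit a graphical calculus in terms of overlaps of these combinatorial walks. Equivalently, this data can be packaged into the graded marked surface attached to $A$, which --- as is used later in this article and in work of Lekili and Polishchuk \cite{LP} --- is a complete invariant of $D^b(A)$ up to the appropriate notion of equivalence. The heart of the proof is then a dictionary recovering the numbers $\phi_A(q,l)$ from this model:
\begin{itemize}
\item the anticycles of $A$, counted by $\phi_A(0,l)$, correspond to the boundary components of the surface carrying no marked point coming from $Q_0$, with $l$ recording the associated winding datum; equivalently these index the connected components of the singularity category $D^b(A)/\mathrm{perf}(A)$, which is a derived invariant by construction;
\item for $q\ge 1$, an AG-orbit of type $(q,l)$ --- an orbit of the permutation $\Phi_A$ on the forbidden threads $\{\del_p\}$ together with the total length $l$ of the threads in that orbit --- corresponds to a boundary component carrying exactly $q$ marked points coming from $Q_0$, again with winding datum $l$.
\end{itemize}
Granting these correspondences, $\phi_A$ is visibly a function of the graded surface model of $A$, hence of $D^b(A)$, and the theorem follows.

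The step I expect to be the main obstacle is the second bullet: matching the bookkeeping of Definition \ref{DefAGOrb} --- the interplay between the bijection $s_p\leftrightarrow t_p$ coming from the permitted threads of Definition \ref{DefBlossom} and the permutation $\Phi_A$ coming from the forbidden threads --- with an intrinsic feature of $D^b(A)$. This is exactly where the blossoming formalism earns its keep: it makes both bijections simultaneously visible, and one must verify that traversing a single $\Phi_A$-orbit amounts to walking once around one boundary component of the surface, with $q$ counting the ``permitted'' turns encountered along the way and $l$ the accumulated ``forbidden'' length. An alternative route, closer to the original argument of \cite{AG}, avoids surfaces altogether: realize the derived equivalence by a tilting complex via Rickard's theorem, and track how the maps between string and band complexes force the configurations counted by $\phi_A(q,l)$ to be carried to those counted by $\phi_B(q,l)$. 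In either approach the routine part is the homotopy-string combinatorics of $D^b(A)$, while the genuinely delicate point is the compatibility of the two bijections underlying the AG-orbit decomposition $(\ref{DecompOrb})$.
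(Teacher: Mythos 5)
The paper states this as a \emph{Fact} and cites \cite[Theorem A]{AG} without giving any proof, so there is no argument in the paper for your sketch to be measured against; your acknowledgment that the statement is strictly a citation already matches the paper's treatment. Your outline of the two known routes (the original tilting-complex bookkeeping in \cite{AG}, and the later surface/graded-marked-surface reformulation of \cite{LP}) is a sensible summary of how the result is actually proved, but since the paper offers no proof here there is nothing further to compare.
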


\begin{ex}\label{ExGentleBlossom}
Following is an example of a gentle bound quiver $(Q,I)$.
\[
\xy
(-44,0)*+{a}="0";
(-24,0)*+{b}="1";
(-12,16)*+{c}="2";
(0,0)*+{d}="3";
(24,0)*+{f}="4";
(44,0)*+{g}="5";
{\ar_{\al} "0";"1"};
{\ar^{\be} "1";"2"};
{\ar^{\gam} "2";"3"};
{\ar@/^0.8pc/^{\ze} "2";"4"};
{\ar^{\lam} "3";"1"};
{\ar_{\thh} "3";"4"};
{\ar_{\kap} "4";"5"};
%
{\ar@/_0.25pc/@{.} (-17.5,0);(-20,5)}; 
{\ar@/^0.25pc/@{.} (-6.5,0);(-4,5)}; 
{\ar@/_0.25pc/@{.} (-15,11);(-9,11)}; 
{\ar@/^0.4pc/@{.} (19.5,4.5);(29.5,0)}; 
\endxy
\]
The dots indicate which pairs of paths are in the relation. We have $I=\langle\{\be\gam,\gam\lam,\lam\be,\ze\kap\}\rangle$ in this example. For $A=KQ/I$, we have $\Mfr_A=\{\al\be\ze,\gam\thh\kap,\lam\}$, and $\Tfr_A=\{a,g\}\se Q_0$.

Its blossoming $(Q\bls,I\bls)$ is given as follows.
\[
\xy
(-44,0)*+{a}="0";
(-24,0)*+{b}="1";
(-12,16)*+{c}="2";
(0,0)*+{d}="3";
(24,0)*+{f}="4";
(44,0)*+{g}="5";
{\ar_{\al} "0";"1"};
{\ar^{\be} "1";"2"};
{\ar^{\gam} "2";"3"};
{\ar@/^0.8pc/^{\ze} "2";"4"};
{\ar^{\lam} "3";"1"};
{\ar_{\thh} "3";"4"};
{\ar_{\kap} "4";"5"};
%
{\ar@/_0.25pc/@{.} (-17.5,0);(-20,5)}; 
{\ar@/^0.25pc/@{.} (-6.5,0);(-4,5)}; 
{\ar@/_0.25pc/@{.} (-15,11);(-9,11)}; 
{\ar@/^0.4pc/@{.} (19.5,4.5);(29.5,0)}; 
%
%
(-54,0)*+{s_1}="s1";
(-44,10)*+{s_2}="s2";
(-6,26)*+{s_3}="s3";
(-6,-10)*+{s_4}="s4";
(44,10)*+{s_5}="s5";
(24,-10)*+{t_1}="t1";
(-44,-10)*+{t_2}="t2";
(54,0)*+{t_3}="t3";
(-24,-10)*+{t_4}="t4";
(44,-10)*+{t_5}="t5";
{\ar^{\sig_1} "s1";"0"};
{\ar_{\sig_2} "s2";"0"};
{\ar_{\sig_3} "s3";"2"};
{\ar^{\sig_4} "s4";"3"};
{\ar_{\sig_5} "s5";"5"};
{\ar^{\tau_1} "4";"t1"};
{\ar^{\tau_2} "0";"t2"};
{\ar_{\tau_3} "5";"t3"};
{\ar^{\tau_4} "1";"t4"};
{\ar^{\tau_5} "5";"t5"};
{\ar@/_0.40pc/@{.} (-39,0);(-44,5)}; 
{\ar@/_0.40pc/@{.} (-49,0);(-44,-5)}; 
{\ar@/_0.40pc/@{.} (-29,0);(-24,-5)}; 
{\ar@/_0.40pc/@{.} (19,0);(24,-5)}; 
{\ar@/_0.40pc/@{.} (39,0);(44,-5)}; 
{\ar@/_0.40pc/@{.} (49,0);(44,5)}; 
{\ar@/_0.40pc/@{.} (-2,-5);(5,0)}; 
{\ar@/^0.20pc/@{.} (-8.5,20.5);(-5.5,15.5)}; 
\endxy
\]
For $A\bls=KQ\bls/I\bls$, we have
$\Mfr_{A\bls}=\{\sig_1\al\be\ze\tau_1,\,\sig_2\tau_2,\,\sig_3\gam\thh\kap\tau_3,\,\sig_4\lam\tau_4,\,\sig_5\tau_5\}$.
The set of maximal antipaths in $A\bls$ is
$\{\Del_1=\sig_4\thh\tau_1,\,\Del_2=\sig_1\tau_2,\,\Del_3=\sig_5\tau_3,\,\Del_4=\sig_2\al\tau_4,\,\Del_5=\sig_3\ze\kap\tau_5\}$.
Thus the associated permutation is
$\Phi_A=\left(\begin{array}{ccccc}1&2&3&4&5\\ 4&1&5&2&3\end{array}\right)\in\Sfr_5$.
The Avella-Alaminos--Geiss invariants of $A$ can be calculated as follows.
\[ \phi_A(q,l)=\begin{cases}
1&\ \text{if}\ (q,l)=(0,3)\\
1&\ \text{if}\ (q,l)=(2,2)\\
1&\ \text{if}\ (q,l)=(3,2)\\
0&\ \text{otherwise}
\end{cases} \]
\end{ex}

\section{Finite gentle repetition}\label{section_Repetition}

\begin{dfn}\label{DefQkIk}
Let $(Q,I)$ be a connected locally gentle bound quiver with $|Q_1|>0$, as before. Moreover, assume that $\Mfr_A\amalg\Tfr_A\ne\emptyset$ holds.

For any positive integer $k$, define a connected locally gentle bound quiver $(Q^{(k)},I^{(k)})$ in the following 4 steps.
\begin{enumerate}
\item[\und{Step 1}.] Take blossoming $(Q\bls,I\bls)$ of $(Q,I)$. Let $\{ s_1,\ldots,s_d\}, \{ t_1,\ldots,t_d\}$ and $\sig_i,\tau_i,\wp_i$ be as in Definition \ref{DefBlossom}. By the assumption of $\Mfr_A\amalg\Tfr_A\ne\emptyset$, we have $d=\dfr_A>0$.

\item[\und{Step 2}.] For each $1\le i\le k$, prepare a copy of $(Q\bls,I\bls)$, which we denote by $(Q^{[i]\blossom},I^{[i]\blossom})$. We obtain a (not connected) bound quiver
\[ (\Pcal,J):=\un{1\le i\le k}{\coprod}(Q^{[i]\blossom},I^{[i]\blossom})=\big(\un{1\le i\le k}{\coprod}Q^{[i]\blossom},\un{1\le i\le k}{\bigoplus}I^{[i]\blossom}\big). \]
For any $a\in Q\bls_0$, 
denote corresponding elements in $Q^{[i]\blossom}_0$
by $a^{[i]}$. 
Similarly, the element in $KQ^{[i]\blossom}$ corresponding to $\xi\in KQ\bls$ will be denoted by $\xi^{[i]}$. Namely, we define as follows.
\[
Q^{[i]\blossom}_0=\{a^{[i]}\mid a\in Q\bls_0 \},\ \ 
Q^{[i]\blossom}_1=\{\al^{[i]}\mid \al\in Q\bls_1 \},\ \ 
I^{[i]\blossom}=\langle\{\xi^{[i]}\mid \xi\in I\bls \}\rangle.
\]
Obviously, each $(Q^{[i]\blossom},I^{[i]\blossom})$ naturally contains a copy of $(Q,I)$, which we denote by $(Q^{[i]},I^{[i]})$. This is a locally gentle bound quiver obtained as follows, which has $(Q^{[i]\blossom},I^{[i]\blossom})$ as its blossoming.
\[
Q^{[i]}_0=\{a^{[i]}\mid a\in Q_0 \},\ \ 
Q^{[i]}_1=\{\al^{[i]}\mid \al\in Q_1 \},\ \ 
I^{[i]}=\langle\{\xi^{[i]}\mid \xi\in I \}\rangle.
\]
We put $A^{\blossom [i]}=KQ^{\blossom [i]}/I^{\blossom [i]}$ and $A^{[i]}=KQ^{[i]}/I^{[i]}$ for $1\le i\le k$.

\item[\und{Step 3}.] Modify $(\Pcal,J)$ to obtain a bound quiver $(\Pcal\ppr,J\ppr)$ by the following. Roughly speaking, we {\it weld} blossom arrows $\tau_p^{[i]},\sig_p^{[i+1]}$ together, preserving the relations possessed by them.
\begin{enumerate}
\item[{\rm (i)}] For any $1\le i<k$ and $1\le p\le d$, add a new arrow $\varpi_p^{[i]}$ with $s(\varpi_p^{[i]})=s(\tau_p^{[i]})$ and $t(\varpi_p^{[i]})=t(\sig_p^{[i+1]})$ to the quiver $\Pcal$. Then remove arrows $\tau_p^{[i]},\sig_p^{[i+1]}$ and vertices $t_p^{[i]},s_p^{[i+1]}$.

\item[{\rm (ii)}] Let $\Cfr=\{ \varpi_p^{[i]}\mid 1\le i<k,\ 1\le p\le d \}$ denote the set of {\it connecting arrows}. For any arrows $\xi,\xi\ppr\in\Pcal\ppr_1\setminus\Cfr$ and for any $\varpi_p^{[i]},\varpi_{p\ppr}^{[i\ppr]}\in\Cfr$ we define relations in $J\ppr$ as follows, to make it quadratic monomial.
\begin{eqnarray*}
\xi\varpi_p^{[i]}\in J\ppr\ \EQ\ \xi\tau_p^{[i]}\in J,&&
\varpi_p^{[i]}\xi\in J\ppr\ \EQ\ \sig_p^{[i+1]}\xi\in J,\\
\xi\xi\ppr\in J\ppr\ \EQ\ \xi\xi\ppr\in J,&&
\varpi_p^{[i]}\varpi_{p\ppr}^{[i\ppr]}\in J\ppr\ \EQ\ \sig_p^{[i+1]}\tau_{p\ppr}^{[i\ppr]}\in J.
\end{eqnarray*}
\end{enumerate}
Since $d>0$, those $k$ sheets are connected together by connecting arrows, which means that $\Pcal\ppr$ is connected.

\item[\und{Step 4}.] From $(\Pcal\ppr,J\ppr)$, remove the remaining blossom vertices $s_p^{[1]},t_p^{[k]}$ and blossom arrows $\sig_p^{[1]},\tau_p^{[k]}$ for all $1\le p\le d$, to obtain a bound quiver $(Q^{(k)},I^{(k)})$.
\end{enumerate}
\end{dfn}

We continue to use the notations in Definition \ref{DefQkIk}.
\begin{rem}\label{RemRepet}
By definition, $(Q^{(k)},I^{(k)})$ is a bound subquiver of $(\Pcal\ppr,J\ppr)$. Moreover, the set of vertices in $\Pcal\ppr$ can be decomposed as
\[ \Pcal\ppr_0=
Q^{(k)}_0\amalg\{ s_p^{[1]}\mid 1\le p\le d\}\amalg \{ t_p^{[k]}\mid 1\le p\le d\}. \]
Vertices in each components satisfy the following properties.
\begin{itemize}
\item Any vertex $v\in
Q^{(k)}_0$ satisfies $\din(v)=\dout(v)=2$ in $\Pcal\ppr$.
\item $\{ s_p^{[1]}\mid 1\le p\le d\}$ are the source vertices of $\Pcal\ppr$, and $\sig_p^{[1]}$ is the unique arrow with $s(\sig_p^{[1]})=s_p^{[1]}$ for each $1\le p\le d$.
\item $\{ t_p^{[k]}\mid 1\le p\le d\}$ are the sink vertices of $\Pcal\ppr$, and $\tau_p^{[k]}$ is the unique arrow with $t(\tau_p^{[k]})=t_p^{[k]}$ for each $1\le p\le d$.
\end{itemize}
By these properties, we observe that $(Q^{(k)},I^{(k)})$ remains connected after removing vertices and arrows in Step 4, and its blossoming is given by $(\Pcal\ppr,J\ppr)$ as stated in Remark \ref{RemGentleUnique}. 
Thus we may write $(\Pcal\ppr,J\ppr)=(Q^{(k)\blossom},I^{(k)\blossom})$ in the following.
\end{rem}

\begin{rem}
If $k=1$, we have $A^{(1)}=A$.
\end{rem}

\begin{prop}\label{PropMaxPathInAkb}
For any $1\le p\le d$, the following holds.
\begin{enumerate}
\item There is a unique maximal path $\Pfr_p(A^{(k)})$ in $A^{(k)\blossom}=KQ^{(k)\blossom}/I^{(k)\blossom}$ with $s(\Pfr_p(A^{(k)}))=s_p^{[1]}$. 
Moreover, these are the all maximal paths in $A^{(k)\blossom}$.
\item As in Definition \ref{DefBlossom}, let $\sig_p\wp_p\tau_p$ denotes the maximal path in $A\bls$ which connects $s_p$ to $t_p$. Then, the maximal path $\Pfr_p(A^{(k)})$ in $A^{(k)\blossom}$ is given by the following.
\begin{equation}\label{MaxQkIkb}
\Pfr_p(A^{(k)})=\sig_p^{[1]}\wp_p^{[1]}\varpi_p^{[1]}\wp_p^{[2]}\varpi_p^{[2]}
\cdots\wp_p^{[k-1]}\varpi_p^{[k-1]}\wp_p^{[k]}\tau_p^{[k]}. 
\end{equation}
In particular, we have $t(\Pfr_p(A^{(k)}))=t_p^{[k]}$. Equation $(\ref{MaxQkIkb})$ also implies
\begin{equation}\label{MaxQkIk}
\wp_p(A^{(k)})=\wp_p^{[1]}\varpi_p^{[1]}\wp_p^{[2]}\varpi_p^{[2]}
\cdots\wp_p^{[k-1]}\varpi_p^{[k-1]}\wp_p^{[k]}. 
\end{equation}

\end{enumerate}
\end{prop}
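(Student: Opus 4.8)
The plan is to argue entirely inside the blossoming. By Remark \ref{RemRepet}, $(\Pcal\ppr,J\ppr)=(Q^{(k)\blossom},I^{(k)\blossom})$ is a locally gentle bound quiver whose source blossom vertices are $s_1^{[1]},\ldots,s_d^{[1]}$ and whose sink blossom vertices are $t_1^{[k]},\ldots,t_d^{[k]}$, with $\sig_p^{[1]}$ the unique arrow out of $s_p^{[1]}$ and $\tau_p^{[k]}$ the unique arrow into $t_p^{[k]}$. By Definition \ref{DefBlossom}(2) applied to $A^{(k)}$, every maximal path in $A^{(k)\blossom}$ starts at one of the $s_p^{[1]}$ and ends at one of the $t_q^{[k]}$, and the resulting source-to-sink correspondence is a bijection, so there is exactly one maximal path starting at each $s_p^{[1]}$, and these $d$ paths are all the maximal paths of $A^{(k)\blossom}$. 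Denote by $P_p$ the word appearing as the right-hand side of $(\ref{MaxQkIkb})$. Then it suffices to prove that $P_p$ is a maximal path in $A^{(k)\blossom}$ starting at $s_p^{[1]}$: part (1) follows, with $\Pfr_p(A^{(k)})$ being this unique maximal path; part (2) follows since then $\Pfr_p(A^{(k)})=P_p$; the target of $P_p$ is visibly $t_p^{[k]}$; and $(\ref{MaxQkIk})$ is obtained by deleting the two end blossom arrows $\sig_p^{[1]},\tau_p^{[k]}$, which by Remark \ref{RemRepet} are the unique blossom arrows at $s_p^{[1]}$ and $t_p^{[k]}$.

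First I would confirm that $P_p$ is a well-formed path in $\Pcal\ppr$, using that by Definition \ref{DefBlossom}(4) the word $\sig_p\wp_p\tau_p=\Pfr_p(A)$ is a maximal path in $A\bls$, and that $\sig_p\tau_p$ itself is a maximal path in $A\bls$ in the degenerate case $\wp_p=e_a$. Compatibility of consecutive arrows at each weld is immediate from Step 3(i) of Definition \ref{DefQkIk}: $s(\varpi_p^{[i]})=s(\tau_p^{[i]})=t(\wp_p^{[i]})$ and $t(\varpi_p^{[i]})=t(\sig_p^{[i+1]})=s(\wp_p^{[i+1]})$. The substance is to show $P_p\notin J\ppr$; as $J\ppr$ is quadratic monomial this amounts to checking that no length-$2$ subword of $P_p$ lies in $J\ppr$, and by the rules of Step 3(ii) each such subword translates to a non-relation of $A\bls$. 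Explicitly, a subword contained in a single sheet $[i]$ is a copy of a length-$2$ subword of $\sig_p\wp_p\tau_p$, hence lies neither in $I\bls$ nor, by the rule $\xi\xi\ppr\in J\ppr\EQ\xi\xi\ppr\in J$, in $J\ppr$; a subword $\xi\varpi_p^{[i]}$ or $\varpi_p^{[i]}\xi$ is, by the rules $\xi\varpi_p^{[i]}\in J\ppr\EQ\xi\tau_p^{[i]}\in J$ and $\varpi_p^{[i]}\xi\in J\ppr\EQ\sig_p^{[i+1]}\xi\in J$, translated to a length-$2$ subword of $\sig_p\wp_p\tau_p$ and so is not in $J\ppr$; and a subword $\varpi_p^{[i]}\varpi_p^{[i+1]}$ of two consecutive connecting arrows, which occurs precisely when $\wp_p=e_a$, is by the rule $\varpi_p^{[i]}\varpi_{p\ppr}^{[i\ppr]}\in J\ppr\EQ\sig_p^{[i+1]}\tau_{p\ppr}^{[i\ppr]}\in J$ translated to $\sig_p\tau_p$, which is not in $I\bls$ because $\sig_p\tau_p$ is then a maximal path of $A\bls$. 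Hence $P_p\notin J\ppr$.

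It then remains only that $P_p$ is maximal, which is clear: it begins at the source vertex $s_p^{[1]}$ of $\Pcal\ppr$, so admits no left extension, and it ends at the sink vertex $t_p^{[k]}$, so admits no right extension. Together with the first paragraph this gives both assertions of the Proposition as well as $(\ref{MaxQkIk})$. The one step demanding care is the subword check above: one must enumerate the welding subwords completely and correctly in both the case $\wp_p\in\Mfr_A$ and the case $\wp_p=e_a$ (in the latter $P_p$ may begin or end with a blossom arrow adjacent to a connecting arrow, and may contain runs $\varpi_p^{[i]}\varpi_p^{[i+1]}$), but each case collapses to the single input that $\sig_p\wp_p\tau_p$ is a maximal path in $A\bls$.
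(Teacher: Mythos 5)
Your proof is correct and follows essentially the same route as the paper's: part (1) is delegated to the general bijectivity property of blossoming stated in Definition \ref{DefBlossom}, and part (2) exhibits the word on the right-hand side of $(\ref{MaxQkIkb})$ as the weld of the per-sheet maximal paths $\sig_p^{[i]}\wp_p^{[i]}\tau_p^{[i]}$, then observes it runs from a source to a sink of $Q^{(k)\blossom}$. The paper's proof is terser — it simply asserts that the welding \lq\lq turns the sequence into a path'' — whereas you explicitly check, subword by subword via the translation rules of Step 3(ii), that no length-$2$ factor lands in $J\ppr$ (including the degenerate $\wp_p=e_a$ case with consecutive connecting arrows); this is the detail the paper leaves implicit, and your version supplies it correctly.
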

\begin{proof}
{\rm (1)} This is a general property of a blossoming quiver, as stated in Definition \ref{DefBlossom}.

{\rm (2)} $\sig_p\wp_p\tau_p$ yields a sequence of maximal paths
\[ \sig_p^{[1]}\wp_p^{[1]}\tau_p^{[1]},\ \sig_p^{[2]}\wp_p^{[2]}\tau_p^{[2]},\ 
\ldots,\ \sig_p^{[k]}\wp_p^{[k]}\tau_p^{[k]} \]
in $A^{[1]\blossom},A^{[2]\blossom},
\ldots,A^{[k]\blossom}$. In Step 3 we welded $\tau_p^{[i]}$ and $\sig_p^{[i+1]}$ together, which turns the above sequence into a path $(\ref{MaxQkIkb})$. This path starts at the source blossoming vertex $s_p^{[1]}$ and end at the sink blossom vertex $t_p^{[k]}$, which means that it is indeed a maximal path in $A^{(k)\blossom}$.
\end{proof}

\begin{dfn}\label{Def_rhostar_iotaa}
Accordingly to the cases in Definition \ref{DefBlossom} {\rm (4)}, we define as follows.
\begin{itemize}
\item[{\rm (i)}] If $\wp_p=\rho$ for some $\rho\in\Mfr_A$, then we write $\varpi_p^{[i]}=\rho^{\ast[i]}$ for any $1\le i<k$. In this case, $(\ref{MaxQkIkb})$ becomes as follows.
\[ \Pfr_p(A^{(k)})=\sig_p^{[1]}\rho^{[1]}\rho^{\ast [1]}\rho^{[2]}\rho^{\ast[2]}
\cdots\rho^{[k-1]}\rho^{\ast[k-1]}\rho^{[k]}\tau_p^{[k]}. \]
\item[{\rm (ii)}] If $\wp_p=e_a$ for some $a\in\Tfr_A$, then we write $\varpi_p^{[i]}=\iota_a^{[i]}$ for any $1\le i<k$. In this case, $(\ref{MaxQkIkb})$ becomes as follows.
\[ \Pfr_p(A^{(k)})=\sig_p^{[1]}\iota_a^{[1]}\iota_a^{[2]}
\cdots\iota_a^{[k-1]}\tau_p^{[k]}. \]
\end{itemize}
\end{dfn}

\begin{cor}\label{CorMaxPathInAk}
If $k\ge2$, any maximal path in $\Ak$ is one of the following forms.
\begin{itemize}
\item[{\rm (i)}] $\rho^{[1]}\rho^{\ast[1]}\rho^{[2]}\rho^{\ast[2]}\cdots\rho^{\ast[k-1]}\rho^{[k]}$ for some $\rho\in\Mfr_A$.
\item[{\rm (ii)}] $\iota_a^{[1]}\iota_a^{[2]}\cdots\iota_a^{[k-1]}$ for some $a\in\Tfr_A$.
\end{itemize}
\end{cor}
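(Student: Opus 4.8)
The plan is to deduce Corollary \ref{CorMaxPathInAk} directly from Proposition \ref{PropMaxPathInAkb} together with Definition \ref{Def_rhostar_iotaa}, by passing from maximal paths in the blossoming algebra $A^{(k)\blossom}$ to maximal paths in $A^{(k)}$ itself. The key observation is the general fact recorded in Definition \ref{DefBlossom}: every maximal path in a blossoming quiver is of the form $\sigma\wp\tau$ where $\sigma$ is a source blossom arrow, $\tau$ is a sink blossom arrow, and $\wp$ is either a non-trivial permitted thread in the (non-blossomed) algebra or a trivial permitted thread $e_a$. Applying this to $A^{(k)\blossom}=(\Pcal\ppr,J\ppr)$, whose blossom arrows are exactly $\sig_p^{[1]}$ and $\tau_p^{[k]}$ by Remark \ref{RemRepet}, the non-blossom part of each maximal path is precisely $\wp_p(A^{(k)})$, and the maximal paths in $A^{(k)}$ are exactly these non-blossom parts (those that are non-trivial) together with the trivial permitted threads in $A^{(k)}$.

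First I would invoke Proposition \ref{PropMaxPathInAkb}(1) to list the $d$ maximal paths $\Pfr_p(A^{(k)})$ in $A^{(k)\blossom}$, and Proposition \ref{PropMaxPathInAkb}(2), equation $(\ref{MaxQkIk})$, to identify the associated permitted thread $\wp_p(A^{(k)}) = \wp_p^{[1]}\varpi_p^{[1]}\wp_p^{[2]}\cdots\varpi_p^{[k-1]}\wp_p^{[k]}$. Then I would split into the two cases of Definition \ref{DefBlossom}(4) as recorded in Definition \ref{Def_rhostar_iotaa}: if $\wp_p = \rho \in \Mfr_A$, then $\wp_p^{[i]} = \rho^{[i]}$ and $\varpi_p^{[i]} = \rho^{\ast[i]}$, giving form (i); if $\wp_p = e_a$ with $a \in \Tfr_A$, then each $\wp_p^{[i]} = e_{a^{[i]}}$ is trivial and $\varpi_p^{[i]} = \iota_a^{[i]}$, so $\wp_p(A^{(k)}) = \iota_a^{[1]}\iota_a^{[2]}\cdots\iota_a^{[k-1]}$, giving form (ii). The hypothesis $k \ge 2$ guarantees that in case (ii) at least one connecting arrow $\iota_a^{[i]}$ is present, so $\wp_p(A^{(k)})$ is a genuine (non-trivial) path and hence an element of $\Mfr_{A^{(k)}}$ rather than a trivial permitted thread.

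The one point that needs a short argument is the claim that \emph{every} maximal path in $A^{(k)}$ arises this way, i.e.\ that $\Mfr_{A^{(k)}}$ is exhausted by the list in (i) and (ii). This follows from the bijection, built into the blossoming construction, between $\Mfr_{A^{(k)\blossom}}$ and the disjoint union $\Mfr_{A^{(k)}} \amalg \Tfr_{A^{(k)}}$: a maximal path $\Pfr_p(A^{(k)}) = \sig_p^{[1]}\wp_p(A^{(k)})\tau_p^{[k]}$ in the blossoming restricts to the maximal path $\wp_p(A^{(k)})$ in $A^{(k)}$ when $\wp_p(A^{(k)})$ is non-trivial, and to a trivial permitted thread otherwise; conversely every element of $\Mfr_{A^{(k)}}$ extends uniquely to a maximal path in the blossoming by prepending/appending the appropriate blossom arrows. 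Since by Proposition \ref{PropMaxPathInAkb}(1) the paths $\Pfr_p(A^{(k)})$, $1 \le p \le d$, are all the maximal paths in $A^{(k)\blossom}$, their non-trivial restrictions are all the maximal paths in $A^{(k)}$, and we have already seen these restrictions take the forms (i) and (ii).

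I do not expect any real obstacle here; the statement is essentially a transcription of Proposition \ref{PropMaxPathInAkb} and Definition \ref{Def_rhostar_iotaa} into the language of $\Mfr_{A^{(k)}}$, and the only mildly delicate point is the bookkeeping that distinguishes non-trivial permitted threads from trivial ones — handled by the hypothesis $k \ge 2$ as noted above. A remark worth including is that case (ii) can occur only when $\Tfr_A \ne \emptyset$, and that even if $\Tfr_A = \emptyset$ the list (i) is non-empty because $\Mfr_A \amalg \Tfr_A \ne \emptyset$ was assumed in Definition \ref{DefQkIk}.
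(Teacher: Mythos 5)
Your proof is correct and follows exactly the route the paper intends: the paper itself just says "This is immediate from Proposition \ref{PropMaxPathInAkb}," and you have filled in the omitted bookkeeping, namely passing from $\Mfr_{A^{(k)\blossom}}$ to $\Mfr_{A^{(k)}}$ via the blossoming bijection, substituting Definition \ref{Def_rhostar_iotaa} into equation $(\ref{MaxQkIk})$, and noting that $k\ge2$ is what makes the case-(ii) thread non-trivial.
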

\begin{proof}
This is immediate from Proposition \ref{PropMaxPathInAkb}.
%
\end{proof}

\begin{dfn}\label{Defrhoint}
For any $1\le i<j\le k$, we define as follows.
\begin{itemize}
\item[{\rm (i)}] For $\rho\in\Mfr_A$, put $\rho^{[i,j]}=\rho^{\ast[i]}\rho^{[i+1]}\rho^{\ast[i+1]}\cdots\rho^{[j-1]}\rho^{\ast[j-1]}$.
\item[{\rm (ii)}] For $a\in\Tfr_A$, put $\iota_a^{[i,j]}=\iota_a^{[i]}\iota_a^{[i+1]}\cdots\iota_a^{[j-1]}$.
\end{itemize}
\end{dfn}

\begin{lem}\label{LemBasisSheets}
Oriented cycles and anticycles in $\Ak$ are determined as follows.
\begin{enumerate}
\item The set of oriented cycles in $\Ak$ is
\[ \un{1\le i\le k}{\coprod}\{(\al_1\al_2\cdots\al_l)^{[i]}\mid \al_1\al_2\cdots\al_l\ \text{is an oriented cycle in}\ A\}. \]
In particular if $A$ is gentle, then so is $\Ak$.
\item The set of anticycles in $\Ak$ is
\[ \un{1\le i\le k}{\coprod}\{(\al_1\al_2\cdots\al_l)^{[i]}\mid \al_1\al_2\cdots\al_l\ \text{is an anticycle in}\ A\}. \]
\end{enumerate}
\end{lem}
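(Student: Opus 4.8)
The plan is to exploit the combinatorial structure of $\Ak$ as a bound subquiver of its blossoming $(Q^{(k)\blossom},I^{(k)\blossom})=(\Pcal\ppr,J\ppr)$, together with the explicit description of the connecting arrows $\varpi_p^{[i]}$ and their relations given in Step 3 of Definition \ref{DefQkIk}. First I would note that every arrow of $\Ak$ is either of the form $\al^{[i]}$ for some $\al\in Q_1$ and $1\le i\le k$, or a connecting arrow $\varpi_p^{[i]}$ for some $1\le p\le d$, $1\le i<k$; and that, by the relations in Step 3(ii), a product $\xi\varpi_p^{[i]}$ (resp. $\varpi_p^{[i]}\xi$) lies in $I^{(k)}$ exactly when $\xi\tau_p^{[i]}$ (resp. $\sig_p^{[i+1]}\xi$) lies in $I\bls$. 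The key point is that a connecting arrow $\varpi_p^{[i]}$ can never be part of an oriented cycle or an anticycle, for a connecting arrow and the arrows $\al^{[i]}$ that can compose with it on either side all lie along a \emph{maximal path} $\Pfr_p(A^{(k)})$ (cf.\ Proposition \ref{PropMaxPathInAkb}(2)), which by construction runs from the source blossom vertex $s_p^{[1]}$ to the sink blossom vertex $t_p^{[k]}$; a maximal path in a locally gentle algebra cannot close up into a cycle. More precisely, a connecting arrow $\varpi_p^{[i]}$ arises as a weld of the blossom arrows $\tau_p^{[i]}$ and $\sig_p^{[i+1]}$, and in $A^{[i]\blossom}$ the arrow $\tau_p^{[i]}$ (resp.\ $\sig_p^{[i+1]}$ in $A^{[i+1]\blossom}$) points to (resp.\ from) a blossom vertex; since blossom vertices are sources or sinks, no arrow of $A$ ``comes after'' $\tau_p$ nor ``comes before'' $\sig_p$ inside a single sheet.

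Concretely, for part (1): suppose $\gam=\be_1\cdots\be_l$ is an oriented cycle in $\Ak$. If some $\be_j$ is a connecting arrow $\varpi_p^{[i]}$, then $\be_{j+1}$ (indices cyclic) satisfies $\varpi_p^{[i]}\be_{j+1}\notin I^{(k)}$, which by Step 3(ii) forces $\sig_p^{[i+1]}\be_{j+1}'\notin I\bls$ where $\be_{j+1}'$ is the arrow of $A^{[i+1]\blossom}$ underlying $\be_{j+1}$; but $\sig_p^{[i+1]}$ is a blossom arrow emanating from a source blossom vertex, and the only arrow it can precede without a relation is the first arrow of $\wp_p^{[i+1]}$, an arrow ``interior'' to sheet $i+1$ that lies strictly inside the maximal path and is \emph{not} followed around to $s_p^{[1]}$. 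Tracing forward, the cycle would be forced to traverse, in order, more and more of the maximal path $\Pfr_p(A^{(k)})$ from Proposition \ref{PropMaxPathInAkb}(2) and eventually reach the blossom sink $t_p^{[k]}$ — but $t_p^{[k]}\notin Q^{(k)}_0$, a contradiction with $\gam$ being a path in $\Ak$. Hence no $\be_j$ is a connecting arrow, so all $\be_j=\al_j^{[i_j]}$; since consecutive arrows must share source/target vertices and the vertices $a^{[i]}$ of distinct sheets $i$ are distinct and only joined by connecting arrows, all the $i_j$ coincide, say $i_j=i$, and then $\gam=(\al_1\cdots\al_l)^{[i]}$ with $\al_1\cdots\al_l$ an oriented cycle in $A$ (the relation $\al_l\al_1\notin I$ transports back via $I^{[i]}=\langle\{\xi^{[i]}\mid\xi\in I\}\rangle$, and primitivity is preserved since the sheet-$i$ copy of $Q$ is an isomorphic copy). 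Conversely, each such $(\al_1\cdots\al_l)^{[i]}$ is patently an oriented cycle in $\Ak$. The ``in particular'' follows since an oriented cycle in $\Ak$ then produces one in $A$.

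Part (2) is entirely parallel, replacing ``path'' by ``antipath'' and ``$\notin I$'' by ``$\in I$'' throughout: if $\del=\be_1\cdots\be_l$ is an anticycle in $\Ak$ using a connecting arrow $\varpi_p^{[i]}$, the defining condition $\varpi_p^{[i]}\be_{j+1}\in I^{(k)}$ translates via Step 3(ii) to $\sig_p^{[i+1]}\be_{j+1}'\in I\bls$, and $\be_{j+1}'$ is then forced to be a specific arrow; running this forward and backward along the forbidden-thread structure again marches $\del$ out to a blossom vertex $t_p^{[k]}\notin Q^{(k)}_0$ or back to $s_p^{[1]}\notin Q^{(k)}_0$, contradiction. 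So every arrow of $\del$ is some $\al_j^{[i]}$ with a common $i$, and $\del=(\al_1\cdots\al_l)^{[i]}$ for an anticycle $\al_1\cdots\al_l$ in $A$; the converse is immediate. I expect the only delicate step to be the bookkeeping that shows a connecting arrow cannot lie on a cycle — i.e.\ verifying carefully that once an oriented (or anti-)cycle enters a connecting arrow it is trapped onto the finite maximal path $\Pfr_p(A^{(k)})$ of Proposition \ref{PropMaxPathInAkb}(2) and must exit at a blossom vertex — but this is a direct consequence of the locally gentle condition (each vertex has a unique ``non-relation successor'' and a unique ``relation successor'' for each incoming arrow) applied sheet by sheet.
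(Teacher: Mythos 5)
Your proof is correct, but it takes a considerably heavier route than the paper's. The paper's entire argument is two sentences: by construction, the ordinary arrows $\al^{[i]}$ stay within sheet $i$, while every connecting arrow $\varpi_p^{[i]}$ goes from sheet $i$ to sheet $i+1$; hence the sheet index is (weakly) increasing along any path in $Q^{(k)}$, so there is no path from sheet $j$ to sheet $i$ when $j>i$, and therefore any oriented cycle or anticycle is confined to a single sheet $A^{[i]}$, which is an isomorphic copy of $(Q,I)$. That monotonicity observation immediately rules out connecting arrows on any cycle, for both parts of the lemma simultaneously, with no reference to the blossoming, to maximal paths, or to uniqueness of (non-)relation successors. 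Your proposal instead tracks the would-be cycle along the maximal permitted/forbidden thread $\Pfr_p(A^{(k)})$ (resp.\ the corresponding maximal antipath), using the locally gentle condition to argue that the cycle is trapped on it and must eventually reach the removed blossom sink $t_p^{[k]}$ or source $s_p^{[1]}$. This does work, and your final deduction that all arrows then lie in one sheet and transport back to a primitive cycle in $A$ is the same as the paper's. But the thread-tracking machinery is more than is needed; it is also where the argument is most delicate (e.g.\ when $\wp_p=e_a$ the ``non-relation successor'' of a connecting arrow is the next connecting arrow rather than an arrow of $\wp_p^{[i+1]}$, and the anticycle case walks along a different thread than the permitted one), whereas the sheet-index argument bypasses all of these case distinctions.
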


\begin{proof}
By construction, there is no path from $i$-th sheet to $j$-th sheet if $i>j$. Thus each (anti-) cycle in $\Ak$ should be contained in one sheet $A^{[i]}$, which is a copy of corresponding (anti-) cycle in $A$.
\end{proof}

\begin{prop}\label{PropBasisSheets}
For $1\le i,j\le k$, a $K$-basis of $1^{[i]}\Ak1^{[j]}$ is given by the following.
\begin{enumerate}
\item If $i>j$, we have $1^{[i]}\Ak1^{[j]}=0$.
\item If $i=j$, we naturally have $1^{[i]}A^{(k)}1^{[i]}=A^{[i]}$ as $K$-modules. Since there is an isomorphism of $K$-algebras $(-)^{[i]}\co A\ov{\cong}{\lra}A^{[i]}$, we obtain the following $K$-basis of $1^{[i]}\Ak1^{[i]}$.
\[ \{ \thh^{[i]}\mid \thh\ \text{is a path in}\ A\}. \]
\item If $i<j$, the following gives a $K$-basis of $1^{[i]}\Ak1^{[j]}$.
\begin{equation}\label{BasisSheetsB1}
\Set{(\al_{u+1}\cdots\al_l)^{[i]}\rho^{[i,j]}(\al_1\cdots\al_v)^{[j]}\in \Ak\ | \begin{array}{c}\rho=\al_1\cdots\al_l\in\Mfr_A,\\ 0\le u, v\le l\end{array} }\amalg\{\iota_a^{[i,j]}\mid a\in\Tfr_A\}
\end{equation}
For convenience, here we put $\al_{u+1}\cdots\al_l=e_{t(\rho)}$ if $u=l$, and $\al_1\cdots\al_v=e_{s(\rho)}$ if $v=0$.
\end{enumerate}
\end{prop}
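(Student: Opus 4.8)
The plan is to exploit that $A^{(k)}=KQ^{(k)}/I^{(k)}$ is a quadratic monomial algebra, so that the paths in $A^{(k)}$ form a $K$-basis of it. By Remark \ref{RemRepet} the vertices decompose as $Q^{(k)}_0=\coprod_{i=1}^{k}Q^{[i]}_0$, hence a $K$-basis of $1^{[i]}\Ak 1^{[j]}$ is precisely the set of paths in $A^{(k)}$ whose source lies in the $i$-th sheet and whose target lies in the $j$-th sheet, and the statement reduces to listing those paths. As in the proof of Lemma \ref{LemBasisSheets}, every arrow of $Q^{(k)}$ either stays within one sheet (a copy $\al^{[\ell]}$ of an arrow of $Q$) or is a connecting arrow $\varpi_p^{[\ell]}$ going from the $\ell$-th to the $(\ell+1)$-st sheet, so no path of $A^{(k)}$ decreases the sheet index. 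This gives (1) at once; and for (2) it confines a path from the $i$-th sheet back to itself to $Q^{[i]}$, so via the algebra isomorphism $(-)^{[i]}\co A\ov{\cong}{\lra}A^{[i]}$ these are exactly the $\thh^{[i]}$ with $\thh$ a path in $A$.

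The content is (3). Let $\thh$ be a path in $A^{(k)}$ from the $i$-th to the $j$-th sheet with $i<j$. Since $\thh$ does not decrease the sheet index and must reach sheet $j$, for each $i\le\ell<j$ it crosses from sheet $\ell$ to sheet $\ell+1$ exactly once, along some connecting arrow $\varpi_{p_\ell}^{[\ell]}$. I would show that $p_i=\cdots=p_{j-1}=:p$, that the portion of $\thh$ in each intermediate sheet $\ell$ ($i<\ell<j$) is the whole thread $\wp_p^{[\ell]}$, and that its portions in sheets $i$ and $j$ are respectively a terminal and an initial subpath of $\wp_p$. The relevant facts are that in a locally gentle algebra each arrow has at most one permitted successor and at most one permitted predecessor and lies either in a unique maximal permitted thread or in an oriented cycle, while the blossom arrow $\sig_p$ has as unique permitted successor the first arrow of $\wp_p$, and $\tau_p$ as unique permitted predecessor the last arrow of $\wp_p$ --- in the trivial case $\wp_p=e_a$ ($a\in\Tfr_A$) these are $\tau_p$ and $\sig_p$ themselves, since $\sig_p\tau_p$ is a maximal path of $A\bls$ by Definition \ref{DefBlossom} (4). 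Transporting these along the relations of Step 3 (ii), which express the incidence of a connecting arrow $\varpi_{p'}^{[\ell]}$ through that of $\sig_{p'},\tau_{p'}$ in $A\bls$, one gets: once $\thh$ enters sheet $\ell$ via $\varpi_p^{[\ell-1]}$ at $t(\sig_p^{[\ell]})=s(\wp_p^{[\ell]})$, it is forced to traverse $\wp_p^{[\ell]}$ arrow by arrow, it cannot branch off inside the sheet, and for $\ell<j$ it can leave the sheet only at $t(\wp_p^{[\ell]})=s(\tau_p^{[\ell]})$ and only along $\varpi_p^{[\ell]}$ --- since any other connecting arrow out of a vertex of $\wp_p^{[\ell]}$ would require the arrow of $\wp_p$ entering that vertex to be the last arrow of a different maximal permitted thread, which is impossible. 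Reading the same argument backwards at sheet $i$ and forwards at sheet $j$ produces the terminal subpath $(\al_{u+1}\cdots\al_l)^{[i]}$ and initial subpath $(\al_1\cdots\al_v)^{[j]}$ of $\wp_p=\al_1\cdots\al_l\in\Mfr_A$ with $0\le u,v\le l$, while $\thh=\iota_a^{[i,j]}$ when $\wp_p=e_a$. Conversely the same relation checks show each element displayed in $(\ref{BasisSheetsB1})$ is a genuine path of $A^{(k)}$ of the required type, and distinct paths of a monomial algebra are $K$-linearly independent; hence $(\ref{BasisSheetsB1})$ is the asserted basis.

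The main obstacle will be this confinement argument: proving that after crossing into a sheet the path stays on the single thread $\wp_p$ until it crosses again, and that it can neither switch threads nor wander onto an oriented cycle of $A$ inside that sheet. The care is concentrated at the trivial-thread vertices $a\in\Tfr_A$ and at the low-degree vertices of $Q$, where several connecting arrows may be incident, and lies in checking that the gentle conditions --- once pushed through the correspondences of Definition \ref{DefBlossom} and the relations of Step 3 --- really do exclude all these detours. Granting this, that each candidate in $(\ref{BasisSheetsB1})$ is admissible and that the list is complete is routine bookkeeping.
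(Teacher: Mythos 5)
Your decomposition of the problem and parts (1), (2) match the paper exactly. For (3), your approach is correct in substance, but you have set yourself up to re-derive from scratch the structure of maximal paths in $A^{(k)}$ --- the ``confinement argument'' you flag as the main obstacle is precisely the content of Proposition \ref{PropMaxPathInAkb} and Corollary \ref{CorMaxPathInAk}, which the paper has already established. The paper's proof is short because it makes two quick invocations: by Lemma \ref{LemBasisSheets} a path from sheet $i$ to sheet $j$ with $i<j$ cannot lie on an oriented cycle; in a (locally) gentle algebra any path off an oriented cycle is a subpath of a unique maximal permitted thread; and Corollary \ref{CorMaxPathInAk} tells you exactly what the maximal threads of $A^{(k)}$ look like, namely $\rho^{[1]}\rho^{\ast[1]}\cdots\rho^{\ast[k-1]}\rho^{[k]}$ for $\rho\in\Mfr_A$ or $\iota_a^{[1]}\cdots\iota_a^{[k-1]}$ for $a\in\Tfr_A$. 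Intersecting these threads with sheets $i$ through $j$ immediately reproduces $(\ref{BasisSheetsB1})$; no further thread-crossing bookkeeping is needed. So there is no gap in your plan, but your worries about the ``detours'' at trivial-thread and low-degree vertices evaporate once you observe that the relevant uniqueness has already been proved: cite Corollary \ref{CorMaxPathInAk} instead of re-establishing it.
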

\begin{proof}
{\rm (1)} and {\rm (2)} follow from the fact that there is no path from $i$-th sheet to $j$-th sheet for $i>j$.
Let us show {\rm (3)}. We have a $K$-basis
\begin{equation}\label{BasisSheetsB2}
\Set{ \thh\in \Ak\ | \begin{array}{l}\thh\ \text{is a path in}\ \Ak,\\
\text{with}\ s(\thh)\in Q_0^{[i]}\ \text{and}\ t(\thh)\in Q_0^{[j]} \end{array}}
\end{equation}
of $1^{[i]}A^{(k)}1^{[j]}$. If a path $\thh$ in $A^{(k)}$ satisfies $s(\thh)\in Q_0^{[i]}$ and $t(\thh)\in Q_0^{[j]}$, it is not a part of an oriented cycle in $A^{(k)}$ by Lemma \ref{LemBasisSheets}. Thus such a path should be a subpath of some (unique) maximal path, which is one of the forms {\rm (i),(ii)} by Corollary \ref{CorMaxPathInAk}.
Thus for any path $\thh\in \Ak$ with $s(\thh)\in Q_0^{[i]}$ and $t(\thh)\in Q_0^{[j]}$, the following holds.
\begin{itemize}
\item[{\rm (i)}] If $\thh$ is a subpath of $\rho^{[1]}\rho^{\ast[1]}\rho^{[2]}\cdots\rho^{\ast[k-1]}\rho^{[k]}$, then we have
$\thh=(\al_{u+1}\cdots\al_l)^{[i]}\rho^{[i,j]}(\al_1\cdots\al_v)^{[j]}$
for some $0\le u, v\le l$, where $\rho=\al_1\cdots\al_l$.
\item[{\rm (ii)}] If $\thh$ is a subpath of $\iota_a^{[1]}\iota_a^{[2]}\cdots\iota_a^{[k-1]}$, then we have $\thh=\iota_a^{[i,j]}$.
\end{itemize}
Thus $(\ref{BasisSheetsB2})$ agrees with $(\ref{BasisSheetsB1})$.
\end{proof}

\begin{ex}\label{ExGentleRepet}
Let $A=KQ/I$ be the gentle algebra in Example \ref{ExGentleBlossom}. For $k=3$, its three-times gentle repetition $A^{(3)}$ is given by the following bound quiver.
\[
%
\xy
(-44,40)*+{a^{[1]}}="0";
(-24,40)*+{b^{[1]}}="1";
(-12,56)*+{c^{[1]}}="2";
(0,40)*+{d^{[1]}}="3";
(24,40)*+{f^{[1]}}="4";
(44,40)*+{g^{[1]}}="5";
{\ar_{\al^{[1]}} "0";"1"};
{\ar^{\be^{[1]}} "1";"2"};
{\ar^{\gam^{[1]}} "2";"3"};
{\ar@/^0.8pc/^{\ze^{[1]}} "2";"4"};
{\ar^{\lam^{[1]}} "3";"1"};
{\ar_{\thh^{[1]}} "3";"4"};
{\ar_{\kap^{[1]}} "4";"5"};
%
{\ar@/_0.25pc/@{.} (-17.5,40);(-20,45)};
{\ar@/^0.25pc/@{.} (-6.5,40);(-4,45)};
{\ar@/_0.25pc/@{.} (-15,51);(-9,51)};
{\ar@/^0.4pc/@{.} (19,45);(31,40)};
%
%
(-44,0)*+{a^{[2]}}="10";
(-24,0)*+{b^{[2]}}="11";
(-12,16)*+{c^{[2]}}="12";
(0,0)*+{d^{[2]}}="13";
(24,0)*+{f^{[2]}}="14";
(44,0)*+{g^{[2]}}="15";
{\ar|!{(-37.2,0);(-34.2,0)}\hole_(0.34){\al^{[2]}} "10";"11"};
{\ar^{\be^{[2]}} "11";"12"};
{\ar^{\gam^{[2]}} "12";"13"};
{\ar@/^0.8pc/^{\ze^{[2]}} "12";"14"};
{\ar^{\lam^{[2]}} "13";"11"};
{\ar_{\thh^{[2]}} "13";"14"};
{\ar_{\kap^{[2]}} "14";"15"};
%
{\ar@/_0.25pc/@{.} (-17.5,0);(-20,5)};
{\ar@/^0.25pc/@{.} (-6.5,0);(-4,5)};
{\ar@/_0.25pc/@{.} (-15,11);(-9,11)};
{\ar@/^0.4pc/@{.} (19,5);(31,0)};
%
%
(-44,-40)*+{a^{[3]}}="20";
(-24,-40)*+{b^{[3]}}="21";
(-12,-24)*+{c^{[3]}}="22";
(0,-40)*+{d^{[3]}}="23";
(24,-40)*+{f^{[3]}}="24";
(44,-40)*+{g^{[3]}}="25";
{\ar|!{(-37.2,-40);(-34.2,-40)}\hole_(0.34){\al^{[3]}} "20";"21"};
{\ar^{\be^{[3]}} "21";"22"};
{\ar^{\gam^{[3]}} "22";"23"};
{\ar@/^0.8pc/^{\ze^{[3]}} "22";"24"};
{\ar^{\lam^{[3]}} "23";"21"};
{\ar_{\thh^{[3]}} "23";"24"};
{\ar_{\kap^{[3]}} "24";"25"};
%
{\ar@/_0.25pc/@{.} (-17.5,-40);(-20,-35)};
{\ar@/^0.25pc/@{.} (-6.5,-40);(-4,-35)};
{\ar@/_0.25pc/@{.} (-15,-29);(-9,-29)};
{\ar@/^0.4pc/@{.} (19,-35);(31,-40)};
%
%
"4";"10" **\crv{(24,34)&(24,31)&(-40,28)&(-62,0)} ?>*\dir{>};
{\ar|!{(-44,13);(-44,16)}\hole_(0.4){\varpi_2^{[1]}} "0";"10"};
"5";"12" **\crv{(52,40)&(62,30)&(-12,26)&(-12,18)} ?>*\dir{>};
{\ar|!{(44,28);(44,31)}\hole^{\varpi_5^{[1]}} "5";"15"};
(-30,23.5)*+{}="hole12";
"1";"hole12" **\crv{(-24,36)&(-24,34)} ?>*\dir{};
"hole12";"13" **\crv{(-32,20)&(-38,4)&(-28,-18)&(0,-6)} ?>*\dir{>};
(-18,24)*+{_{\varpi_1^{[1]}}}="pi1";
(22,24.5)*+{_{\varpi_3^{[1]}}}="pi3";
(-29,15)*+{_{\varpi_4^{[1]}}}="pi4";
%
%
"14";"20" **\crv{(24,-6)&(24,-9)&(-40,-12)&(-62,-40)} ?>*\dir{>};
{\ar|!{(-44,-27);(-44,-24)}\hole_(0.4){\varpi_2^{[2]}} "10";"20"};
"15";"22" **\crv{(52,0)&(62,-10)&(-12,-14)&(-12,-22)} ?>*\dir{>};
{\ar|!{(44,-12);(44,-9)}\hole^{\varpi_5^{[2]}} "15";"25"};
(-26.5,-10.8)*+{}="hole23plus";
"11";"hole23plus" **\crv{(-24,-4)&(-24,-6)} ?>*\dir{};
(-29.5,-16.5)*+{}="hole23";
"hole23plus";"hole23" **\crv{(-27,-13)} ?>*\dir{};
"hole23";"23" **\crv{(-32,-20)&(-38,-36)&(-28,-58)&(0,-46)} ?>*\dir{>};
(-18,-16)*+{_{\varpi_1^{[2]}}}="pi1";
(22,-15.5)*+{_{\varpi_3^{[2]}}}="pi3";
(-29,-25)*+{_{\varpi_4^{[2]}}}="pi4";%
{\ar@/_0.40pc/@{.} (-29,40);(-25,35)}; 
{\ar@/_0.40pc/@{.} (19,40);(23,35)}; 
{\ar@/_0.40pc/@{.} (39,40);(44,35)}; 
%
{\ar@/_0.40pc/@{.} (-39,0);(-44,5)}; 
{\ar@/_0.40pc/@{.} (-49,0);(-44,-5)}; 
{\ar@/_0.40pc/@{.} (-29,0);(-25,-5)}; 
{\ar@/_0.40pc/@{.} (19,0);(23,-5)}; 
{\ar@/_0.40pc/@{.} (39,0);(44,-5)}; 
{\ar@/_0.40pc/@{.} (49,0);(44,5)}; 
{\ar@/_0.40pc/@{.} (-2,-5);(5,0)}; 
{\ar@/^0.20pc/@{.} (-8.5,20.5);(-5.5,15.5)}; 
{\ar@/_0.40pc/@{.} (-39,-40);(-44,-35)}; 
{\ar@/_0.40pc/@{.} (-2,-45);(5,-40)}; 
{\ar@/^0.20pc/@{.} (-8.5,-19.5);(-5.5,-24.5)}; 
\endxy
\]
The connecting arrows are
\[ \varpi_1^{[i]}=(\al\be\ze)^{\ast[i]},\ \varpi_2^{[i]}=\iota_a^{[i]},\ \varpi_3^{[i]}=(\gam\thh\kap)^{\ast[i]},\ \varpi_4^{[i]}=\lam^{\ast[i]},\ \varpi_5^{[i]}=\iota_g^{[i]} \]
for $i=1,2$.

Its blossoming becomes as follows. We observe that the maximal paths in $A\bls$ are
\begin{eqnarray*}
\Pfr_1&=&\sig_1^{[1]}\al^{[1]}\be^{[1]}\ze^{[1]}(\al\be\ze)^{\ast[1]}\al^{[2]}\be^{[2]}\ze^{[2]}(\al\be\ze)^{\ast[2]}\al^{[3]}\be^{[3]}\ze^{[3]}\tau_1^{[3]},\\
\Pfr_2&=&\sig_2^{[1]}\iota_a^{[1]}\iota_a^{[2]}\tau_2^{[3]},\\
\Pfr_3&=&\sig_3^{[1]}\gam^{[1]}\thh^{[1]}\kap^{[1]}(\gam\thh\kap)^{\ast[1]}\gam^{[2]}\thh^{[2]}\kap^{[2]}(\gam\thh\kap)^{\ast[2]}\gam^{[3]}\thh^{[3]}\kap^{[3]}\tau_3^{[3]},\\
\Pfr_4&=&\sig_4^{[1]}\lam^{[1]}\lam^{\ast[1]}\lam^{[2]}\lam^{\ast[2]}\lam^{[3]}\tau_4^{[3]},\\
\Pfr_5&=&\sig_5^{[1]}\iota_g^{[1]}\iota_g^{[2]}\tau_5^{[3]}.
\end{eqnarray*}

\[
%
\xy
(-44,40)*+{a^{[1]}}="0";
(-24,40)*+{b^{[1]}}="1";
(-12,56)*+{c^{[1]}}="2";
(0,40)*+{d^{[1]}}="3";
(24,40)*+{f^{[1]}}="4";
(44,40)*+{g^{[1]}}="5";
{\ar_{\al^{[1]}} "0";"1"};
{\ar^{\be^{[1]}} "1";"2"};
{\ar^{\gam^{[1]}} "2";"3"};
{\ar@/^0.8pc/^{\ze^{[1]}} "2";"4"};
{\ar^{\lam^{[1]}} "3";"1"};
{\ar_{\thh^{[1]}} "3";"4"};
{\ar_{\kap^{[1]}} "4";"5"};
%
{\ar@/_0.25pc/@{.} (-17.5,40);(-20,45)};
{\ar@/^0.25pc/@{.} (-6.5,40);(-4,45)};
{\ar@/_0.25pc/@{.} (-15,51);(-9,51)};
{\ar@/^0.4pc/@{.} (19,45);(31,40)};
%
%
(-44,0)*+{a^{[2]}}="10";
(-24,0)*+{b^{[2]}}="11";
(-12,16)*+{c^{[2]}}="12";
(0,0)*+{d^{[2]}}="13";
(24,0)*+{f^{[2]}}="14";
(44,0)*+{g^{[2]}}="15";
{\ar|!{(-37.2,0);(-34.2,0)}\hole_(0.34){\al^{[2]}} "10";"11"};
{\ar^{\be^{[2]}} "11";"12"};
{\ar^{\gam^{[2]}} "12";"13"};
{\ar@/^0.8pc/^{\ze^{[2]}} "12";"14"};
{\ar^{\lam^{[2]}} "13";"11"};
{\ar_{\thh^{[2]}} "13";"14"};
{\ar_{\kap^{[2]}} "14";"15"};
%
{\ar@/_0.25pc/@{.} (-17.5,0);(-20,5)};
{\ar@/^0.25pc/@{.} (-6.5,0);(-4,5)};
{\ar@/_0.25pc/@{.} (-15,11);(-9,11)};
{\ar@/^0.4pc/@{.} (19,5);(31,0)};
%
%
(-44,-40)*+{a^{[3]}}="20";
(-24,-40)*+{b^{[3]}}="21";
(-12,-24)*+{c^{[3]}}="22";
(0,-40)*+{d^{[3]}}="23";
(24,-40)*+{f^{[3]}}="24";
(44,-40)*+{g^{[3]}}="25";
{\ar|!{(-37.2,-40);(-34.2,-40)}\hole_(0.34){\al^{[3]}} "20";"21"};
{\ar^{\be^{[3]}} "21";"22"};
{\ar^{\gam^{[3]}} "22";"23"};
{\ar@/^0.8pc/^{\ze^{[3]}} "22";"24"};
{\ar^{\lam^{[3]}} "23";"21"};
{\ar_{\thh^{[3]}} "23";"24"};
{\ar_{\kap^{[3]}} "24";"25"};
%
{\ar@/_0.25pc/@{.} (-17.5,-40);(-20,-35)};
{\ar@/^0.25pc/@{.} (-6.5,-40);(-4,-35)};
{\ar@/_0.25pc/@{.} (-15,-29);(-9,-29)};
{\ar@/^0.4pc/@{.} (19,-35);(31,-40)};
%
%
"4";"10" **\crv{(24,34)&(24,31)&(-40,28)&(-62,0)} ?>*\dir{>};
{\ar|!{(-44,13);(-44,16)}\hole_(0.4){\varpi_2^{[1]}} "0";"10"};
"5";"12" **\crv{(52,40)&(62,30)&(-12,26)&(-12,18)} ?>*\dir{>};
{\ar|!{(44,28);(44,31)}\hole^{\varpi_5^{[1]}} "5";"15"};
(-30,23.5)*+{}="hole12";
"1";"hole12" **\crv{(-24,36)&(-24,34)} ?>*\dir{};
"hole12";"13" **\crv{(-32,20)&(-38,4)&(-28,-18)&(0,-6)} ?>*\dir{>};
(-18,24)*+{_{\varpi_1^{[1]}}}="pi1";
(22,24.5)*+{_{\varpi_3^{[1]}}}="pi3";
(-29,15)*+{_{\varpi_4^{[1]}}}="pi4";
%
%
"14";"20" **\crv{(24,-6)&(24,-9)&(-40,-12)&(-62,-40)} ?>*\dir{>};
{\ar|!{(-44,-27);(-44,-24)}\hole_(0.4){\varpi_2^{[2]}} "10";"20"};
"15";"22" **\crv{(52,0)&(62,-10)&(-12,-14)&(-12,-22)} ?>*\dir{>};
{\ar|!{(44,-12);(44,-9)}\hole^{\varpi_5^{[2]}} "15";"25"};
(-26.5,-10.8)*+{}="hole23plus";
"11";"hole23plus" **\crv{(-24,-4)&(-24,-6)} ?>*\dir{};
(-29.5,-16.5)*+{}="hole23";
"hole23plus";"hole23" **\crv{(-27,-13)} ?>*\dir{};
"hole23";"23" **\crv{(-32,-20)&(-38,-36)&(-28,-58)&(0,-46)} ?>*\dir{>};
(-18,-16)*+{_{\varpi_1^{[2]}}}="pi1";
(22,-15.5)*+{_{\varpi_3^{[2]}}}="pi3";
(-29,-25)*+{_{\varpi_4^{[2]}}}="pi4";%
%
%
(-58,40)*+{s_1^{[1]}}="s1";
(-44,54)*+{s_2^{[1]}}="s2";
(-4,68)*+{s_3^{[1]}}="s3";
(-12,32)*+{s_4^{[1]}}="s4";
(44,54)*+{s_5^{[1]}}="s5";
{\ar^{\sig_1^{[1]}} "s1";"0"};
{\ar_{\sig_2^{[1]}} "s2";"0"};
{\ar_{\sig_3^{[1]}} "s3";"2"};
{\ar_(0.4){\sig_4^{[1]}} "s4";"3"};
{\ar_{\sig_5^{[1]}} "s5";"5"};
%
{\ar@/_0.40pc/@{.} (-39,40);(-44,45)}; 
{\ar@/_0.40pc/@{.} (-49,40);(-44,35)}; 
{\ar@/_0.40pc/@{.} (-29,40);(-25,35)}; 
{\ar@/_0.40pc/@{.} (19,40);(23,35)}; 
{\ar@/_0.40pc/@{.} (39,40);(44,35)}; 
{\ar@/_0.40pc/@{.} (49,40);(44,45)}; 
{\ar@/_0.40pc/@{.} (-5,35);(5,40)}; 
{\ar@/^0.20pc/@{.} (-8.5,60.5);(-5.5,55.5)}; 
%
{\ar@/_0.40pc/@{.} (-39,0);(-44,5)}; 
{\ar@/_0.40pc/@{.} (-49,0);(-44,-5)}; 
{\ar@/_0.40pc/@{.} (-29,0);(-25,-5)}; 
{\ar@/_0.40pc/@{.} (19,0);(23,-5)}; 
{\ar@/_0.40pc/@{.} (39,0);(44,-5)}; 
{\ar@/_0.40pc/@{.} (49,0);(44,5)}; 
{\ar@/_0.40pc/@{.} (-2,-5);(5,0)}; 
{\ar@/^0.20pc/@{.} (-8.5,20.5);(-5.5,15.5)}; 
(24,-54)*+{t_1^{[3]}}="t1";
(-44,-54)*+{t_2^{[3]}}="t2";
(58,-40)*+{t_3^{[3]}}="t3";
(-24,-54)*+{t_4^{[3]}}="t4";
(44,-54)*+{t_5^{[3]}}="t5";
%
%
{\ar^{\tau_1^{[3]}} "24";"t1"};
{\ar^{\tau_2^{[3]}} "20";"t2"};
{\ar_{\tau_3^{[3]}} "25";"t3"};
{\ar^(0.4){\tau_4^{[3]}} "21";"t4"};
{\ar^{\tau_5^{[3]}} "25";"t5"};
{\ar@/_0.40pc/@{.} (-39,-40);(-44,-35)}; 
{\ar@/_0.40pc/@{.} (-49,-40);(-44,-45)}; 
{\ar@/_0.40pc/@{.} (-29,-40);(-24,-45)}; 
{\ar@/_0.40pc/@{.} (19,-40);(24,-45)}; 
{\ar@/_0.40pc/@{.} (39,-40);(44,-45)}; 
{\ar@/_0.40pc/@{.} (49,-40);(44,-35)}; 
{\ar@/_0.40pc/@{.} (-2,-45);(5,-40)}; 
{\ar@/^0.20pc/@{.} (-8.5,-19.5);(-5.5,-24.5)}; 
\endxy
\]
\end{ex}

We remark that the procedure of welding in Definition \ref{DefQkIk} can be slightly generalized to a pair of locally gentle bound quivers, as follows.
\begin{dfn}\label{DefWeld}
Let $(Q,I)$ be a locally gentle bound quiver, $(Q\bls,I\bls)$ be its blossoming, and let $A=KQ/I$ be the associated bound quiver algebra, as before.

Let $(Q\ppr,I\ppr)$ be another locally gentle bound quiver with its blossoming $(Q^{\prime\blossom},I^{\prime\blossom})$, and let $B=KQ/I$ be the associated bound quiver algebra. To distinguish, let $\{s_p^A\mid1\le p\le \dfr_A\}$ (respectively, $\{s_p^B\mid1\le p\le \dfr_B\}$) denote the set of blossom source vertices in $Q\bls$ (resp. $Q^{\prime\blossom}$), and let $\sig_p^A$ (resp. $\sig_p^B$) denote the arrow satisfying $s(\sig_p^A)=s_p^A$ (resp. $s(\sig_p^B)=s_p^B$). Similarly for $t_p^A,t_p^B,\tau_p^A,\tau_p^B$ for each $p$.

Assume that they satisfy $\dfr_A=\dfr_B>0$, which we put $d$. If we are given a permutation $\sfr\in\Sfr_d$, then the {\it welding} of $A$ and $B$ by $\sfr$ is a locally gentle bound quiver algebra $A\un{\sfr}{\ci}B$ associated to the bound quiver $(Q\un{\sfr}{\ci}Q\ppr,I\un{\sfr}{\ci}I\ppr)$ defined by the following.
\begin{itemize}
\item $(Q\un{\sfr}{\ci}Q\ppr)_0=Q_0\amalg Q_0\ppr$.
\item $(Q\un{\sfr}{\ci}Q\ppr)_1=Q_1\amalg Q_1\ppr\amalg\{\varpi_p\mid1\le p\le d\}$, with
\[ s(\varpi_p)=s(\tau_p^A)\ \ \text{and}\ t(\varpi_p)=t(\sig_{\sfr(p)}^B).  \]
For each $\al\in Q_1$, its source $s(\al)$ and target $t(\al)$ in $Q\un{\sfr}{\ci}Q\ppr$ are the same as those in $Q$. Similarly for arrows $\be\in Q\ppr_1$.
\item $I\un{\sfr}{\ci}I\ppr$ is generated by the following set of paths.
\begin{eqnarray*}
&&\{\al\be\mid \al,\be\in Q_1,t(\al)=s(\be)\ \text{and}\ \al\be\in I\}\amalg
\{\al\be\mid\al,\be\in Q_1\ppr,t(\al)=s(\be)\ \text{and}\ \al\be\in I\ppr\}\\
&&\!\!\amalg\,\{\al\varpi_p\mid\al\in Q_1,1\le p\le d\ \text{and}\ \al\tau_p^A\in I\bls \}\amalg\{\varpi_p\be\mid\be\in Q\ppr_1,1\le p\le d\ \text{and}\ \sig_{\sfr(p)}^B\be\in I^{\ppr\blossom} \}.
\end{eqnarray*}
\end{itemize}
\end{dfn}

\begin{rem}
Especially if $A=B$, we may take the same labeling of blossom vertices for $Q\bls=Q^{\prime\blossom}$, and take the identity permutation $\sfr=\id$. In this case, we have $A^{(2)}=A\un{\id}{\ci}A$. Moreover by Proposition \ref{PropMaxPathInAkb}, we may also construct $A^{(k)}$ inductively as
\[ A^{(k)}=A^{(k-1)}\un{\id}{\ci}A=\cdots=((((A\un{\id}{\ci}A)\un{\id}{\ci}A)\un{\id}{\ci}\cdots)\un{\id}{\ci}A)\un{\id}{\ci}A. \]
\end{rem}

\section{Avella-Alaminos--Geiss invariants of gentle repetitive algebras}\label{section_AGRepetition}

In this section, we assume that $A=KQ/I$ is a gentle algebra, and let $k$ be a positive integer. By Lemma \ref{LemBasisSheets} {\rm (1)}, it follows that $\Ak=KQ^{(k)}/I^{(k)}$ is also gentle.
\begin{lem}\label{LemAGRepet}
Let $k$ be any positive integer.
\begin{enumerate}
\item Each anticycle in $A$ of length $l\in\Np$ yields $k$ copies of anticycles in $\Ak$ of length $l$.
All anticycles in $\Ak$ arise in this way.
\item Each AG-orbit in $A$ of type $(q,l)\in\Np\ti\Nn$ yields $\gcd(q,k)$ pieces of AG-orbits in $\Ak$ of type $(\frac{L}{k},\frac{L}{q}l+\frac{L}{k}(k-1))$, where $L=\lcm(q,k)$ is the least common multiplier and $\gcd(q,k)$ is the greatest common divisor of $q$ and $k$.
All AG-orbits in $\Ak$ arise in this way.
\end{enumerate}
\end{lem}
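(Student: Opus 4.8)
The plan is to invoke Lemma~\ref{LemBasisSheets}~(2), which already tells us that every anticycle in $\Ak$ lies entirely within one sheet $A^{[i]}$ and is a copy of an anticycle in $A$. Conversely, for each anticycle $\del = \al_1\cdots\al_l$ in $A$, its image $\del^{[i]} = \al_1^{[i]}\cdots\al_l^{[i]}$ in the $i$-th sheet is again an anticycle: the relations $\al_j\al_{j+1}\in I$ and $\al_l\al_1\in I$ are preserved by the sheet-isomorphism $(-)^{[i]}\co A\to A^{[i]}$, and primitivity is likewise preserved. Since the $i$-th sheets are disjoint, distinct pairs $(\del, i)$ give distinct anticycles. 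Hence we get exactly $k$ copies, each of the same length $l$, and this exhausts all anticycles in $\Ak$. This part is essentially bookkeeping once Lemma~\ref{LemBasisSheets} is in hand.

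**Part (2).** This is the main obstacle. I would start by recalling that an AG-orbit of type $(q,l)$ in $A$ corresponds to a cyclic sequence $\del_{p_1}, \del_{p_2} = \Phi(\del_{p_1}), \ldots, \del_{p_q} = \Phi^{q-1}(\del_{p_1})$ of maximal antipaths (or trivial ones) in $A$ with $\Phi^q(\del_{p_1}) = \del_{p_1}$, where $q = |\Ocal|$ and $l = \sum_{j}\length(\del_{p_j})$. The key computation is to describe the permutation $\Phi_{\Ak}$ in terms of $\Phi_A$. By Proposition~\ref{PropMaxPathInAkb} and Definition~\ref{Def_rhostar_iotaa}, the maximal paths $\Pfr_p(\Ak)$ are labeled by the same index set $\{1,\ldots,d\}$ as those of $A$, but each one threads through all $k$ sheets via $\wp_p^{[1]}\varpi_p^{[1]}\wp_p^{[2]}\cdots\varpi_p^{[k-1]}\wp_p^{[k]}$. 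I would then trace the unique maximal antipath $\Del_p(\Ak)$ ending at $t_p^{[k]}$: by construction of the welding in Step~3, the relations on the connecting arrows $\varpi_p^{[i]}$ mimic those on $\sig_p^{[i+1]}\tau_p^{[i]}$, so the maximal antipath in $\Ak$ coming into $t_p^{[k]}$ runs backward through the $k$-th sheet along $\del_p(A)^{[k]}$, crosses the connecting arrow, continues backward through the $(k-1)$-st sheet along $\del_p(A)^{[k-1]}$, and so on — but at each sheet crossing the index $p$ is \emph{unchanged} (the connecting arrows weld $\tau_p^{[i]}$ to $\sig_p^{[i+1]}$), while after passing through all $k$ sheets the antipath exits at $s^{[1]}_{\Phi_A(p)}$, because $\del_p(A)$ itself is the antipath in $A$ running from $s_{\Phi_A(p)}$ to $t_p$ after deleting blossoms. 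Hence $\Phi_{\Ak}(p) = \Phi_A(p)$ as permutations on $\{1,\ldots,d\}$ — but the \emph{length} of $\del_p(\Ak)$ is now $k\cdot\length(\del_p(A)) + (k-1)$, counting the $k$ copies of $\del_p(A)$ plus the $k-1$ connecting arrows $\varpi_p^{[i]}$.

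Wait — that is not quite right, and here is the subtlety that makes part (2) genuinely interesting. The connecting arrow $\varpi_p^{[i]}$ is a single arrow, but $\wp_p(A)$ itself may be nontrivial, so the antipath picture must be re-examined: a connecting arrow $\varpi_p^{[i]}$ with $\wp_p = \rho\in\Mfr_A$ equals $\rho^{\ast[i]}$, and the relations $\xi\varpi_p^{[i]}\in J\ppr \EQ \xi\tau_p^{[i]}\in J$ mean that the only arrow $\xi$ in $\Ak$ with $\xi\varpi_p^{[i]}\notin I^{(k)}$ is the last arrow of $\rho^{[i]}$, while the only arrow $\eta$ with $\varpi_p^{[i]}\eta\notin I^{(k)}$ is the first arrow of $\rho^{[i+1]}$. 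So when building a maximal antipath, passing through $\varpi_p^{[i]}$ forces the antipath to turn — and tracing this carefully is what produces the merging of orbits. Concretely, I would show that the $q$ distinct indices $p_1,\ldots,p_q$ in an AG-orbit of $A$, together with the $k$ sheets, interact as follows: because each $\Pfr_{p_j}(\Ak)$ already contains $k$ copies of $\wp_{p_j}(A)$ glued by connecting arrows, but the AG-orbit structure permutes the indices $p_j$ cyclically with period $q$, the combined period in $\Ak$ becomes $L = \lcm(q,k)$; one AG-orbit of $A$ of size $q$ spreads into $\gcd(q,k)$ orbits of $\Ak$, each of size $L/k$, and the total antipath-length in each such orbit is $\frac{L}{q}l + \frac{L}{k}(k-1)$ — the first term accounting for $L/q$ traversals of the original total antipath-length $l$, the second for the $(L/k)(k-1)$ connecting arrows encountered. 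I would verify the type formula by a direct count of antipath lengths along one cycle, and verify the orbit count $\gcd(q,k)$ by an elementary argument on $\mathbb{Z}/q \times \mathbb{Z}/k$ (Chinese-remainder-style: the cyclic group generated by $(1,1)$ in $\mathbb{Z}/q\times\mathbb{Z}/k$ has order $L$, and the number of its cosets is $\gcd(q,k)$). The surjectivity claim (all AG-orbits of $\Ak$ arise this way) follows because the maximal paths of $\Ak$ are exhausted by Proposition~\ref{PropMaxPathInAkb}, so the index set for $\Ak$'s blossoming data is the same $\{1,\ldots,d\}$, and every antipath in $\Ak^{\blossom}$ decomposes along sheets. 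The hard part will be organizing the index/sheet bookkeeping cleanly enough that the $\lcm$ and $\gcd$ emerge transparently rather than through opaque modular arithmetic; I would do this by fixing one AG-orbit, choosing a base index $p$, and parametrizing the antipaths in the corresponding orbit(s) of $\Ak$ by pairs $(r \bmod q, i \bmod k)$ where the transition rule is $(r,i)\mapsto (r, i+1)$ when $i < k$ and $(r,i)\mapsto(r+1, 1)$ when $i = k$ — i.e. the odometer on $\mathbb{Z}/q \times \mathbb{Z}/k$ driven by adding $(0,1)$ except at a carry — whose cycle structure gives exactly $\gcd(q,k)$ orbits of length $L/k$ each.
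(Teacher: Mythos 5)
Your part (1) is correct and matches the paper's argument verbatim.

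For part (2), your general strategy — trace $\Del_p(\Ak)$ explicitly across the $k$ sheets, deduce how $\Phi_{\Ak}$ relates to $\Phi_A$, then count orbits and lengths — is the same one the paper uses. But there is a genuine gap. Your first pass claims that at each sheet crossing the index $p$ is \emph{unchanged}, concluding $\Phi_{\Ak}(p)=\Phi_A(p)$ with $\length(\del_p(\Ak))=k\cdot\length(\del_p(A))+(k-1)$. This is wrong: the antipath $\del_p(A)$ runs from $s_{\Phi_A(p)}$ to $t_p$, so when you cross from the $i$-th sheet backward through $\varpi_{\Phi_A(p)}^{[i-1]}$ into the $(i-1)$-st sheet, you land at $t_{\Phi_A(p)}^{[i-1]}$ and the piece there is $(\del_{\Phi_A(p)})^{[i-1]}$, not $(\del_p)^{[i-1]}$. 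Iterating, the pieces of $\del_p(\Ak)$ in sheets $k,k-1,\ldots,1$ are $(\del_p)^{[k]},(\del_{\Phi(p)})^{[k-1]},\ldots,(\del_{\Phi^{k-1}(p)})^{[1]}$, the antipath exits at $s_{\Phi^k(p)}^{[1]}$, and hence $\Phi_{\Ak}=\Phi_A^k$. This formula is the single fact from which the rest of (2) falls out, and although you say ``Wait — that is not quite right'' and gesture at the correct conclusion, your revised paragraph never actually derives it; it instead asserts ``the combined period becomes $L$'' heuristically.

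The closing ``odometer'' argument also does not do what you claim: the map on $\mathbb{Z}/q\times\{1,\ldots,k\}$ that increments $i$ with a carry to $r$ is a single cycle of length $qk$, not $\gcd(q,k)$ cycles of size $L/k$. What you actually want is the orbit structure of $\Phi_A^k$ restricted to a $\Phi_A$-orbit of size $q$, i.e.\ addition by $k$ on $\mathbb{Z}/q$, which has $\gcd(q,k)$ orbits of size $q/\gcd(q,k)=L/k$; this is elementary modular arithmetic, not a CRT or odometer argument. Once $\Phi_{\Ak}=\Phi_A^k$ and the corrected per-index length
\[ \length(\del_p(\Ak))=(k-1)+\sum_{0\le m\le k-1}\length(\del_{\Phi^m(p)}(A)) \]
are in hand, your type-count and the final formula $\bigl(\tfrac{L}{k},\tfrac{L}{q}l+\tfrac{L}{k}(k-1)\bigr)$ are correct, as is your surjectivity argument that $\dfr_{A^{(k)}}=\dfr_A$ forces all orbits to arise this way.
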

\begin{proof}
{\rm (1)} This immediately follows from Lemma \ref{LemBasisSheets} {\rm (2)}.

{\rm (2)} We continue to use the notations in Definition \ref{DefQkIk}. 
Remark that we have $\dfr_A=\dfr_{A^{(k)}}$, which we put $d$.
Also, let $\Del_p(A)=\Del_p=\sig_{\Phi(p)}\del_p\tau_p$ be the maximal antipath connecting $s_{\Phi(p)}$ to $t_p$.

Then $s_1^{[1]},\ldots,s_d^{[1]}$ give the blossom source vertices, and $t_1^{[k]},\ldots,t_d^{[k]}$ give the blossom sink vertices for $(Q^{(k)\blossom},I^{(k)\blossom})$ respectively. For $1\le p\le d$, we have
\begin{equation}\label{EqP}
\wp_p(A^{(k)})=\wp_p^{[1]}\varpi_p^{[1]}\wp_p^{[2]}\varpi_p^{[2]}\cdots\wp_p^{[k-1]}\varpi_p^{[k-1]}\wp_p^{[k]}
\end{equation}
by Proposition \ref{PropMaxPathInAkb} {\rm (2)}, where $\wp_p=\wp_p(A)$.

For any $1\le p\le d$, each $\Del_p(\Ak)$ becomes
\[ \Del_p(\Ak)=\sig_{\Phi^k(p)}^{[1]}(\del_{\Phi^{k-1}(p)})^{[1]}\varpi_{\Phi^{k-1}(p)}^{[1]}(\del_{\Phi^{k-2}(p)})^{[2]}\varpi_{\Phi^{k-2}(p)}^{[2]}\cdots(\del_{\Phi(p)})^{[k-1]}\varpi_{\Phi(p)}^{[k-1]}(\del_p)^{[k]}\tau_p^{[k]}, \]
and thus we obtain
\begin{equation}\label{EqA}
\del_p(\Ak)=(\del_{\Phi^{k-1}(p)})^{[1]}\varpi_{\Phi^{k-1}(p)}^{[1]}(\del_{\Phi^{k-2}(p)})^{[2]}\varpi_{\Phi^{k-2}(p)}^{[2]}\cdots(\del_{\Phi(p)})^{[k-1]}\varpi_{\Phi(p)}^{[k-1]}(\del_p)^{[k]}.
\end{equation}
In particular we have
\begin{equation}\label{Eq_LLL}
\length\big(\del_p(\Ak)\big)=(k-1)+\sum_{0\le m\le k-1}\length\big(\del_{\Phi^{m}(p)}(A)\big),
\end{equation}
and the permutation $\Psi=\Phi_{A^{(k)}}$ for $\Ak$ is given by $\Psi=\Phi^k\in\Sfr_d$.

For any AG-orbit in $A$
\[ \Ocal=\{\del_p(A),\del_{\Phi(p)}(A),\ldots,\del_{\Phi^{q-1}(p)}(A) \} \]
of type $(q,l)$, put $c(\Ocal)=\gcd(q,k)$. Remark that we have $q=|\Ocal|$ by definition. Each such $\Ocal$ yields AG-orbits $\widetilde{\Ocal}_{1},\widetilde{\Ocal}_{2},\ldots,\widetilde{\Ocal}_{c(\Ocal)}$ in $\Ak$ given by
\begin{eqnarray*}
\widetilde{\Ocal}_{r}&=&\{ \del(\Ak)_{\Phi^r(p)},\del(\Ak)_{\Psi(\Phi^r(p))},\ldots,\del(\Ak)_{\Psi^{\frac{L}{k}-1}(\Phi^r(p))}\}\\
&=&\{ \del(\Ak)_{\Phi^r(p)},\del(\Ak)_{\Phi^{k+r}(p)},\ldots,\del(\Ak)_{\Phi^{L-k+r}(p)}\}
\end{eqnarray*}
for any $1\le r\le c(\Ocal)$, where $L=\lcm(q,k)$. In particular we have $|\widetilde{\Ocal}_r|=\frac{L}{k}$.
By $(\ref{Eq_LLL})$, for each $1\le r\le\gcd(q,k)$, we have
\begin{eqnarray*}
\sum_{0\le i\le\frac{L}{k}-1}\length(\del_{\Psi^i(\Phi^r(p))}(\Ak))
&=&\frac{L}{k}(k-1)+\sum_{0\le i\le\frac{L}{k}-1}\bigg(\sum_{0\le m\le k-1}\length(\del_{\Phi^{ik+r+m}(p)}(A))\bigg)\\
&=&\frac{L}{k}(k-1)+\sum_{0\le n\le L-1}\length(\del_{\Phi^{n}(p)}(A))\\
&=&\frac{L}{k}(k-1)+\frac{L}{q}\sum_{0\le n\le q-1}\length(\del_{\Phi^{n}(p)}(A))\\
&=&\frac{L}{k}(k-1)+\frac{L}{q}l,
\end{eqnarray*}
which means that the type of $\widetilde{\Ocal}_r$ is $(\frac{L}{k},\frac{L}{k}(k-1)+\frac{L}{q}l)$.

Let $\mathrm{Orb}(A)$ denote the set of all AG-orbits in $A$. We obtain
\[ \sum_{\Ocal\in\mathrm{Orb}(A)}\sum_{1\le r\le c(\Ocal)}|\widetilde{\Ocal}_r|
=\sum_{\Ocal\in\mathrm{Orb}(A)}c(\Ocal)\frac{\lcm(|\Ocal|,k)}{k}=\sum_{\Ocal\in\mathrm{Orb}(A)}|\Ocal|=\dfr_A. \]
Since $\dfr_A=\dfr_{A^{(k)}}$, this shows $\mathrm{Orb}(A^{(k)})=\{\widetilde{\Ocal}_r\mid \Ocal\in\mathrm{Orb}(A),\ 1\le r\le c(\Ocal) \}$. Thus any AG-orbit in $A^{(k)}$ arises as one of these $\widetilde{\mathcal{O}_r}$.
\end{proof}

\begin{prop}\label{PropAGRepet1}
Let $k$ be any positive integer.
\begin{enumerate}
\item For any $(n,m)\in\Np\ti\Nn$, we have
\[ \phi_{\Ak}(n,m)=\sum_{(q,l)\in S_{(k)}(n,m)}\gcd(q,k)\phi_A(q,l), \]
where we put
\[ S_{(k)}(n,m)=\Set{(q,l)\in\Np\ti\Nn | \begin{array}{l}(n,m)=(\frac{L}{k},\frac{L}{q}l+\frac{L}{k}(k-1)),\\
L=\lcm(q,k)
\end{array}}. \]
\item For any $l\in\Np$, we have
\[ \phi_{\Ak}(0,l)=k\phi_A(0,l). \]
\end{enumerate}
\end{prop}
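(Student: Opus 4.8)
The plan is to read off both assertions as bookkeeping consequences of Lemma~\ref{LemAGRepet}, which has already carried out the substantive combinatorics describing how anticycles and AG-orbits of $A$ propagate into $\Ak$.

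First I would handle part~{\rm (2)}. By Definition~\ref{DefAGInv}~{\rm (2)}, $\phi_{\Ak}(0,l)$ counts the anticycles of length $l$ in $\Ak$. Lemma~\ref{LemAGRepet}~{\rm (1)} tells us that every such anticycle is $(\al_1\cdots\al_l)^{[i]}$ for a unique anticycle $\al_1\cdots\al_l$ of length $l$ in $A$ and a unique sheet index $1\le i\le k$, and conversely each such pair produces exactly one (distinct pairs give distinct anticycles, since they either lie in different sheets or are distinct within a single sheet). So this is a bijection between the product of the set of length-$l$ anticycles of $A$ with $\{1,\dots,k\}$ and the set of length-$l$ anticycles of $\Ak$, and counting gives $\phi_{\Ak}(0,l)=k\,\phi_A(0,l)$.

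For part~{\rm (1)} I would proceed as follows. Put $d=\dfr_A=\dfr_{\Ak}$. By Lemma~\ref{LemAGRepet}~{\rm (2)}, each AG-orbit $\Ocal\in\mathrm{Orb}(A)$ of type $(q,l)$ gives rise to $\gcd(q,k)$ AG-orbits $\widetilde{\Ocal}_1,\dots,\widetilde{\Ocal}_{\gcd(q,k)}$ of $\Ak$, each of size $\tfrac{L}{k}$ and of type $\bigl(\tfrac{L}{k},\,\tfrac{L}{q}l+\tfrac{L}{k}(k-1)\bigr)$ with $L=\lcm(q,k)$, and every AG-orbit of $\Ak$ occurs in this way. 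I would first observe that the assignment $(\Ocal,r)\mapsto\widetilde{\Ocal}_r$ is injective: the cardinality identity displayed at the end of the proof of Lemma~\ref{LemAGRepet} shows the sizes $|\widetilde{\Ocal}_r|$ already sum to $d=\sum_{\Ocal\ppr\in\mathrm{Orb}(\Ak)}|\Ocal\ppr|$, leaving no room for repetition; hence $(\Ocal,r)\mapsto\widetilde{\Ocal}_r$ is a bijection onto $\mathrm{Orb}(\Ak)$. Then, fixing $(n,m)\in\Np\ti\Nn$, I would group the pairs $(\Ocal,r)$ according to the type $(q,l)=\type(\Ocal)$ of $\Ocal$: there are exactly $\phi_A(q,l)$ orbits of each type $(q,l)$, each contributing $\gcd(q,k)$ pairs, and the orbit $\widetilde{\Ocal}_r$ so produced has type $(n,m)$ if and only if $(q,l)\in S_{(k)}(n,m)$. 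Summing these contributions yields $\phi_{\Ak}(n,m)=\sum_{(q,l)\in S_{(k)}(n,m)}\gcd(q,k)\,\phi_A(q,l)$.

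There is no genuinely hard step; the points deserving a line of care are (i) the injectivity of $(\Ocal,r)\mapsto\widetilde{\Ocal}_r$, which is forced by the exact equality of total sizes already established inside the proof of Lemma~\ref{LemAGRepet}, and (ii) the remark that the right-hand side of {\rm (1)} is a finite sum: the defining condition of $S_{(k)}(n,m)$ forces $L=\lcm(q,k)=nk$, so $q$ ranges over the finitely many divisors of $nk$ with $\lcm(q,k)=nk$, and then $l=\bigl(m-n(k-1)\bigr)q/(nk)$ is uniquely determined by $q$, so that $S_{(k)}(n,m)$ is a finite set, possibly empty.
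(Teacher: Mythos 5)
Your proof is correct and follows the same approach as the paper's: the paper simply says the proposition is immediate from Lemma~\ref{LemAGRepet}, and you carry out the routine counting that justifies this, including the slightly subtle injectivity of $(\Ocal,r)\mapsto\widetilde{\Ocal}_r$ (deduced from the cardinality identity inside the proof of the lemma) and the observation that the defining condition of $S_{(k)}(n,m)$ pins down $L=nk$ and hence makes the index set finite.
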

\begin{proof}
This is immediate from Lemma \ref{LemAGRepet}.
\end{proof}

\begin{prop}\label{PropAGRepet2}
Let $(q,l)\in\Np\ti\Nn$ be any element, and put $d=\gcd(q,l)$. We have
\begin{equation}\label{Eq_AGRepet}
\phi_A(q,l)=\frac{1}{q}\sum_{c|d}\mu(c)\phi_{A^{(\frac{q}{c})}}\big(1,\frac{q+l}{c}-1\big).
\end{equation}
Here $\mu$ is the M\"{o}bius function, which is
\[ \mu(c)=\begin{cases}
0&\text{if $c$ has a square factor},\\
(-1)^r&\text{if $c$ is a product of different $r$ primes}.\end{cases}
\]
\end{prop}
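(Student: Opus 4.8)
The plan is to derive $(\ref{Eq_AGRepet})$ by a M\"obius inversion of the formula in Proposition~\ref{PropAGRepet1}~(1). The first thing I would do is make that formula explicit in the case $n=1$. If $(q',l')\in S_{(k)}(1,m)$, then $\frac Lk=1$ with $L=\lcm(q',k)$ forces $L=k$, hence $q'\mid k$ and $\gcd(q',k)=q'$; the second coordinate then reads $m=\frac{k}{q'}l'+(k-1)$, i.e.\ $l'=\frac{q'(m-k+1)}{k}$. Conversely any $(q',l')$ with $q'\mid k$ and $l'=\frac{q'(m-k+1)}{k}\in\Nn$ belongs to $S_{(k)}(1,m)$. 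Proposition~\ref{PropAGRepet1}~(1) therefore becomes
\[
\phi_{\Ak}(1,m)=\sum_{\substack{q'\mid k\\ q'(m-k+1)/k\,\in\,\Nn}}q'\,\phi_A\!\left(q',\tfrac{q'(m-k+1)}{k}\right).
\]

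Next I would substitute $k=\frac qc$ and $m=\frac{q+l}{c}-1$ for each divisor $c\mid d$, which is legitimate since $c\mid q$ and $c\mid l$; here $m-k+1=\frac lc$, so a term indexed by $q'$ occurs precisely when $q'\mid\frac qc$ and $\frac qc\mid q'\cdot\frac lc$, the latter being equivalent to $q\mid q'l$, in which case its second argument is $l'=\frac{q'l}{q}$ (independent of $c$). Plugging this into the right-hand side of $(\ref{Eq_AGRepet})$ and interchanging the summation over $c$ with the summation over $q'$, the coefficient of $\phi_A\!\left(q',\frac{q'l}{q}\right)$ becomes
\[
\frac{q'}{q}\sum_{\substack{c\mid d\\ q'\mid q/c}}\mu(c).
\]
Writing $e=q/q'$, the condition $q'\mid q/c$ is equivalent to $c\mid e$, so this inner sum is $\sum_{c\mid\gcd(d,e)}\mu(c)$, which equals $1$ if $\gcd(d,e)=1$ and $0$ otherwise.

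Finally I would observe that whenever such a term is present one has $e\mid q$ and $e\mid l$, hence $e\mid d$ and $\gcd(d,e)=e$; thus the coefficient vanishes unless $e=1$, i.e.\ $q'=q$ and $l'=l$, in which case it equals $\frac qq=1$. Therefore the right-hand side of $(\ref{Eq_AGRepet})$ collapses to $\phi_A(q,l)$, which proves the statement.

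The main obstacle is purely combinatorial bookkeeping: pinning down the set $S_{(k)}(1,m)$ in the first step, and then carefully tracking the nested divisibility constraints $c\mid d$, $q'\mid q/c$, $q\mid q'l$ through the interchange of sums and the M\"obius identity $\sum_{c\mid n}\mu(c)=\delta_{n,1}$, while checking that degenerate cases (for instance $l=0$, where $d=q$, or $q'=q$) cause no trouble. Once the first step is done, the remaining two steps are the standard M\"obius-inversion computation.
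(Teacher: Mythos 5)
Your proof is correct and follows essentially the same route as the paper: both compute the set $S_{(k)}(1,m)$ explicitly from Proposition~\ref{PropAGRepet1} and then invoke M\"obius theory to invert the resulting arithmetic relation. The only presentational difference is that the paper reparametrizes by $(q_0,l_0)=(q/d,l/d)$, proves the general identity $\phi_{A^{(nq_0)}}(1,n(q_0+l_0)-1)=\sum_{u\mid n}\frac{n}{u}q_0\,\phi_A(\frac{n}{u}q_0,\frac{n}{u}l_0)$, and applies the M\"obius inversion formula as a black box before specializing to $n=d$, whereas you substitute directly and verify the identity by interchanging sums and using $\sum_{c\mid n}\mu(c)=\delta_{n,1}$ — which amounts to unwinding that same inversion.
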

\begin{proof}
Put $(q_0,l_0)=(\frac{q}{d},\frac{l}{d})$.
By Proposition \ref{PropAGRepet1}, for any $n\in\Np$ we have
\[ \phi_{A^{(nq_0)}}(1,n(q_0+l_0)-1)=\sum_{(q\ppr,l\ppr)\in S_{(nq_0)}(1,n(q_0+l_0)-1)}\gcd(q\ppr,nq_0)\phi_A(q\ppr,l\ppr). \]
Since
\begin{eqnarray*}
&&\hspace{-1cm}S_{(nq_0)}(1,n(q_0+l_0)-1)\\
&=&\Set{(q\ppr,l\ppr)\in\Np\ti\Nn |
\begin{array}{l}
\lcm(q\ppr,nq_0)=nq_0,\\
n(q_0+l_0)-1=nq_0(\frac{l\ppr}{q\ppr}+\frac{nq_0-1}{nq_0})
\end{array}
}\\
&=&\{(q\ppr,l\ppr)\in\Np\ti\Nn \mid q\ppr\ \text{is a divisor of}\ nq_0,\ \text{satisfying}\ q\ppr l_0=q_0l\ppr \}\\
&=&\{(q\ppr,l\ppr)\in\Np\ti\Nn \mid nq_0=uq\ppr\ \text{and}\ nl_0=ul\ppr\ \text{for some}\ u\in\Np \}\\
&=&\Set{(\frac{n}{u}q_0,\frac{n}{u} l_0)\in\Np\ti\Nn | u\in\Np,\ u\ \text{is a divisor of}\ n},
\end{eqnarray*}
we obtain
\[ \phi_{A^{(nq_0)}}(1,n(q_0+l_0)-1)=\sum_{u|n}\frac{n}{u}q_0\phi_A(\frac{n}{u}q_0,\frac{n}{u}l_0). \]
Thus the M\"{o}bius inversion formula gives
\[ nq_0\phi_A(nq_0,nl_0)=\sum_{c|n}\mu(c)\phi_{A^{(\frac{nq_0}{c})}}(1,\frac{n}{c}(q_0+l_0)-1) \]
for any $n\in\Np$. Especially for $n=d$, we obtain $(\ref{Eq_AGRepet})$.
\end{proof}

\begin{cor}\label{CorAGRepetHoch}
Let $(q,l)\in\Np\ti\Nn$ be any element, and put $d=\gcd(q,l)$. If $(Q,I)$ has no anticycle, then $\phi_A(q,l)$ can be written as follows.
\begin{enumerate}
\item If $q\ne l$ and $l>0$, then
\[\phi_A(q,l)=\frac{1}{q}\sum_{c|d}\mu(c)\dim_K\HH^{\frac{q+l}{c}-1}(A^{(\frac{q}{c})}). \]
\item If $q=l>0$, then
\[\phi_A(q,q)=\frac{\mu(q)}{q}(\chi(Q)-1)+\frac{1}{q}\sum_{c|q}\mu(c)\dim_K\HH^{\frac{2q}{c}-1}(A^{(\frac{q}{c})}). \]
\item If $l=0$ and $q$ is odd, then
\[\phi_A(q,0)=-\frac{\mu(q)}{q}+\frac{1}{q}\sum_{c|q}\mu(c)\dim_K\HH^{\frac{q}{c}-1}(A^{(\frac{q}{c})}). \]
\item If $l=0$ and $q$ is even, then
\[ \phi_A(q,0)
=-\frac{\mu(q)}{q}-\frac{\mu(q/2)}{q}(|Q_1|+1)+\frac{1}{q}\sum_{c|q}\mu(c)\dim_K\HH^{\frac{q}{c}-1}(A^{(\frac{q}{c})}).
\]
\end{enumerate}
\end{cor}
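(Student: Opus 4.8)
The plan is to feed Ladkani's formula (Fact~\ref{FactLadkani}) into the M\"obius expansion of Proposition~\ref{PropAGRepet2} and to separate out the low-degree Hochschild cohomologies that do not obey the clean rule $\dim_K\HH^n=\phi(1,n)$. Two preliminary remarks set this up. First, by Lemma~\ref{LemBasisSheets}, since $A$ is gentle with no anticycle, every $\Ak$ is again gentle with no anticycle; hence $\phi_{\Ak}(0,m)=0$ for all $m$, so all terms of Fact~\ref{FactLadkani} involving $\phi_{\Ak}(0,-)$ vanish, \emph{including} the $\Char K=2$ correction to $\dim_K\HH^1$. Writing $B=\Ak$, we therefore have, independently of $\Char K$,
\[ \dim_K\HH^0(B)=1+\phi_B(1,0),\qquad \dim_K\HH^1(B)=1-\chi(Q^{(k)})+\phi_B(1,1),\qquad \dim_K\HH^n(B)=\phi_B(1,n)\ (n\ge2). \]
Second, from Definition~\ref{DefQkIk} one reads off $|Q^{(k)}_0|=k|Q_0|$ and $|Q^{(k)}_1|=k|Q_1|+(k-1)\dfr_A$ with $\dfr_A=2|Q_0|-|Q_1|$, so $\chi(Q^{(k)})=(2-k)|Q_0|-|Q_1|$; in particular $\chi(Q^{(1)})=\chi(Q)$ and $\chi(Q^{(2)})=-|Q_1|$.

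Now fix $(q,l)\in\Np\ti\Nn$, set $d=\gcd(q,l)$, and recall from Proposition~\ref{PropAGRepet2} that $\phi_A(q,l)=\frac1q\sum_{c\mid d}\mu(c)\,\phi_{A^{(q/c)}}(1,\frac{q+l}{c}-1)$. For each $c\mid d$ write $q=cq'$, $l=cl'$, so the Hochschild degree appearing in the $c$-th summand is $n_c:=\frac{q+l}{c}-1=q'+l'-1$. The entire argument is the bookkeeping of which $c$ give $n_c\ge2$ (where the summand \emph{is} $\dim_K\HH^{n_c}(A^{(q/c)})$), which give $n_c=1$, and which give $n_c=0$, together with absorbing the resulting correction terms.

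In case~(1), $l>0$ gives $l'\ge1$ and $q\ne l$ gives $q'\ne l'$, so $n_c=q'+l'-1\ge2$ for \emph{every} $c\mid d$; substituting directly yields the claim. In case~(2) we have $d=q$; for $c<q$, $q/c\ge2$ forces $n_c=2q/c-1\ge3$, while $c=q$ gives $A^{(q/c)}=A$ and $n_c=1$, contributing $\mu(q)\phi_A(1,1)=\mu(q)\bigl(\dim_K\HH^1(A)-1+\chi(Q)\bigr)$; pulling the constant $\frac{\mu(q)}{q}(\chi(Q)-1)$ in front gives the stated identity. In case~(3), $d=q$ and $q$ odd means $q/c$ is never $2$, so each proper divisor gives $n_c=q/c-1\ge2$ while $c=q$ gives $n_c=0$ and the correction $\mu(q)\phi_A(1,0)=\mu(q)\bigl(\dim_K\HH^0(A)-1\bigr)$, producing the term $-\frac{\mu(q)}{q}$.

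In case~(4), $d=q$ with $q$ even, so the divisors of $q$ fall into three regimes: $q/c\ge3$ gives $n_c\ge2$ (direct substitution); $c=q/2$ gives $n_c=1$ and $A^{(q/c)}=A^{(2)}$, contributing, via $\chi(Q^{(2)})=-|Q_1|$, the value $\mu(q/2)\phi_{A^{(2)}}(1,1)=\mu(q/2)\bigl(\dim_K\HH^1(A^{(2)})-1-|Q_1|\bigr)$; and $c=q$ gives $n_c=0$, contributing $\mu(q)\bigl(\dim_K\HH^0(A)-1\bigr)$. Reassembling the sum with these two corrections gives the displayed formula. The only genuine difficulty is this case analysis; I expect case~(4) to be the delicate point, since one must correctly identify $\chi(Q^{(2)})$ and match the index $q/c=2$ to the divisor $c=q/2$.
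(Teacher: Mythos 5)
Your proof is correct and follows the same route as the paper: combine Lemma~\ref{LemBasisSheets} (so each $A^{(k)}$ has no anticycle and the $\phi(0,-)$ terms in Fact~\ref{FactLadkani} drop out), Proposition~\ref{PropAGRepet2}, and the computation $\chi(Q^{(2)})=-|Q_1|$. The paper states the proof in one line and leaves the case-by-case bookkeeping implicit; you have simply carried it out explicitly, and the treatment of the $c=q/2$ divisor in case~(4) is exactly where the $\chi(Q^{(2)})$ computation enters, as the paper's terse proof indicates.
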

\begin{proof}
For any $k\in\Np$, algebra $\Ak$ has no anticycle by Lemma \ref{LemBasisSheets} {\rm (2)}. Thus Fact \ref{FactLadkani}, Proposition \ref{PropAGRepet2} and the equality $\chi(Q^{(2)})=|Q_0^{(2)}|-|Q_1^{(2)}|=2|Q_0|-(2|Q_1|+\dfr_A)=-|Q_1|$ show the desired equalities.
\end{proof}

\begin{cor}\label{CorCoprime}
Assume that $(Q,I)$ has no anticycle.
Let $(q,l)\in(\Np\ti\Np)\setminus\{(1,1)\}$ be any element. If $q$ and $l$ are coprime, then we have
\[ \phi_A(q,l)=\frac{1}{q}\dim\HH^{q+l-1}(A^{(q)}). \]
\end{cor}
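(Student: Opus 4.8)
The plan is to obtain this as an immediate specialization of Corollary~\ref{CorAGRepetHoch}. Write $d=\gcd(q,l)$. The coprimality hypothesis says precisely that $d=1$, so every sum of the form $\sum_{c\mid d}$ appearing in Corollary~\ref{CorAGRepetHoch} collapses to the single summand $c=1$, for which $\mu(1)=1$, $(q+l)/c-1=q+l-1$, and $q/c=q$. Thus the whole argument reduces to identifying which of the four cases of Corollary~\ref{CorAGRepetHoch} we are in.

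The only point requiring any reasoning is to check that we land in case~(1), namely that $q\ne l$ and $l>0$. The condition $l>0$ is automatic since $(q,l)\in\Np\ti\Np$, which simultaneously rules out cases~(3) and~(4). For $q\ne l$: if we had $q=l$, then $d=\gcd(q,l)=q=l$, and $d=1$ would force $q=l=1$, contradicting the standing hypothesis $(q,l)\ne(1,1)$ --- this is exactly the role of excluding $(1,1)$ from the statement. Hence case~(2) is also excluded, case~(1) of Corollary~\ref{CorAGRepetHoch} applies with $d=1$, and it yields $\phi_A(q,l)=\frac{1}{q}\mu(1)\dim_K\HH^{q+l-1}(A^{(q)})=\frac{1}{q}\dim_K\HH^{q+l-1}(A^{(q)})$, as desired.

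There is no genuine obstacle here; all the substance is already contained in Corollary~\ref{CorAGRepetHoch} (and, upstream, in Proposition~\ref{PropAGRepet2} combined with Ladkani's formula, Fact~\ref{FactLadkani}). The present corollary is simply the clean coprime case in which the M\"obius sum of Proposition~\ref{PropAGRepet2} degenerates to a single term, and writing it out is the entire proof.
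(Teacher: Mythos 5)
Your proof is correct and follows the same route as the paper, which simply cites Corollary~\ref{CorAGRepetHoch}~(1); you have merely spelled out the easy checks (that coprimality gives $d=1$ and, together with $(q,l)\ne(1,1)$, forces $q\ne l$) that the paper leaves implicit.
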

\begin{proof}
This immediately follows from Corollary \ref{CorAGRepetHoch} {\rm (1)}.
\end{proof}

\section{Relation with generalized Auslander-Platzeck-Reiten reflection}\label{section_APR}

In this section, we assume that $A=KQ/I$ is gentle. We recall the definition of generalized Auslander-Platzeck-Reiten reflection for gentle bound quivers from \cite{BM}.
\begin{dfn}\label{DefAPRreflection}
Let $(Q,I)$ be a gentle bound quiver. Let $x\in Q_0$ be a vertex which satisfies the following conditions.
\begin{itemize}
\item[{\rm (r1)}] There is no $\al\in Q_1$ with $s(\al)=t(\al)=x$.
\item[{\rm (r2)}] For any $\al\in Q_1$ with $s(\al)=x$, there exists $\be_{\al}\in Q_1$ such that $t(\be_{\al})=x$ and $\be_{\al}\al\notin I$.
\end{itemize}
The bound quiver $(Q\ppr,I\ppr)$ obtained from $(Q,I)$ by applying the {\it generalized Auslander-Platzeck-Reiten reflection} ({\it generalized APR-reflection} for short) at $x$ is defined as follows.
\begin{enumerate}
\item[{\rm (0)}] $Q\ppr_0=Q_0$.
\item[{\rm (1)}] $Q\ppr_1=\{\lam\ppr\mid \lam\in Q_1\}$, where $\lam\ppr\in Q\ppr_1$ is an arrow with the following $s(\lam\ppr)$ and $t(\lam\ppr)$.
\[ s(\lam\ppr)=\begin{cases}
x&\ \text{if}\ t(\lam)=x\\
s(\be_{\lam})&\ \text{if}\ s(\lam)=x\\
s(\lam)&\ \text{otherwise}
\end{cases}\ ,\qquad
t(\lam\ppr)=\begin{cases}
s(\lam)&\ \text{if}\ t(\lam)=x\\
x&\ \text{if}\ \lam\ \text{satisfies}\ (\ref{cond_refl1})\ \text{for some}\ \be\in Q_1\\
t(\lam)&\ \text{otherwise}
\end{cases}\ .
\]
Here, condition $(\ref{cond_refl1})$ is the following.
\begin{equation}\label{cond_refl1}
t(\be)=x,\ s(\be)=t(\lam)\ \text{and}\ \lam\be\in I.
\end{equation}
\item[{\rm (2)}] $I\ppr$ is generated by the following set $R\ppr$ of paths of length $2$.
\begin{eqnarray*}
R\ppr&=&\{\lam\ppr\kap\ppr\mid\lam,\kap\in Q_1,\ t(\kap)\ne x,\ t(\lam)=s(\kap)\ne x\ \text{and}\ \lam\kap\in I\}\\
&\ &\!\!\amalg\ \{(\be_{\al})\ppr\al\ppr\mid\al\in Q_1,\ s(\al)=x\}\\
&\ &\!\!\amalg\ \{\lam\ppr\kap\ppr\mid \lam,\kap\in Q_1,\ t(\kap)=x,\ \text{and}\ \lam\ \text{satisfies}\ (\ref{cond_refl1})\ \text{for some}\ \be\ne\kap \}.
\end{eqnarray*}
\end{enumerate}
\end{dfn}

Let us consider the following, slightly a bit stronger set of conditions for a vertex $x\in Q_0$.
\begin{cond}\label{Cond_APR_preserve}
\ \ 
\begin{enumerate}
\item[{\rm (c1)}] ($=${\rm (r1)}) There is no arrow with $s(\al)=t(\al)=x$.
\item[{\rm (c2)}] $\din(x)=\dout(x)=2$.
\item[{\rm (c3)}] For any $\be\in Q_1$ with $t(\be)=x$, there exists $\gam$ which satisfies $t(\gam)=s(\be)$ and $\gam\be\in I$.
\end{enumerate}
\end{cond}

In the rest of this section, let $x$ be a vertex satisfying Condition \ref{Cond_APR_preserve}. Such $x$ fulfills the requirements {\rm (r1),(r2)} in Definition \ref{DefAPRreflection}. In addition, we use the following notations.
\begin{dfn}\label{DefNotaAPRatx}
Let $(Q,I)$ and $x$ be as above.
\begin{enumerate}
\item Let $(Q\bls,I\bls)$ be a blossoming of $(Q,I)$ as before. We continue to use the symbols $s_p,t_p\in Q_0$ and $\sig_p,\tau_p\in Q_1$ for $1\le p\le\dfr_A$ as in Definition \ref{DefBlossom}. Remark that $x$ also satisfies Condition \ref{Cond_APR_preserve} as a vertex in $(Q\bls,I\bls)$. Let $(Q\ppr,I\ppr)$ (respectively $(Q^{\blossom\prime},I^{\blossom\prime})$) denote the gentle bound quiver obtained from $(Q,I)$ (resp. $(Q\bls,I\bls)$) by applying generalized APR-reflection at $x$.
\item By {\rm (c2)} and {\rm (c3)}, there are the following arrows.
\begin{itemize}
\item[{\rm (i)}] $\al_1,\al_2\in Q_1$ with $\al_1\ne\al_2$ and $s(\al_1)=s(\al_2)=x$.
\item[{\rm (ii)}] $\be_1,\be_2\in Q_1$ with $\be_1\ne\be_2$ and $t(\be_1)=t(\be_2)=x$, satisfying $\be_1\al_2\in I$ and $\be_2\al_1\in I$.
\item[{\rm (iii)}] $\gam_1,\gam_2\in Q_1$ with $t(\gam_i)=s(\be_i)$ and $\gam_i\be_i\in I$ for $i=1,2$.
\end{itemize}
We put $\Scal=\{\al_1,\al_2,\be_1,\be_2,\gam_1,\gam_2\}$ and
\[ t(\al_i)=a_i,\ s(\be_i)=t(\gam_i)=b_i,\ s(\gam_i)=c_i \]
for $i=1,2$.
\end{enumerate}
\end{dfn}

\begin{rem}\label{RemAPRatx}
The following holds.
\begin{enumerate}
\item By {\rm (c1)}, we have $x\notin\{ a_1,a_2,b_1,b_2\}$. In particular we have $\{\al_1,\al_2\}\cap\{\be_1,\be_2\}=\emptyset$ and $\{\be_1,\be_2\}\cap\{\gam_1,\gam_2\}=\emptyset$. By $\be_1\ne\be_2$ and $\gam_i\be_i\in I\ (i=1,2)$, we also have $\gam_1\ne\gam_2$. On the contrary, $\{\al_1,\al_2\}\cap\{\gam_1,\gam_2\}$ may not be empty.
\item For any $\lam\in Q_1$, we have
\[ s(\lam\ppr)=\begin{cases}
x&\ \text{if}\ \lam\in\{\be_1,\be_2\}\\
b_i&\ \text{if}\ \lam=\al_i\ (i=1,2)\\
s(\lam)&\ \text{if}\ \lam\in Q_1\setminus\{\al_1,\al_2,\be_1,\be_2\}
\end{cases}\ ,\qquad
t(\lam\ppr)=\begin{cases}
b_i&\ \text{if}\ \lam=\be_i\ (i=1,2)\\
x&\ \text{if}\ \lam\in\{\gam_1,\gam_2\}\\
t(\lam)&\ \text{if}\ \lam\in Q_1\setminus\{\be_1,\be_2,\gam_1,\gam_2\}
\end{cases}
\]
in $Q\ppr_1$. Similarly for arrows in $Q^{\blossom\prime}$. Especially, we have the following.
\begin{itemize}
\item $Q\ppr$ is a subquiver of $Q^{\blossom\prime}$.
\item For any $v\in Q\bls_0$, its in-degree in $Q\bls$ is the same as that in $Q^{\blossom\prime}$. Namely, the in-degrees of vertices are unchanged by the generalized APR-reflection at $x$. Similarly for the out-degrees.
\item For any arrow $\lam\in Q_1\setminus\Scal$, we have $s(\lam\ppr)=s(\lam)$ and $t(\lam\ppr)=t(\lam)$. In this sense, arrows outside of $\Scal$ are \lq\lq unchanged" by the generalized APR-reflection at $x$.
\end{itemize}
\item $I\ppr$ is generated by
\begin{eqnarray*}
&&\{\lam\ppr\kap\ppr\mid\lam,\kap\in Q_1,\, \kap\notin\{\al_1,\al_2,\be_1,\be_2\},\, t(\lam)=s(\kap),\, \lam\kap\in I\}\\
&&\!\!\amalg\,\{\be\ppr_i\al\ppr_i\mid i=1,2\}\amalg\{\gam\ppr_1\be\ppr_2,\gam\ppr_2\be\ppr_1 \}.
\end{eqnarray*}
Similarly, $I^{\blossom\prime}$ is generated by
\begin{eqnarray*}
&&\{\lam\ppr\kap\ppr\mid\lam,\kap\in Q\bls_1,\, \kap\notin\{\al_1,\al_2,\be_1,\be_2\},\, t(\lam)=s(\kap),\, \lam\kap\in I\}\\
&&\!\!\amalg\,\{\be\ppr_i\al\ppr_i\mid i=1,2\}\amalg\{\gam\ppr_1\be\ppr_2,\gam\ppr_2\be\ppr_1 \}.
\end{eqnarray*}
Thus if $\lam,\kap\in Q_1$, then we have $\lam\ppr\kap\ppr\in I\ppr$ if and only if $\lam\ppr\kap\ppr\in I^{\blossom\prime}$, 
which means that $(Q\ppr,I\ppr)$ is a bound subquiver of $(Q^{\blossom\prime},I^{\blossom\prime})$.
\item By {\rm (2)}, in particular
\begin{itemize}
\item $s_p\ (1\le p\le d)$ are the source vertices with out-degree $1$,
\item $t_p\ (1\le p\le d)$ are the sink vertices with in-degree $1$,
\item any vertex $v\in Q\ppr_0$ has in-degree $2$ and out-degree $2$ in $Q^{\blossom\prime}_0$
\end{itemize}
in the gentle bound quiver $(Q^{\blossom\prime},I^{\blossom\prime})$. This means that $(Q^{\blossom\prime},I^{\blossom\prime})$ is a blossoming of $(Q\ppr,I\ppr)$, as in Remark \ref{RemGentleUnique}.

\end{enumerate}
\end{rem}


\begin{dfn}\label{Def_mu}
We continue to use the notation in Definition \ref{DefNotaAPRatx}. Put $B=KQ\ppr/I\ppr$, and $B\bls=KQ^{\blossom\prime}/I^{\blossom\prime}$.
\begin{enumerate}
\item Let $\Acal_x$ and $\Acal\bls_x$ be the following sets of paths in $A=KQ/I$ and $A\bls=KQ\bls/I\bls$, respectively. We have $\Acal_x\se\Acal\bls_x$.
\begin{eqnarray*}
&\Acal_x=(Q_1\setminus\Scal)\amalg\{\be_i\al_i\mid i=1,2\}\amalg(\{\gam_1,\gam_2\}\setminus\{\al_1,\al_2\}),&\\
&\Acal\bls_x=(Q\bls_1\setminus\Scal)\amalg\{\be_i\al_i\mid i=1,2\}\amalg(\{\gam_1,\gam_2\}\setminus\{\al_1,\al_2\}).&
\end{eqnarray*}
\item Let $\cfr_x\co\Acal\bls_x\to\{\text{paths in}\ B\bls\}$ be a map defined by the following.
\begin{itemize}
\item[{\rm (i)}] For any arrow $\lam\in Q\bls_1\setminus\Scal$, put $\cfr_x(\lam)=\lam\ppr$.
\item[{\rm (ii)}] For $\be_1\al_1$ and $\be_2\al_2$, define as
\[
\cfr_x(\be_1\al_1)=\begin{cases}
\al_1\ppr&\text{if}\ \al_1\notin\{\gam_1,\gam_2\}\\
\al_1\ppr\be_1\ppr=\gam_1\ppr\be_1\ppr&\text{if}\ \al_1=\gam_1\\
\al_1\ppr\be_2\ppr=\gam_2\ppr\be_2\ppr&\text{if}\ \al_1=\gam_2
\end{cases}
\ ,\quad
\cfr_x(\be_2\al_2)=\begin{cases}
\al_2\ppr&\text{if}\ \al_1\notin\{\gam_1,\gam_2\}\\
\al_2\ppr\be_2\ppr=\gam_2\ppr\be_2\ppr&\text{if}\ \al_2=\gam_2\\
\al_2\ppr\be_1\ppr=\gam_1\ppr\be_1\ppr&\text{if}\ \al_2=\gam_1
\end{cases}\ .
\]
\item[{\rm (iii)}] For $i=1,2$, if $\gam_i\in\Acal\bls_x$ (i.e., if $\gam_i\notin\{\al_1,\al_2\}$), then $\cfr_x(\gam_i)=\gam_i\ppr\be_i\ppr$.
\end{itemize}
\end{enumerate}
It is obvious that $\cfr_x(\xi)$ is indeed a path in $B\bls$ for any $\xi\in\Acal\bls_x$, since $\gam_i\ppr\be_i\ppr$ is always a path for $i=1,2$ by definition. Also, we can check $s(\xi)=s(\cfr_x(\xi))$ and $t(\xi)=t(\cfr_x(\xi))$ easily.
\end{dfn}

\begin{prop}\label{PropDecompAx}
Let $\Pfr_p=\sig_p\wp_p\tau_p$ be any maximal path in $A\bls$, as in Definition \ref{DefBlossom}. Then $\wp_p$ can be written uniquely as a product of paths
\begin{equation}\label{ExpDecompAx}
\wp_p=\xi_1\cdots\xi_r
\end{equation}
with $r\in\Nn$ and pairwise distinct $\xi_1,\ldots,\xi_r\in\Acal_x$. If $\length(\wp_p)=0$, we just put $\wp_p=e_{t(\sig_p)}$.
\end{prop}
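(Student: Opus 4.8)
The plan is to regard $\wp_p$ as an honest path in $A$ and to reduce everything to a purely local description of how the arrows of $\Scal$ can occur inside it. By Definition~\ref{DefBlossom}, $\wp_p$ is either a nontrivial permitted thread $\al_{j_1}\cdots\al_{j_m}\in\Mfr_A$ or the trivial path $e_{t(\sig_p)}$; in the latter case the statement holds at once with $r=0$, so assume the former. First I would record the following facts. Since $\din(x)=\dout(x)=2$ already in $Q$, the blossoming adds no arrow incident to $x$, so $\wp_p$ can neither begin with any $\al_i$ nor end with any $\be_i$ (otherwise $\sig_p$, resp.\ $\tau_p$, would be a blossom arrow at $x$). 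Next, if $\al_i$ occurs in $\wp_p$ in a position other than the first, its predecessor ends at $x$, hence lies in $\{\be_1,\be_2\}$, and since $\be_l\al_i\in I$ for $l\ne i$ this predecessor is forced to be $\be_i$; dually, if $\be_i$ occurs in a position other than the last, its successor lies in $\{\al_1,\al_2\}$ and, since $\be_i\al_l\in I$ for $l\ne i$, is forced to be $\al_i$. Combining these, every occurrence of $\be_i$ is immediately followed by an occurrence of $\al_i$ and every occurrence of $\al_i$ immediately preceded by one of $\be_i$; also, as $\be_l\al_i\in I$ for $l\ne i$ and $\din(x)=2$, gentleness forces $\be_i\al_i\notin I$, so $\be_i\al_i$ is a genuine length-$2$ path in $A$.

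Existence of the decomposition is then immediate: the letters of $\wp_p$ lying in $\{\be_1,\be_2,\al_1,\al_2\}$ occur precisely as pairwise disjoint consecutive pairs $\be_i\al_i$, so I take the blocks of $\wp_p$ to be these pairs together with each remaining letter taken singly, obtaining $\wp_p=\xi_1\cdots\xi_r$, with consecutive blocks composable by construction. Each block lies in $\Acal_x$: a block $\be_i\al_i$ is the corresponding element of the summand $\{\be_i\al_i\mid i=1,2\}$; a singleton $\lam\notin\Scal$ lies in $Q_1\setminus\Scal$; and a singleton equal to some $\gam_i$ must satisfy $\gam_i\notin\{\al_1,\al_2\}$, since otherwise its occurrence would be an occurrence of some $\al_l$ and would have been absorbed into the pair $\be_l\al_l$ — hence it lies in $\{\gam_1,\gam_2\}\setminus\{\al_1,\al_2\}$. (By the local facts above no singleton can be a $\be_i$ or an $\al_i$, and $\{\be_1,\be_2\}\cap\{\gam_1,\gam_2\}=\emptyset$.)

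For uniqueness I would observe that the grouping is forced. All elements of $\Acal_x$ have length $\le 2$, the only ones of length $2$ are $\be_1\al_1$ and $\be_2\al_2$, and no $\be_i$ belongs to $\Acal_x$ on its own; hence in any expression $\wp_p=\xi\ppr_1\cdots\xi\ppr_{r\ppr}$ with $\xi\ppr_t\in\Acal_x$, reading from the left, a letter $\be_i$ must begin a length-$2$ block (necessarily $\be_i\al_i$) while any other letter must begin a length-$1$ block, so the blocks are determined one after another and coincide with those constructed above. Finally, distinctness of $\xi_1,\dots,\xi_r$ reduces to the fact that no arrow is repeated in a path of a gentle algebra: a repetition $\al_{j_a}=\al_{j_b}$ with $a<b$ would make $\al_{j_a}\cdots\al_{j_{b-1}}$ a closed path in $A$ whose wrap-around product $\al_{j_{b-1}}\al_{j_a}=\al_{j_{b-1}}\al_{j_b}$ is a product of consecutive arrows of $\wp_p$, hence not in $I$, giving an oriented cycle in $A$ and contradicting gentleness; since each $\xi_t$ is determined by the arrow(s) occurring in it, the $\xi_t$ are pairwise distinct.

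I expect the main obstacle to be the local analysis at $x$: establishing the forced pairing $\be_i\al_i$ from the relations $\be_1\al_2,\be_2\al_1\in I$ together with the gentleness axioms, and — as a secondary bookkeeping point — correctly handling the possible coincidences $\gam_i\in\{\al_1,\al_2\}$, in which case $\gam_i$ is not a block of its own but is carried inside a block $\be_l\al_l$. Once this local picture is in place, existence, uniqueness and distinctness each follow quickly.
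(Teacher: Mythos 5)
Your argument follows essentially the same route as the paper: analyse the occurrences of $\al_i$ and $\be_i$ in $\wp_p$ locally at $x$ (each $\al_i$ forced to be preceded by $\be_i$ and each $\be_i$ forced to be followed by $\al_i$, using $\din(x)=\dout(x)=2$ and the relations $\be_1\al_2,\be_2\al_1\in I$), group these pairs together with the remaining single arrows to get an expression in $\Acal_x$, prove uniqueness from the structure of $\Acal_x$ (that no $\be_i$ is a block on its own), and deduce distinctness from the absence of oriented cycles in $A$. Your write-up is somewhat more explicit — in particular the left-to-right uniqueness scan, the observation that $\be_i\al_i\notin I$, and the careful treatment of the possible coincidences $\gam_i\in\{\al_1,\al_2\}$ — but the underlying argument is the same as the paper's.
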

\begin{proof}
Put $l=\length(\wp_p)$. We may assume $l>0$. In this case we can uniquely write as $\wp_p=\lam_1\lam_2\cdots\lam_l$ using some arrows $\lam_1,\ldots,\lam_l\in Q_1$. The following holds for $1\le j\le l$.
\begin{itemize}
\item If $\lam_j\in Q_1\setminus\Scal$, then $\lam_j\in\Acal\bls_x$.
\item If $\lam_j=\al_i$ for $i\in\{1,2\}$, then we should have $j>1$ and $\lam_{j-1}\lam_j=\be_i\al_i\in\Acal\bls_x$.
\item Similarly, if $\lam_j\in\{\be_1,\be_2\}$, then we have $j<l$ and $\lam_j\lam_{j+1}\in\Acal\bls_x$.
\item If $\lam_j\notin\{\al_1,\al_2,\be_1,\be_2\}$ and $\lam_j=\gam_i$ for $i\in\{1,2\}$, then $\lam_j\in \{\gam_1,\gam_2\}\setminus\{\al_1,\al_2\}\se\Acal\bls_x$.
\end{itemize}
Thus $\lam_1\cdots\lam_l$ can be written as a product of elements in $\Acal\bls_x$. Uniqueness of such an expression follows from Remark \ref{RemAPRatx} {\rm (1)} and the definition of $\Acal\bls_x$. Since $A=KQ/I$ has no oriented cycle because it is gentle by assumption, those $\xi_1,\ldots,\xi_r$ are necessarily pairwise distinct.
\end{proof}

\begin{lem}\label{LemDecompAx1}
Let $\lam\in Q\bls_1$ be any arrow.
\begin{enumerate}
\item Assume $s(\lam\ppr)=b_1$. Then $\be_1\ppr\lam\ppr$ is a path in $B\bls$ if and only if $\lam\ne\al_1$.
\item Assume $t(\lam\ppr)=b_2$. Then $\lam\ppr\al_2\ppr$ is a path in $B\bls$ if and only if $\lam\ne\be_2$.
\item If $\lam\gam_2$ is a path in $A\bls$ and if $\gam_2\notin\{\al_1,\al_2\}$ and $\lam\ne\gam_1$, then $\lam\ppr\gam_2\ppr$ is a path in $B\bls$.
\item If $\gam_1\gam_2$ is a path in $A$ and if $\gam_2\notin\{\al_1,\al_2\}$, then $\be_1\ppr\gam_2\ppr$ is a path in $B$.
\end{enumerate}
\end{lem}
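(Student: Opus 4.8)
The plan is to read each of the four assertions off the explicit description of the reflected bound quiver recorded in Remark~\ref{RemAPRatx}: the source/target formulas for $\lam\ppr$ in part~(2), and the lists of degree-$2$ generators of $I\ppr$ and $I^{\blossom\prime}$ in part~(3). Recall that a product $\zeta\eta$ of two arrows of $Q^{\blossom\prime}$ (resp.\ $Q\ppr$) is a path in $B\bls$ (resp.\ $B$) precisely when $\zeta$ and $\eta$ are composable and $\zeta\eta$ is not one of those generators. So each item splits into a composability check and a finite inspection running the given product past the three families of generators; here one uses that $\lam\mapsto\lam\ppr$ is a bijection on arrows, so a generator $\mu\ppr\nu\ppr$ can coincide with $\zeta\eta$ only when the underlying unprimed pairs coincide. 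The \lq\lq only if\rq\rq\ directions of (1) and (2) are then immediate: $\be_1\ppr\al_1\ppr$ and $\be_2\ppr\al_2\ppr$ are literally among the generators of $I^{\blossom\prime}$.

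For the \lq\lq if\rq\rq\ directions, composability follows from Remark~\ref{RemAPRatx}~(2): $t(\be_1\ppr)=b_1$ and $s(\al_2\ppr)=b_2$ take care of (1) and (2); in (3), the hypotheses that $\lam\gam_2$ is a path and $\gam_2\notin\{\al_1,\al_2\}$ force $\lam\notin\{\be_1,\be_2\}$ (otherwise $t(\lam)=x=s(\gam_2)$ would put $\gam_2$ among the two arrows out of $x$), $\lam\ne\gam_1$ is assumed, and $\lam=\gam_2$ is impossible because a gentle algebra has no oriented cycle — a loop $\gam_2$ with $\gam_2\gam_2\notin I$ would be one — so $\lam\notin\{\be_1,\be_2,\gam_1,\gam_2\}$ and $t(\lam\ppr)=t(\lam)=s(\gam_2)=s(\gam_2\ppr)$; in (4), $\gam_1\gam_2$ a path gives $b_1=t(\gam_1)=s(\gam_2)$ and $\gam_2\notin\{\al_1,\al_2,\be_1,\be_2\}$ gives $s(\gam_2\ppr)=s(\gam_2)=b_1=t(\be_1\ppr)$. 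For non-membership in the ideal: in (1), a first-family generator with leading arrow $\be_1$ would need $s(\kap)=t(\be_1)=x$, forcing $\kap\in\{\al_1,\al_2\}$, which is barred from that family; the second family forces $\lam=\al_1$, excluded by hypothesis; and $\be_1\notin\{\gam_1,\gam_2\}$ rules out the third. In (2) the trailing arrow $\al_2$ already lies in $\{\al_1,\al_2,\be_1,\be_2\}$, so no first-family generator occurs, $\lam=\be_2$ is excluded, and $\al_2\notin\{\be_1,\be_2\}$ kills the third family. Items (3) and (4) are parallel: $\lam\gam_2\notin I\bls$ (resp.\ $x=t(\be_1)\ne b_1=s(\gam_2)$) kills the first family, $\gam_2\notin\{\al_1,\al_2\}$ kills the second, and the disjointness $\{\gam_1,\gam_2\}\cap\{\be_1,\be_2\}=\emptyset$ from Remark~\ref{RemAPRatx}~(1), together with $\lam\ne\gam_2$ (resp.\ $\be_1\notin\{\gam_1,\gam_2\}$), kills the third.

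The point that will need the most care is the bookkeeping of sources and targets of reflected arrows in the exceptional case where $\gam_1$ or $\gam_2$ coincides with one of $\al_1,\al_2$ — this is permitted by Remark~\ref{RemAPRatx}~(1) and moves $t(\gam_i\ppr)$ to $x$ — and, relatedly, keeping straight that items (1)--(3) are statements inside the blossoming $B\bls=KQ^{\blossom\prime}/I^{\blossom\prime}$ while (4) is inside $B=KQ\ppr/I\ppr$, so one invokes Remark~\ref{RemAPRatx}~(3) (that $(Q\ppr,I\ppr)$ is a bound subquiver of $(Q^{\blossom\prime},I^{\blossom\prime})$) to move between them. Beyond this, the verification is a routine finite case check with no hidden difficulty.
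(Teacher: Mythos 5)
Your proof is correct and follows exactly the route the paper intends: the paper's proof of Lemma~\ref{LemDecompAx1} is simply \emph{``This is straightforward from Remark~\ref{RemAPRatx}~(3) and the gentleness,''} and your argument is a faithful expansion of that one-liner into the composability checks and the finite inspection of the three families of generators of $I\ppr$ and $I^{\blossom\prime}$, using the disjointness facts from Remark~\ref{RemAPRatx}~(1) and the absence of oriented cycles in $A\bls$ exactly as the paper's ``gentleness'' hint suggests.
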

\begin{proof}
This is straightforward from Remark \ref{RemAPRatx} {\rm (3)} and the gentleness.
\end{proof}

\begin{lem}\label{LemDecompAx2}
Let $\lam\in Q\bls_1$ be any arrow.
\begin{enumerate}
\item If $\gam_1\lam$ is a path in $A\bls$ and if $\lam\ne\be_2$, then $\be_1\ppr\lam\ppr$ is a path in $B\bls$.
\item If $\lam\be_2$ is a path in $A\bls$ and if $\lam\ne\gam_1$, then $\lam\ppr\al_2\ppr$ is a path in $B\bls$.
\item If $\al_1\lam$ is a path in $A\bls$ and if $\lam\notin\{\be_1,\be_2\}$, then $\cfr_x(\be_1\al_1)\lam\ppr$ is a path in $B\bls$.
\end{enumerate}
\end{lem}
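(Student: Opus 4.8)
The plan is to reduce each of the three assertions to two routine verifications: that the relevant arrows are composable in $B\bls$, and that the length-$2$ path they form is not among the generators of $I^{\blossom\prime}$ listed in Remark \ref{RemAPRatx}~{\rm (3)}. Since $I^{\blossom\prime}$ is quadratic monomial, a sequence of composable arrows in $Q^{\blossom\prime}$ is a path in $B\bls$ precisely when none of its consecutive length-$2$ subpaths lies in that generating set; so these checks suffice, and beyond them I will only use the source/target formulas of Remark \ref{RemAPRatx}~{\rm (2)} and gentleness.

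For {\rm (1)}: since $\gam_1\lam$ is a path in $A\bls$ we have $s(\lam)=t(\gam_1)=b_1$; since $\gam_1\be_1\in I\bls$ we have $\lam\ne\be_1$; together with the hypothesis $\lam\ne\be_2$ and with $b_1\ne x$ (by {\rm (c1)}) this forces $\lam\notin\{\al_1,\al_2,\be_1,\be_2\}$, so $s(\lam\ppr)=b_1=t(\be_1\ppr)$ by Remark \ref{RemAPRatx}~{\rm (2)}. As for the generating set: $\be_1\lam$ is not even composable in $A\bls$ (because $t(\be_1)=x\ne b_1$), so $\be_1\ppr\lam\ppr$ is not a first-type generator; it is not $\be_1\ppr\al_1\ppr$ because $\lam\ne\al_1$; and $\be_1\notin\{\be_2,\gam_1,\gam_2\}$ (all have the wrong target) rules out $\be_2\ppr\al_2\ppr,\gam_1\ppr\be_2\ppr,\gam_2\ppr\be_1\ppr$. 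Hence $\be_1\ppr\lam\ppr\notin I^{\blossom\prime}$. I would then obtain {\rm (2)} from {\rm (1)} by the evident symmetry of the whole situation under simultaneously exchanging $\al_1\leftrightarrow\al_2$, $\be_1\leftrightarrow\be_2$, $\gam_1\leftrightarrow\gam_2$ (which preserves the relations $\be_1\al_2,\be_2\al_1,\gam_1\be_1,\gam_2\be_2\in I$, the reflection, and the map $\cfr_x$), the one extra point being that $\lam\be_2$ a path forces $\lam\ne\gam_2$ (from $\gam_2\be_2\in I\bls$) and also $\lam\ne\al_2$, since otherwise $\al_2\be_2$ would be a path while $\be_2\al_2\notin I\bls$ (from $\be_1\al_2\in I\bls$ and gentleness at $\al_2$), giving an oriented cycle in $A$; then $t(\lam\ppr)=b_2=s(\al_2\ppr)$, and $\lam\ppr\al_2\ppr$ is not a generator because $\al_2$ is excluded as the second arrow of every first-type generator, $\lam\ne\be_2$ rules out $\be_2\ppr\al_2\ppr$, and $\al_2\notin\{\be_1,\be_2\}$ rules out $\gam_1\ppr\be_2\ppr$ and $\gam_2\ppr\be_1\ppr$.

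For {\rm (3)} I would split on whether $\al_1\in\{\gam_1,\gam_2\}$. If $\al_1\notin\{\gam_1,\gam_2\}$, then $\cfr_x(\be_1\al_1)=\al_1\ppr$; since $\al_1\lam$ is a path we get $s(\lam)=t(\al_1)=a_1\ne x$, hence $\lam\notin\{\al_1,\al_2,\be_1,\be_2\}$, so $\al_1\ppr\lam\ppr$ is composable, and it is not a first-type generator ($\al_1\lam\notin I\bls$), while $\al_1\notin\{\be_1,\be_2,\gam_1,\gam_2\}$ rules out all the others. If $\al_1=\gam_i$ for some $i$, then $\cfr_x(\be_1\al_1)=\gam_i\ppr\be_i\ppr$, and I would first check directly that $\gam_i\ppr\be_i\ppr$ is a path in $B\bls$ ($t(\gam_i\ppr)=x=s(\be_i\ppr)$, and it is not one of the listed generators, as $\gam_i$ is not a $\be$ and $\gam_i\ppr\be_i\ppr$ has equal indices), then invoke {\rm (1)} with index $i$ (the $1\leftrightarrow2$ symmetry again) to conclude that $\be_i\ppr\lam\ppr$ is a path in $B\bls$, using that $\gam_i\lam=\al_1\lam$ is a path and $\lam\ne\be_{3-i}$. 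Since $\gam_i\ppr\be_i\ppr$ and $\be_i\ppr\lam\ppr$ are both paths, the concatenation $\gam_i\ppr\be_i\ppr\lam\ppr=\cfr_x(\be_1\al_1)\lam\ppr$ is a path in $B\bls$ by quadratic monomiality.

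The routine part is {\rm (1)} and {\rm (2)}, which run in parallel with Lemma \ref{LemDecompAx1}. The main obstacle I anticipate is the coincidence case $\al_1\in\{\gam_1,\gam_2\}$ of {\rm (3)}: there $\cfr_x(\be_1\al_1)$ has length two, so one cannot argue directly but must bootstrap on {\rm (1)} in its $1\leftrightarrow2$-symmetric form. Beyond that, the only delicate point is the bookkeeping of which of the six arrows in $\Scal$ a given $\lam$ may coincide with; these degenerate identifications, permitted by Remark \ref{RemAPRatx}~{\rm (1)}, are exactly the places where gentleness and the absence of oriented cycles are used.
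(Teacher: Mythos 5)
Your proposal is correct and follows essentially the same route as the paper: establish composability via the source/target formulas of Remark \ref{RemAPRatx}~{\rm (2)} and then test each length-$2$ subpath against the explicit generating set of $I^{\blossom\prime}$, with the same case split on whether $\al_1\in\{\gam_1,\gam_2\}$ in part {\rm (3)} (the paper routes {\rm (1)} and {\rm (2)} through Lemma \ref{LemDecompAx1} and re-verifies the coincidence case of {\rm (3)} directly, whereas you inline the former and bootstrap the latter on {\rm (1)} --- an immaterial difference). One small inaccuracy: the $1\leftrightarrow2$ exchange does not transform statement {\rm (1)} into statement {\rm (2)} (it yields the index-swapped form of {\rm (1)}, which is what you correctly invoke in part {\rm (3)}), but since you supply a complete direct verification of {\rm (2)} anyway, nothing is lost.
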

\begin{proof}
{\rm (1)} Since $\gam_1\lam$ is a path in $A\bls$, in particular we have $\lam\ne\be_1$. Also, $s(\lam)=t(\gam_1)=b_1\ne x$ implies $\lam\notin\{\al_1,\al_2\}$. 
Thus we have $\lam\notin\{\al_1,\al_2,\be_1,\be_2\}$, which implies $s(\lam\ppr)=s(\lam)=b_1$. Lemma \ref{LemDecompAx1} {\rm (1)} shows that $\be_1\ppr\lam\ppr$ is a path in $B\bls$.

{\rm (2)} Since $\lam\be_2$ is a path in $A\bls$, in particular we have $\lam\ne \gam_2$. Also, $t(\lam)=s(\be_2)=b_2\ne x$ implies $\lam\notin\{\be_1,\be_2\}$.Thus we have $\lam\notin\{\be_1,\be_2,\gam_1,\gam_2\}$, which implies $t(\lam\ppr)=t(\lam)=b_2$. Lemma \ref{LemDecompAx1} {\rm (2)} shows that $\lam\ppr\al_2\ppr$ is a path in $B\bls$.

{\rm (3)} Since $\al_1\lam$ is a path, in particular we have $s(\lam)=t(\al_1)=a_1\ne x$. Especially we have $\lam\notin\{\al_1,\al_2\}$.
Thus we have $s(\lam\ppr)=s(\lam)=a_1=t(\be_1\al_1)=t(\cfr_x(\be_1\al_1))$. 
By Remark \ref{RemAPRatx} {\rm (3)} and by $\lam\notin\{\al_1,\al_2,\be_1,\be_2 \}$, we see that if $\al_1=\gam_1$, then $t(\be_1)\ne s(\lam)$ implies that $\cfr_x(\be_1\al_1)\gam\ppr_1=\al_1\ppr\be\ppr_1\gam\ppr_1$ is a path in $B\bls$.
Similarly if $\al_1=\gam_2$, then $t(\be_2)\ne s(\lam)$ implies that $\cfr_x(\be_1\al_1)\gam\ppr_1=\al_1\ppr\be\ppr_2\gam\ppr_1$ is a path in $B\bls$.
If $\al\notin\{\gam_1,\gam_2\}$, then $\al_1\lam\notin I\bls$ implies that $\cfr_x(\be_1\al_1)\gam\ppr_1=\al_1\ppr\gam_1\ppr$ is a path in $B\bls$.
\end{proof}

\begin{prop}\label{PropMutAx}
Let $\xi,\ze\in\Acal\bls_x$ be any pair of elements. If $\xi\ze$ is a path in $A\bls$, then $\cfr_x(\xi)\cfr_x(\ze)$ is a path in $B\bls$.
\end{prop}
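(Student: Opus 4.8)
The plan is to verify the statement by a case analysis on the possible forms of $\xi$ and $\ze$ as elements of $\Acal\bls_x$. Recall that $\Acal\bls_x = (Q\bls_1\setminus\Scal)\amalg\{\be_1\al_1,\be_2\al_2\}\amalg(\{\gam_1,\gam_2\}\setminus\{\al_1,\al_2\})$, so each of $\xi$ and $\ze$ is of one of three kinds: a "plain" arrow $\lam\in Q\bls_1\setminus\Scal$, a composite $\be_i\al_i$, or a surviving $\gam_i$ (when $\gam_i\notin\{\al_1,\al_2\}$). The assumption that $\xi\ze$ is a path in $A\bls$ imposes $t(\xi)=s(\ze)$; the key point is that this vertex, call it $w$, is a genuine vertex of $Q\bls$ (not a blossom in the middle of a maximal path), and the local structure of $A\bls$ around $w$ is constrained by gentleness and by Condition \ref{Cond_APR_preserve}. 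The strategy is to identify $w$ and then invoke the appropriate part of Lemma \ref{LemDecompAx1} or Lemma \ref{LemDecompAx2}.

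First I would dispose of the generic case: if neither $\xi$ nor $\ze$ involves an arrow of $\Scal$ at the gluing vertex, i.e.\ $\xi,\ze\in Q\bls_1\setminus\Scal$ with $\xi\ze$ a path not lying in $I\bls$, then $\cfr_x(\xi)\cfr_x(\ze)=\xi\ppr\ze\ppr$, and Remark \ref{RemAPRatx}(2),(3) together with the defining relations of $I^{\blossom\prime}$ show this is a path in $B\bls$ — the only way a plain composite $\xi\ze\notin I\bls$ could become a relation in $I^{\blossom\prime}$ is if the target of $\xi$ equals $x$, which is handled by the $\gam_i$ case below. Then I would run through the remaining cases according to which of $\xi,\ze$ is a $\be_i\al_i$ or a $\gam_i$:

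(i) $\ze=\be_i\al_i$: then $w=t(\xi)=s(\be_i)=b_i$, so $\xi$ is an arrow $\lam$ with $t(\lam)=b_i$, $\lam\ne\be_i$; if $i=1$ this is Lemma \ref{LemDecompAx2}(1) (after noting $\cfr_x(\be_1\al_1)$ starts with $\al_1\ppr$ or with $\gam_j\ppr$ and that $\be_1\ppr\lam\ppr$ composes correctly), and the case $i=2$ is symmetric. (ii) $\xi=\be_i\al_i$: then $w=t(\al_i)=a_i$, and $\ze$ is an arrow $\lam$ with $s(\lam)=a_i$, and I apply Lemma \ref{LemDecompAx2}(3) (for $i=1$) after checking $\lam\notin\{\be_1,\be_2\}$, which holds since $s(\lam)=a_i\ne x$; the $i=2$ case is symmetric, using the mirror of Lemma \ref{LemDecompAx2}(3). (iii) $\ze=\gam_i$ with $\gam_i\notin\{\al_1,\al_2\}$: here $w=s(\gam_i)$, so $\xi$ is an arrow $\lam$ with $\lam\gam_i$ a path in $A\bls$ and $\lam\ne\gam_{i'}$ (the "other" $\gam$); this is Lemma \ref{LemDecompAx1}(3) for $i=2$, and the $i=1$ case is its mirror image. (iv) $\xi=\gam_i$ with $\gam_i\notin\{\al_1,\al_2\}$: then $\cfr_x(\gam_i)=\gam_i\ppr\be_i\ppr$ ends with $\be_i\ppr$, and $w=t(\gam_i)=b_i$; since $\ze$ has source $b_i\ne x$ we get $\ze\notin\{\al_1,\al_2,\be_1,\be_2\}$ unless $\ze=\be_i\al_i$ (already covered in (i)), so $\ze=\lam$ is plain and $\cfr_x(\xi)\cfr_x(\ze)=\gam_i\ppr\be_i\ppr\lam\ppr$; that $\be_i\ppr\lam\ppr$ is a path is Lemma \ref{LemDecompAx1}(1) for $i=1$ (noting $\lam\ne\al_1$ because $\gam_1\lam$ being a path in $A\bls$ forces $\lam\ne\al_1$), and the mirror for $i=2$. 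Finally, the combination $\xi=\be_i\al_i$, $\ze=\be_j\al_j$ cannot occur since $t(\al_i)=a_i\ne b_j=s(\be_j)$ by Remark \ref{RemAPRatx}(1).

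The main obstacle I anticipate is bookkeeping rather than conceptual: making sure that in each case the gluing vertex $w$ is correctly identified and that one has excluded, via $w\ne x$ and gentleness, all the stray possibilities ($\lam\in\Scal$, $\lam$ equal to the "wrong" $\gam$, the composite $\cfr_x(\be_i\al_i)$ starting or ending with an unexpected arrow when $\al_i\in\{\gam_1,\gam_2\}$). In particular the subcases where $\al_i=\gam_j$ are delicate, because then $\cfr_x(\be_i\al_i)$ is a length-two path $\gam_j\ppr\be_j\ppr$ and one must check that prepending or appending $\lam\ppr$ still yields a path in $B\bls$ — exactly what Lemma \ref{LemDecompAx1} and Lemma \ref{LemDecompAx2} were set up to handle, so the proof reduces to citing them in the right configuration. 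Symmetry under swapping the indices $1\leftrightarrow2$ (which swaps $\al_1\leftrightarrow\al_2$, $\be_1\leftrightarrow\be_2$, $\gam_1\leftrightarrow\gam_2$ and hence $a_i,b_i,c_i$) roughly halves the work.
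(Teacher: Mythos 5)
Your approach — decomposing $\Acal\bls_x$ into its three components and running a case analysis supported by Lemmas \ref{LemDecompAx1} and \ref{LemDecompAx2} — is the same strategy as the paper's, which treats all nine combinations. However, there is a genuine error in your case coverage. You assert that "the combination $\xi=\be_i\al_i$, $\ze=\be_j\al_j$ cannot occur since $t(\al_i)=a_i\ne b_j=s(\be_j)$ by Remark \ref{RemAPRatx}(1)." This is wrong: Remark \ref{RemAPRatx}(1) only gives $x\notin\{a_1,a_2,b_1,b_2\}$, together with the disjointness of $\{\al_1,\al_2\}$ from $\{\be_1,\be_2\}$ and of $\{\be_1,\be_2\}$ from $\{\gam_1,\gam_2\}$; it does not make $a_i,b_j$ mutually distinct. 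In fact $a_1=b_2$ can happen whenever $\al_1\be_2$ is a path, and the paper devotes a full subcase (its case (5)) to exactly this configuration, splitting further on whether $\al_1=\gam_1$. This case must be handled, not dismissed.

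Beyond that, in each of your items (i)--(iv) the partner of the distinguished element is silently taken to be a plain arrow in $Q\bls_1\setminus\Scal$. That leaves uncovered the combinations where both $\xi$ and $\ze$ lie in the special components of $\Acal\bls_x$: both in $\{\gam_1,\gam_2\}\setminus\{\al_1,\al_2\}$ (the paper's case (2), which needs Lemma \ref{LemDecompAx1}(4)); $\xi\in\{\gam_1,\gam_2\}\setminus\{\al_1,\al_2\}$ with $\ze\in\{\be_i\al_i\}$ (case (6)); and $\xi\in\{\be_i\al_i\}$ with $\ze\in\{\gam_1,\gam_2\}\setminus\{\al_1,\al_2\}$ (case (9)). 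Your parenthetical in (iv) that "$\ze=\be_i\al_i$ ... already covered in (i)" does not patch this: in (i) you again assume the other factor is a plain arrow, and moreover $\ze=\be_i\al_i$ with the same index $i$ as $\xi=\gam_i$ is actually impossible (since $\gam_i\be_i\in I$), so what would need treatment is $\ze=\be_j\al_j$ with $j\ne i$. So the case analysis is incomplete in addition to the erroneous dismissal; both need to be fixed before the proof is sound.
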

\begin{proof}
Since $(Q\bls,I\bls)$ has no oriented cycle, we have $\xi\ne\ze$. We remark that each $\cfr_x(\xi)$ and $\cfr_x(\ze)$ is a path in $B\bls$. Accordingly to the 3 components $Q\bls_1\setminus\Scal$, $\{\be_i\al_i\mid i=1,2\}$ and $\{\gam_1,\gam_2\}\setminus\{\al_1,\al_2\}$ of $\Acal\bls_x$, the arguments below in the following 9 cases show the proposition.
\begin{enumerate}
\item If $\xi,\ze\in Q\bls_1\setminus\Scal$, this is obvious by Remark \ref{RemAPRatx} {\rm (3)}.
\item If $\xi,\ze\in\{\gam_1,\gam_2\}\setminus\{\al_1,\al_2\}$, we may assume $\xi=\gam_1$ and $\ze=\gam_2$ without loss of generality. Then $\be\ppr_1\gam\ppr_2$ is a path in $B\bls$ by Lemma \ref{LemDecompAx2} {\rm (1)}, hence so is $\cfr_x(\xi)\cfr_x(\ze)=\gam_1\ppr\be_1\ppr\gam_2\ppr\be_2\ppr$.
\item If $\xi\in\{\gam_1,\gam_2\}\setminus\{\al_1,\al_2\}$ and $\ze\in Q\bls_1\setminus\Scal$, we may assume $\xi=\gam_1$. Then $\cfr_x(\xi)\cfr_x(\ze)=\gam_1\ppr\be_1\ppr\ze\ppr$ is a path in $B\bls$ by Lemma \ref{LemDecompAx2} {\rm (1)}, similarly as above.
\item If $\xi\in Q\bls_1\setminus\Scal$ and $\ze\in\{\be_i\al_i\mid i=1,2\}$, we may assume $\ze=\be_2\al_2$. Then $\xi\ppr\al_2\ppr=\cfr_x(\xi)\al_2\ppr$ is a path in $B\bls$ by Lemma \ref{LemDecompAx2} {\rm (2)}. Since $\cfr_x(\ze)$ always starts with $\al_2\ppr$, this means that $\cfr_x(\xi)\cfr_x(\ze)$ is a path in $B\bls$.
\item If $\xi,\ze\in\{\be_i\al_i\mid i=1,2\}$, we may assume $\xi=\be_1\al_1$ and $\ze=\be_2\al_2$. Similarly as above, it suffices to show that $\cfr_x(\xi)\al_2\ppr$ is a path in $B\bls$.

Since $\al_1\be_2$ is a path, we have $a_1=b_2$ and $\al_1\ne\gam_2$. If $\al_1=\gam_1$, then $t(\be_1\ppr)=b_1=a_1=b_2$ implies that $\be_1\ppr\al_2\ppr$ is a path in $B\bls$ by Lemma \ref{LemDecompAx1} {\rm (2)}. Thus $\cfr_x(\be_1\al_1)\al_2\ppr=\al_1\ppr\be_1\ppr\al_2\ppr$ is a path in $B\bls$. If $\al_1\ne\gam_1$, then $\cfr_x(\be_1\al_1)=\al_1\ppr$. Thus $\cfr_x(\be_1\al_1)\al_2\ppr=\al_1\ppr\al_2\ppr$ is a path in $B\bls$ by Lemma \ref{LemDecompAx2} {\rm (2)}. 
\item If $\xi\in\{\gam_1,\gam_2\}\setminus\{\al_1,\al_2\}$ and $\ze\in\{\be_1\al_1,\be_2\al_2\}$, we may assume $\xi=\gam_1$. Since $\gam_1\be_1\in I$, necessarily we have $\ze=\be_2\al_2$. Then $t(\xi)=s(\ze)$ means $b_1=b_2$, hence $t(\be_1\ppr)=b_2$. Thus $\be_1\ppr\al_2\ppr$ is a path in $B\bls$ by Lemma \ref{LemDecompAx1} {\rm (2)}, hence so is $\cfr_x(\xi)\cfr_x(\ze)$.
\item If $\xi\in Q\bls_1\setminus\Scal$ and $\ze\in\{\gam_1,\gam_2\}\setminus\{\al_1,\al_2\}$, we may assume $\ze=\gam_2\notin\{\al_1,\al_2\}$. If $\xi\ne\gam_1$, then $\xi\ppr\gam_2\ppr$ is a path in $B\bls$ by Lemma \ref{LemDecompAx1} {\rm (3)}, hence so is $\cfr_x(\xi)\cfr_x(\ze)=\xi\ppr\gam_2\ppr\be_2\ppr$. If $\xi=\gam_1$, then $\cfr_x(\xi)\cfr_x(\ze)=\gam_1\ppr\be_1\ppr\gam_2\ppr\be_2\ppr$ is a path in $B$ by Lemma \ref{LemDecompAx1} {\rm (4)}.
\item If $\xi\in\{\be_i\al_i\mid i=1,2\}$ and $\ze\in Q\bls_1\setminus\Scal$, we may assume $\xi=\be_1\al_1$. Then $\cfr_x(\xi)\cfr_x(\ze)=\cfr_x(\xi)\ze\ppr$ is a path in $B\bls$ by Lemma \ref{LemDecompAx2} {\rm (3)}.
\item If $\xi\in\{\be_i\al_i\mid i=1,2\}$ and $\ze\in\{\gam_1,\gam_2\}\setminus\{\al_1,\al_2\}$, we may assume $\xi=\be_1\al_1$. Then $\cfr_x(\xi)\ze\ppr$ is a path in $B\bls$ by Lemma \ref{LemDecompAx2} {\rm (3)}. Since $\cfr_x(\ze)$ starts with $\ze\ppr$, this implies that $\cfr_x(\xi)\cfr_x(\ze)$ is a path in $B\bls$.
\end{enumerate}
\end{proof}

\begin{cor}\label{CorDecompAx}
Let $\sig_p\wp_p\tau_p$ be any maximal path in $A\bls$, as in Definition \ref{DefBlossom}. Take the expression
\[ \sig_p\wp_p\tau_p=\sig_p\xi_1\cdots\xi_r\tau_p \]
with $r\in\Nn$ and $\xi_1,\ldots,\xi_r\in\Acal\bls_x$, as in Proposition \ref{PropDecompAx}. Then 
\begin{equation}\label{MaxinB}
\sig_p\ppr\cfr_x(\xi_1)\cdots\cfr_x(\xi_r)\tau_p\ppr
\end{equation}
gives a maximal path in $B\bls$.
\end{cor}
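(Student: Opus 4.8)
The plan is to deduce the statement by iterating Proposition~\ref{PropMutAx} along the given factorization $\wp_p=\xi_1\cdots\xi_r$ (Proposition~\ref{PropDecompAx}), once we notice that the two boundary blossom arrows may be absorbed into $\Acal\bls_x$. Since $x$ has in- and out-degree $2$ by Condition~\ref{Cond_APR_preserve}(c2), no blossom arrow is incident to $x$, so $\al_1,\al_2,\be_1,\be_2,\gam_1,\gam_2$ all lie in $Q_1$ by Definition~\ref{DefNotaAPRatx}, whereas $\sig_p,\tau_p$ are blossom arrows. Consequently $\sig_p,\tau_p\in Q\bls_1\setminus\Scal\se\Acal\bls_x$ with $\cfr_x(\sig_p)=\sig_p\ppr$ and $\cfr_x(\tau_p)=\tau_p\ppr$, so, setting $\xi_0=\sig_p$ and $\xi_{r+1}=\tau_p$, we may regard $\sig_p\wp_p\tau_p=\xi_0\xi_1\cdots\xi_{r+1}$ as a factorization with all $\xi_j\in\Acal\bls_x$, and $(\ref{MaxinB})$ is exactly $\cfr_x(\xi_0)\cfr_x(\xi_1)\cdots\cfr_x(\xi_{r+1})$.

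First I would verify that $(\ref{MaxinB})$ is a path in $B\bls$. For each $0\le j\le r$ the product $\xi_j\xi_{j+1}$ is a subpath of the maximal path $\sig_p\wp_p\tau_p$, hence a path in $A\bls$, so Proposition~\ref{PropMutAx} gives that $\cfr_x(\xi_j)\cfr_x(\xi_{j+1})$ is a path in $B\bls$; moreover each $\cfr_x(\xi_j)$ is itself a path in $B\bls$ of length at least $1$, and $s(\cfr_x(\xi_j))=s(\xi_j)$, $t(\cfr_x(\xi_j))=t(\xi_j)$, by the remarks following Definition~\ref{Def_mu}. Since $I^{\blossom\prime}$ is quadratic monomial, a path avoids $I^{\blossom\prime}$ precisely when all its length-$2$ subpaths do, and every length-$2$ subpath of $\cfr_x(\xi_0)\cdots\cfr_x(\xi_{r+1})$ either lies inside some $\cfr_x(\xi_j)$ or straddles one junction $\cfr_x(\xi_j)\cfr_x(\xi_{j+1})$; in both cases it avoids $I^{\blossom\prime}$. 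Hence $(\ref{MaxinB})$ is a genuine path in $B\bls$, the degenerate case $\length(\wp_p)=0$ (where it is simply $\sig_p\ppr\tau_p\ppr$) being included.

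It remains to check maximality. By Remark~\ref{RemAPRatx}(2) applied to the blossom arrows, $\sig_p\notin\{\al_1,\al_2,\be_1,\be_2\}$ gives $s(\sig_p\ppr)=s(\sig_p)=s_p$ and $\tau_p\notin\{\be_1,\be_2,\gam_1,\gam_2\}$ gives $t(\tau_p\ppr)=t(\tau_p)=t_p$, so $(\ref{MaxinB})$ runs from $s_p$ to $t_p$. By Remark~\ref{RemAPRatx}(4), $(Q^{\blossom\prime},I^{\blossom\prime})$ is a blossoming of $(Q\ppr,I\ppr)$ in which $s_p$ is a source vertex of out-degree $1$ and $t_p$ a sink vertex of in-degree $1$; hence no arrow of $Q^{\blossom\prime}$ has target $s_p$ and none has source $t_p$, so no one-arrow extension of $(\ref{MaxinB})$ on either side is a path. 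Thus $(\ref{MaxinB})$ is a maximal path in $B\bls$.

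I do not expect a real obstacle: the substance has already been done in Proposition~\ref{PropMutAx} and the preparatory Lemmas~\ref{LemDecompAx1} and~\ref{LemDecompAx2}. The only point needing a little care is the bookkeeping at the two ends --- justifying that $\sig_p$ and $\tau_p$ count as factors lying in $\Acal\bls_x$ (which relies on $x$ having full in- and out-degree, so that $\Scal\se Q_1$), and then invoking the blossoming characterization of Remark~\ref{RemAPRatx}(4) to upgrade ``path from $s_p$ to $t_p$'' into ``maximal path.''
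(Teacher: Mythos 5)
Your proof is correct and follows essentially the same route as the paper's: observe that $\sig_p,\tau_p\in Q\bls_1\setminus\Scal$ so that $\cfr_x(\sig_p)=\sig_p\ppr$ and $\cfr_x(\tau_p)=\tau_p\ppr$, iterate Proposition~\ref{PropMutAx} to get that the concatenation is a path (the paper leaves the quadratic-monomial bookkeeping implicit; you spell it out), and then conclude maximality from $s_p$ being a source and $t_p$ a sink in $B\bls$ via Remark~\ref{RemAPRatx}(4). The extra care you take at the two ends and with the degenerate case $\length(\wp_p)=0$ is sound and simply makes explicit what the paper compresses.
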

\begin{proof}
Since $\cfr_x(\sig_p)=\sig_p\ppr$ and $\cfr_x(\tau_p)=\tau_p\ppr$, Proposition \ref{PropMutAx} shows that $(\ref{MaxinB})$ is indeed a path in $B\bls$. Since $s_p=s(\sig_p\ppr)$ is a source and $t_p=t(\tau_p\ppr)$ is a sink in $B\bls$, it is maximal.
\end{proof}

\begin{prop}\label{PropAPRPresRepet}
Let $A=KQ/I$, $x\in Q_0$ and $B=KQ\ppr/I\ppr$ be as above. We continue to use the above notations. Especially, we assume that blossom vertices in $A\bls$ and $B\bls$ have the same labeling $\{s_p\mid 1\le p\le \dfr_A=\dfr_B\}$ and $\{t_p\mid 1\le p\le \dfr_A=\dfr_B\}$, as shown in Corollary \ref{CorDecompAx}.

Let $C=KQ\pprr/I\pprr$ be another arbitrary gentle algebra satisfying $\dfr_C=\dfr_A=d$. For any permutation $\sfr\in\Sfr_d$, we have the following.
\begin{enumerate}
\item $B\un{\sfr}{\ci}C$ is obtained from $A\un{\sfr}{\ci}C$ by the generalized APR-reflection at $x\in Q_0\se(Q\un{\sfr}{\ci}Q\pprr)_0$.
\item $C\un{\sfr}{\ci}B$ is obtained from $C\un{\sfr}{\ci}A$ by the generalized APR-reflection at $x\in Q_0\se(Q\pprr\un{\sfr}{\ci}Q)_0$.
\end{enumerate}
\end{prop}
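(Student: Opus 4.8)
The plan is to compare the bound quivers of the generalized APR-reflection $\mathrm{APR}_x(A\un{\sfr}{\ci}C)$ and of $B\un{\sfr}{\ci}C$ after passing to blossomings, the essential input being that the reflection commutes with blossoming and acts on maximal paths exactly as in Corollary \ref{CorDecompAx} and Remark \ref{RemAPRatx}. I describe the argument for (1); (2) is symmetric. First one checks that $x$ is still a legitimate reflection vertex in $A\un{\sfr}{\ci}C$: by Condition \ref{Cond_APR_preserve}(c2) we have $\din(x)=\dout(x)=2$ in $Q$, so no blossom arrow of $Q\bls$ is incident to $x$; in particular $x$ is not the source $s(\tau_p^A)$ of any blossom arrow (such a vertex has out-degree $\le 1$ in $Q$) and of course $x\notin Q_0\pprr$. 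Since every connecting arrow runs from some $s(\tau_p^A)$ to some $t(\sig_{\sfr(p)}^C)\in Q_0\pprr$, none of them is incident to $x$, so the arrows of $A\un{\sfr}{\ci}C$ at $x$ and the relations among them are precisely those of $A$, whence $x$ still satisfies (c1)--(c3). Moreover $A\un{\sfr}{\ci}C$ is gentle, because any oriented cycle avoids the connecting arrows and so lies entirely in $A$ or in $C$.

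Next I would identify the blossoming of the welding. Arguing as in Proposition \ref{PropMaxPathInAkb}, the maximal paths of $A\un{\sfr}{\ci}C$ in its blossoming are $\sig_p^A\,\wp_p(A)\,\varpi_p\,\wp_{\sfr(p)}(C)\,\tau_{\sfr(p)}^C$ for $1\le p\le d$, so by the uniqueness in Remark \ref{RemGentleUnique}, $(A\un{\sfr}{\ci}C)\bls$ is the bound quiver obtained from $A\bls$ and $C\bls$ by deleting the blossom vertices $t_p^A,\,s_{\sfr(p)}^C$ and blossom arrows $\tau_p^A,\,\sig_{\sfr(p)}^C$, and gluing in one arrow $\varpi_p$ from $s(\tau_p^A)$ to $t(\sig_{\sfr(p)}^C)$ that inherits the left-hand relations of $\tau_p^A$ in $A\bls$ and the right-hand relations of $\sig_{\sfr(p)}^C$ in $C\bls$. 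Equivalently, $(A\un{\sfr}{\ci}C)\bls$ contains a copy of $A\bls$ (its ``$A$-sheet'', with $\tau_p^A$ renamed to $\varpi_p$) and a copy of $C\bls$ (its ``$C$-sheet'', with $\sig_{\sfr(p)}^C$ renamed to $\varpi_p$), overlapping precisely in the connecting arrows.

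Now $x$ lies in the $A$-sheet, and the connecting arrows $\varpi_p$ are the relabelled blossom arrows $\tau_p^A$ of that sheet, hence not incident to $x$; moreover no vertex or arrow of the $C$-sheet is incident to $x$, and condition $(\ref{cond_refl1})$ cannot be witnessed by a connecting arrow or by an arrow of the $C$-sheet, since such an arrow does not end at $x$. Consulting Definition \ref{DefAPRreflection}, the reflection of $(A\un{\sfr}{\ci}C)\bls$ at $x$ therefore fixes the $C$-sheet and acts on the $A$-sheet exactly as the reflection of $A\bls$ at $x$; by Corollary \ref{CorDecompAx} and Remark \ref{RemAPRatx}(4) the latter turns $A\bls$ into $B\bls$, carrying the blossom arrow $\tau_p^A$ to $\tau_p^B$ (so in particular $s(\tau_p^B)=s(\tau_p^A)$). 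Hence the reflected quiver is $B\bls$ and $C\bls$ glued along $\tau_p^B=\varpi_p\ppr=\sig_{\sfr(p)}^C$, which by the previous paragraph is exactly $(B\un{\sfr}{\ci}C)\bls$. Since the reflection commutes with blossoming (the argument of Remark \ref{RemAPRatx}(4) applies verbatim to $A\un{\sfr}{\ci}C$), restricting to the underlying bound subquivers yields $\mathrm{APR}_x(A\un{\sfr}{\ci}C)=B\un{\sfr}{\ci}C$, which is (1). For (2), in $C\un{\sfr}{\ci}A$ the $A$-sheet carries its \emph{source} blossom arrows $\sig_q^A$ as the connecting arrows; $x$ is again interior to this sheet (it is not the start of any permitted thread of $A$, since $\din(x)=2$), the reflection sends $\sig_q^A$ to $\sig_q^B$ and fixes the $C$-sheet, and the identical comparison gives $\mathrm{APR}_x(C\un{\sfr}{\ci}A)=C\un{\sfr}{\ci}B$.

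The main obstacle is the bookkeeping behind the assertion that reflecting at $x$ truly leaves the $C$-sheet and the connecting arrows untouched and creates no relation bridging the two sheets. This reduces to noting that every arrow appearing in the reflected relations of Definition \ref{DefAPRreflection}(2) must start at $x$, end at $x$, or satisfy $(\ref{cond_refl1})$ — each of which confines it to the $A$-sheet and away from the connecting arrows — together with the identification $s(\tau_p^A)=s(\tau_p^B)$ supplied by Corollary \ref{CorDecompAx}, which is precisely what makes the connecting arrows of $\mathrm{APR}_x(A\un{\sfr}{\ci}C)$ and of $B\un{\sfr}{\ci}C$ correspond.
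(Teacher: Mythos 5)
Your proof is correct in substance, but it takes a considerably longer route than the paper does. The paper's entire argument is two sentences: since $\Scal=\{\al_1,\al_2,\be_1,\be_2,\gam_1,\gam_2\}\subseteq Q_1\subseteq(Q\un{\sfr}{\ci}Q\pprr)_1$, the vertex $x$ satisfies Condition \ref{Cond_APR_preserve} in the welded quiver, and since the generalized APR-reflection changes arrows and relations only within $\Scal$, reflecting the welding $(Q\un{\sfr}{\ci}Q\pprr,I\un{\sfr}{\ci}I\pprr)$ is the same as welding the reflection. There is no need to pass to blossomings at all. You instead reconstruct $(A\un{\sfr}{\ci}C)\bls$, argue that the reflection preserves the $C$-sheet and the connecting arrows, invoke Corollary \ref{CorDecompAx}, and then restrict back to the underlying bound subquivers. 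This does work — the preliminary check that no connecting arrow and no $C$-arrow can witness condition $(\ref{cond_refl1})$ is a useful sanity check that the paper leaves implicit — but it reproves, via blossomings, what the paper simply reads off from the containment $\Scal\subseteq Q_1$.

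One imprecision worth fixing: in your final paragraph you claim that ``every arrow appearing in the reflected relations of Definition \ref{DefAPRreflection}(2) must start at $x$, end at $x$, or satisfy $(\ref{cond_refl1})$.'' That is false for the first component of $R\ppr$, which consists of the \emph{old} relations $\lam\ppr\kap\ppr$ with $t(\kap)\ne x$ and $t(\lam)=s(\kap)\ne x$ and $\lam\kap\in I$; these can freely involve connecting arrows and $C$-sheet arrows, and some of them can even involve arrows of $\Scal$ (for instance $\al_i$ or $\gam_i$, neither of which starts at $x$, ends at $x$, nor satisfies $(\ref{cond_refl1})$ in general). The correct statement is that the relations \emph{created or destroyed} by the reflection (the second and third components of $R\ppr$, and the old relations that become invalid as concatenations after retargeting) only involve arrows of $\Scal$, and $\Scal\subseteq Q_1$. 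That is precisely the observation the paper makes directly.
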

\begin{proof}
Since {\rm (2)} can be shown in a similar way, we only show {\rm (1)}. Since $\Scal=\{\al_1,\al_2,\be_1,\be_2,\gam_1,\gam_2\}$ is contained in $\Scal\se Q_1\se(Q\un{\sfr}{\ci}Q\pprr)_1$, it is obvious that $x$ also satisfies Condition \ref{Cond_APR_preserve} as a vertex of $Q\un{\sfr}{\ci}Q\pprr$. Since the arrows and relations are unchanged outside of $\Scal$, it is obvious that $(Q\ppr\un{\sfr}{\ci}Q\pprr,I\ppr\un{\sfr}{\ci}I\pprr)$ is obtained from $(Q\un{\sfr}{\ci}Q\pprr,I\un{\sfr}{\ci}I\pprr)$ by the generalized APR-reflection at $x\in(Q\un{\sfr}{\ci}Q\pprr)_0$.
\end{proof}

\begin{cor}\label{CorAPRRepet}
Let $A=KQ/I$, $x\in Q_0$ and $B=KQ\ppr/I\ppr$ be as before. Let $k$ be any positive integer. Then $(Q^{\prime(k)},I^{\prime(k)})$ is obtained from $(Q^{(k)},I^{(k)})$ by a $k$-times iteration of generalized APR-reflections.
\end{cor}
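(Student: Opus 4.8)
The plan is to induct on $k$, repeatedly feeding Proposition \ref{PropAPRPresRepet} the inductive presentations $A^{(k)}=A^{(k-1)}\un{\id}{\ci}A$ and $B^{(k)}=B^{(k-1)}\un{\id}{\ci}B$ recorded after Definition \ref{DefWeld}. The statement I would actually carry through the induction is the slightly sharper one: $(Q^{\prime(k)},I^{\prime(k)})$ is obtained from $(Q^{(k)},I^{(k)})$ by reflecting successively at the copies $x^{[1]},\dots,x^{[k]}$, and each of these reflections is performed at a vertex satisfying Condition \ref{Cond_APR_preserve}. The sharper form is what the inductive step requires, since Proposition \ref{PropAPRPresRepet} is only stated for vertices satisfying Condition \ref{Cond_APR_preserve}, which is strictly stronger than {\rm (r1),(r2)}.

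First I would assemble the bookkeeping facts that keep Proposition \ref{PropAPRPresRepet} applicable at every stage. By Definition \ref{DefAPRreflection}~(0) and Remark \ref{RemAPRatx}~(2), the generalized APR-reflection fixes the vertex set and every in- and out-degree, hence the number of arrows, so $\dfr_B=\dfr_A$, which I keep calling $d$; and a direct count from Definition \ref{DefWeld} gives $\dfr_{X\un{\sfr}{\ci}Y}=\dfr_X+\dfr_Y-d=d$ for any gentle $X,Y$ with $\dfr_X=\dfr_Y=d$, so every welding appearing below again has $\dfr=d$. Next, since $x$ satisfies Condition \ref{Cond_APR_preserve} we have $\din(x)=\dout(x)=2$ in $Q$, so no blossom arrow of $A\bls$ is incident to $x$; hence no connecting arrow $\varpi_p^{[i]}$ of $Q^{(k)}$ is incident to any $x^{[i]}$, and the arrows at $x^{[i]}$ together with the quadratic relations among them form a faithful copy of the local situation at $x$ in $Q$. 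By Remark \ref{RemAPRatx}~(2),~(3) a reflection at $x^{[i]}$ alters only arrows in $\Scal^{[i]}\se Q^{[i]}_1$ and relations among such arrows; since these supports are pairwise disjoint for distinct $i$, a reflection at $x^{[i]}$ leaves the Condition-\ref{Cond_APR_preserve} data at every $x^{[j]}$ with $j\ne i$ untouched. In particular every $x^{[i]}$ satisfies Condition \ref{Cond_APR_preserve} in $A^{(k)}$, and continues to do so after the remaining reflections are carried out.

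For the induction itself, the case $k=1$ is just the defining relation between $B$ and $A$. For the step I would assume a chain of gentle algebras $A^{(k-1)}=Z_0,Z_1,\dots,Z_{k-1}=B^{(k-1)}$ with $Z_j$ obtained from $Z_{j-1}$ by the generalized APR-reflection at $x^{[j]}$, each step legitimate by the observations above (all $Z_j$ have $\dfr=d$, and $x^{[j]}$ satisfies Condition \ref{Cond_APR_preserve} in $Z_{j-1}$). Applying Proposition \ref{PropAPRPresRepet}~(1) link by link, with arbitrary factor $A$ and $\sfr=\id$, turns this into a chain of $k-1$ generalized APR-reflections from $A^{(k)}=A^{(k-1)}\un{\id}{\ci}A$ to $B^{(k-1)}\un{\id}{\ci}A$, still reflecting at $x^{[1]},\dots,x^{[k-1]}$ in turn. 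Then one application of Proposition \ref{PropAPRPresRepet}~(2), with arbitrary factor $B^{(k-1)}$ and $\sfr=\id$, reflects at $x^{[k]}$ and lands on $B^{(k-1)}\un{\id}{\ci}B=B^{(k)}$. Concatenating these two chains yields the desired $k$-fold iteration and closes the induction.

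I expect the only genuinely delicate point to be exactly this bookkeeping: verifying, at each intermediate algebra, that the vertex at which one reflects still satisfies Condition \ref{Cond_APR_preserve} — which is precisely where the pairwise disjointness of the supports $\Scal^{[i]}$ is used — and keeping track of which factor and which permutation ($\id$) are supplied to Proposition \ref{PropAPRPresRepet} at every step. All of this is routine once the preliminary observations in the second paragraph are in place; there is no real conceptual obstacle beyond them.
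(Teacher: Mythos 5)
Your argument is correct and takes essentially the same route as the paper: iterate Proposition \ref{PropAPRPresRepet} so as to reflect at each copy $x^{[i]}$ one sheet at a time. The differences are presentational only — you run an explicit induction on $k$ with a strengthened induction hypothesis and reflect at $x^{[1]},\dots,x^{[k]}$ in increasing sheet order, whereas the paper iterates directly and reflects in the order $x^{[k]},\dots,x^{[1]}$; and you make explicit the bookkeeping (invariance of $\dfr$ under welding and reflection, and preservation of Condition \ref{Cond_APR_preserve} at the untouched copies via the pairwise disjointness of the supports $\Scal^{[i]}$) that the paper leaves implicit.
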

\begin{proof}
By Proposition \ref{PropAPRPresRepet} {\rm (2)}, we see that $(Q^{(k-1)},I^{(k-1)})\un{\id}{\ci}(Q\ppr,I\ppr)$ is obtained from $(Q^{(k)},I^{(k)})=(Q^{(k-1)},I^{(k-1)})\un{\id}{\ci}(Q,I)$ by the generalized APR-reflection at $x^{[k]}\in Q^{(k)}_0$. Iteratively, for any $2\le j\le k$, Proposition \ref{PropAPRPresRepet} shows that
\[ (Q^{(k-j)},I^{(k-j)})\un{\id}{\ci}(Q^{\prime(j)},I^{\prime(j)})\qquad ((Q^{\prime(k)},I^{\prime(k)})\ \text{when}\ j=k) \]
is obtained from the generalized APR-reflection at $x^{[k-j+1]}\in Q^{(k-j+1)}_0\se(Q^{(k-j+1)}\un{\id}{\ci}Q^{\prime(j-1)})_0$.
\end{proof}

\section{Some remarks for graded case}\label{section_Graded}

In this section, assume that $A=KQ/I$ is a locally gentle algebra, equipped with a function $\deg\co Q_1\to\Zbb$.
\begin{dfn}\label{DefAGrading}
By using the function $\deg\co Q_1\to\Zbb$, we can endow $A$ a $\Zbb$-grading, by assigning
\[ \deg\thh=\begin{cases}\displaystyle\sum_{1\le u\le l}\deg\al_u&\ \text{if}\ \thh=\al_1\cdots\al_{l}\\[12pt]
\ 0&\ \text{if}\ \thh=e_a\ \text{for some}\ a\in Q_0 \end{cases} \]
to each path $\thh\in A$, and by extending it $K$-linearly. Let $A_n$ denote the homogeneous part of degree $n$. Then $A=\un{n\in\Zbb}{\oplus}A_n$ becomes a $\Zbb$-graded $K$-algebra.
\end{dfn}

Graded version of AG-invariants for such $A$ have been introduced by Lekili and Polishchuk in \cite{LP}. In our terminology, it can be stated as follows (Definition \ref{DefAGInvGraded}). For the underlying ungraded locally gentle algebra, we continue to use the same notations as in Section \ref{section_Review}. 
\begin{itemize}
\item $d=\dfr_A$, and $\Phi=\Phi_A\in\Sfr_d$,
\item $\Mfr_{A\bls}=\{\Pfr_p(A)=\sig_p\wp_p(A)\tau_p\mid1\le p\le d \}$ and $\wp_p=\wp_p(A)$,
\item $\Afr_{A\bls}=\{\Del_p(A)=\sig_{\Phi(p)}\del_p(A)\tau_p\mid1\le p\le d \}$ and $\del_p=\del_p(A)$.
\end{itemize}
Remark that each AG-orbit of $A$ is of the form
\[ \Ocal=\{\del_{\Phi^j(p)}\mid0\le j<q\} \]
for some $1\le p\le d$, where $q=|\Ocal|\in\Np$ is the smallest positive integer satisfying $\del_p=\del_{\Phi^q(p)}$.

\begin{dfn}\label{DefDegBar}
Let $l>0$ be a positive integer. For any antipath $\om=\al_1\cdots\al_l$ in $A$, put
\[ \ovl{\deg}(\om)=l-\sum_{1\le i\le l}\deg(\al_i). \]
This also applies when $\om$ is an anticycle.
We put $\ovl{\deg}(e_a)=0$ for any $a\in Q_0$.
\end{dfn}

\begin{rem}\label{RemDegBar1}
The following holds.
\begin{enumerate}
\item If $\om=\al_1\cdots\al_l$ and $\ze=\be_1\cdots\be_m$ are anticycles in $A$ such that $\om\ze=\al_1\cdots\al_l\be_1\cdots\be_m$ is again an anticycle in $A$, then we have
\[ \ovl{\deg}(\om\ze)=\ovl{\deg}(\om)+\ovl{\deg}(\ze). \]
\item For any arrow $\al\in Q_1$, we have
$\ovl{\deg}(\al)=1-\deg(\al)$
by definition.
\end{enumerate}
\end{rem}

\begin{rem}\label{RemDegBar2}
If $\deg_0\co Q_1\to\Zbb$ is the constant function to $0\in\Zbb$, then we have
$\ovl{\deg_0}(\om)=l$
for any antipath $\om=\al_1\cdots\al_l$ in $A$.
\end{rem}

\begin{dfn}\label{DefAGInvGraded}
Let $A=KQ/I$ and $\deg\co Q_1\to\Zbb$ as before.
\begin{enumerate}
\item For each AG-orbit $\Ocal=\{\del_{\Phi^j(p)}(A)\mid 0\le j<q\}$ with $q=|\Ocal|$, considering a sequence
\[ \wp_p(A),\del_p(A),\wp_{\Phi(p)}(A),\del_{\Phi(p)}(A),\ldots,\wp_{\Phi^{q-1}(p)}(A),\del_{\Phi^{q-1}(p)}(A), \]
put $\type_{\deg}(\Ocal)=(q,l)\in\Np\ti\Nn$, where
\[ l=\sum_{0\le j<q-1}\big(\deg(\wp_{\Phi^j(p)}(A))+\ovl{\deg}(\del_{\Phi^j(p)}(A)\big). \]
\item For any $(q,l)\in\Nn\ti\Nn$, define $\phi_{A,\deg}(q,l)\in\Nn$ by the following.
\[
\phi_{A,\deg}(q,l)=\begin{cases}
\begin{array}{l}
|\{\rho\mid \rho\ \text{is an oriented cycle in}\ A\ \text{with}\ \deg(\rho)=l \}|\\
+|\{\om\mid \om\ \text{is an anticycle in}\ A\ \text{with}\ \ovl{\deg}(\om)=l \}|
\end{array}&\ \text{if}\ q=0,\\[16pt]
|\{\Ocal\mid \Ocal\ \text{is an AG-orbit in}\ A\ \text{with}\ \type_{\deg}(\Ocal)=(q,l)\}|&\ \text{if}\ q>0.
\end{cases}
\]
\end{enumerate}
\end{dfn}

\begin{rem}\label{RemPhiBar}
Suppose $A=KQ/I$ is gentle.
Let $\deg_0$ be the constant function to $0\in\Zbb$, as in Remark \ref{RemDegBar2}. We have $\type_{\deg_0}(\Ocal)=\type(\Ocal)$ for any AG-orbit $\Ocal$ in $A$, and $\phi_{A,\deg_0}(q,l)=\phi_A(q,l)$ for any $(q,l)\in\Nn\ti\Nn$. 
\end{rem}

\begin{dfn}\label{DefDegRepet}
As in Definition \ref{DefQkIk}, let $(Q,I)$ be a connected locally gentle bound quiver 
which satisfies $\Mfr_A\amalg\Tfr_A\ne\emptyset$. Let $k$ be any positive integer. Remark that the $k$-times gentle repetition $(Q^{(k)},I^{(k)})$ of $(Q,I)$ defined in Definition \ref{DefQkIk}, has the following set of arrows.
\[ Q_1^{(k)}=\{\al^{[i]}\mid \al\in Q_1,\ 1\le i\le k\}\amalg\{\varpi_p^{[i]}\mid 1\le p\le d,\ 1\le i\le k-1 \}. \]
We extend the function $\deg\co Q_1\to\Zbb$ to $Q^{(k)}_1$, which we denote by the same symbol $\deg\co Q_1^{(k)}\to\Zbb$, by the following.
\begin{itemize}
\item For any $\al\in Q_1$, put $\deg(\al^{[i]})=\deg(\al)$ for any $1\le i\le k$.
\item For each $1\le p\le d$, choose some $w_p\in\Zbb$, and put $\deg(\varpi_p^{[i]})=w_p$ for any $1\le i\le k-1$.
\end{itemize}
\end{dfn}

\begin{rem}
Although there is a natural choice of $w_p\in\Zbb$ as below,
\[ w_p=\begin{cases}\displaystyle-\sum_{1\le u\le l}\deg(\al_u)&\ \text{if}\ \varpi_p^{[i]}=\rho^{\ast [i]}\ \text{for}\ \rho=\al_1\cdots\al_l\in\Mfr_A\\
0&\ \text{otherwise}
\end{cases} \]
the proofs of Propositions \ref{PropAGInvGraded1} and \ref{PropAGGraded2} work for arbitrary $w_p$.
\end{rem}

\begin{prop}\label{PropAGInvGraded1}
Let $(Q,I)$, $k\in\Np$, and $\deg\co Q_1^{(k)}\to\Zbb$ be as in Definition \ref{DefDegRepet}. Then we have
\[
\phi_{A^{(k)},\deg}(0,l)=k\phi_{A,\deg}(0,l)
\]
for any $l\in\Nn$.
\end{prop}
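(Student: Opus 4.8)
The plan is to reduce the statement entirely to Lemma \ref{LemBasisSheets} together with the explicit extension of $\deg$ to $Q_1^{(k)}$ given in Definition \ref{DefDegRepet}. First I would recall from Lemma \ref{LemBasisSheets} that every oriented cycle (respectively, anticycle) in $\Ak$ lies entirely in one of the $k$ sheets $A^{[i]}$, and is of the form $(\al_1\cdots\al_l)^{[i]}$ for a unique oriented cycle (resp.\ anticycle) $\al_1\cdots\al_l$ in $A$ and a unique $1\le i\le k$; conversely each such oriented cycle (resp.\ anticycle) in $A$ gives rise to exactly $k$ of them. In particular no connecting arrow $\varpi_p^{[i]}$ appears in any oriented cycle or anticycle of $\Ak$, so the values $w_p$ chosen in Definition \ref{DefDegRepet} play no role here.

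Next I would compute the relevant degrees. Since $\deg(\al^{[i]})=\deg(\al)$ for every $\al\in Q_1$ and every $1\le i\le k$, for an oriented cycle $\rho=\al_1\cdots\al_l$ in $A$ one gets
\[ \deg(\rho^{[i]})=\sum_{1\le u\le l}\deg(\al_u^{[i]})=\sum_{1\le u\le l}\deg(\al_u)=\deg(\rho), \]
and for an anticycle $\om=\al_1\cdots\al_l$ in $A$, using that $\length(\om^{[i]})=l=\length(\om)$,
\[ \ovl{\deg}(\om^{[i]})=l-\sum_{1\le u\le l}\deg(\al_u^{[i]})=l-\sum_{1\le u\le l}\deg(\al_u)=\ovl{\deg}(\om). \]
Thus the bijection of Lemma \ref{LemBasisSheets} restricts, for each fixed $l$, to a bijection between the oriented cycles of degree $l$ (resp.\ anticycles $\om$ with $\ovl{\deg}(\om)=l$) in $\Ak$ and $k$ disjoint copies of the corresponding set in $A$.

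Finally, by Definition \ref{DefAGInvGraded} the value $\phi_{\Ak,\deg}(0,l)$ is the number of oriented cycles in $\Ak$ of degree $l$ plus the number of anticycles $\om$ in $\Ak$ with $\ovl{\deg}(\om)=l$; by the previous paragraph this equals $k$ times the number of oriented cycles of degree $l$ in $A$ plus $k$ times the number of anticycles $\om$ in $A$ with $\ovl{\deg}(\om)=l$, i.e.\ $k\,\phi_{A,\deg}(0,l)$. The argument is essentially bookkeeping, and I do not expect a serious obstacle; the only point requiring a moment's care is the formula for $\ovl{\deg}$, where one must note that it involves the \emph{length} of the anticycle rather than merely the sum of degrees, and that this length is visibly preserved under $(-)^{[i]}$, so that $\ovl{\deg}$ is preserved as well.
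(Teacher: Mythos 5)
Your proof is correct and takes exactly the route the paper intends: the paper simply cites Lemma \ref{LemBasisSheets} as making the claim ``obvious,'' and your write-up supplies the bookkeeping that justifies this -- the sheet-wise decomposition of (anti)cycles, the preservation of $\deg$ and $\ovl{\deg}$ under $(-)^{[i]}$, and the consequent $k$-to-$1$ correspondence for each fixed $l$.
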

\begin{proof}
This is obvious by Lemma \ref{LemBasisSheets}.
\end{proof}

\begin{prop}\label{PropAGGraded2}
Let $(Q,I)$, $k\in\Np$, and $\deg\co Q_1^{(k)}\to\Zbb$ be as in Definition \ref{DefDegRepet}. Assume that $k$ is coprime to $|\Ocal|$ for any AG-orbit $\Ocal$ in $A$. Then for any $(q,l)\in\Np\ti\Nn$, we have
\begin{equation}\label{***}
\phi_{A^{(k)},\deg}(q,l)=\begin{cases}
\phi_{A,\deg}(q,\frac{q+l}{k}-q)&\ \text{if}\ \frac{q+l}{k}-q\in\Nn,\\
0&\ \text{otherwise}.
\end{cases}
\end{equation}
\end{prop}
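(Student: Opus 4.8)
The plan is to mimic the proof of Lemma~\ref{LemAGRepet}, now keeping track of the \emph{graded} type of an AG-orbit under the passage from $A$ to $\Ak$. Since $k$ is coprime to $q:=|\Ocal|$ for every AG-orbit $\Ocal$ in $A$, we have $\gcd(q,k)=1$ and $\lcm(q,k)=qk$, so Lemma~\ref{LemAGRepet}~{\rm (2)} shows that $\Ocal\mapsto\wt{\Ocal}$ is a bijection from the set of AG-orbits of $A$ onto the set of AG-orbits of $\Ak$, with $|\wt{\Ocal}|=|\Ocal|=q$; in particular the first component of the type is preserved. Thus the proposition follows once I show
\[
\type_{\deg}(\Ocal)=(q,l)\quad\Longrightarrow\quad\type_{\deg}(\wt{\Ocal})=\big(q,\,kl+q(k-1)\big).
\]
Granting this, $\phi_{\Ak,\deg}(q,m)$ counts the AG-orbits $\Ocal$ of $A$ with $|\Ocal|=q$ whose graded second coordinate $l$ satisfies $kl+q(k-1)=m$, i.e.\ $l=\tfrac{q+m}{k}-q$; hence it equals $\phi_{A,\deg}\big(q,\tfrac{q+m}{k}-q\big)$ if $\tfrac{q+m}{k}-q\in\Nn$ and $0$ otherwise. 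Only $q\ge1$ is relevant here, the case $q=0$ being Proposition~\ref{PropAGInvGraded1}.

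To prove the displayed claim I fix an orbit $\Ocal=\{\del_{\Phi^j(p)}(A)\mid 0\le j<q\}$ with $\Phi^q(p)=p$, and use the explicit expressions for $\wp_p(\Ak)$ and $\del_p(\Ak)$ recorded in $(\ref{MaxQkIk})$ and $(\ref{EqA})$, together with $\Phi_{\Ak}=\Phi^k$ from the proof of Lemma~\ref{LemAGRepet}. As $\gcd(q,k)=1$, the $\Phi^k$-orbit of $p$ coincides as a set with the $\Phi$-orbit of $p$, so $\wt{\Ocal}=\{\del_{\Phi^j(p)}(\Ak)\mid 0\le j<q\}$ and the second coordinate of $\type_{\deg}(\wt{\Ocal})$ equals the base-point-independent sum $\sum_{j=0}^{q-1}\big(\deg(\wp_{\Phi^j(p)}(\Ak))+\ovl{\deg}(\del_{\Phi^j(p)}(\Ak))\big)$. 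The elementary inputs are: a copy $\xi^{[i]}$ of a path $\xi$ has the same length and the same arrow-degrees, so $\deg(\xi^{[i]})=\deg(\xi)$ and $\ovl{\deg}(\xi^{[i]})=\ovl{\deg}(\xi)$; $\deg(\varpi_{p'}^{[i]})=w_{p'}$, whence $\ovl{\deg}(\varpi_{p'}^{[i]})=1-w_{p'}$ by Remark~\ref{RemDegBar1}~{\rm (2)}; and both $\deg$ and $\ovl{\deg}$ are additive along concatenations of paths. These give, for every index $p'$,
\[
\deg\big(\wp_{p'}(\Ak)\big)=k\deg\big(\wp_{p'}(A)\big)+(k-1)w_{p'},
\]
\[
\ovl{\deg}\big(\del_{p'}(\Ak)\big)=\sum_{m=0}^{k-1}\ovl{\deg}\big(\del_{\Phi^m(p')}(A)\big)+(k-1)-\sum_{m=1}^{k-1}w_{\Phi^m(p')}.
\]

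The final step is to sum these two identities over $p'=\Phi^j(p)$, $0\le j<q$. For each fixed $m$ the map $j\mapsto\Phi^{m+j}(p)$ permutes the $q$ indices of the orbit, so $\sum_{j=0}^{q-1}\sum_{m=0}^{k-1}\ovl{\deg}\big(\del_{\Phi^{m+j}(p)}(A)\big)=k\sum_{j=0}^{q-1}\ovl{\deg}\big(\del_{\Phi^j(p)}(A)\big)$ and $\sum_{j=0}^{q-1}\sum_{m=1}^{k-1}w_{\Phi^{m+j}(p)}=(k-1)\sum_{j=0}^{q-1}w_{\Phi^j(p)}$; after substituting and adding the two sums, the terms involving the weights $w_{\Phi^j(p)}$ cancel, leaving
\[
k\sum_{j=0}^{q-1}\big(\deg(\wp_{\Phi^j(p)}(A))+\ovl{\deg}(\del_{\Phi^j(p)}(A))\big)+q(k-1)=kl+q(k-1),
\]
which is the desired value. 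I expect the only delicate point to be this bookkeeping: matching the index ranges coming from $(\ref{EqA})$ and verifying the cancellation of the weights (which is exactly what makes the statement valid for an arbitrary choice of the $w_p$, as noted in the remark preceding the proposition); everything else is routine.
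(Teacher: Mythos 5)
Your proof is correct and follows essentially the same route as the paper: both use the explicit decompositions of $\wp_p(\Ak)$ and $\del_p(\Ak)$, the identity $\Phi_{\Ak}=\Phi^k$, and the coprimality of $q$ and $k$ to identify each $\wt{\Ocal}$ with its source orbit $\Ocal$, and then compute the graded second coordinate via additivity of $\deg$ and $\ovl{\deg}$ with the weight terms cancelling. The only cosmetic difference is the reindexing at the end: the paper collapses the double sum $\sum_{j}\sum_{i}$ by summing over $0\le y\le qk-1$ and invoking periodicity, while you sum over $\Phi^{m+j}(p)$ for fixed $m$ and use that this permutes the orbit — two phrasings of the same observation.
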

\begin{proof}
Put $\Psi=\Phi_{A^{(k)}}$. Remark that we have $\Psi=\Phi^k$ for $\Phi=\Phi_A$. Similarly as in the proof of Lemma \ref{LemAGRepet}, any AG-orbit $\Ocal=\{\del_{\Phi^j(p)}\mid0\le j<q\}$ of $\type_{\deg}(\Ocal)=(q,n)$ in $A$ yields one AG-orbit
\[ \widetilde{\Ocal}=\{\del_{\Psi^j(p)}(A^{(k)})\mid 0\le j<q\} \]
in $A^{(k)}$, since $k$ and $q=|\Ocal|$ are coprime. For any $0\le j<q$, we have
\begin{eqnarray*}
\wp_{\Psi^j(p)}(A^{(k)})&=&(\wp_{\Psi^j(p)}(A))^{[1]}(\varpi_{\Psi^j(p)})^{[1]}(\wp_{\Psi^j(p)}(A))^{[2]}\cdots(\varpi_{\Psi^j(p)})^{[k-1]}(\wp_{\Psi^j(p)}(A))^{[k]},\\
\del_{\Psi^j(p)}(A^{(k)})&=&(\del_{\Phi^{k-1}\Psi^j(p)}(A))^{[1]}(\varpi_{\Phi^{k-1}\Psi^j(p)})^{[1]}(\del_{\Phi^{k-2}\Psi^j(p)}(A))^{[2]}\cdots(\varpi_{\Phi\Psi^j(p)})^{[k-1]}(\del_{\Psi^j(p)}(A))^{[k]}
\end{eqnarray*}
as in $(\ref{EqP})$ and $(\ref{EqA})$.
They satisfy
\begin{eqnarray}
\deg(\wp_{\Psi^j(p)}(A^{(k)}))&=&\sum_{1\le i\le k}\deg((\wp_{\Psi^j(p)}(A))^{[i]})+\sum_{1\le i\le k-1}\deg((\varpi_{\Psi^j(p)})^{[i]})\nonumber\\
&=&k\deg(\wp_{\Psi^j(p)}(A))+(k-1)w_{\Psi^j(p)}\label{EqdegP}
\end{eqnarray}
and
\begin{eqnarray}
\ovl{\deg}(\del_{\Psi^j(p)}(A^{(k)}))&=&\sum_{1\le i\le k}\ovl{\deg}((\del_{\Phi^{k-i}\Psi^j(p)}(A))^{[i]})+\sum_{1\le i\le k-1}\ovl{\deg}((\varpi_{\Phi^{k-i}\Psi^j(p)})^{[i]})\nonumber\\
&=&(k-1)+\sum_{1\le i\le k}\ovl{\deg}(\del_{\Phi^{k-i}\Psi^j(p)}(A))-\sum_{1\le i\le k-1}w_{\Phi^{k-i}\Psi^j(p)}\label{EqdegA}
\end{eqnarray}
by Remark \ref{RemDegBar1}. By definition, we have
$\type_{\deg}(\widetilde{\Ocal})=(q,l)$, with
\begin{equation}\label{EqLengthOtilde}
l=\sum_{0\le j\le q-1}\Big(\deg\big(\wp_{\Psi^j(p)}(A^{(k)})\big)+\ovl{\deg}\big(\del_{\Psi^j(p)}(A^{(k)})\big)\Big).
\end{equation}
Let us calculate $l$. By $(\ref{EqdegP})$, we have
\begin{eqnarray*}
\sum_{0\le j\le q-1}\deg(\wp_{\Psi^j(p)}(A^{(k)}))
&=&\sum_{0\le j\le q-1}\big(k\deg(\wp_{\Phi^{kj}(p)}(A)+(k-1)w_{\Psi^{kj}(p)}\big)\\
&=&k\sum_{0\le x\le q-1}\deg(\wp_{\Phi^x(p)(A)})+(k-1)\sum_{0\le x\le q-1}w_{\Phi^x(p)}
\end{eqnarray*}
since $k$ and $q$ are coprime. Also, by $(\ref{EqdegA})$ and $\Phi^{q+m}(p)=\Phi^m(p)\ (\fa m\in\Nn)$, we have
\begin{eqnarray*}
&&\hspace{-1cm}\sum_{0\le j\le q-1}\ovl{\deg}(\del_{\Psi^j(p)}(A^{(k)}))\\
&=&\sum_{0\le j\le q-1}(k-1)+\sum_{0\le j\le q-1}\sum_{1\le i\le k}\ovl{\deg}(\del_{\Phi^{kj+k-i}(p)}(A))
-\sum_{0\le j\le q-1}\sum_{1\le i\le k-1}w_{\Phi^{kj+k-i}(p)}\\
&=&q(k-1)+\sum_{0\le y\le qk-1}\ovl{\deg}(\del_{\Phi^y(p)}(A))
-\Big(\sum_{0\le y\le qk-1}w_{\Phi^y(p)}-\sum_{0\le j\le q-1}w_{\Phi^{kj}(p)}\Big)\\
&=&q(k-1)+k\sum_{0\le y\le q-1}\ovl{\deg}(\del_{\Phi^y(p)}(A))
-(k-1)\sum_{0\le y\le q-1}w_{\Phi^y(p)}.
\end{eqnarray*}
Thus by $(\ref{EqLengthOtilde})$, we have
\begin{equation}\label{DegTypeEq}
l=q(k-1)+k\Big(\sum_{0\le x\le q-1}\deg\wp_{\Phi^x(p)}(A)+\sum_{0\le y\le q-1}\ovl{\deg}(\del_{\Phi^y(p)}(A)\Big)=q(k-1)+kn.
\end{equation}
Similarly as in the proof of Lemma \ref{LemAGRepet} and Proposition \ref{PropAGRepet1}, from $(\ref{DegTypeEq})$ we obtain
\[ \phi_{A^{(k)},\deg}(q,l)=\sum_{(q\ppr,n)\in S_{(k)}(q,l)}\phi_{A,\deg}(q\ppr,n) \]
where
\begin{eqnarray*}
S_{(k)}(q,l)&=&\{(q\ppr,n)\in\Np\ti\Nn\mid (q,l)=(q\ppr,q\ppr(k-1)+kn) \}\\
&=&\begin{cases}
\{(q,\frac{l-q\ppr(k-1)}{k})\}&\ \text{if}\ \frac{l-q\ppr(k-1)}{k}\in\Nn,\\
\emptyset&\ \text{otherwise}.
\end{cases}
\end{eqnarray*}
This shows $(\ref{***})$.
\end{proof}

\section{Associated semisimple algebra}\label{section_Associated}


In the proceeding sections, for any pair of paths $\thh,\xi$ in $A$, we write $\thh\xi=0$ as an equation in $A$, which means $\thh\xi\in I$, including the case $t(\thh)\ne s(\xi)$. Similarly for $\thh\xi\ne0$.

\begin{dfn}\label{DefSA}
Let $A$ be a locally gentle algebra.
\begin{enumerate}
\item For any maximal path $\rho=\al_1\cdots\al_{l}\in A$ with
\[ s(\al_u)=a_{u-1},\ \ t(\al_u)=a_u\quad(1\le u\le l), \]
let $\Mcal_{\rho}=M_{l+1}(K)$ be the full $(l+1)\times(l+1)$-matrix algebra, whose $(u,v)$-matrix unit is denoted by $\rho_{u,v}$. Thus $\Mcal_{\rho}$ has a $K$-basis $\{\rho_{u,v}\mid 0\le u,v\le l \}$, which we may abbreviately write as follows.
\begin{equation}\label{Mrho1}
\left[\begin{array}{cccc}
K\rho_{0,0}&K\rho_{0,1}&\cdots&K\rho_{0,l}\\
K\rho_{1,0}&K\rho_{1,1}&\cdots&K\rho_{1,l}\\
\vdots&\ddots&\ddots&\vdots\\
K\rho_{l,0}&\cdots&\cdots&K\rho_{l,l}\\
\end{array}\right]
\end{equation}
Remark that a vertex $a\in Q_0$ can appear at most twice in $\{a_0,\ldots,a_{l}\}$.
Define a $K$-algebra homomorphism $\eta_A^{\rho}\co A\to\Mcal_{\rho}$
by $K$-linearly extending the following correspondence for any path $\thh\in A$.
\begin{itemize}
\item[{\rm (i)}] If $\thh=e_a$ for some $a\in Q_0$, then
\[ \eta_A^{\rho}(e_a)=\displaystyle\sum_{\bsm a_u=e_a\\ 0\le u\le l\esm}\rho_{u,u}=%
\begin{cases}
\ 0&\ \text{if}\ a\notin\{a_0,\ldots,a_{l} \}\\
\rho_{u,u}&\ \text{if}\ a=a_u,\ \text{and}\ a\ne a_v\ \text{whenever}\ v\ne u\\
\rho_{u,u}+\rho_{v,v}&\ \text{if}\ a=a_u=a_v\ \text{for}\ u\ne v
\end{cases}. \]
\item[{\rm (ii)}] If $\thh$ is of positive length, then
\[ \eta_A^{\rho}(\thh)=\begin{cases}\rho_{u-1, v}&\ \text{if}\ \thh=\al_u\cdots\al_v\ \text{for some}\ 1\le u\le v\le l\\
\ 0&\ \text{otherwise} \end{cases}. \]
\end{itemize}
\item For any $a\in\Tfr_A$, let $K\efr_a$ be the $1$-dimensional $K$-algebra with the idempotent basis element $\efr_a$. Define a $K$-algebra homomorphism $\eta_A^a\co A\to K\efr_a$ by
\[ \eta_A^a(\thh)=\begin{cases}\efr_a&\ \text{if}\ \thh=e_a\\ \ 0&\ \text{otherwise}\end{cases}. \]
\item Define a semisimple $K$-algebra $V(A)$ by
\[ V(A)=\big(\prod_{\rho\in\Mfr_A}\Mcal_{\rho}\big)\times\big(\prod_{a\in\Tfr_A}K\efr_a\big). \]
By the universality of the product, we obtain a $K$-algebra homomorphism $\eta_A\co A\to V(A)$, whose components are $\eta_A^{\rho}$ and $\eta_A^a$ defined in {\rm (1)} and {\rm (2)}.
\end{enumerate}
\end{dfn}

\begin{rem}\label{RemBasisVA}

\begin{enumerate}
\item Remark that each component $\Mcal_{\rho}$ or $K\efr_a$ has an inclusion to $V(A)$
\[ \Mcal_{\rho}\hookrightarrow V(A),\ \ K\efr_a\hookrightarrow V(A) \]
as a $K$-module. We identify elements in $\Mcal_{\rho}$ or in $K\efr_a$ with their images in $V(A)$ by these inclusions. Namely, an element $\xi\in\Mcal_{\rho}$ is identified with an element $\upsilon=(\{\upsilon_{\pi}\}_{\pi\in\Mcal_{\rho}},\{\upsilon_a\}_{a\in\Tfr_A})\in V(A)$ such that
\[ \upsilon_{\pi}=\begin{cases}\xi&\ \text{if}\ \pi=\rho\\
0&\ \text{if}\ \pi\ne\rho
\end{cases}
\ \ ,\quad \upsilon_a=0\ \ (\fa a\in\Tfr_A).
\]
Similarly for elements in $K\efr_a$.
\item With the above identification, a $K$-basis of $V(A)$ is given by the following subset of $V(A)$.
\begin{equation}\label{BasisForVA}
\big(\coprod_{\rho\in\Mfr_A}\{\rho_{u,v}\mid 0\le u,v\le\length(\rho) \}\big)\amalg\{\efr_a\mid a\in\Tfr_A\}.
\end{equation}
\end{enumerate}
\end{rem}

\begin{rem}\label{RemGrading}
If a function $\deg\co Q_1\to\Zbb$ is given, we may endow $A$ a $\Zbb$-grading as seen in Definition \ref{DefAGrading}. Also for $V(A)$, we have the following.
\begin{enumerate}
\item For any $\rho=\al_1\cdots\al_{l}\in \Mfr_A$, we may endow $\Mcal_{\rho}$ a $\Zbb$-grading by assigning
\[ \deg(\rho_{u,v})=\begin{cases}
\displaystyle\sum_{u-1\le w\le v}\deg\al_w&\ \text{if}\ u<v\\
\ 0&\ \text{if}\ u=v\\
\displaystyle\sum_{v-1\le w\le u}(-\deg\al_w)&\ \text{if}\ u>v
 \end{cases} \]
to each basis element $\rho_{u,v}$, and extending it $K$-linearly. Then $\eta_A^{\rho}\co A\to\Mcal_{\rho}$ preserves the grading.

\item For any $a\in\Tfr_A$ we assign $\deg\efr_a=0$, to regard $K\efr_a$ as a graded $K$-algebra concentrated in degree $0$. Then $\eta_A^a\co A\to K\efr_a$ preserves the grading.

\item $V(A)$ becomes a $\Zbb$-graded $K$-algebra $V(A)=\un{d\in\Zbb}{\bigoplus}V_d(A)$ by the gradings given in {\rm (1)} and {\rm (2)}. Then $\eta_A\co A\to V(A)$ preserves the grading.
\end{enumerate}

We mainly deal with $A$ and $V(A)$ without grading. However, also in this ungraded case, considering a temporary grading given by the constant function $\deg_1\co Q_1\to\Zbb$ with value $1\in\Zbb$ makes some arguments easier. For any path $0\ne\xi\in A$, we have $\deg_1\xi=\length(\xi)$ with this grading.
\end{rem}

\begin{prop}\label{PropApK}
Let $A=KQ/I$ be a locally gentle algebra satisfying $\dfr_A>0$. For any $k\in\Np$, let $\Akp$ to be the following $k\ti k$-upper-triangular matrix algebra.
\[ \Akp=\left[
\begin{array}{ccccc}
A&V(A)&V(A)&\cdots&V(A)\\
0&A&V(A)&\cdots&V(A)\\
0&0&A&\ddots&\vdots\\
0&\cdots&\ddots&\ddots&V(A)\\
0&\cdots&\cdots&0&A\\
\end{array}
\right] \]
Namely, $\Akp=[\Akp_{ij}]_{ij}$ is given by
\[ \Akp_{ij}=
\begin{cases}
A& \text{if}\ i=j,\\
V(A)& \text{if}\ i<j,\\
0& \text{if}\ i>j.\\
\end{cases}
\]
As for the multiplication, we use the multiplications in $A$ and $V(A)$, and the $A$-$A$-bimodule structure on $V(A)$ induced from $\eta_A$.
More precisely, for elements $x=[x_{ij}]_{ij},y=[y_{ij}]_{ij}\in\Akp$ where $x_{ij},y_{ij}\in\Akp_{ij}$ (in particular $x_{ij}=y_{ij}=0$ if $i>j$), their product $xy=z=[z_{ij}]_{ij}$ is defined by the following.
\[ z_{ij}=\begin{cases}
x_{ii}y_{ii}&\ \text{if}\ i=j\\[11.5pt]
\eta_A(x_{ii})y_{ij}+\big(\displaystyle\sum_{i< m<j}x_{im}y_{mj}\big)+x_{ij}\eta_A(y_{jj})&\ \text{if}\ i<j\\
0&\ \text{if}\ i>j
\end{cases} \]
Then we have an isomorphism $\psi\co \Ak\ov{\cong}{\lra}\Akp$ of $K$-algebras.
\end{prop}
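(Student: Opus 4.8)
First, the case $k=1$ is trivial since then $\Ak=A=\Akp$, so assume $k\ge 2$. The plan is to exploit the orthogonal idempotents $1^{[i]}=\sum_{a\in Q_0}e_{a^{[i]}}$ of $\Ak$, which satisfy $\sum_{i=1}^k1^{[i]}=1_{\Ak}$, and to compare the block decomposition $\Ak=\bigoplus_{1\le i,j\le k}1^{[i]}\Ak1^{[j]}$ with the matrix structure of $\Akp$. By Proposition \ref{PropBasisSheets} the blocks already fit the shape of $\Akp$: one has $1^{[i]}\Ak1^{[j]}=0$ for $i>j$; the copying map $(-)^{[i]}$ is an algebra isomorphism from $A$ onto $1^{[i]}\Ak1^{[i]}=A^{[i]}$; and for $i<j$ the block $1^{[i]}\Ak1^{[j]}$ has the explicit $K$-basis $(\ref{BasisSheetsB1})$. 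I would define $\psi$ block by block: on $1^{[i]}\Ak1^{[i]}$ as the inverse of $(-)^{[i]}$, landing in the $(i,i)$-entry $A$ of $\Akp$; and on $1^{[i]}\Ak1^{[j]}$ for $i<j$ as the $K$-linear map sending the basis vector $(\al_{u+1}\cdots\al_l)^{[i]}\rho^{[i,j]}(\al_1\cdots\al_v)^{[j]}$ (for $\rho=\al_1\cdots\al_l\in\Mfr_A$ and $0\le u,v\le l$) to the matrix unit $\rho_{u,v}\in\Mcal_\rho$ placed in the $(i,j)$-entry, and sending $\iota_a^{[i,j]}$ to $\efr_a\in K\efr_a$ placed in the $(i,j)$-entry. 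Comparing $(\ref{BasisSheetsB1})$ with the $K$-basis $(\ref{BasisForVA})$ of $V(A)$ shows immediately that $\psi$ is a $K$-linear isomorphism, and it is unital since $1^{[i]}$ corresponds to $1_A$ in the $(i,i)$-entry.

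It then remains to verify that $\psi$ is multiplicative, and by $K$-bilinearity this reduces to products of basis vectors. Since each basis vector lies in a single block $1^{[i]}\Ak1^{[j]}$, the product of a vector of block $(i,j)$ with one of block $(i\ppr,j\ppr)$ is zero unless $j=i\ppr$, so only four families occur. The family $(i,i)\times(i,i)$ is just the statement that $(-)^{[i]}$ is an algebra map. The families $(i,i)\times(i,j)$ and $(i,j)\times(j,j)$, with $i<j$, encode the $A$-$A$-bimodule structure on $V(A)$ induced by $\eta_A$; here I would show that if $\theta^{[i]}\cdot(\al_{u+1}\cdots\al_l)^{[i]}\rho^{[i,j]}(\al_1\cdots\al_v)^{[j]}\ne0$ then — using that by the defining conditions of local gentleness a path extends uniquely to the left and that each non-cyclic arrow lies on a unique maximal path, so that the product, being a path from sheet $i$ to sheet $j$, is a subpath of one of the maximal paths of Corollary \ref{CorMaxPathInAk} — the path $\theta$ must be a suffix of $\al_1\cdots\al_u$ (or of $\rho$ when $u=l$), whence the product is again a listed basis vector; one then checks directly that it corresponds to $\eta_A(\theta)\cdot\rho_{u,v}$. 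Three points need separate care: the case $\theta=e_{a_u}$, where $\eta_A(e_{a_u})$ may be a sum $\rho_{u,u}+\rho_{u\pprr,u\pprr}$ of two diagonal matrix units, the second of which is killed by $\rho_{u,v}$; the subcase where the product in $\Ak$ is zero, where one checks $\eta_A(\theta)\cdot\rho_{u,v}=0$ as well; and the possibility that $\theta$ uses an arrow of $A$ that lies on an oriented cycle, in which case $\eta_A(\theta)=0$ and both sides vanish. The right-action family, and the variants with $\iota_a^{[i,j]}$ in place of the $\rho$-type vectors, are entirely analogous.

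The heart of the proof is the family $(i,j)\times(j,m)$ with $i<j<m$. Given a $\rho$-type basis vector $x=(\al_{u+1}\cdots\al_l)^{[i]}\rho^{[i,j]}(\al_1\cdots\al_v)^{[j]}$ of block $(i,j)$ and a basis vector $y$ of block $(j,m)$ — of $\sigma$-type for some $\sigma\in\Mfr_A$, or of $\iota$-type — I would analyze the concatenation $xy$ in $\Ak$ and establish the dichotomy: $xy\ne0$ if and only if $y$ is the $\rho$-type vector corresponding to a matrix unit $\rho_{v,v\ppr}\in\Mcal_\rho$ (that is, $\sigma=\rho$ and the left index of $y$ equals the right index $v$ of $x$), in which case the sheet-$j$ parts of $x$ and $y$ glue to $\rho^{[j]}$, one has $\rho^{[i,j]}\rho^{[j]}\rho^{[j,m]}=\rho^{[i,m]}$ by Definition \ref{Defrhoint}, and so $xy$ is the $\rho$-type basis vector of block $(i,m)$ corresponding to $\rho_{u,v}\cdot\rho_{v,v\ppr}=\rho_{u,v\ppr}$; the $\iota$-type vectors compose in the same fashion, and all mixed products vanish. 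This is exactly the product rule in $V(A)=\big(\prod_\rho\Mcal_\rho\big)\times\big(\prod_a K\efr_a\big)$. Establishing the vanishing half of the dichotomy is where the (local) gentleness of $A$ is genuinely needed: if the pair of arrows at the junction of $x$ and $y$ is not in $I$, the unique-extension property propagates the coincidence of $\sigma$ with $\rho$ along their overlapping prefix/suffix until the maximality of $\rho$ or of $\sigma$ is contradicted, unless $\sigma=\rho$ with matching indices; one also uses that a maximal path has no repeated arrow, since a repeated arrow would lie on an oriented cycle and hence on no maximal path. I expect this vanishing analysis — with its boundary subcases ($v=l$, trivial cutting indices, the junctions involving the connecting arrows $\varpi_p^{[i]}$) — to be the main obstacle; the rest is bookkeeping. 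Once multiplicativity is in hand, $\psi$ is the asserted isomorphism of $K$-algebras.
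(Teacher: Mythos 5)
Your proposal is correct and takes essentially the same route as the paper's own proof: both define $\psi$ block-by-block via the $K$-bases of Proposition \ref{PropBasisSheets} and of $V(A)$, and both reduce multiplicativity to the same four case types ($h=i=j$, $h=i<j$, $h<i=j$, $h<i<j$ in the paper's notation). Your extra caveats about $\eta_A(e_a)$ possibly being a sum of two matrix units and about paths lying on oriented cycles are sound but largely redundant given that the nonvanishing of $\thh^{[i]}\bfr$ already forces $\thh$ to be a suffix of a maximal path; the paper folds all of this into a single ``$0$ otherwise'' clause.
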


\begin{rem}\label{RemUTA}
As we will show later in Proposition \ref{PropVAdim}, the homomorphism $\eta_A$ becomes injective whenever $A$ is gentle. Thus if $A$ is gentle, identifying $A$ with the image of $\eta_A$, we may regard $\Akp$ as a $K$-subalgebra of $M_k(V(A))$.
\end{rem}

\begin{proof}[Proof of Proposition \ref{PropApK}.]
For each $1\le i,j\le k$, put $\Ak_{ij}=1^{[i]}\Ak1^{[j]}$ for simplicity. By using Proposition \ref{PropBasisSheets}, we give an isomorphism of $K$-modules
\begin{equation}\label{psiij}
\psi_{ij}\co \Ak_{ij}\ov{\cong}{\lra}\Akp_{ij}
\end{equation}
in the following way.
\begin{enumerate}
\item If $i>j$, we have $\Ak_{ij}=\Akp_{ij}=0$.
\item If $i=j$, naturally we have $\Ak_{ii}\cong A^{[i]}$. Define $\psi_{ii}$ to be the inverse of the composition of
\[ \Akp_{ii}=A\un{\cong}{\ov{\ (-)^{[i]}}{\lra}}A^{[i]}\cong \Ak_{ii}. \]
This respects the multiplication.
\item If $i<j$, we have a $K$-basis $(\ref{BasisSheetsB1})$ of $\Ak_{ij}$, and a $K$-basis $(\ref{BasisForVA})$ of $\Akp_{ij}$. Define $\psi_{ij}$ by extending $K$-linearly the following bijective correspondence of the basis elements.
\begin{itemize}
\item[{\rm (i)}] For any $\rho=\al_1\cdots\al_l\in\Mfr_A$ and $0\le u,v\le l$, put
\[ \psi_{ij}\big((\al_{u+1}\cdots\al_l)^{[i]}\rho^{[i,j]}(\al_1\cdots\al_v)^{[j]} \big)=\rho_{u,v}\in V(A)=\Akp_{ij}. \]
\item[{\rm (ii)}] For any $a\in\Tfr_A$, put
\[ \psi_{ij}(\iota_a^{[i,j]})=\efr_a\in V(A)=\Akp_{ij}. \]
\end{itemize}
\end{enumerate}

Summing up $(\ref{psiij})$ for $1\le i,j\le k$, we obtain an isomorphism $\psi\co \Ak\ov{\cong}{\lra}\Akp$ of $K$-modules, which satisfies $\psi|_{A_{ij}^{(k)}}=\psi_{ij}$. It remains to show that $\psi$ respects multiplications of basis elements. Let us show
\begin{equation}\label{EqToShow_ab}
\psi(\afr\bfr)=\psi(\afr)\psi(\bfr)
\end{equation}
for any pair of basis elements $\afr\in \Ak_{hi}$ and $\bfr\in \Ak_{i\ppr j}$ in Proposition \ref{PropBasisSheets}.
Since $\Ak_{hi}=0$ for $h>i$, we may assume $1\le h\le i\le k$. We may also assume $1\le i\ppr\le j\le k$ by the same reason. Besides, if $i\ne i\ppr$ we have $\afr\bfr=0$, and similarly $\psi(\afr)\in \Akp_{hi},\psi(\bfr)\in \Akp_{i\ppr j}$ implies $\psi(\afr)\psi(\bfr)=0$, hence $(\ref{EqToShow_ab})$ is trivially satisfied. Thus it suffices to consider the case $i=i\ppr$, so we may assume
\[ \afr\in \Ak_{hi},\ \bfr\in\Ak_{ij}\quad(1\le h\le i\le j\le k) \]
from the beginning. We show $(\ref{EqToShow_ab})$ in the following 4 cases.
\medskip

\noindent\underline{Case 1}: $h=i=j$.

This case is obvious, since $\psi_{ii}$ respects multiplications.

\medskip

\noindent\underline{Case 2}: $h=i<j$.

In this case, we may assume $\afr=\thh^{[i]}$ for some path $\thh$ in $A$.
Let us show that $\psi_{ij}(\thh^{[i]}\bfr)=\eta_A(\psi_{ii}(\thh^{[i]}))\psi_{ij}(\bfr)$ holds in $V(A)=\Akp_{ij}$, for any element $\bfr\in\Ak_{ij}$ which is of one of the following forms.
\begin{itemize}
\item[{\rm (i)}] $\bfr=(\al_{u+1}\cdots\al_l)^{[i]}\rho^{[i,j]}(\al_1\cdots\al_v)^{[j]}$ for some $\rho=\al_1\cdots\al_l\in\Mfr_A$ and $0\le u,v\le l$.
\item[{\rm (ii)}] $\bfr=\iota_a^{[i,j]}$ for some $a\in\Tfr_A$.
\end{itemize}

For {\rm (i)}, we see that $\thh^{[i]}\bfr=(\thh\al_{u+1}\cdots\al_l)^{[i]}\rho^{[i,j]}(\al_1\cdots\al_v)^{[j]}$ becomes $0$ except for the case
\begin{equation}\label{casexiw}
\thh=\al_{w+1}\cdots\al_u\ \text{for some}\ 0\le w\le u,
\end{equation}
where we put $\al_{w+1}\cdots\al_u=e_{t(\al_u)}$ for convenience when $w=u$. 
Thus we have
\[ \thh^{[i]}\bfr=\begin{cases}
(\al_{w+1}\cdots\al_l)^{[i]}\rho^{[i,j]}(\al_1\cdots\al_v)^{[j]}&\ \text{if}\ (\ref{casexiw})\ \text{holds}\\
0&\ \text{otherwise}
\end{cases} \]
which shows
\[
\psi_{ij}(\thh^{[i]}\bfr)=
\begin{cases}
\rho_{w,u}&\ \text{if}\ (\ref{casexiw})\ \text{holds}\\
0&\ \text{otherwise}
\end{cases}
=\eta_A(\thh)\rho_{u,v}\ =\ \eta_A(\psi_{ii}(\thh^{[i]}))\psi_{ij}(\bfr).
\]

For {\rm (ii)}, similarly we have
\[ \psi_{ij}(\thh^{[i]}\bfr)=\begin{cases}
\efr_a&\ \text{if}\ \thh=e_a\\
0&\ \text{otherwise}
\end{cases}
=\eta_A(\thh)\efr_a=\psi_{ii}(\thh^{[i]})\psi_{ij}(\bfr). \]

\medskip

\noindent\underline{Case 3}: $h<i=j$.

This case can be shown in a similar way as in Case 2.

\medskip

\noindent\underline{Case 4}: $h<i<j$.

If $\afr=\iota_a^{[h,i]}$ for some $a\in\Tfr_A$, then we have
\[
\psi_{hj}(\afr\bfr)=
\begin{cases}
\efr_a&\ \text{if}\ \bfr=\iota_a^{[i,j]}\\
0&\ \text{otherwise}
\end{cases}
=\efr_a\psi_{ij}(\bfr)=\psi_{hi}(\afr)\psi_{ij}(\bfr).
\]
Similarly for the case $\bfr=\iota_b^{[i,j]}$ for some $b\in\Tfr_A$.
Thus it remains to deal with the case where
\[
\afr=(\al_{u+1}\cdots\al_l)^{[h]}\rho^{[h,i]}(\al_1\cdots\al_v)^{[i]},\quad
\bfr=(\be_{p+1}\cdots\be_m)^{[i]}\om^{[i,j]}(\be_1\cdots\be_q)^{[j]}
\]
hold for some
\[
\rho=\al_1\cdots\al_l\in\Mfr_A,\ 0\le u,v\le l,\ \ \text{and}\ \ 
\om=\be_1\cdots\be_m\in\Mfr_A,\ 0\le p,q\le m.
\]
In this case, we have
\begin{eqnarray*}
\afr\bfr&=&
\begin{cases}
(\al_{u+1}\cdots\al_l)^{[h]}\rho^{[h,i]}\rho\rho^{[i,j]}(\al_1\cdots\al_q)&\ \text{if}\ \rho=\om\ \text{and}\ v=p\\
0&\ \text{otherwise}
\end{cases}\\
&=&
\begin{cases}
(\al_{u+1}\cdots\al_l)^{[h]}\rho^{[h,j]}(\al_1\cdots\al_q)&\ \text{if}\ \rho=\om\ \text{and}\ v=p\\
0&\ \text{otherwise}
\end{cases}\ ,
\end{eqnarray*}
hence
\[ \psi_{hj}(\afr\bfr)=\begin{cases}
\rho_{u,q}&\ \text{if}\ \rho=\om\ \text{and}\ v=p\\
0&\ \text{otherwise}
\end{cases} \]
holds in $V(A)$. On the other hand, we have
\[ \psi_{hi}(\afr)\psi_{ij}(\bfr)=\rho_{u,v}\om_{p,q}=
\begin{cases}
\rho_{u,q}&\ \text{if}\ \rho=\om\ \text{and}\ v=p\\
0&\ \text{otherwise}
\end{cases}\ ,
\]
which shows $\psi_{hj}(\afr\bfr)=\psi_{hi}(\afr)\psi_{ij}(\bfr)$.
\end{proof}

By Proposition \ref{PropApK}, we have an isomorphism $\psi\co A^{(k)}\ov{\cong}{\lra}\Akp$ for any locally gentle algebra $A$. In the rest of this manuscript, we identify $\Akp$ with $A^{(k)}$ through this isomorphism.
Let us look at the structure of $V(A)$ more closely, when $A$ is gentle.\
\begin{prop}\label{PropVAdim}
If $A$ is gentle, then the following holds.
\begin{enumerate}
\item For any $a\in Q_0$, we have $\eta_A(e_a)=\ufr_a+\vfr_a$ for a pair of distinct non-zero orthogonal idempotent elements $\ufr_a,\vfr_a\in V(A)$. In particular, $\eta_A(e_a)\ne0$.
\item The idempotent elements $\{\ufr_a\mid a\in Q_0\}\amalg\{\vfr_a\mid a\in Q_0\}$ obtained in {\rm (1)} are pairwise orthogonal, and satisfy
\begin{equation}\label{Decomp1}
1=\displaystyle\sum_{a\in Q_0}(\ufr_a+\vfr_a)
\end{equation}
for the unit $1\in V(A)$.
\item $\eta_A$ is injective.
\item $\dim_KV(A)=2\dim_KA$.
\end{enumerate}
\end{prop}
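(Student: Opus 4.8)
The plan is to prove assertions (1)--(4) in order, the decisive ingredient being a combinatorial count of how often a fixed vertex of $Q$ is met by the maximal paths of the blossoming. I will first establish the following counting lemma: \emph{for every $a\in Q_0$, the number of indices $u$ with $a_u=a$ — summed over all maximal paths $\rho=\al_1\cdots\al_l\in\Mfr_A$, whose vertices $a_0,\dots,a_l$ are as in Definition~\ref{DefSA} — plus $1$ if $a\in\Tfr_A$, equals exactly $2$.} To see this I pass to the blossoming $(Q\bls,I\bls)$, which is still gentle because the new arrows are incident to a source or a sink and so create no oriented cycle. Its maximal paths are the $\Pfr_p=\sig_p\wp_p\tau_p$ of $(\ref{MABSDT})$, and the interior vertices of $\Pfr_p$ (all its vertices except the endpoints $s_p,t_p$) are precisely the vertices of $\wp_p$ — i.e.\ the $a_0,\dots,a_l$ of the corresponding $\rho\in\Mfr_A$, or the single vertex of a trivial thread. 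Since a vertex $a\in Q_0$ is neither a source nor a sink of $Q\bls$, the quantity in the lemma is exactly the number of interior occurrences of $a$ among these maximal paths. Now $a$ has in-degree $2$ in $Q\bls$; as $A\bls$ has no oriented cycle, each of the two arrows into $a$ belongs to a unique maximal path, in which it occurs only once and is immediately followed by one interior occurrence of $a$, and conversely every interior occurrence of $a$ is reached across one of these two arrows. Hence the count is $2$.

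Given the lemma, part~(1) is immediate: by the formulas in Definition~\ref{DefSA}, $\eta_A(e_a)$ is literally the sum of the diagonal matrix units $\rho_{u,u}$ with $a_u=a$ (over $\rho\in\Mfr_A$) together with $\efr_a$ if $a\in\Tfr_A$ — that is, the sum of exactly two distinct primitive idempotents among the explicit basis $(\ref{BasisForVA})$ of $V(A)$, and two distinct such are orthogonal; writing $\eta_A(e_a)=\ufr_a+\vfr_a$ accordingly settles (1). Part~(2) is then formal: for $a\ne b$ we have $\eta_A(e_a)\eta_A(e_b)=\eta_A(e_ae_b)=0$, and since $\ufr_a$ is a sub-idempotent of $\eta_A(e_a)$ and $\ufr_b$ of $\eta_A(e_b)$ this forces $\ufr_a\ufr_b=0$, and likewise for the other three products, so all of $\ufr_a,\vfr_a,\ufr_b,\vfr_b$ are pairwise orthogonal; moreover $\sum_{a\in Q_0}(\ufr_a+\vfr_a)=\eta_A\big(\sum_{a\in Q_0}e_a\big)=\eta_A(1)=1$.

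For part~(3) I use gentleness once more: $A$ has no oriented cycle, so every positive-length path is a subpath of a unique maximal path, whence $\eta_A$ sends the basis of positive-length paths bijectively onto the strictly-upper-triangular matrix units $\{\rho_{i,j}\mid 0\le i<j\le\length(\rho)\}$ and sends each $e_a$ to $\ufr_a+\vfr_a$. The first family spans a subspace $U\subseteq V(A)$; the elements $\ufr_a+\vfr_a$ lie in the complementary subspace spanned by the diagonal matrix units and the $\efr_a$; and within each family the images are linearly independent (distinct basis elements, using (2) for the second family). So a relation $\sum_\thh c_\thh\eta_A(\thh)=0$ splits and forces all $c_\thh=0$, i.e.\ $\eta_A$ is injective. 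Part~(4) is then a dimension count closing with Remark~\ref{RemCountTA}: the basis of $A$ is $\{e_a\mid a\in Q_0\}$ together with $\sum_{\rho\in\Mfr_A}\binom{\length(\rho)+1}{2}$ positive-length paths (by the bijection of (3)), so $\dim_K A=|Q_0|+\sum_{\rho\in\Mfr_A}\binom{\length(\rho)+1}{2}$, whereas $\dim_K V(A)=\sum_{\rho\in\Mfr_A}(\length(\rho)+1)^2+|\Tfr_A|$; substituting $|\Tfr_A|=2|Q_0|-\sum_{\rho\in\Mfr_A}(\length(\rho)+1)$ from Remark~\ref{RemCountTA} and simplifying turns the latter into $2\dim_K A$.

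I expect the main obstacle to be the counting lemma behind part~(1): making precise the bookkeeping of interior occurrences of a vertex in the blossomed quiver, and the clean observation that the two incoming arrows at $a$ each account for exactly one such occurrence. Everything after that is either formal (parts (2) and (3)) or routine arithmetic (part (4)).
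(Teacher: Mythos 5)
Your proof is correct, and it reaches the same conclusions by a route that is structurally similar to the paper's but a bit cleaner in two places.

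For part (1), the paper argues by cases on how many distinct maximal paths pass through $a$ and how many times (two different maximal paths; one maximal path twice; one maximal path once, forcing $a\in\Tfr_A$). Your counting lemma --- that the number of diagonal/trivial-thread basis idempotents summing to $\eta_A(e_a)$ is always exactly $2$, because $a$ has in-degree $2$ in $Q\bls$ and each incoming arrow lies in a unique maximal path of $A\bls$, producing exactly one interior occurrence of $a$ --- proves the same fact in one stroke by passing to the blossoming. This is the same combinatorial content, but your version unifies the paper's three subcases and makes the ``exactly two'' statement transparent; it is a genuinely nicer way to see it.

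For part (3), the paper works locally: it decomposes $A=\bigoplus_{a,b}e_aAe_b$, imposes the length grading, and checks injectivity of $\eta_A$ on each homogeneous piece $e_aA_de_b$, with a case split on whether $|\Bfr_d^{a,b}|$ is $0$, $1$, or $2$. Your argument is global: $\eta_A$ sends each positive-length path $\thh=\al_u\cdots\al_v$ of a maximal $\rho$ to a \emph{single} basis element $\rho_{u-1,v}$ of $V(A)$ with $u-1<v$, sends each $e_a$ to a sum $\ufr_a+\vfr_a$ of two diagonal/trivial basis idempotents, and these two collections of basis elements of $V(A)$ are disjoint (strictly-upper indices vs.\ diagonal indices together with the $\efr_b$); linear independence within each collection (distinct basis elements for the paths; orthogonal idempotents, via (2), for the $e_a$) then gives injectivity immediately. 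This avoids the paper's case analysis in the proof of (3) and is arguably a simpler route. Your parts (2) and (4) coincide with the paper's.

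The only thing I would tighten is a wording issue in the counting lemma: you say each of the two arrows ``belongs to a unique maximal path,'' which could be read as saying they lie in \emph{different} maximal paths; they can lie in the \emph{same} maximal path (when a single $\rho$ visits $a$ twice), and of course the argument still gives a count of $2$. It's worth phrasing as ``each of the two arrows lies in exactly one maximal path of $A\bls$ and occurs there once'' to avoid ambiguity. Apart from that, the argument is complete.
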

\begin{proof}
{\rm (1)} Since $A$ is gentle, at least one and at most two maximal paths pass through $a$.

\smallskip

\noindent\underline{Case 1} Suppose that two different maximal paths $\rho,\om\in\Mfr_A$ pass through $a$. In this case, each of $\rho,\om$ passes through $a$ only once, and
\[ \eta_A^{\rho}(e_a)\in\Mcal_{\rho},\quad \eta_A^{\om}(e_a)\in\Mcal_{\om} \]
give the desired idempotents in $V(A)$. As for the other components of $\eta_A$, we have $\eta_A^{\pi}(e_a)=0$ for any $\pi\in\Mfr_A\setminus\{\rho,\om\}$, and $\eta_A^b(e_a)=0$ for any $b\in\Tfr_A$.

\smallskip

\noindent\underline{Case 2} Suppose that only one maximal path $\rho\in\Mfr_A$ passes through $a$. Put $\rho=\al_1\cdots\al_l$ and
\[ s(\al_u)=a_{u-1},\ \ t(\al_u)=a_u\quad(1\le u\le l) \]
as in Definition \ref{DefSA}.
If $\rho$ passes through $a$ twice, then there are $0\le u<v\le l$ such that $a=a_u=a_v$. In this case,
\[ \rho_{u,u}=\ep_u^{\rho},\ \rho_{v,v}=\ep_v^{\rho}\in\Mcal_{\rho} \]
give the desired idempotents in $V(A)$. We have $\eta_A^{\pi}(e_a)=0$ for any $\pi\in\Mfr_A\setminus\{\rho\}$, and $\eta_A^b(e_a)=0$ for any $b\in\Tfr_A$.

If $\rho$ passes through $a$ only once, then we have $a\in\Tfr_A$. There is a unique $0\le u\le l$ such that $a=a_u$. In this case,
\[ \eta_A^{\rho}(e_a)=\rho_{u,u}=\ep_u^{\rho}\in\Mcal_{\rho}\ \ \text{and}\ \ \eta_A^a(e_a)=\efr_a\in K\efr_a \]
give the desired idempotents in $V(A)$. We have $\eta_A^{\pi}(e_a)=0$ for any $\pi\in\Mfr_A\setminus\{\rho\}$, and $\eta_A^b(e_a)=0$ for any $b\in\Tfr_A\setminus\{ a\}$.

{\rm (2)} The proof of {\rm (1)} shows that each $\ufr_a,\vfr_a$ is given by
\begin{itemize}
\item[{\rm (i)}] $\ep_u^{\rho}\in\Mcal_{\rho}$ for some $\rho\in\Mfr_A$ and $0\le u\le\length(\rho)$, or
\item[{\rm (ii)}] $\efr_a\in K\efr_a$ for some $a\in\Tfr_A$.
\end{itemize}
Moreover, we see easily that $\{\ufr_a,\vfr_a\}\cap\{\ufr_b,\vfr_b\}$ for $a\ne b$. This implies that these $2|Q_0|$ elements $\{\ufr_a,\vfr_a\}_{a\in Q_0}$ are distinct. Different elements {\rm (i),(ii)} are obviously mutually orthogonal. Equation $(\ref{Decomp1})$ follows from
\[ 1=\eta_A(1)=\eta_A\big(\displaystyle\sum_{a\in Q_0}e_a\big)=\displaystyle\sum_{a\in Q_0}\eta_A(e_a)=\sum_{a\in Q_0}(\ufr_a+\vfr_a). \]

{\rm (3)} As $K$-modules, we have
\[ A=\displaystyle\bigoplus_{a,b\in Q_0}e_aAe_b\quad \text{and}\quad V(A)=\displaystyle\bigoplus_{a,b\in Q_0}\eta_A(e_a)V(A)\eta_A(e_b). \]
It suffices to show that
\[ \eta_A|_{e_aAe_b}\co e_aAe_b\to \eta_A(e_a)V(A)\eta_A(e_b)\se V(A) \]
is monomorphic for any $a,b\in Q_0$. Let us assign a grading by $\deg_1$ as in Remark \ref{RemGrading}. Since $e_aAe_b=\displaystyle\bigoplus_{d\in\Zbb_{\ge0}}(e_aA_de_b)$ and since $\eta_A$ preserves the grading, it is enough to show that
\begin{equation}\label{ToShowMonom}
\eta_A|_{e_AA_de_b}\co e_aA_de_b\to V_d(A)\se V(A)
\end{equation}
is monomorphic for any $a,b\in Q_0$ and any $d\in\Zbb_{\ge0}$.
If $d=0$, then we have $e_aA_0e_b=0$ unless $a=b$. If moreover $a=b$, then $e_aA_0e_a=Ke_a$ is $1$-dimensional. Thus the monomorphicity of $(\ref{ToShowMonom})$ follows from $\eta_A(e_a)\ne0$ shown in {\rm (1)}.

Thus it suffices to deal with the case $d>0$.
For simplicity, let $\Bfr_d^{a,b}$ denote the set of paths in $A$ from $a$ to $b$ of length $d$.
Since $A$ is gentle, we have $|\Bfr_d^{a,b}|\le 2$. 
For any $\xi\in \Bfr_d^{a,b}$, there is a unique $\rho=\al_1\cdots\al_l\in\Mfr_A$ with $l\ge d$ and $1\le u\le l-d$ such that $\xi=\al_u\cdots\al_{u+d}$. By definition we have $\eta_A^{\rho}(\xi)=\rho_{u-1, u+d}\ne0$. Thus if $|\Bfr_d^{a,b}|\le 1$, then $(\ref{ToShowMonom})$ is monomorphic.

It remains to show in the case $|\Bfr_d^{a,b}|=2$. Put $\Bfr_d^{a,b}=\{\xi,\ze\}$. Then we have $e_aA_de_b=K\xi\oplus K\ze$ and $\dim_K(e_aA_de_b)=2$.
Take the maximal paths $\rho=\al_1\cdots\al_l,\om=\be_1\cdots\be_m\in\Mfr_A$ with $l,m\ge d$ which give
\[ \xi=\al_u\cdots\al_{u+d}\quad\text{and}\quad \ze=\be_v\cdots\be_{v+d} \]
for some $1\le u\le l-d$ and $1\le v\le m-d$.
To show $(\ref{ToShowMonom})$ is monomorphic, it suffices to show that $\eta_A(\xi),\eta_A(\ze)\in V(A)$ are linearly independent over $K$.

If $\rho\ne\om$, then we have
\[
\eta_A^{\pi}(\xi)=
\begin{cases}
\rho_{u-1, u+d}\ne0&\ \text{if}\ \pi=\rho\\
0&\ \text{otherwise}
\end{cases}
\ \ ,\ \ 
\eta_A^{\pi}(\ze)=
\begin{cases}
\om_{v-1\, v+d}\ne0&\ \text{if}\ \pi=\om\\
0&\ \text{otherwise}
\end{cases}
\]
for any $\pi\in\Mfr_A$, which obviously implies that $\eta_A(\xi),\eta_A(\ze)\in V(A)$ are linearly independent over $K$. 

If $\rho=\om$, then $u\ne v$ follows from $\xi\ne\ze$. Thus
\[ \eta_A^{\rho}(\xi)=\rho_{u-1, u+d}\quad\text{and}\quad \eta_A^{\rho}(\ze)=\rho_{v-1, v+d} \]
are $K$-linearly independent in $\Mcal_{\rho}$, and thus so are $\eta_A(\xi),\eta_A(\ze)\in V(A)$.

{\rm (4)} Let $\rho_1,\ldots,\rho_r$ denote the all maximal paths in $A$, and put $l_p=\length(\rho_p)$ for any $1\le p\le r$. As in Remark \ref{RemCountTA}, we have
\[ |\Tfr_A|=2|Q_0|-\displaystyle\sum_{1\le p\le r}(l_p+1).  \]
On the other hand, since any path of positive length is a part of a (unique) maximal path because $A$ is gentle, we have
\[ |\{\text{paths in $A$ of positive lengths}\}|=\displaystyle\sum_{1\le p\le r}\frac{l_p(l_p+1)}{2}. \]
Thus we obtain
\begin{eqnarray*}
\dim_KV(A)&=&\displaystyle\sum_{1\le p\le r}\dim_K(\Mcal_{\rho})+\displaystyle\sum_{a\in\Tfr_A}\dim_K(K\efr_a)\\
&=&\displaystyle\sum_{1\le p\le r}(l_p+1)^2+2|Q_0|-\displaystyle\sum_{1\le p\le r}(l_p+1)\\
&=&2(|\{\text{paths in $A$ of positive lengths}\}|+|Q_0|)\\
&=&2\dim_KA.
\end{eqnarray*}
\end{proof}

By Proposition \ref{PropVAdim} {\rm (3)}, in particular $\eta_A$ induces a short exact sequence
\[ 0\to A\ov{\eta_A}{\lra}V(A)\to\Cok\eta_A\to0 \]
of $A$-$A$-bimodules if $A$ is gentle. Let us show some property of $\Cok\eta_A$. In the following, let $DA=\Hom_K(A,K)$ be the standard $K$-dual of $A$, and let $\{\xi^{\bullet}\mid \xi\ \text{is a path in}\ A\}$ denote the dual basis.

\begin{dfn}\label{DefAlmostStandard}
An $A$-$A$-bimodule $E$ is called an {\it almost standard dual of} $A$ if it has a $K$-basis
\[ \{ \xi^{\dag}\mid \xi\in A\ \text{is a path} \} \]
which satisfies the following conditions.
\begin{enumerate}
\item $A_0=KQ_0$ acts on $E$ in the same way as on $DA$. Namely,
\[ e_a\xi^{\dag}=\begin{cases}\xi^{\dag}&\ \text{if}\ t(\xi)=a\\0&\ \text{otherwise}\end{cases} \]
holds for any $a\in Q_0$. Similarly for $\xi^{\dag}e_a$. In other words, we have $\xi^{\dag}\in e_{t(\xi)}Ee_{s(\xi)}$.
\item For any path $\thh\in A$ of positive length and any path $\xi\in A$, the following holds for $\thh\xi^{\dag}\in E$. Similarly for $\xi^{\dag}\thh$.
\begin{itemize}
\item $\thh\xi^{\dag}=0$ holds in $E$ if and only if $\thh\xi^{\bullet}=0$ holds in $DA$.

In particular, we have $\thh\xi^{\dag}=0$ whenever $\length(\thh)>\length(\xi)$.
\item If $\length(\thh)<\length(\xi)$, then $\thh\xi^{\dag}=\om^{\dag}$ holds in $E$ for a path $\om\in A$ if and only if $\thh\xi^{\bullet}=\om^{\bullet}$ holds in $DA$ for the same path.
\item If $\length(\thh)=\length(\xi)$, then
\begin{equation}\label{Eq_C}
\thh\xi^{\dag}=Ce_a^{\dag}
\end{equation}
hold for some $C\in K\setminus\{0\}$ if and only if $\thh\xi^{\bullet}=e_a^{\bullet}$ holds in $DA$ (if and only if $\thh=\xi$ and $s(\xi)=a$).
\end{itemize}
Those coefficients $C$ can differ accordingly to $\xi$. We do not require either $C=C\ppr$ for $\xi\xi^{\dag}=Ce_{s(\xi)}^{\dag}$ and $\xi^{\dag}\xi=C\ppr e_{t(\xi)}^{\dag}$.
\end{enumerate}
\end{dfn}

\begin{rem}
Let $E$ be an almost standard dual of $A$.
\begin{enumerate}
\item By putting $\deg(\xi^{\dag})=-\length(\xi)$, we may also regard $E$ as a $\Zbb_{\le0}$-graded module.
\item There is a short exact sequence of $A$-$A$-bimodules
\[ 0\to D_0A\to E\to DA/D_0A\to0, \]
where $D_0A=\mathrm{soc}DA$ is the degree zero part of $DA$.
\end{enumerate}
\end{rem}

\begin{prop}\label{PropCoketaSD}
$\Cok\eta_A$ is an almost standard dual of $A$.
\end{prop}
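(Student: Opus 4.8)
The plan is to write down an explicit $K$-basis of $\Cok\eta_A$ indexed by the paths of $A$ and then to check the two conditions of Definition \ref{DefAlmostStandard} by direct matrix computations. Since $A$ is gentle, each path $\xi$ in $A$ of positive length lies on a unique maximal path $\rho=\al_1\cdots\al_{l}\in\Mfr_A$, say $\xi=\al_u\cdots\al_v$ with $1\le u\le v\le l$; with the notation of Definition \ref{DefSA} I would let $\xi^{\dag}$ be the image in $\Cok\eta_A$ of the matrix unit $\rho_{v,u-1}$, i.e.\ of the transpose position of $\eta_A(\xi)=\rho_{u-1,v}$. For $\xi=e_a$ with $a\in Q_0$, Proposition \ref{PropVAdim} {\rm (1)} writes $\eta_A(e_a)=\ufr_a+\vfr_a$ with each of $\ufr_a,\vfr_a$ one of the basis elements $(\ref{BasisForVA})$ of $V(A)$; fixing one of the two once and for all, I would let $e_a^{\dag}$ be its image in $\Cok\eta_A$. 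This choice is well defined only up to sign, which is harmless since Definition \ref{DefAlmostStandard} prescribes the constants $C$ only up to nonzero scalars.

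To see that these elements form a basis, recall from Proposition \ref{PropVAdim} {\rm (3),(4)} that $\eta_A$ is injective and $\dim_KV(A)=2\dim_KA$, so $\dim_K\Cok\eta_A=\dim_KA$, which is the number of paths in $A$; hence it suffices to show the chosen representatives span $V(A)$ modulo $\eta_A(A)$. Now the images $\eta_A(\al_p\cdots\al_q)=\rho_{p-1,q}$ of the positive-length paths exhaust the strictly upper triangular matrix units inside each block $\Mcal_{\rho}$, the representatives $\rho_{v,u-1}$ ($u\le v$) exhaust the strictly lower triangular ones, and the elements $\ufr_a$ together with the $\eta_A(e_a)=\ufr_a+\vfr_a$ span the same subspace of $V(A)$ as $\{\ufr_a,\vfr_a\mid a\in Q_0\}$, which by Proposition \ref{PropVAdim} {\rm (2)} is the span of all diagonal units and all $\efr_a$. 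Together these exhaust $V(A)$, so the $\xi^{\dag}$ span $\Cok\eta_A$ and, having the right cardinality, form a basis.

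It then remains to verify Definition \ref{DefAlmostStandard}. Condition {\rm (1)} is immediate: $\eta_A(e_b)\,\rho_{v,u-1}$ equals $\rho_{v,u-1}$ when $\rho$ passes through $b$ at index $v$, i.e.\ $b=t(\xi)$, and $0$ otherwise (symmetrically on the right), and $e_b$ acts on $e_a^{\dag}$ as the Kronecker delta in $a,b$. For condition {\rm (2)}, let $\thh=\al_p\cdots\al_q$ be a path of positive length; then $\eta_A(\thh)$ is supported on the single block $\Mcal_{\sigma}$ whose $\sigma\in\Mfr_A$ contains $\thh$, and the identity $\rho_{p-1,q}\,\rho_{v,u-1}=\delta_{q,v}\,\rho_{p-1,u-1}$ gives $\thh\cdot\xi^{\dag}=[\rho_{p-1,u-1}]$ when $\sigma=\rho$ and $q=v$, and $\thh\cdot\xi^{\dag}=0$ otherwise. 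In the first case the class $[\rho_{p-1,u-1}]$ is $0$ when $p<u$ (it is then strictly upper triangular, hence in $\eta_A(A)$; here $\thh$ strictly extends $\xi$), equals $(\al_u\cdots\al_{p-1})^{\dag}$ when $p>u$ (then $\thh$ is a proper suffix of $\xi$), and equals $\pm\,e_{s(\xi)}^{\dag}$ when $p=u$, i.e.\ $\thh=\xi$. On the other side one has $\thh\cdot\xi^{\bullet}=\sum_{b\,:\,b\thh=\xi}b^{\bullet}$ in $DA$, which vanishes unless $\thh$ is a suffix of $\xi$, in which case it equals $(\al_u\cdots\al_{p-1})^{\bullet}$, with the convention that this reads $e_{s(\xi)}^{\bullet}$ when $p=u$. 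Matching the two descriptions case by case according to whether $\length(\thh)$ is less than, equal to, or greater than $\length(\xi)$ gives the three bullets of Definition \ref{DefAlmostStandard} {\rm (2)} exactly; the case $\xi=e_a$ is trivial since then $\thh\cdot e_a^{\dag}$ is the class of $\eta_A(\thh)\,\ufr_a$, a strictly upper triangular unit or $0$, hence $0$ in $\Cok\eta_A$, matching $\thh\cdot e_a^{\bullet}=0$; and the statements about right multiplication follow from the transpose identity $\rho_{v,u-1}\,\rho_{p-1,q}=\delta_{u-1,p-1}\,\rho_{v,q}$.

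The genuine difficulty will be none of these steps in isolation but the bookkeeping in the last one: keeping the position of a matrix unit inside $\Mcal_{\rho}$ synchronized with the start and end indices of the corresponding subpath of $\rho$, and treating correctly the finitely many vertices occurring twice on a single maximal path, for which $\eta_A(e_a)$ is a sum of two matrix units lying in the same block. No idea beyond the explicit description of $\eta_A$ in Definition \ref{DefSA} and the structure of $V(A)$ in Proposition \ref{PropVAdim} should be needed.
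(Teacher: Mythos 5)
Your proof is correct and follows essentially the same route as the paper: you identify the $K$-basis of $\Cok\eta_A$ with the strictly lower-triangular matrix units $\rho_{v,u-1}$ together with one of $\ufr_a,\vfr_a$ per vertex (the paper organizes this via the temporary $\deg_1$-grading rather than your dimension count, but the content is identical), and then verify both conditions of Definition \ref{DefAlmostStandard} by the same explicit matrix-unit computations, with the degenerate case at $\thh=\xi$ handled correctly by the sign-ambiguity of $e_a^\dag$.
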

\begin{proof}
Take a temporary grading $V(A)=\un{d\in\Zbb}{\bigoplus}V_d(A)$ by $\deg_1\co Q_1\to\Zbb$ as in Remark \ref{RemGrading}, for the convenience in the argument. Then the $K$-basis $(\ref{BasisForVA})$ can be divided into the following $K$-basis of the homogeneous parts $V_d(A)$ for $d\in\Zbb$.
\begin{itemize}
\item $\un{\rho\in\Mfr_A}{\coprod}\{\rho_{u,v}\mid 0\le u<v\le \length(\rho),\, v-u=d\}$ \ if $d>0$.
\item $\big(\un{\rho\in\Mfr_A}{\coprod}\{\rho_{u,u}=\ep^{\rho}_u\mid 0\le u\le \length(\rho)\}\big)\amalg\{\efr_a\mid a\in\Tfr_A\}=\un{a\in Q_0}{\coprod}\{\ufr_a,\vfr_a\}$ \ if $d=0$.
\item $\un{\rho\in\Mfr_A}{\coprod}\{\rho_{v,u}\mid 0\le u<v\le \length(\rho),\, u-v=d\}$ \ if $d<0$.
\end{itemize}
For any $\rho=\al_1\cdots\al_l\in\Mfr_A$ and for any $0\le u<v\le\length(\rho)$, the path $\al_{u+1}\cdots\al_v\in A$ satisfies $\eta_A(\al_{u+1}\cdots\al_v)=\rho_{u,v}$ by definition. This means that $\eta_A$ surjects $\un{d>0}{\bigoplus}A_d$ onto $\un{d>0}{\bigoplus}V_d(A)$. Thus the above $K$-basis gives the following $K$-basis of $\Cok\eta_A=V(A)/\eta_A(A)$.
\begin{equation}\label{BBCC}
\big(\un{\rho\in\Mfr_A}{\coprod}\{\ovl{\rho_{v,u}}\mid 0\le u<v\le\length(\rho)\}\big)\amalg\big(\un{a\in Q_0}{\coprod}\{\ovl{\ufr_a}=-\ovl{\vfr_a}\}\big).
\end{equation}
Here, $\ovl{\ze}$ denotes the image of $\ze\in V(A)$ in $\Cok\eta_A$. The equation $\ovl{\ufr_a}=-\ovl{\vfr_a}$ is a consequence of $\eta_A(e_a)=\ufr_a+\vfr_a$. Let us define a subset $\{\xi^{\dag}\mid \xi\in A\ \text{is a path}\}$ of $\Cok\eta_A$ by the following.
\begin{itemize}
\item For any path $\xi\in A$ with $\length(\xi)>0$, there uniquely exists
\begin{equation}\label{Forrhoexists}
\rho=\al_1\cdots\al_l\in\Mfr_A\ \text{and}\ 0\le u<v\le l\ \text{such that}\ \xi=\al_{u+1}\cdots\al_v.
\end{equation}
Using this, put $\xi^{\dag}=\ovl{\rho_{v,u}}$.
\item For any $a\in Q_0$, choose one of $\ovl{\ufr_a}$ or $\ovl{\vfr_a}$, and denote it by $e_a^{\dag}$. Since $\ovl{\ufr_a}=-\ovl{\vfr_a}$ holds in $\Cok\eta_A$, anyway we have $e_a^{\dag}=\pm\ovl{\ufr_a}$.
\end{itemize}
The above argument shows that this set $\{\xi^{\dag}\mid \xi\in A\ \text{is a path}\}$ indeed forms a $K$-basis of $\Cok\eta_A$. Let us check the conditions {\rm (1),(2)} in Definition \ref{DefAlmostStandard}.

\medskip

{\rm (1)} Let $a\in Q_0$ be any vertex, and let $\xi\in A$ be any path. It suffices to show $\xi^{\dag}\in e_{t(\xi)}(\Cok\eta_A)e_{s(\xi)}$. If $\length(\xi)>0$, then we have $\xi^{\dag}=\ovl{\rho_{v,u}}$ with $(\ref{Forrhoexists})$ taken as above. Then the fact $\rho_{v,u}\in\ep^{\rho}_vV(A)\ep^{\rho}_u\se e_{t(\xi)}V(A)e_{s(\xi)}$ implies $\ovl{\rho_{v,u}}\in e_{t(\xi)}(\Cok\eta_A)e_{s(\xi)}$. If $\length(\xi)=0$, then $\xi$ satisfies $\xi=e_{t(\xi)}=e_{s(\xi)}$, and thus $\xi^{\dag}=\pm\ovl{\ufr_{t(\xi)}}\in e_{t(\xi)}(\Cok\eta_A)e_{s(\xi)}$ follows from Proposition \ref{PropVAdim} {\rm (1),(2)}.

\medskip

{\rm (2)} Let $\thh,\xi\in A$ be any pair of paths, with $\length(\thh)>0$. If $\length(\xi)=0$, obviously we have $\thh\xi^{\bullet}=0$ and $\thh\xi^{\dag}=0$. Thus we may assume $\length(\xi)>0$. Take $(\ref{Forrhoexists})$ as above. Then $\thh\xi^{\dag}\ne0$ holds if and only if
$\thh=\al_{w+1}\cdots\al_v$
holds for some $u\le w<v$. Moreover, the following holds for such $\thh$.
\begin{itemize}
\item If $u<w$, then $\thh\xi^{\dag}=\eta_A(\thh)\ovl{\rho_{v,u}}=\ovl{\rho_{w,v}}\ovl{\rho_{v,u}}=\ovl{\rho_{w,u}}=(\al_{u+1}\cdots\al_w)^{\dag}$.
\item If $u=w$, then $\thh\xi^{\dag}=\ovl{\rho_{u,v}}\ovl{\rho_{v,u}}=\ovl{\ep^{\rho}_u}=\pm e_{s(\xi)}^{\dag}$.
\end{itemize}
This shows
\[ \thh\xi^{\dag}=\begin{cases}
(\al_{u+1}\cdots\al_w)^{\dag}&\ \text{if}\ \thh=\al_{w+1}\cdots\al_v\ \text{and}\ u<w<v,\\
\pm e_{s(\xi)}^{\dag}&\ \text{if}\ \thh=\xi,\\
0&\ \text{otherwise}.
\end{cases}
\]
On the other hand, from the $A$-$A$-bimodule structure on $DA$, we have
\[  \thh\xi^{\bullet}=\begin{cases}
(\al_{u+1}\cdots\al_w)^{\bullet}&\ \text{if}\ \thh=\al_{w+1}\cdots\al_v\ \text{and}\ u<w<v,\\
e_{s(\xi)}^{\bullet}&\ \text{if}\ \thh=\xi,\\
0&\ \text{otherwise}.
\end{cases}
\]
Thus {\rm (2)} is satisfied.
\end{proof}

\begin{rem}\label{RemCpm1}
The above proof of Proposition \ref{PropCoketaSD} shows that $\xi\xi^{\dag}=\pm e_{s(\xi)}^{\dag}$ holds in $\Cok\eta_A$, for any path $\xi\in A$ of positive length. Similarly for $\xi^{\dag}\xi$.

Namely, the coefficients $C$ in $(\ref{Eq_C})$ can be taken to satisfy $C=\pm1$. However, we do not require $C$ to be $\pm1$ a priori in Definition \ref{DefAlmostStandard} in general, since $C\ne0$ is enough for our purpose. In fact, in the situation we are considering in Section \ref{section_Characterize}, the arguments there implicitly shows that we may replace those $\xi^{\dag}$ appropriately, to adjust $C$ to become $\pm 1$ (by using $c_{\rho}$ in Lemma \ref{LemQuasiInverse} and Proposition \ref{PropQuasiInverse}).
\end{rem}

\begin{rem}\label{RemQuestionI}
We remark that there also exists an $A$-$A$-submodule $N\se V(A)$ which induces an isomorphism $V(A)/N\cong DA$ of $A$-$A$-bimodules. In fact,
$N=\big(\bigoplus_{d>0}V_d(A)\big)\oplus\big(\bigoplus_{a\in Q_0}K(\ufr_a-\vfr_a)\big)$
satisfies this property, where $V(A)=\un{d\in\Zbb}{\bigoplus}V_d(A)$ is the grading by $\deg_1$, similarly as in the proof of Proposition \ref{PropCoketaSD}. Difference between $N$ and $\eta_A(A)$ lies only in the degree $0$ part.

Indeed, if we denote the image of $\xi\in V(A)$ in $V(A)/N$ by $\und{\xi}$, then $V(A)/N$ has a $K$-basis
$\big(\un{\rho\in\Mfr_A}{\coprod}\{\und{\rho_{u,v}}\mid 0\le u<v\le\length(\rho)\}\big)\amalg\big(\un{a\in Q_0}{\coprod}\{\und{\ufr_a}=\und{\vfr_a}\}\big)$,
which corresponds bijectively to the dual basis in $DA$, respecting the $A$-$A$-bimodule structure.

Consequently, we obtain an epimorphism of $A$-$A$-bimodules $V(A)\to DA$. This induces the following sequence of surjective homomorphisms of $K$-algebras.
\[ \Ak\cong\Akp
\to
\left[
\begin{array}{cccccc}
A&V(A)&0&\cdots&\cdots&0\\
0&A&V(A)&0&\cdots&0\\
0&0&A&V(A)&\ddots&\vdots\\
\vdots&\vdots&\ddots&\ddots&\ddots&0\\
0&\cdots&\cdots&0&A&V(A)\\
0&\cdots&\cdots&\cdots&0&A\\
\end{array}
\right]
\to
\left[
\begin{array}{cccccc}
A&DA&0&\cdots&\cdots&0\\
0&A&DA&0&\cdots&0\\
0&0&A&DA&\ddots&\vdots\\
\vdots&\vdots&\ddots&\ddots&\ddots&0\\
0&\cdots&\cdots&0&A&DA\\
0&\cdots&\cdots&\cdots&0&A\\
\end{array}
\right]
\]
We remark that the rightmost $K$-algebra is the one obtained by the usual (but finitely many times) repetition of $A$.
\end{rem}

\section{Characterization of $V(A)$}\label{section_Characterize}

In this section, we always assume that $A$ is gentle,  
and moreover satisfies the following condition. 
\begin{cond}\label{CondMaxLength1}
For any maximal path $\rho\in A$, there is no arrow $\al\in Q_1$ satisfying $s(\al)=s(\rho)$, $t(\al)=t(\rho)$ and $\al\ne\rho$.
\end{cond}
Throughout this section, let
\begin{equation}\label{ExBimodV}
0\to A\ov{\eta\ppr}{\lra}V\ov{\pi}{\lra}E\to 0
\end{equation}
be a short exact sequence of $A$-$A$-bimodules with the following properties.
\begin{itemize}
\item[{\rm (v1)}] $V$ is a semisimple $K$-algebra.
\item[{\rm (v2)}] $\eta\ppr$ is a morphism of $K$-algebras.
\item[{\rm (v3)}] $E$ is an almost standard dual of $A$.
\end{itemize}
We remark that $\dim_KV=2$ also follows from the existence of $(\ref{ExBimodV})$.

As we have seen, $\eta_A\co A\hookrightarrow V(A)$ satisfies them. 
In this section, we will see that $V(A)$ is characterized by these properties uniquely up to isomorphism. 
Indeed, we show the following.
\begin{thm}\label{ThmCharacterizeVA}
Assume $\Char(K)\ne 2$, and that $A$ satisfies Condition \ref{CondMaxLength1}. Then for any $(\ref{ExBimodV})$ satisfying {\rm (v1),(v2),(v3)}, there exists an isomorphism of $K$-algebras $\vp\co V\ov{\cong}{\lra}V(A)$ with $\vp\ci\eta\ppr=\eta_A$.
\end{thm}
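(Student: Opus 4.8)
The plan is to reconstruct, inside the abstract semisimple algebra $V$, the complete matrix‑unit system that exhibits $V(A)$ as $\big(\prod_{\rho\in\Mfr_A}\Mcal_{\rho}\big)\ti\big(\prod_{a\in\Tfr_A}K\efr_a\big)$, and then read off the isomorphism. Throughout I identify $A$ with $\eta\ppr(A)\se V$. First I would establish the analogue of Proposition \ref{PropVAdim}{\rm (1),(2)} for $V$. For $a\in Q_0$ the corner $R_a:=\eta\ppr(e_a)V\eta\ppr(e_a)$ is a finite‑dimensional semisimple $K$‑algebra, and restricting $(\ref{ExBimodV})$ to it gives $\dim_K R_a=\dim_K e_aAe_a+\dim_K e_aEe_a$ with $\dim_K e_aEe_a=\dim_K e_aAe_a$ by Definition \ref{DefAlmostStandard}{\rm (1)}; by gentleness $e_aAe_a=Ke_a$ unless some maximal path passes through $a$ twice, in which case $e_aAe_a=Ke_a\oplus K\zeta$ for the unique positive‑length path $\zeta$ from $a$ to $a$, which is nilpotent. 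Thus $\dim_K R_a\in\{2,4\}$: when it is $2$ we get $R_a\cong K\ti K$, and when it is $4$ the nonzero nilpotent $\eta\ppr(\zeta)$ forces $R_a\cong M_2(K)$. In either case $\eta\ppr(e_a)=\ufr_a+\vfr_a$ for a pair of primitive orthogonal idempotents $\ufr_a,\vfr_a$ of $V$ (primitive because $\ufr_aV\ufr_a=\ufr_aR_a\ufr_a\cong K$); since $\eta\ppr(e_a)\eta\ppr(e_b)=0$ for $a\ne b$ and $\sum_{a\in Q_0}\eta\ppr(e_a)=1_V$, the family $\{\ufr_a,\vfr_a\mid a\in Q_0\}$ is a complete system of $2|Q_0|$ primitive orthogonal idempotents of $V$.

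The second step is to \emph{orient} these idempotents along each maximal path. Fix $\rho=\al_1\cdots\al_l\in\Mfr_A$ with vertices $a_0,\dots,a_l$; since $\rho\notin I$ every subpath $\al_{u+1}\cdots\al_v$ is a path in $A$, so $\eta\ppr(\al_{u+1}\cdots\al_v)\ne0$. I would show that for each $0\le u\le l$ there is a unique $\ep^\rho_u\in\{\ufr_{a_u},\vfr_{a_u}\}$ with $\ep^\rho_u\,\eta\ppr(\al_{u+1}\cdots\al_v)=\eta\ppr(\al_{u+1}\cdots\al_v)$ and $\eta\ppr(\al_{w+1}\cdots\al_u)\,\ep^\rho_u=\eta\ppr(\al_{w+1}\cdots\al_u)$ for all relevant $w<u<v$, the complementary idempotent annihilating these on the appropriate side; one reduces to $v=u+1$, $w=u-1$, the real point being that the choice forced ``from the right at $a_u$'' coincides with the one forced ``from the left at $a_u$''. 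Moreover distinct occurrences of a vertex select distinct idempotents: if $a_u=a_v$ in the same $\rho$ with $u\ne v$, then $\ep^\rho_u=\ep^\rho_v=:p$ would give $p\eta\ppr(\zeta)p=\eta\ppr(\zeta)\ne0$ with $pVp\cong K$, forcing $\eta\ppr(\zeta)\in Kp$ and contradicting nilpotence; and when $a$ lies on two distinct maximal paths, or on one maximal path and is also a trivial permitted thread, the two relevant idempotents must again differ — here the local gentleness axioms and, in the degenerate short‑thread situations, Condition \ref{CondMaxLength1} are used. As in the proof of Proposition \ref{PropVAdim}, each vertex lies (with multiplicity) on exactly two permitted threads, so these selections partition $\{\ufr_a,\vfr_a\}_{a\in Q_0}$ into $\{\ep^\rho_u\mid0\le u\le l\}$ over $\rho\in\Mfr_A$ together with one idempotent $\mathfrak f_a$ for each $a\in\Tfr_A$.

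Next I would assemble, for each $\rho=\al_1\cdots\al_l\in\Mfr_A$, a unital subalgebra of $V$ isomorphic to $\Mcal_\rho=M_{l+1}(K)$: the $(u,u)$ matrix unit is $\ep^\rho_u$; for $u<v$ the $(u,v)$ unit is $\eta\ppr(\al_{u+1}\cdots\al_v)$ (this is forced by the requirement $\vp\ci\eta\ppr=\eta_A$); and for $u>v$ the $(u,v)$ unit is a lift along $\pi$ of the basis vector $(\al_{v+1}\cdots\al_u)^{\dag}$ of $E$, taken inside $\ep^\rho_uV\ep^\rho_v$ and rescaled so that $\eta\ppr(\al_{v+1}\cdots\al_u)$ times this lift equals $\ep^\rho_v$. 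The matrix‑unit identities are then verified by applying the $A$‑$A$‑bimodule map $\pi$ and reading off products from Definition \ref{DefAlmostStandard}{\rm (2)} (which also forces the complementary products to vanish), the hypotheses $\Char(K)\ne2$ and $K=\ovl K$ entering exactly in normalizing the scalars $C$ of $(\ref{Eq_C})$ to the values a matrix‑unit system requires; for $a\in\Tfr_A$ the idempotent $\mathfrak f_a$ already spans a copy of $K\efr_a$. These subalgebras are pairwise orthogonal because their corners are cut out by the distinct idempotents of the second step, and the sum of their identity elements is $1_V$; a dimension count $\dim_KV=2\dim_KA=\sum_{\rho\in\Mfr_A}(\length(\rho)+1)^2+|\Tfr_A|=\dim_KV(A)$, by Proposition \ref{PropVAdim}{\rm (4)} and Remark \ref{RemCountTA}, then shows they exhaust $V$. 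Hence the combined map $\big(\prod_{\rho\in\Mfr_A}\Mcal_\rho\big)\ti\big(\prod_{a\in\Tfr_A}K\efr_a\big)=V(A)\to V$ is an isomorphism of $K$‑algebras, and its inverse $\vp$ satisfies $\vp\ci\eta\ppr=\eta_A$ by construction: on a positive‑length path $\thh$ because $\thh$ is a subpath of a unique maximal path, and on $e_a$ because of the idempotent matching arranged in step two.

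The step I expect to be the main obstacle is the orientation of the idempotents, together with the compatible normalization in the assembly; it amounts to recovering, inside the abstract semisimple $V$, the bookkeeping that in $V(A)=\prod\Mcal_\rho\ti\prod K\efr_a$ is visible at a glance from the product decomposition — in particular the fact that two distinct permitted threads through a vertex $a$ pick out the two \emph{different} primitive idempotents summing to $\eta\ppr(e_a)$. The twice‑visited‑vertex case falls out immediately from the nilpotent $\eta\ppr(\zeta)$, but the case of two distinct maximal paths (or a maximal path and a trivial permitted thread) at $a$ will need a careful local analysis of how $\eta\ppr$ of the arrows incident to $a$ sits relative to $\ufr_a$ and $\vfr_a$, and this is where Condition \ref{CondMaxLength1} — which precisely excludes a maximal path being a single arrow parallel to a longer thread — is genuinely used.
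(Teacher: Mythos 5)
Your overall strategy --- to rebuild the matrix‑unit system of $V(A)$ inside the abstract $V$ and then read off the isomorphism --- coincides with the paper's. But the proposal leaves its central technical claim unproved, and as stated I don't think the claim is available at the point where you invoke it.

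The problematic step is the ``orientation'' in your second paragraph. You first fix, for each $a\in Q_0$, a decomposition $\eta\ppr(e_a)=\ufr_a+\vfr_a$ into primitive orthogonal idempotents of $V$, and then assert that for each maximal path $\rho$ through $a=a_u$ there is a unique $\ep^\rho_u\in\{\ufr_a,\vfr_a\}$ absorbing the incident arrows of $\rho$ on both sides. Nothing in {\rm (v1)}--{\rm (v3)} gives this for free. When $e_aVe_a\cong M_2(K)$ (the twice‑visited case) the decomposition $\eta\ppr(e_a)=\ufr_a+\vfr_a$ is a two‑parameter family and only very particular choices are compatible with the arrows; when $e_aVe_a\cong K\times K$ the decomposition is canonical, but there is still nothing \emph{a priori} preventing an arrow $\al$ with $s(\al)=a$ from having $\ufr_a\eta\ppr(\al)\ne0$ and $\vfr_a\eta\ppr(\al)\ne0$ simultaneously: ruling that out is exactly the local analysis you defer, and you acknowledge it yourself as ``the main obstacle.'' A proof cannot defer its main obstacle.

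The paper reverses the logic so this issue never arises. Rather than decompose $\eta\ppr(e_a)$ first and match pieces to threads, it starts from an arbitrary quasi‑inverse $\rho\dia$ of a maximal path $\rho$ (available since $V$ is semisimple), shows $\rho\wt\rho\rho=c_\rho\rho$ with $c_\rho\ne0$ for any lift $\wt\rho$ of $\rho^\dag$ (Lemma \ref{LemQuasiInverse}), and then modifies $\rho\dia$ to a \emph{unique} normalized quasi‑inverse $\rho\ust$ satisfying {\rm (q1)}--{\rm (q4)} of Proposition \ref{PropQuasiInverse}. The primitive orthogonal idempotents at $a$ are then \emph{produced} directly as $\emn=\nu(\mu\nu)\ust\mu$ for the permitted threads $\mu\nu$ through $a$, and orthogonality, completeness, and the basis statement are proved from the algebra of $\rho\ust$ (Definition \ref{DefForBasis}, Lemma \ref{LemMaxOrtho}, Proposition \ref{PropIdempBasis}). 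This is also where $\Char(K)\ne2$ and Condition \ref{CondMaxLength1} genuinely enter --- respectively the $a=b$ case of Proposition \ref{PropQuasiInverse} and the vanishing Lemma \ref{LemTildeVanish} --- not merely ``in normalizing the scalars $C$'' as your sketch suggests.

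Your third paragraph has the same character: you assert the lift of $(\al_{v+1}\cdots\al_u)^\dag$ can be taken inside $\ep^\rho_uV\ep^\rho_v$ and rescaled so that $\eta\ppr(\al_{v+1}\cdots\al_u)$ times it is $\ep^\rho_v$, coherently across all subpaths of $\rho$. In the paper this is the nontrivial content of Lemma \ref{LemForThm1}{\rm (3)} ($\thh\thh\ust\thh=\thh$ for \emph{arbitrary} positive‑length $\thh$), proved by a reduction to single arrows and then a case analysis. So the proposal is pointed in the right direction, but the claims flagged with ``I would show'' are precisely the substance of the theorem, and they are not established.
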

Its proof will be given at the end of this section.
In the rest, we regard $A$ as a $K$-subalgebra of $V$ through $\eta\ppr$, and abbreviate $\eta\ppr(\xi)\in V$ simply to $\xi$, for any $\xi\in A$.
We will often use the following observation.
\begin{rem}\label{RemUseOften}
For any pair of vertices $a,b\in Q_0$, the sequence $(\ref{ExBimodV})$ induces a short exact sequence of $K$-modules
\[ 0\to e_aAe_b\to e_aVe_b\to e_aEe_b\to0. \]
In particular, for any $K$-basis $\{ \vartheta_j\mid j\in J \}$ of $e_aAe_b$ and any $K$-basis $\{ \chi_s\mid s\in S \}$ of $e_aEe_b$, any choice of lifts $\wt{\chi}_s$ of $\chi_s$ to $e_aVe_b$ for $s\in S$ gives a $K$-basis $\{ \vartheta_j\mid j\in J\}\amalg\{ \wt{\chi}_s\mid s\in S\}$ of $e_aVe_b$.
\end{rem}

\begin{lem}\label{LemTildeVanish}
Let $\rho=\al_1\cdots\al_{l}\in e_aAe_b$ be any maximal path, and let $\thh\in e_aAe_c$ be any path which does not start with $\al_1$. Then the following holds for any lift $\wt{\thh}\in V$ of $\thh^{\dag}\in E$.
\begin{enumerate}
\item If $l=1$, then $\wt{\thh}\rho=0$ holds.
\item If $l\ge2$, then we have $\wt{\thh}\al_1\al_2=0$, and thus $\wt{\thh}\rho=0$ holds also in this case.
\end{enumerate}
Similarly, if a path $\om\in e_dAe_c$ does not end with $\al_{l}$, we have $\rho\wt{\om}=0$.
\end{lem}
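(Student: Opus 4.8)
The plan is to reduce the statement to a single vanishing in $A$ and then extract that vanishing from the semisimplicity of $V$. By the evident left--right symmetry (passing to $A^{\mathrm{op}}$, which is again gentle, again satisfies Condition \ref{CondMaxLength1}, and under which an almost standard dual stays an almost standard dual), the assertion $\rho\wt\om=0$ follows from the assertion $\wt\thh\rho=0$, so I would only treat the latter. Writing $\mu=\al_1$ when $l=1$ and $\mu=\al_1\al_2$ when $l\ge2$, so that $\rho=\mu\,\al_{l'+1}\cdots\al_l$ with $l'=\length(\mu)$, it suffices to show $\wt\thh\mu=0$, since then $\wt\thh\rho=(\wt\thh\mu)\al_{l'+1}\cdots\al_l=0$. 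I also record that ``a path which does not start with $\al_1$'' is only meaningful when $\thh$ has positive length, so $s(\thh)=a\ne t(\thh)$ because the gentle algebra $A$ has no oriented cycle.

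Next I would apply $\pi$ to $\wt\thh\mu$. As $\pi$ is a morphism of $A$-$A$-bimodules and $\mu\in A$, one has $\pi(\wt\thh\mu)=\thh^\dag\mu$, and by {\rm (v3)} together with the almost-standard-dual axioms this is zero in $E$ iff $\thh^\bullet\mu=0$ in $DA$; but in $DA$ one has $\thh^\bullet\mu\ne0$ only if $\mu$ is a prefix of $\thh$, which is impossible since $\mu$ starts with $\al_1$ and $\thh$ does not. Hence $\wt\thh\mu\in\ker\pi=\eta\ppr(A)$, say $\wt\thh\mu=\eta\ppr(\xi)$ with $\xi\in e_{t(\thh)}Ae_{t(\mu)}$. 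The statement is moreover independent of the chosen lift: two lifts of $\thh^\dag$ differ by $\eta\ppr(\bfr)$ for a path $\bfr$ from $t(\thh)$ to $a$, necessarily of positive length, hence ending with an arrow $\nu$ satisfying $\nu\al_1\in I$ by maximality of $\rho$, so $\bfr\mu=0$ in $A$. Thus I may fix a convenient lift.

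It then remains to show $\xi=0$ (if $l=1$) and $\xi\al_2=0$ in $A$ (if $l\ge2$); this is the heart of the argument. For $l\ge2$ one argues combinatorially: by gentleness only $\al_1$ among the arrows into $t(\al_1)$ composes with $\al_2$ outside $I$, and by maximality of $\rho$ no arrow $\nu$ satisfies $\nu\al_1\notin I$, so $\al_1$ is the unique path in $A$ ending with $\al_1$; since $s(\al_1)=a\ne t(\thh)$, any path occurring in $\xi$ surviving right multiplication by $\al_2$ must have length $0$, which forces $t(\thh)=t(\al_1)$ and reduces us to showing $\wt\thh\al_1=0$, i.e. to the case $\mu=\al_1$. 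For that case I would use semisimplicity of $V$: from {\rm (v1),(v2),(v3)} and $\dim_Ke_vVe_v=1+\dim_Ke_vEe_v=2$ one obtains, for each vertex $v$, a decomposition $e_v=\ufr_v+\vfr_v$ into primitive orthogonal idempotents of $V$; after suitable choices of lifts one arranges $\mu=\ufr_a\mu$ (so $\mu$ is supported over the occurrence of $a$ at the start of $\rho$) and $\wt\thh\,\ufr_a=0$ (so the $e_a$-side of $\thh^\dag$ is supported over the complementary idempotent, since $\thh$ starts with the other arrow out of $a$), whence $\wt\thh\mu=\wt\thh\,\ufr_a\mu=0$. The main obstacle is exactly this last step: identifying, purely from {\rm (v1)--(v3)}, which primitive idempotent of $V$ corresponds to which thread-occurrence at $a$ --- carried out through the basis $\{\xi^\dag\}$ of $E$ and using Condition \ref{CondMaxLength1} to bound the corner $e_{t(\thh)}Ae_{t(\al_1)}$ of $A$. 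The earlier steps are formal, but the bimodule structure alone cannot finish the proof, since a lift of $\thh^\dag$ is only determined up to $\eta\ppr(A)$, and one genuinely needs semisimplicity of $V$ (and Condition \ref{CondMaxLength1}) to see that the product with $\mu$ still dies.
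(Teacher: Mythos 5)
You correctly isolate the formal skeleton --- pass to $A^{\mathrm{op}}$ for the second assertion, apply $\pi$ to see that $\wt{\thh}\mu$ lands in $\eta\ppr(A)$, and note the lift-independence --- but the proposal is missing the two steps where the real work happens. The first is the reduction to $\thh$ maximal, which the paper does at the very start: if some arrow $\be$ has $\be\thh\ne 0$ then $\wt{\thh}\rho=\wt{(\be\thh)}\be\rho=0$ because $\be\rho=0$ by maximality of $\rho$; otherwise extend $\thh$ on the right to a maximal path. Without this reduction you cannot invoke Condition \ref{CondMaxLength1} against $\thh$, and hence cannot rule out $c=t(\al_1)$ in the $l\ge 2$ case. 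Your combinatorial argument shows precisely that only a length-$0$ path in $\wt{\thh}\al_1$ could survive right multiplication by $\al_2$, and then you say this ``reduces to showing $\wt{\thh}\al_1=0$, i.e.\ to the case $\mu=\al_1$.'' That reduction is both stronger than what is needed (only the $e_c$-coefficient must die; $\wt{\thh}\al_1$ can legitimately carry a multiple of the non-maximal cycle at $c$, which is killed by $\al_2$ anyway) and unsupported, since $\al_1$ is not maximal when $l\ge 2$, so whatever argument handles the genuine $l=1$ case does not apply.

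The second missing step is exactly the one you flag as ``the main obstacle,'' and you do not fill it. The dimension count $\dim_K e_vVe_v=1+\dim_K e_vEe_v=2$ is false in general: $\dim_K e_vVe_v=\dim_K e_vAe_v+\dim_K e_vEe_v$, and both summands equal $2$ when there is a non-maximal cycle at $v$ (Proposition \ref{PropIdempBasis} (3)), so the dimension can be $4$. More seriously, the claim ``after suitable choices of lifts one arranges $\wt{\thh}\ufr_a=0$'' is the content of the lemma in disguise, and you offer no proof of it. The paper's argument for case (1) avoids constructing distinguished idempotents at all: having reduced to $\thh$ maximal it observes $b\ne c$ (by Condition \ref{CondMaxLength1}), hence $\wt{\thh}\rho$ is a scalar times a single path $\ze\in e_cAe_b$ with $\thh\ze=0$; if $\ze$ extends to the right the scalar dies because $\rho\gam=0$; otherwise one picks a quasi-inverse $\ze\dia$ of $\ze$ (this is the only use of semisimplicity), shows $\rho\ze\dia$ is a multiple of $\thh$, and concludes $x\ze=\wt{\thh}\rho\ze\dia\ze=y\wt{\thh}\thh\ze=0$. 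This is a much more concrete use of {\rm (v1)} than your primitive-idempotent decomposition, and it is where the unproven work in your sketch actually lives.
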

\begin{proof}
By the assumption on $\rho$ and $\thh$, we can easily deduce $a\ne c$. If both $\wt{\thh}$ and $\wt{\thh}\ppr$ are lifts of $\thh^{\dag}$, then $\wt{\thh}-\wt{\thh}\ppr\in\Ker\pi=A$ should satisfy $(\wt{\thh}-\wt{\thh}\ppr)\rho=0$ since $\rho$ is maximal. Thus the element $\wt{\thh}\rho\in V$ does not depend on the choice of lift $\wt{\thh}$. We may also assume $\wt{\thh}$ is in $e_cAe_a$.

First let us reduce to the case where $\thh$ is maximal. If there is any arrow $\be\in A$ of positive length such that $\be\thh\ne0$, then for any lift $\wt{(\be\thh)}$ of $(\be\thh)^{\dag}$, we have
\[ \wt{\thh}\rho=\wt{(\be\thh)}\be\rho=0. \]
Thus it is enough to consider the case with no such $\be$. In this case, there exists a path $\kappa\in A$ (possibly $\kappa=e_b$) such that $\thh\kappa$ is a maximal path. This gives $\wt{\thh}\rho=\kappa\wt{(\thh\kappa)}\rho$,
and $\wt{\thh}\rho=0$ should automatically follow if $\wt{(\thh\kappa)}\rho=0$ is shown.

Thus we may assume that $\thh$ is maximal, from the beginning. Then since $\rho,\thh$ are different maximal paths, it follows that $a$ is a source vertex. In particular we have $a\ne b$, in addition to $a\ne c$.
It remains to show {\rm (1)} and {\rm (2)} under these assumptions.

\medskip

{\rm (1)} By Condition \ref{CondMaxLength1}, we have $b\ne c$ in this case. Since $\pi(\wt{\thh}\rho)=\thh^{\dag}\rho=0$, we have $\wt{\thh}\rho\in e_cAe_b$. Thus if $\wt{\thh}\rho\ne0$, 
then it should be a $K$-linear sum of paths $\ze\in e_cAe_b$. Since $\thh$ is maximal, we have $\thh\ze=0$ for any path $\ze\in e_cAe_b$. Moreover by $l=1$ and $a\ne c$, we see that $\ze$ does not end with $\al_1$. By these properties, such path $\ze$ should be unique, thus we obtain 
\begin{equation}\label{Eq_wtrxz}
\wt{\thh}\rho=x\ze
\end{equation}
for some $x\in K$ and the path $\ze\in e_cAe_b$.
It is enough to show that $(\ref{Eq_wtrxz})$ forces $x=0$. 

If there is any arrow $\gam\in A$ with $\ze\gam\ne0$, then $(\ref{Eq_wtrxz})$ implies $x\ze\gam=\wt{\thh}\rho\gam=0$ and hence $x=0$.
Thus we may assume that there is no such $\gam$. In this case $\ze$ cannot be a part of $\thh$, since they have different targets $b\ne c$. It also implies that $\thh$ is the unique path in $A$ from $a$ to $c$. Indeed, otherwise the existence of $\rho$ tells us that any other path from $a$ to $c$ is a part of $\thh$, and thus
\[ \thh=\thh_1\thh_2 \quad(\thh_1\in e_aAe_c,\thh_2\in e_cAe_c) \]
should hold for some pair $\thh_1,\thh_2$ of paths of positive lengths. Then $\thh\ze=0$ induces $\thh_1\ze\ne0$, which in turn should imply that $\ze$ is a part of $\thh_2$, which is a contradiction.

Take a quasi-inverse $\ze\dia\in e_bVe_c$ of $\ze$. Namely, $\ze\dia$ is an element satisfying $\ze\ze\dia\ze=\ze$. Such $\ze\dia$ always exists since $V$ is semisimple. By the argument so far, there is no path in $e_cAe_a$ nor in $e_aAe_c$ other than $\thh$, thus
\[ \rho\ze\dia=y\thh \]
holds for some $y\in K$. Then
\[ x\ze=x\ze\ze\dia\ze=\wt{\thh}\rho\ze\dia\ze=y\wt{\thh}\thh\ze=0 \]
follows from $(\ref{Eq_wtrxz})$, and hence $x=0$ as desired.

{\rm (2)} Put $a_2=t(\al_1)$ for simplicity. By $\pi(\wt{\thh}\al_1)=\thh^{\dag}\al_1=0$, we have $\wt{\thh}\al_1\in e_cAe_{a_2}$.

%
%
By Condition \ref{CondMaxLength1}, we have $c\ne a_2$.
If $\wt{\thh}\al_1=0$, there is nothing to show. Otherwise, there exists a unique path $\ze\in e_cAe_{a_2}$ 
which gives
$\wt{\thh}\al_1=x\ze$
for some $x\in K$, similarly as in {\rm (1)}. Since $\rho$ is maximal and $a\ne c$, this $\ze$ does not end with $\al_1$, and hence $\ze\al_2=0$. This shows $\wt{\thh}\al_1\al_2=x\ze\al_2=0$.
%
%
%
%
\end{proof}

We will use the following properties of $A$. In the following, for any pair of vertices $a,b\in Q_0$, let $\Bp^{a,b}$ denote the set of paths in $e_aAe_b$ of positive lengths. Remark that we always have $|\Bp^{a,b}|\le 4$ and $|\Bp^{a,a}|\le 1$, since $A$ is gentle.
\begin{lem}\label{LemPositivePathCount}
Assume that there is a maximal path $\rho\in e_aAe_b$ with $a\ne b$. Then the following holds.
\begin{enumerate}
\item $|\Bp^{b,a}|\le1$.
\item If $|\Bp^{a,a}|=1$, then $\rho=\lam\om$ holds for some $\om\in\Bp^{a,b}$ and $\lam\in\Bp^{a,a}$. In particular the path $\lam$ is a subpath of $\rho$, and we have $|\Bp^{a,b}|\ge 2$.
\item If $|\Bp^{b,b}|=1$, then $\rho=\om\kap$ holds for some $\om\in\Bp^{a,b}$ and $\kap\in\Bp^{b,b}$. In particular $\kap$ is a subpath of $\rho$, and we have $|\Bp^{a,b}|\ge 2$.
\item If $|\Bp^{b,a}|=1$, than $|\Bp^{a,a}|=|\Bp^{b,b}|$.
\item Assume that there is $\om\in\Bp^{a,b}$ which does not start with a common arrow with $\rho$ nor end with a common arrow with $\rho$. Then $|\Bp^{b,a}|=0$ holds.
\end{enumerate}
\end{lem}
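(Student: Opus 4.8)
The statement of Lemma~\ref{LemPositivePathCount} is a collection of purely combinatorial assertions about the shape of paths in the gentle algebra $A$, all revolving around a single maximal path $\rho=\al_1\cdots\al_l\in e_aAe_b$ with $a\ne b$. Since $A$ is gentle, at each vertex there are at most two arrows in and two arrows out, and the key rigidity is that for each arrow $\al$ there is at most one arrow $\be$ with $t(\be)=s(\al)$, $\be\al\notin I$ (and similarly on the other side). I would prove the five items one by one, each time unwinding the maximality of $\rho$ together with these degree/relation constraints, and also invoking Condition~\ref{CondMaxLength1} where needed (it forbids a second arrow parallel to a maximal path, so in particular no two distinct maximal paths share both endpoints, and it rules out certain short loops).

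\textbf{Items (1)--(3).} For (2): suppose $\lam\in\Bp^{a,a}$ exists; it is a path from $a$ to $a$ of positive length, hence $\lam$ must be a subpath of the \emph{unique} maximal path through each of its arrows. Because $a=s(\rho)$ is a source of a maximal path and $A$ is gentle, trace how $\lam$ sits relative to $\rho$: the first arrow of $\rho$ is one of the at most two arrows out of $a$, and gentleness forces $\lam$ to begin with the other arrow out of $a$ (or with $\al_1$ itself) and then re-enter $a$; following the nonzero-composition condition shows the only consistent possibility is $\rho=\lam\om$ with $\om\in\Bp^{a,b}$, giving $|\Bp^{a,b}|\ge 2$ since both $\rho$ and $\om$ lie there. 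Item (3) is the mirror image, using the target $b=t(\rho)$. For (1): any path $\ze\in\Bp^{b,a}$ would combine with $\rho$; maximality of $\rho$ gives $\rho\ze=0$ and $\ze\rho=0$, so $\ze$ neither starts with a common arrow with the end of $\rho$ nor ends with one with the start of $\rho$; the gentle bound on in/out-degrees at $a$ and $b$ then leaves room for at most one such $\ze$. This last point is essentially (5) in the degenerate form, so I would actually prove (5) first and derive (1) as the special case where the hypothesis of (5) is automatically met, or else argue (1) directly by the same counting.

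\textbf{Items (4)--(5).} For (5): assume $\om\in\Bp^{a,b}$ shares no initial arrow and no terminal arrow with $\rho$. I would show any $\ze\in\Bp^{b,a}$ leads to a contradiction with gentleness or with Condition~\ref{CondMaxLength1}: $\om$ and $\rho$ together already use up both arrows out of $a$ and both arrows into $b$ (they are distinct at both ends), so a path $\ze$ from $b$ to $a$ would have to enter $a$ along an arrow already occupied and, chasing the relation pattern of a gentle algebra around the ``square'' formed by $\rho$, $\om$, and $\ze$, one produces either a forbidden parallel pair or an oriented cycle in $A$ --- impossible since $A$ is gentle. Hence $\Bp^{b,a}=\emptyset$. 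For (4): given $|\Bp^{b,a}|=1$, say $\ze\in\Bp^{b,a}$, I would show the presence of a loop $\lam\in\Bp^{a,a}$ is equivalent to the presence of a loop $\kap\in\Bp^{b,b}$, by transporting one to the other along $\ze$: if $\lam$ exists then $\ze\lam\in\Bp^{b,a}$ or $\lam$-related compositions force, via gentleness and (2)--(3), a corresponding loop at $b$, and vice versa; since $|\Bp^{a,a}|,|\Bp^{b,b}|\le 1$ this yields $|\Bp^{a,a}|=|\Bp^{b,b}|$.

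\textbf{Main obstacle.} The delicate part is item (5) (and through it (1)): keeping careful track, in a gentle algebra, of which arrows at the four vertices $a$, $b$, and the intermediate vertices are ``used'' by $\rho$ and $\om$, and then deducing that no room is left for $\ze$ without creating a relation violation or an oriented cycle. This is where Condition~\ref{CondMaxLength1} and the no-oriented-cycle hypothesis both get used, and it requires a clean case analysis on whether $\om$ and $\rho$ overlap in the middle. The other items reduce, once (5) and the subpath structure in (2)--(3) are in place, to bookkeeping with the degree bounds $|\Bp^{a,b}|\le 4$, $|\Bp^{a,a}|\le 1$ that gentleness supplies.
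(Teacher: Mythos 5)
Your outline is close in spirit to the paper's proof---parts (2) and (3) do come from showing the loop is an initial or terminal segment of $\rho$, and part (5) does hinge on producing an oriented cycle---but several of the logical steps you propose do not actually go through. First, part (1) is \emph{not} a special case of part (5): the hypothesis of (5), that some $\om\in\Bp^{a,b}$ shares neither its first nor its last arrow with $\rho$, is genuinely extra and is not automatically met, so you cannot ``derive (1) as the special case where the hypothesis of (5) is automatically met.'' The direct argument is short: any $\ze\in\Bp^{b,a}$ satisfies $\ze\rho=\rho\ze=0$ by maximality of $\rho$, so its last arrow is the unique arrow $\gamma$ with $\gamma\al_1\in I$, its first arrow is the unique arrow $\gamma'$ with $\al_l\gamma'\in I$, and gentleness (no oriented cycle, unique path continuation) then determines $\ze$ up to equality. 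Second, in (2) you hedge between ``$\lam$ begins with the other arrow out of $a$'' and ``$\lam$ begins with $\al_1$ itself,'' but only the latter is possible, and the reason must be pinned down: $\lam^2=0$ (since $A$ is gentle, hence no oriented cycle) and $\lam\rho=0$ (maximality) both pair the \emph{last} arrow $\gamma$ of $\lam$ with the first arrow of $\lam$, resp.\ of $\rho$, in a relation; gentleness allows at most one arrow $\be$ with $\gamma\be\in I$, so those two first arrows must coincide, i.e.\ $\lam$ is an initial segment of $\rho$.

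Third, the sketch of (4) via ``$\ze\lam\in\Bp^{b,a}$'' fails outright: the last arrow of $\ze$ is precisely the arrow forming a relation with $\al_1$ (the first arrow of $\lam$), so $\ze\lam=0$. The correct object is $\ze\om$, where $\rho=\lam\om$ from (2). Since $\ze\rho=0$ forces the last arrow of $\ze$ to pair in a relation with $\al_1$, while $\lam^2=0$ forces the first arrow of $\om$ to be the \emph{other} outgoing arrow at $a$, gentleness gives $\ze\om\ne0$; this is a loop in $\Bp^{b,b}$, and $|\Bp^{b,b}|\le1$ does the rest (the converse is symmetric via $\om\ze$). Finally, Condition~\ref{CondMaxLength1} plays no role in this lemma---it is a red herring here, and invoking it would obscure the argument; all five statements follow from gentleness (degree bounds, uniqueness of relation partners, absence of oriented cycles) and the maximality of $\rho$ alone, as in (5), where $\ze\rho=\rho\ze=0$ together with the mismatch of first and last arrows of $\om$ and $\rho$ give $\ze\om\ne0$ and $\om\ze\ne0$, making $\ze\om$ an oriented cycle.
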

\begin{proof}
{\rm (1)} If there is $\ze\in\Bp^{b,a}$, it should satisfy $\ze\rho=\rho\ze=0$. Since $A$ is gentle, such $\ze$ is unique. This means $|\Bp^{b,a}|\le1$.

{\rm (2)} If there exists $\lam\in\Bp^{a,a}$, it should satisfy $\lam^2=\lam\rho=0$ since $A$ is gentle and $\rho$ is maximal. This in particular means that $\lam$ and $\rho$ start with a common arrow. Since $\rho$ is maximal, there is a path $\om$ such that $\rho=\lam\om$. Especially we have $\Bp^{a,b}\ni\om\ne\rho$, hence $|\Bp^{a,b}|\ge2$.

{\rm (3)} This is shown in a similar way as {\rm (2)}.

{\rm (4)} Assume $|\Bp^{b,a}|=1$. By {\rm (1)}, it suffices to show the equivalence of $|\Bp^{a,a}|=1$ and $|\Bp^{b,b}|=1$. We only show that $|\Bp^{a,a}|=1$ implies $|\Bp^{b,b}|=1$, since the converse can be shown in a similar way.
Suppose that $|\Bp^{a,a}|=1$ holds, and put $\Bp^{a,a}=\{\lam\}$. By {\rm (2)}, we have $\rho=\lam\om$ for some $\om\in\Bp^{a,b}$. Since $\ze\rho=0$, the gentleness of $A$ implies $\ze\om\ne0$, which means $\ze\om\in\Bp^{b,b}$ and thus $|\Bp^{b,b}|=1$.

{\rm (5)} If there is any $\ze\in\Bp^{b,a}$, then $\ze\rho=\rho\ze=0$ and the gentleness of $A$ imply $\ze\om\ne0$ and $\om\ze\ne0$. Then $\ze\om$ forms an oriented cycle in $A$, which is a contradiction. Thus we have $\Bp^{b,a}=\emptyset$.\end{proof}

\begin{lem}\label{LemMaxAtMost3}
Assume that there is a maximal path $\rho\in e_aAe_b$ with $a\ne b$. Then the value of $\nfr=(|\Bp^{a,b}|,|\Bp^{b,a}|,|\Bp^{a,a}|,|\Bp^{b,b}|)$ becomes one of the following.
\[ (1,0,0,0),(1,1,0,0),(2,0,0,0),(2,0,1,0),(2,0,0,1),(4,0,1,1),(3,1,1,1). \]
In each case, the following holds.
\begin{itemize}
\item[{\rm (i)}] If $\nfr=(1,0,0,0)$, then obviously $\Bp^{a,b}=\{\rho\}$.
\item[{\rm (ii)}] If $\nfr=(1,1,0,0)$, then $\Bp^{a,b}=\{\rho\}$ and $\Bp^{b,a}=\{\ze\}$ for some path $\ze$. 
\item[{\rm (iii)}] If $\nfr=(2,0,0,0)$, then $\Bp^{a,b}=\{\rho,\om\}$ for some path $\om$. These $\rho$ and $\om$ do not start with a common path, nor end with a common path.
\item[{\rm (iv)}] If $\nfr=(2,0,1,0)$, then $\Bp^{a,b}=\{\rho,\om\}$ and $\Bp^{a,a}=\{\lam\}$ hold for some paths $\om$ and $\lam$ satisfying $\rho=\lam\om$.
\item[{\rm (v)}] If $\nfr=(2,0,0,1)$, then $\Bp^{a,b}=\{\rho,\om\}$ and $\Bp^{b,b}=\{\kap\}$ hold for some paths $\om$ and $\kap$ satisfying $\rho=\om\kap$.
\item[{\rm (vi)}] If $\nfr=(4,0,1,1)$, then $\Bp^{a,b}=\{\rho,\lam\om,\om\kap,\om\}$, $\Bp^{a,a}=\{\lam\}$ and $\Bp^{b,b}=\{\kap\}$ hold for some paths $\lam,\om,\kap$ satisfying $\rho=\lam\om\kap$.
\item[{\rm (vii)}] If $\nfr=(3,1,1,1)$, then $\Bp^{a,b}=\{\rho,\om,\tau\}$, $\Bp^{b,a}=\{\ze\}$, $\Bp^{a,a}=\{\tau\ze\}$ and $\Bp^{b,b}=\{\ze\om\}$ hold for some paths $\tau,\ze,\om$ satisfying $\rho=\tau\ze\om$.
\end{itemize}
We also remark that whenever there exists a path $\om\in e_aAe_b$ different from $\rho$, then Condition \ref{CondMaxLength1} implies $\length(\om)\ge2$.
\end{lem}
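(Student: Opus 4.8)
The plan is to run a case analysis on the pair $(r,s):=(|\Bp^{a,a}|,|\Bp^{b,b}|)\in\{0,1\}^{2}$, combining Lemma~\ref{LemPositivePathCount} with two elementary observations about the gentle algebra $A$. The concluding remark of the statement is immediate: if $\om\in\Bp^{a,b}$ and $\om\ne\rho$, then $\length(\om)\ge2$, since a path of length one is an arrow $a\to b$, and Condition~\ref{CondMaxLength1} applied to the maximal path $\rho$ forbids any arrow $a\to b$ other than $\rho$ itself; hence $\om$ is not an arrow.

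The first observation I would record is a \emph{divergence principle}: because $A$ is gentle, each arrow $\al$ admits at most one arrow $\gam$ with $\al\gam\notin I$, so two distinct paths with the same source that agree on their first arrow keep agreeing until one of them stops, and therefore one is an initial subpath of the other. In particular, if two distinct elements of $\Bp^{a,b}$ share a first arrow, their difference is a positive-length path $b\to b$, so $\Bp^{b,b}\ne\emptyset$; dually, sharing a last arrow forces $\Bp^{a,a}\ne\emptyset$. The second observation is that $A$ has no oriented cycle, so whenever $\rho$ visits a vertex $v\in\{a,b\}$ twice, the two arrows of $\rho$ leaving $v$ are distinct; as $v$ has out-degree at most $2$, they are then \emph{all} the arrows out of $v$. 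Combining the two, whenever the analysis forces $r=1$ (resp.\ $s=1$), every element of $\Bp^{a,b}$ is a subpath of $\rho$, and such subpaths are indexed by choosing one of the at most two visits of $a$ by $\rho$ as start and one of the at most two visits of $b$ as end; this leaves at most $2\times2$ candidates, from which one selects the valid ones.

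The case analysis is then short. If $r=s=0$, no two distinct elements of $\Bp^{a,b}$ share a first or a last arrow, so $|\Bp^{a,b}|\le2$ (at most two arrows leave $a$); if it equals $2$, the extra path shares no first/last arrow with $\rho$, so Lemma~\ref{LemPositivePathCount}(5) gives $|\Bp^{b,a}|=0$ and $\nfr=(2,0,0,0)$ (case (iii)); if it equals $1$, then $|\Bp^{b,a}|\in\{0,1\}$ by Lemma~\ref{LemPositivePathCount}(1), giving $(1,0,0,0)$ or $(1,1,0,0)$ (cases (i),(ii)). If $r=1,s=0$ (and symmetrically $r=0,s=1$), Lemma~\ref{LemPositivePathCount}(4) forces $|\Bp^{b,a}|=0$, and Lemma~\ref{LemPositivePathCount}(2) writes $\rho=\lam\om$ with $\Bp^{a,a}=\{\lam\}$; the enumeration then gives $\Bp^{a,b}=\{\rho,\om\}$, i.e.\ $(2,0,1,0)$ (case (iv)), respectively $(2,0,0,1)$ (case (v)). If $r=s=1$, Lemma~\ref{LemPositivePathCount}(2),(3) give factorizations exhibiting $\lam$ as an initial and $\kap$ as a terminal segment of $\rho$, with $\Bp^{a,a}=\{\lam\}$ and $\Bp^{b,b}=\{\kap\}$; comparing $\length(\lam)+\length(\kap)$ with $\length(\rho)$, equality is impossible (it would identify a vertex of $\rho$ with both $a$ and $b$), the \emph{gap} case $\length(\lam)+\length(\kap)<\length(\rho)$ writes $\rho=\lam\om\kap$ with $\om\in\Bp^{a,b}$ of positive length sharing no first/last arrow with $\rho$, so Lemma~\ref{LemPositivePathCount}(5) gives $|\Bp^{b,a}|=0$ and the enumeration gives $\Bp^{a,b}=\{\rho,\lam\om,\om\kap,\om\}$, i.e.\ $(4,0,1,1)$ (case (vi)), and the \emph{overlap} case $\length(\lam)+\length(\kap)>\length(\rho)$ exhibits the common segment of $\lam$ and $\kap$ inside $\rho$ as a positive-length path $\ze\co b\to a$, so $\rho=\tau\ze\om$ with $\tau,\om\in\Bp^{a,b}$, forcing $|\Bp^{b,a}|=1$ by Lemma~\ref{LemPositivePathCount}(1) and $\Bp^{a,b}=\{\rho,\tau,\om\}$, i.e.\ $(3,1,1,1)$ (case (vii)).

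I expect the case $r=s=1$ to be the main obstacle: one has to decide from the two factorizations of $\rho$ whether the initial segment $\lam$ and the terminal segment $\kap$ overlap inside $\rho$, extract the segments $\lam,\om,\kap$ (gap) or $\tau,\ze,\om$ (overlap) with the correct sources and targets, and then confirm --- via the divergence principle together with Condition~\ref{CondMaxLength1}, which rules out spurious short detours through $a$ or $b$ --- that $\Bp^{a,b}$ consists of exactly the subpaths of $\rho$ listed and nothing else. The bookkeeping of which vertices $\rho$ revisits, and at which positions, is where all the care is needed.
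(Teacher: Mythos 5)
Your proposal is correct and follows essentially the same route as the paper: both arguments reduce to Lemma \ref{LemPositivePathCount} plus a case analysis on whether $\rho$ revisits $a$ and/or $b$ (you index the cases by $(|\Bp^{a,a}|,|\Bp^{b,b}|)$, the paper by $\{a,b\}\cap\{a_1,\ldots,a_{l-1}\}$, and these are equivalent via parts (2) and (3) of that lemma), with the final enumeration of $\Bp^{a,b}$ obtained in both cases by observing that the two arrows of $\rho$ leaving a revisited vertex exhaust its outgoing arrows, so every positive-length path $a\to b$ is a subpath of $\rho$. Your gap/overlap dichotomy in the $r=s=1$ case is exactly the paper's comparison of the positions $u,v$ with $a_u=a$, $a_v=b$.
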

\begin{proof}
Put $\rho=\al_1\cdots\al_l$, and put $a_i=t(\al_i)$ for $1\le i\le l-1$.

\medskip

\noindent\und{Case 1} Suppose $\{a,b\}\cap\{a_1,\ldots,a_{l-1}\}=\emptyset$ holds. Then there is no subpath of $\rho$ which lies in $\Bp^{a,a}$ or $\Bp^{b,b}$. By Lemma \ref{LemPositivePathCount} {\rm (2)} and {\rm (3)}, this show $|\Bp^{a,a}|=|\Bp^{b,b}|=0$.

If there exists a path $\om\in\Bp^{a,b}$ different from $\rho$, then $\Bp^{a,a}=\Bp^{b,b}=\emptyset$ implies that $\rho$ and $\om$ do not start with a common arrow nor end with a common arrow. Since $A$ is gentle, this means that such $\om$ is unique. Besides, $|\Bp^{b,a}|=0$ follows from Lemma \ref{LemPositivePathCount} {\rm (5)}. Thus if $\{a,b\}\cap\{a_1,\ldots,a_{l-1}\}=\emptyset$, then we have
\[ \nfr=(1,0,0,0),(1,1,0,0),(2,0,0,0), \]
which correspond to the cases {\rm (i),(ii),(iii)}.

\medskip

\noindent\und{Case 2} Suppose $\{a,b\}\cap\{a_1,\ldots,a_{l-1}\}=\{a\}$ holds. Similarly as in Case 1, we have $|\Bfr^{b,b}|=0$. There is a unique $1\le u\le l-1$ such that $a_u=a$. If we put $\lam=\al_1\cdots\al_u$ and $\om=\al_{u+1}\cdots\al_l$, they give $\rho=\lam\om$, $\Bp^{a,a}\ni\lam$ and $\Bp^{a,b}\supseteq\{\rho,\om\}$. In particular we have $|\Bp^{a,a}|=1\ne0=|\Bp^{b,b}|$, which shows $|\Bp^{b,a}|=0$ by Lemma \ref{LemPositivePathCount} {\rm (4)}. In this case $\al_1$ and $\al_{u+1}$ are the only outgoing arrows at $a$, which shows that any path in $\Bp^{a,b}$ should be a subpath of $\rho$. Since $b\notin\{a_1,\ldots,a_{l-1}\}$, this means $\Bp^{a,b}=\{\rho,\om\}$. Thus if $\{a,b\}\cap\{a_1,\ldots,a_{l-1}\}=\{a\}$, this corresponds to the case {\rm (iv)}.

\medskip

\noindent\und{Case 3} Suppose $\{a,b\}\cap\{a_1,\ldots,a_{l-1}\}=\{b\}$ holds. Similarly as in Case 2, this corresponds to the case {\rm (v)}.

\medskip

\noindent\und{Case 4} Suppose $\{a,b\}\cap\{a_1,\ldots,a_{l-1}\}=\{a,b\}$ holds. There exist $1\le u,v\le l-1$ satisfying $a_u=a$ and $a_v=b$.
If $u<v$, then
\[ \lam=\al_1\cdots\al_u\in\Bp^{a,a},\ \ \om=\al_{u+1}\cdots\al_v\in\Bp^{a,b},\ \ \kap=\al_{v+1}\cdots\al_l\in\Bp^{b,b} \]
satisfy $\rho=\lam\om\kap$, and thus $\Bp^{a,b}=\{\rho,\lam\om,\om\kap,\om \}$ follows. By Lemma \ref{LemPositivePathCount} {\rm (5)}, we also have $|\Bp^{b,a}|=0$. This corresponds to the case {\rm (vi)}.

If $u>v$, then
\[ \tau=\al_1\cdots\al_v\in\Bp^{a,b},\ \ \ze=\al_{v+1}\cdots\al_u\in\Bp^{b,a},\ \ \om=\al_{u+1}\cdots\al_l\in\Bp^{a,b} \]
satisfy $\rho=\tau\ze\om$, and we have $\Bp^{a,b}\supseteq\{\rho,\om,\tau\}$, $\Bp^{a,a}=\{\tau\ze\}$, $\Bp^{b,b}=\{\ze\om\}$. 
In this case, $\{\al_1,\al_{u+1}\}$ is the set of outgoing arrows at $a$, and $\{\al_v,\al_l\}$ is the set of incoming arrows at $b$. Since there cannot exist a path which starts with $\al_{u+1}$ and ends with $\al_v$, it follows $\Bp^{a,b}=\{\rho,\om,\tau\}$. 
This corresponds to the case {\rm (vii)}.
\end{proof}

\begin{lem}\label{LemNonMaxVanish}
Assume that there is a maximal path $\rho\in e_aAe_b$ with $a\ne b$. Let $\wt{\rho}\in e_bVe_a$ be any lift of $\rho^{\dag}$. If $\Bp^{b,a}=\emptyset$, then the following holds for any path $\om\in e_aAe_b$.
\begin{enumerate}
\item If $\rho$ and $\om$ do not start with a common arrow, then $\wt{\rho}\om=0$.
\item If $\rho$ and $\om$ do not end with a common arrow, then $\om\wt{\rho}=0$.
\end{enumerate}
\end{lem}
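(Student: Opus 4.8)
The plan is to feed the classification of Lemma \ref{LemMaxAtMost3} into a short case analysis, dispose of the generic cases by a path‑composition argument, and clear the single degenerate case by a linear‑independence trick.

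First I would note that since $A$ is gentle and $\Bp^{b,a}=\emptyset$, Lemma \ref{LemMaxAtMost3} forces $\nfr=(|\Bp^{a,b}|,|\Bp^{b,a}|,|\Bp^{a,a}|,|\Bp^{b,b}|)$ to be one of $(1,0,0,0),(2,0,0,0),(2,0,1,0),(2,0,0,1),(4,0,1,1)$. For part (1), if $\om$ and $\rho$ share their first arrow there is nothing to prove, so one may assume they do not; then $\om\ne\rho$, and in the configurations $(1,0,0,0)$ (where $\Bp^{a,b}=\{\rho\}$) and $(2,0,0,1)$ (where the only other path is the $\om$ with $\rho=\om\kap$, hence sharing the first arrow of $\rho$) no admissible $\om$ exists, so the statement is vacuous; the mirror remark disposes of $(2,0,1,0)$ for part (2). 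This leaves only genuinely non‑vacuous configurations.

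For an admissible $\om=\gamma_1\cdots\gamma_n$ in part (1) — so $\gamma_1$ is not the first arrow $\al_1$ of $\rho$, and $n\ge 2$ by Condition \ref{CondMaxLength1} — I would compute $\pi(\wt\rho\gamma_1)=\rho^\dag\gamma_1$, which vanishes by Definition \ref{DefAlmostStandard} since $\rho$ does not start with $\gamma_1$; hence $\wt\rho\gamma_1\in e_bAe_{t(\gamma_1)}$, and as $t(\gamma_1)\ne b$ (otherwise $\gamma_1$ would be a length‑$1$ element of $\Bp^{a,b}$ distinct from $\rho$, against the last remark of Lemma \ref{LemMaxAtMost3}), $\wt\rho\gamma_1$ is a $K$‑combination of paths of positive length. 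Therefore $\wt\rho\om=(\wt\rho\gamma_1)(\gamma_2\cdots\gamma_n)$ is a $K$‑combination of products of two positive‑length paths $b\to t(\gamma_1)\to b$, i.e.\ it lies in the span of $\Bp^{b,b}$. In the configurations with $\Bp^{b,b}=\emptyset$ this already gives $\wt\rho\om=0$; part (2) is symmetric, with $\Bp^{a,a}$ replacing $\Bp^{b,b}$.

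The only remaining obstacle is the configuration $\nfr=(4,0,1,1)$, where $\Bp^{a,a}=\{\lam\}$, $\Bp^{b,b}=\{\kap\}$, $\rho=\lam\om_0\kap$ and $\lam^2=\kap^2=0$; here the above only yields $\wt\rho\om_0=c'\kap$, and symmetrically (using that $\om_0$ does not end with the last arrow of $\rho$, which follows from $\kap^2=0$) $\om_0\wt\rho=c''\lam$. To finish I would form $\wt\rho\om_0\wt\rho$, which equals $c'\kap\wt\rho$ with one bracketing and $c''\wt\rho\lam$ with the other. Now $\pi(\kap\wt\rho)=\kap\rho^\dag=(\lam\om_0)^\dag$ and $\pi(\wt\rho\lam)=\rho^\dag\lam=(\om_0\kap)^\dag$ by Definition \ref{DefAlmostStandard} (as $\rho$ ends with $\kap$ and starts with $\lam$, both of length $<\length(\rho)$), and $(\lam\om_0)^\dag\ne(\om_0\kap)^\dag$ because $\lam$ and $\om_0$ begin with different arrows — which is exactly the condition making $\om_0$ admissible; hence $\kap\wt\rho$ and $\wt\rho\lam$ are $K$‑linearly independent in $V$, forcing $c'=c''=0$ and thus $\wt\rho\om_0=0=\om_0\wt\rho$. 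The leftover admissible paths are then handled by $\wt\rho(\om_0\kap)=(\wt\rho\om_0)\kap=0$ and $(\lam\om_0)\wt\rho=\lam(\om_0\wt\rho)=0$. I expect the fiddliest point to be exactly this last configuration: keeping straight which of $\lam\om_0,\om_0,\om_0\kap$ share a first (resp.\ last) arrow with $\rho$, and extracting the needed inequalities of arrows from $\lam^2=\kap^2=0$.
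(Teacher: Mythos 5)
Your proof is correct, but it takes a genuinely different and longer route than the paper's. The paper observes directly that since $\pi(\wt\rho\gamma_1)=0$, the element $\wt\rho\gamma_1$ is a $K$-linear sum of paths $\ze\in e_bAe_{t(\gamma_1)}$, and then uses $\Bp^{b,a}=\emptyset$ to conclude that no such $\ze$ passes through $a$; hence $\ze$ cannot end with $\gamma_1$ (whose source is $a$), so $\ze\gamma_2=0$ by gentleness (because $\gamma_1\gamma_2\notin I$ already uses up the one allowed non-relation). This kills $\wt\rho\gamma_1\gamma_2$ after just two arrows, with no case analysis at all. Your argument instead only extracts the weaker fact that $\wt\rho\om$ lands in the span of $\Bp^{b,b}$, which is sufficient in the configurations where $\Bp^{b,b}=\emptyset$ but forces you to invoke Lemma~\ref{LemMaxAtMost3} for the enumeration and then to clear the single stubborn case $\nfr=(4,0,1,1)$ by the two-bracketings-of-$\wt\rho\om_0\wt\rho$ trick and a linear-independence computation in $E$. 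The extra work buys nothing here — the paper's observation about $\ze$ not passing through $a$ is precisely what you are missing — but your computation is sound (including the $\be_m\be_1\in I$ reasoning used to verify that $\om_0$ shares neither its first arrow with $\lam$ nor its last arrow with $\kap$), and it has the minor virtue of being explicit about the full list of admissible $\om$'s in each configuration.
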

\begin{proof}
We only show {\rm (1)}, in a similar way as in the proof of Lemma \ref{LemTildeVanish} {\rm (2)}. Put $\om=\be_1\be_2\cdots\be_r$ and $t(\be_1)=c$. By Condition \ref{CondMaxLength1}, we have $r\ge 2$ and $b\ne c$. Since $\pi(\wt{\rho}\be_1)=\rho^{\dag}\be_1=0$, we have $\wt{\rho}\be_1\in e_bAe_c$. Thus $\wt{\rho}\be_1$ is a $K$-linear sum of paths $\ze\in e_bAe_c$. Since $\Bp^{b,a}=\emptyset$, any such $\ze$ does not pass through $a$. In particular, $\ze$ does not end with $\be_1$. This shows $\ze\be_2=0$, which implies $\wt{\rho}\om=(\wt{\rho}\be_1)\be_2\cdots\be_r=0$.
\end{proof}

\begin{rem}\label{RemSandDeterm}
Let $\thh\in A$ be any path of positive length. Since $A$ is assumed to be gentle, there is no oriented cycle in $A$. This implies that the element $\thh\wt{\thh}\thh\in V$ is uniquely determined from $\thh$, independently from the choice of lift $\wt{\thh}$ of $\thh^{\dag}$.
\end{rem}

\begin{lem}\label{LemQuasiInverse}
For each maximal path $\rho\in A$, there exists $0\ne c_{\rho}\in K$ such that $\rho\wt{\rho}\rho=c_{\rho}\rho$ holds for any lift $\wt{\rho}\in V$ of $\rho^{\dag}$. In other words, $c_{\rho}\iv\wt{\rho}$ gives a quasi-inverse of $\rho$. By Remark \ref{RemSandDeterm}, this $c_{\rho}$ does not depend on the choice of lift $\wt{\rho}$.
\end{lem}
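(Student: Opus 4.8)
The plan is to work inside the semisimple algebra $V = e_a V e_b \oplus e_b V e_a \oplus (\text{loops})$ localized at the source/target vertices of $\rho$, and to exploit that $V$, being semisimple, has enough quasi-inverses. The statement $\rho\wt\rho\rho = c_\rho\rho$ for some $c_\rho\neq 0$ will be extracted by a case analysis on the combinatorial type $\nfr = (|\Bp^{a,b}|,|\Bp^{b,a}|,|\Bp^{a,a}|,|\Bp^{b,b}|)$ of Lemma \ref{LemMaxAtMost3}. The independence from the choice of lift is already recorded in Remark \ref{RemSandDeterm}, so I only need to produce one lift for which the identity holds with a nonzero scalar; the nonvanishing of $c_\rho$ is then the real content.

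First I would apply $\pi$ to see that $\pi(\rho\wt\rho\rho) = \rho\,\rho^{\dag}\rho$ in $E$; by property (v3) and Definition \ref{DefAlmostStandard}(2) (the length-equals case, applied to $\thh=\rho$, $\xi=\rho$), this equals $C\,\rho\, e_{s(\rho)}^{\dag}$ if $\length(\rho)=\length(\rho)$ — but one must be careful: $\rho\rho^{\dag}$ lands in degree $0$, namely $\rho\rho^{\dag} = C e_{s(\rho)}^{\dag}$, and then $(Ce^\dag_{s(\rho)})\rho = 0$ since multiplying a degree-$0$ element of $E$ by a positive-length path kills it (Definition \ref{DefAlmostStandard}(2), $\length(\thh)>\length(e_a)$). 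Hence $\rho\wt\rho\rho \in \Ker\pi = A$, i.e. $\rho\wt\rho\rho$ is an actual element of $e_aAe_b$. So the question is reduced to: which $K$-linear combination of paths in $\Bp^{a,b}$ is it, and is the coefficient of $\rho$ nonzero?

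Next, I would pin down $\rho\wt\rho\rho$ among the basis $\Bp^{a,b} \cup \{e_a \text{ if } a=b\}$ of $e_aAe_b$ — here $a\ne b$ by hypothesis on $\rho$. Multiply $\rho\wt\rho$ on the right by a quasi-inverse $\rho^\diamond \in e_bVe_a$ of $\rho$ (which exists by semisimplicity). Then $\rho\wt\rho\rho\rho^\diamond$ is an idempotent-ish element, and using $\rho\rho^\diamond\rho = \rho$ one gets $\rho\wt\rho\rho = (\rho\wt\rho\rho\rho^\diamond)\rho$; analyzing $\rho\wt\rho\rho\rho^\diamond \in e_aVe_a$ via the short exact sequence restricted to $e_aVe_a$ (Remark \ref{RemUseOften}) reduces the problem to the structure of $e_aVe_a$, which is small by Lemma \ref{LemPositivePathCount}/\ref{LemMaxAtMost3} (at most $|\Bp^{a,a}|\le 1$ nontrivial path, so $\dim_K e_aAe_a \le 2$ and $\dim_K e_aVe_a \le 4$). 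In the generic types (i), (ii), (iii) where $|\Bp^{a,a}|=|\Bp^{b,b}|=0$, Lemmas \ref{LemTildeVanish} and \ref{LemNonMaxVanish} already force $\wt\rho$ to annihilate every path in $e_aAe_b$ that does not share a starting (resp. ending) arrow with $\rho$; since by Lemma \ref{LemMaxAtMost3}(iii) the only companion $\om$ of $\rho$ shares neither end with $\rho$, we get $\rho\wt\rho\rho \in K\rho$ immediately, and the coefficient is nonzero because $\rho\wt\rho$ cannot be zero (if it were, then $\rho\wt\rho\rho = 0$, but one can test against $\rho^\diamond$: $0 = \rho\wt\rho\rho\rho^\diamond$ and $\rho\rho^\diamond$ is an idempotent whose left multiple $\rho$ is $\rho$, contradiction with a separate argument that $\wt\rho$ restricted to $e_b V \cdot (\text{image of }\rho)$ is nonzero — this is where I'd use that $\pi(\rho\wt\rho) = \rho\rho^\dag = C e^\dag_{s(\rho)} \ne 0$ since $C\ne 0$).

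**The hard part will be** the two "large" types (vi) $\nfr = (4,0,1,1)$ with $\rho = \lambda\om\kappa$ and (vii) $\nfr = (3,1,1,1)$ with $\rho = \tau\ze\om$, where $e_aAe_b$ is multidimensional and the loop algebras $e_aVe_a$, $e_bVe_b$ interact with $\wt\rho$ nontrivially; there the annihilation lemmas only kill some of the companion paths, and I expect to need an explicit computation of $\wt\rho\lambda$, $\kappa\wt\rho$ etc. modulo $A$, combined with the relations $\lambda^2 = \kappa^2 = 0$, $\lambda\om\ne 0$, $\om\kappa \ne 0$ from gentleness, to show that $\rho\wt\rho\rho$ still lies in $K\rho$ (the companion paths $\lambda\om$, $\om\kappa$, $\om$ must drop out) and that the scalar survives. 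I would handle type (vii) by the same bookkeeping, additionally using $\Bp^{b,a} = \{\ze\}$ to compute $\ze\wt\rho$ and $\wt\rho$-products through the cycle $\ze\om$, $\tau\ze$. In all cases the nonvanishing $c_\rho \ne 0$ ultimately rests on: $\wt\rho$ is a genuine lift of the nonzero element $\rho^\dag$, so $\rho\wt\rho$ and $\wt\rho\rho$ are nonzero in $V$ (their images under $\pi$ are $\pm e^\dag_{\ast}\ne 0$), and in a semisimple algebra a product of three elements $\rho\cdot\wt\rho\cdot\rho$ with $\rho\wt\rho\ne 0\ne\wt\rho\rho$ landing in the one-dimensional space $K\rho$ cannot be $0$ unless $\rho$ itself were annihilated on one side, which it is not.
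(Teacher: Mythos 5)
Your plan --- show $\rho\wt\rho\rho\in A=\Ker\pi$ by degree considerations, then pin down its $\rho$-coefficient via a case analysis on $\nfr$ and an abstract semisimplicity argument for the nonvanishing --- takes a genuinely different route from the paper's, and it has gaps at exactly the points where the paper's route resolves everything at once. The paper never computes $\rho\wt\rho\rho$ directly: it fixes an arbitrary quasi-inverse $\rho\dia$ of $\rho$, expands $\rho\dia=x\wt\rho+\sum y_\om\wt\om+\sum z_\ze\ze$ in the $K$-basis $\{\wt\rho\}\amalg\{\wt\om\mid\rho\ne\om\in\Bp^{a,b}\}\amalg\Bp^{b,a}$ of $e_bVe_a$ coming from Remark \ref{RemUseOften}, and multiplies $\rho$ on both sides of $\rho=\rho\rho\dia\rho$. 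Maximality of $\rho$ kills each $\rho\ze\rho$, Lemma \ref{LemTildeVanish} kills each $\rho\wt\om\rho$, and one is left with $\rho=x\rho\wt\rho\rho$; since $\rho\ne0$, $x\ne0$ is automatic and $c_\rho=x\iv$. No case split on $\nfr$ (Lemma \ref{LemMaxAtMost3}) is made and Lemma \ref{LemNonMaxVanish} is not used in this lemma at all --- those enter only later, in Proposition \ref{PropQuasiInverse}.

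There are three concrete gaps. First, you dismiss $a=b$ as excluded ``by hypothesis on $\rho$,'' but no such hypothesis exists: a maximal path in a gentle algebra may satisfy $s(\rho)=t(\rho)$ (then $\rho^2=0$, compatible with the absence of oriented cycles), and the paper devotes a separate Case 2 to it, using a lift $f_a$ of $e_a^\dag$ and the relation $f_a\rho=p\rho$. Second, even for $a\ne b$, your claim $\rho\wt\rho\rho\in K\rho$ is not a consequence of the lemmas you cite. For instance in type (iii), $e_aVe_a$ has $K$-basis $\{e_a,f_a\}$, and $\rho\wt\rho=ye_a+Cf_a$ for some $y\in K$, so $\rho\wt\rho\rho=y\rho+Cf_a\rho$; you would need to exclude an $\om$-component of $f_a\rho$, but Lemmas \ref{LemTildeVanish} and \ref{LemNonMaxVanish} control products of $\wt\rho$ or $\wt\om$ with paths, not products of the degree-zero lift $f_a$ with $\rho$. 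Third, the closing nonvanishing claim is false as a general principle: in $M_2(K)$, take $\rho=E_{12}$ and $\wt\rho=E_{11}+E_{22}$; then $\rho\wt\rho=E_{12}\ne0$ and $\wt\rho\rho=E_{12}\ne0$, yet $\rho\wt\rho\rho=0$. Semisimplicity alone does not force $c_\rho\ne0$; what does, in the paper, is that $\wt\rho$ necessarily appears with nonzero coefficient in any quasi-inverse $\rho\dia$, a structural fact about the short exact sequence $(\ref{ExBimodV})$ rather than about semisimple rings in the abstract.
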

\begin{proof}
Put $s(\rho)=a$ and $t(\rho)=b$, for simplicity. We may assume $\wt{\rho}$ is in $e_bVe_a$. Take any quasi-inverse $\rho\dia\in e_bVe_a$ of $\rho$.

\medskip

\noindent\underline{Case 1}: $a\ne b$.

Assume $a\ne b$. 
As in Remark \ref{RemUseOften}, $K$-module $e_bVe_a$ has a $K$-basis
\[ \{\wt{\rho}\}\amalg\{\wt{\om}\mid\rho\ne\om\in\Bp^{a,b}\}\amalg\Bp^{b,a}, \]
where $\wt{\om}\in e_bVe_a$ is a lift of $\om^{\dag}$. Thus $\rho\dia$ can be written as
\begin{equation}\label{rhodiasum}
\rho\dia=x\wt{\rho}+\sum_{\rho\ne\om\in\Bp^{a,b}}y_{\om}\wt{\om}+\sum_{\ze\in\Bp^{b,a}}z_{\ze}\ze
\end{equation}
for some $x,y_{\om},z_{\ze}\in K$. Since $\rho$ is maximal, we have $\rho\ze\rho=0$ for any $\ze\in\Bp^{b,a}$.
Also if $\rho\ne\om\in\Bp^{a,b}$, then $\rho$ and $\om$ do not start with a common arrow or not end with a common arrow. Thus by Lemma \ref{LemTildeVanish}, we have $\rho\wt{\om}\rho=0$ for such $\om$. Therefore we obtain $\rho=\rho\rho\dia\rho=x\rho\wt{\rho}\rho$ from $(\ref{rhodiasum})$. This in particular means $x\ne0$, and $\rho\wt{\rho}\rho=x\iv\rho$.




\medskip

\noindent\underline{Case 2}: $a=b$.

Assume $a=b$. Let $f_a\in e_aVe_a$ be any lift of $e_a^{\dag}\in E$. Since $A$ is gentle, there does not exist a path $\thh$ of positive length satisfying $s(\thh)=t(\thh)=a$ other than $\rho$. This means that $\{e_a,\rho,f_a,\wt{\rho}\}$ is a $K$-basis of $e_aVe_a$, as in Remark \ref{RemUseOften}. Also, since $\pi(f_a\rho)=e_a^{\dag}\rho=0$, we have $f_a\rho=p\rho+qe_a$ for some $p,q\in K$. This implies $0=f_a\rho^2=p\rho^2+q\rho=q\rho$ and hence $q=0$, which means $f_a\rho=p\rho$. This shows $\rho f_a\rho=p\rho^2=0$. By using the above basis, we have
\[ \rho\dia=x\wt{\rho}+y\rho+ze_a+wf_a \]
for some $x,y,z,w\in K$. This gives $\rho=\rho\rho\dia\rho=x\rho\wt{\rho}\rho$, hence $\rho\wt{\rho}\rho=x\iv\rho$.

\end{proof}

\begin{prop}\label{PropQuasiInverse}
Assume $\Char(K)\ne 2$. For any maximal path $\rho\in e_aAe_b$, there exists a unique element $\rho\ust\in e_bVe_a$ which satisfies the following.
\begin{itemize}
\item[{\rm (q1)}] $\rho\ust$ is a quasi-inverse of $\rho$.
\item[{\rm (q2)}] $\pi(c_{\rho}\rho\ust)=\rho^{\dag}$ for $c_{\rho}\in K\setminus\{0\}$ obtained in Lemma \ref{LemQuasiInverse}.
\item[{\rm (q3)}] $\rho$ is a quasi-inverse of $\rho\ust$.
\item[{\rm (q4)}] $\rho\ust\rho\ust=0$ and $e_a=\rho\rho\ust+\rho\ust\rho$ hold if $a=b$.
\end{itemize}
\end{prop}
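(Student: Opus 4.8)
The plan is to start from the element $\wt{\rho}\in e_bVe_a$ (an arbitrary lift of $\rho^{\dag}$) and the scalar $c_\rho\in K\setminus\{0\}$ from Lemma \ref{LemQuasiInverse}, and to correct $c_\rho\iv\wt{\rho}$ by an element of $e_bAe_a$ (which projects to $0$ under $\pi$, so (q2) is automatically unaffected) until all four conditions hold. Set $\rho^0=c_\rho\iv\wt{\rho}$; by Lemma \ref{LemQuasiInverse} this is already a quasi-inverse of $\rho$, so (q1) holds, and $\pi(c_\rho\rho^0)=\pi(\wt{\rho})=\rho^{\dag}$, so (q2) holds. The remaining task is to adjust $\rho^0$ within the coset $\rho^0+e_bAe_a$ so that it also satisfies (q3), and additionally (q4) in the case $a=b$. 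First I would handle the generic case $a\ne b$: here by Lemma \ref{LemMaxAtMost3} the possible shapes of $e_aAe_b,e_bAe_a$ are explicitly listed, and the correction term lives in the (at most one-dimensional, by Lemma \ref{LemPositivePathCount}(1)) space $e_bAe_a$. I would replace $\rho^0$ by $\rho\uas=e_a^V\rho^0 e_b^V$ after multiplying on both sides by suitable idempotents built from $\rho\rho^0$ and $\rho^0\rho$ — the standard trick for turning a one-sided/weak inverse into a reflexive (von Neumann regular) inverse in a semisimple algebra: set $\rho\uas=\rho^0\rho\rho^0$, which still satisfies $\rho\rho\uas\rho=\rho$, now also satisfies $\rho\uas\rho\rho\uas=\rho\uas$ (i.e. (q3)), and by Remark \ref{RemSandDeterm} the expression $\rho\rho^0\rho=c_\rho\iv\rho\wt{\rho}\rho=\rho$ shows $\rho^0\rho\rho^0$ projects correctly so (q2) still holds; uniqueness then follows because any two elements satisfying (q1)+(q3) with the same image under $\pi$ differ by an element of $e_bAe_a$ which the relations $\rho x\rho=0$, $x\rho x\cdots$ force to be $0$ using the explicit path descriptions from Lemma \ref{LemMaxAtMost3} together with Lemmas \ref{LemTildeVanish}, \ref{LemNonMaxVanish}.

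Next I would treat the loop case $a=b$, which is where conditions (q4) enter and where $\Char K\ne 2$ is genuinely needed. Here $e_aVe_a$ is $4$-dimensional with basis $\{e_a,\rho,f_a,\wt{\rho}\}$ as in the proof of Lemma \ref{LemQuasiInverse}, and I would compute, for a candidate $\rho\uas=c_\rho\iv\wt{\rho}+y\rho+z e_a+w f_a$, the products $\rho\rho\uas$, $\rho\uas\rho$, $\rho\uas\rho\uas$, and $\rho\rho\uas+\rho\uas\rho$, using the relations already established in Lemma \ref{LemQuasiInverse} ($f_a\rho=p\rho$ for some $p$, and symmetrically $\rho f_a=p'\rho$, $\rho^2=0$, $\rho\wt{\rho}\rho=c_\rho\rho$). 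The idempotent $e_a=\rho\rho\uast+\rho\uast\rho$ together with $(\rho\uast)^2=0$ amounts to requiring that $\{\rho,\rho\uast\}$ behave like the two off-diagonal matrix units of a $2\times 2$ matrix block; solving the resulting scalar equations for $y,z,w$ is a short linear/quadratic computation, and the appearance of a factor $2$ (from $\rho\rho\uast+\rho\uast\rho$ being symmetric) is exactly what forces $\Char K\ne 2$. I expect the solution to be unique given (q2), yielding the uniqueness claim.

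The main obstacle I anticipate is not the algebraic manipulation itself but the bookkeeping of \emph{which} paths can appear in the correction term $e_bAe_a$ in the case $a\ne b$, and showing that all of them are killed: one must go through the cases $\nfr=(1,1,0,0)$ and $\nfr=(3,1,1,1)$ of Lemma \ref{LemMaxAtMost3}, where $\Bp^{b,a}=\{\ze\}$ is nonempty, and verify using maximality of $\rho$ (so $\rho\ze\rho=0$) plus Lemmas \ref{LemTildeVanish}/\ref{LemNonMaxVanish} that adding a multiple of $\ze$ to $\rho\uast$ breaks (q1) or (q3) unless the multiple is zero — hence uniqueness. The existence in those cases should still come from the reflexive-inverse construction $\rho^0\rho\rho^0$, but one should double-check that this element indeed lies in $e_bVe_a$ and that its $\pi$-image is $\rho^{\dag}$ rather than $\rho^{\dag}$ plus a nonzero path, which again reduces to the identity $\rho\rho^0\rho=\rho$ from Lemma \ref{LemQuasiInverse} and Remark \ref{RemSandDeterm}.
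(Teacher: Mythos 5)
Your overall plan --- begin with the quasi-inverse $\rho^0 = c_\rho^{-1}\wt{\rho}$ from Lemma~\ref{LemQuasiInverse}, pass to a reflexive (von~Neumann regular) inverse to get (q1)+(q3), and then treat the loop case $a=b$ separately by solving scalar equations where $\Char K\ne 2$ enters --- has the same shape as the paper's argument. In fact in each of the seven configurations of Lemma~\ref{LemMaxAtMost3} the paper's $\rho\ust$ turns out to equal $\rho\dia\rho\rho\dia$, so the ``trick'' you propose is implicitly there.

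However, there is a genuine gap in your justification of (q2). You assert that $\rho^\ast := \rho^0\rho\rho^0$ ``projects correctly'' by citing Remark~\ref{RemSandDeterm} and the identity $\rho\rho^0\rho = \rho$. That remark only says $\rho\wt{\rho}\rho$ is independent of the choice of lift; it gives no control on the \emph{difference} $\rho^0\rho\rho^0 - \rho^0 \in e_bVe_a$, which is what (q2) demands lie in $A=\Ker\pi$. The element $v=\rho^0\rho\rho^0-\rho^0$ satisfies $\rho v = v\rho = 0$, but this does not force $\pi(v)=0$: by Remark~\ref{RemUseOften} a basis of $e_bVe_a$ consists of paths in $e_bAe_a$ together with lifts $\wt{\xi}$ of $\xi^\dag$ for paths $\xi\in e_aAe_b$, and for every \emph{non-maximal} such $\xi$ one already has $\rho\xi^\dag=\xi^\dag\rho=0$ in $E$ (only $\xi=\rho$ gives a nonzero product). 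Lemma~\ref{LemMaxAtMost3} shows there can be up to three non-maximal paths in $e_aAe_b$ besides $\rho$ (cases (vi) and (vii)), so $\pi(v)$ could \emph{a priori} have nonzero coefficients along those $\xi^\dag$. The paper closes exactly this gap: it expands $\rho\dia\rho\rho\dia$ against an explicit basis of $e_bVe_a$ in each of the seven cases, multiplies by $\rho$ (and in case (vi) also by the auxiliary path $\om$, invoking Lemma~\ref{LemNonMaxVanish}) to show the unwanted coefficients vanish, leaving only a correction lying in $A$. Without some version of this case analysis, (q2) is not established, and your existence proof does not go through.

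Two smaller remarks. Your uniqueness sketch (``the relations force the difference to be $0$ using Lemmas~\ref{LemTildeVanish},~\ref{LemNonMaxVanish}'') is more machinery than is needed when $a\ne b$: once (q2) puts $\rho\ust-\rho^{\star\prime}$ into $e_bAe_a$, maximality of $\rho$ already gives $\rho\cdot(\rho\ust-\rho^{\star\prime})=(\rho\ust-\rho^{\star\prime})\cdot\rho=0$, and then (q3) alone yields $\rho^{\star\prime}=\rho^{\star\prime}\rho\rho^{\star\prime}=\rho\ust\rho\rho\ust=\rho\ust$. And your heuristic for where $\Char K\ne 2$ is used (``symmetry of $\rho\rho\ust+\rho\ust\rho$'') is off: in the paper it enters both in the explicit formula for $\rho\ust$ in the loop case (division by $2$ and $4$) and in the uniqueness step $2n^2\rho=0\Rightarrow n=0$.
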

\begin{proof}
Let $\wt{\rho}\in e_bVe_a$ be any lift of $\rho^{\dag}$. By Lemma \ref{LemQuasiInverse}, there exists a unique $c_{\rho}\in K\setminus\{0\}$ with which $\rho\dia:=c_{\rho}\iv\wt{\rho}$ gives a quasi-inverse of $\rho$. 

\medskip

\noindent\underline{Case 1}: $a\ne b$.

Assume $a\ne b$. 
First we show the uniqueness. If both $\rho\ust$ and $\rho^{\star\prime}$ satisfy the stated properties, then their difference satisfies $\rho\ust-\rho^{\star\prime}\in e_bAe_a$ because of {\rm (q2)}, and thus can be written as a $K$-linear sum of paths in $A$ of positive length. Then we obtain $\rho^{\star\prime}=\rho^{\star\prime}\rho\rho^{\star\prime}=\rho\ust\rho\rho\ust=\rho\ust$ by {\rm (q3)}.

Let us show the existence.
Since $\rho\dia$ already satisfies {\rm (q1)} and {\rm (q2)}, it suffices to modify $\rho\dia$ to satisfy {\rm (q3)}.
As in Lemma \ref{LemMaxAtMost3}, we have seven cases {\rm (i),\ldots,(vii)} corresponding to the values of $\nfr=(|\Bp^{a,b}|,|\Bp^{b,a}|,|\Bp^{a,a}|,|\Bp^{b,b}|)$. We use the symbols given in the statement of Lemma \ref{LemMaxAtMost3}.

\medskip

{\rm (i)} In this case, $\{\rho\dia\}$ gives a $K$-basis of $e_bVe_a$. Thus $\rho\dia\rho\rho\dia=x\rho\dia$ holds for some $x\in K$. Multiplying $\rho$ from the both sides, we obtain
\[ \rho=\rho(\rho\dia\rho\rho\dia)\rho=x\rho\rho\dia\rho=x\rho, \]
hence $x=1$. Thus $\rho\ust=\rho\dia$ satisfies the desired properties.

\medskip

{\rm (ii)} In this case, since $e_bVe_a$ has a $K$-basis $\{\rho\dia,\ze\}$, we have $\rho\dia\rho\rho\dia=x\rho\dia+y\ze$ for some $x,y\in K$. Similarly as in {\rm (i)}, multiplying $\rho$ from the both sides, we obtain $x=1$. If we put $\rho\ust=\rho\dia+y\ze$, it satisfies the desired properties.

\medskip

{\rm (iii),(iv),(v)} These cases can be shown in a similar way as the following {\rm (vi)}, using Lemma \ref{LemNonMaxVanish}.

\medskip

{\rm (vi)} In this case, we have $\Bp^{b,a}=\emptyset$ and $\Bp^{a,b}=\{\rho,\lam\om,\om\kap,\om\}$. Since $\kap\wt{\rho},\wt{\rho}\lam,\kap\wt{\rho}\lam$ give lifts of $(\lam\om)^{\dag},(\om\kap)^{\dag},\om^{\dag}$ respectively, we have a $K$-basis $\{\rho\dia,\kap\wt{\rho},\wt{\rho}\lam,\kap\wt{\rho}\lam\}$ of $e_bVe_a$. Thus
\begin{equation}\label{RMO}
\rho\dia\rho\rho\dia=x\rho\dia+y\kap\wt{\rho}+z\wt{\rho}\lam+w\kap\wt{\rho}\lam
\end{equation}
holds for some $x,y,z,w\in K$. Multiplying $\rho$ to $(\ref{RMO})$ from the both sides, we obtain $x=1$. Then if we multiply $\rho$ from the left, we obtain
$\rho\rho\dia=\rho\rho\dia+z\rho\wt{\rho}\lam$, hence $z\rho\wt{\rho}\lam=0$.
Since $(\rho\wt{\rho}\lam)\om\kap=\rho\wt{\rho}\rho=c_{\rho}\rho\ne0$, we have $\rho\wt{\rho}\lam\ne0$, which shows $z=0$. Similarly, multiplying $\rho$ to $(\ref{RMO})$ from the right, we obtain $y=0$.

Now $(\ref{RMO})$ becomes $\rho\dia\rho\rho\dia=\rho\dia+w\kap\wt{\rho}\lam$. If we multiply $\om$ to this equation from the both sides, we obtain
\[ w\om\kap\wt{\rho}\lam\om=0 \]
by Lemma \ref{LemNonMaxVanish}. Since $\lam(\om\kap\wt{\rho}\lam\om)\kap=c_{\rho}\rho\ne0$, it follows $w=0$. Thus $(\ref{RMO})$ becomes $\rho\dia\rho\rho\dia=\rho\dia$, which means that $\rho\ust=\rho\dia$ satisfies the desired properties.

\medskip

{\rm (vii)} In this case, similarly we see that $\{\rho\dia,\wt{\rho}\tau\ze,\ze\om\wt{\rho},\ze\}$ gives a $K$-basis of $e_bVe_a$. Thus
\begin{equation}\label{LHO}
\rho\dia\rho\rho\dia=x\rho\dia+y\wt{\rho}\tau\ze+z\ze\om\wt{\rho}+w\ze
\end{equation}
holds for some $x,y,z,w\in K$. Similarly as {\rm (vi)}, multiplying $\rho$ to $(\ref{LHO})$ from the both sides/left/right, we obtain $x=1$ and $y=z=0$. If we put $\rho\ust=\rho\dia+w\ze$, then $\rho\ust$ satisfies the desired properties.

%

\medskip

\noindent\underline{Case 2}: $a=b$.

Assume $a=b$. We use the assumption of $\Char(K)\ne2$ only in this case. Let $f_a\in e_aVe_a$ be a lift of $e_a^{\dag}\in E$. As in the proof of Lemma \ref{LemQuasiInverse}, we have $f_a\rho=p\rho$ for some $p\in K$. Since $\{e_a,\rho,f_a,\rho\dia\}$ is a $K$-basis of $e_aVe_a$ by Remark \ref{RemUseOften},
\[ \rho\dia\rho\dia=x\rho\dia+y\rho+ze_a+wf_a \]
holds for some $x,y,z,w\in K$. If we put $\rho\ust=\rho\dia-\frac{1}{2}xe_a-(z+wp+\frac{1}{4}x^2)\rho$, it satisfies {\rm (q1),(q2)} and moreover $\rho\ust\rho\ust\rho=0$, since
\begin{eqnarray*}
\rho\ust\rho\ust\rho&=&\big(\rho\dia-\frac{1}{2}xe_a-(z+wp+\frac{1}{4}x^2)\rho\big)\big(\rho\dia-\frac{1}{2}xe_a-(z+wp+\frac{1}{4}x^2)\rho\big)\rho\\
&=&\big(\rho\dia\rho\dia-x\rho\dia+\frac{1}{4}x^2e_a-(z+wp+\frac{1}{4}x^2)\rho\rho\dia\big)\rho\\
&=&\big((x\rho\dia+y\rho+ze_a+wf_a)-x\rho\dia\big)\rho+\frac{1}{4}x^2\rho-(z+wp+\frac{1}{4}x^2)\rho\\
&=&(y\rho+ze_a+wf_a)\rho+\frac{1}{4}x^2\rho-(z+wp+\frac{1}{4}x^2)\rho\\
&=&(z\rho+wp\rho+\frac{1}{4}x^2\rho)-(z+wp+\frac{1}{4}x^2)\rho\ =\ 0.
\end{eqnarray*}
We need the following claim, to show {\rm (q3)} and {\rm (q4)}.
\begin{claim}\label{Claim_rhorho_basis}
If a maximal path $\rho$ satisfies $s(\rho)=t(\rho)=a$ as in Case 2, then $\{ \rho\rho\ust,\,\rho\ust\rho,\,\rho,\,\rho\ust \}$ is a $K$-basis of $e_aVe_a$ for the above $\rho\ust$. Moreover, we have
\begin{equation}\label{eq_cycle_rho}
e_a=\rho\rho\ust+\rho\ust\rho.
\end{equation}
\end{claim}
\begin{proof}
Since $\dim_K(e_aVe_a)=4$, to show that $\{ \rho\rho\ust,\,\rho\ust\rho,\,\rho,\,\rho\ust \}$ is a $K$-basis, it suffices to show that these elements are linearly independent over $K$. However this can be shown easily, using the equations
 $\rho(\rho\rho\ust)=(\rho\ust\rho)\rho=\rho^2=0$ and $\rho\rho\ust\rho=\rho\ne0$. 

In particular, the element $e_a\in e_aVe_a$ can be written by using this basis as
\[ e_a=q\rho\rho\ust+r\rho\ust\rho+s\rho+t\rho\ust, \]
for some $q,r,s,t\in K$. Multiplying $\rho$ from the both sides, we obtain $t=0$. Then, multiplying $\rho$ from the left, we obtain $r=1$. Similarly for $q=1$. Now we have $e_a=\rho\rho\ust+\rho\ust\rho+s\rho$. If we multiply $\rho\rho\ust$ from the left, then $\rho\ust\rho\ust\rho=0$ shows
\[ \rho\rho\ust=(\rho\rho\ust\rho)\rho\ust+\rho(\rho\ust\rho\ust\rho)+s\rho\rho\ust\rho=\rho\rho\ust+s\rho, \]
which means $s=0$. Thus we have $e_a=\rho\rho\ust+\rho\ust\rho$.
\end{proof}
By Claim \ref{Claim_rhorho_basis} and $\rho\ust\rho\ust\rho=0$, we obtain
\[ \rho\ust=\rho\ust(\rho\rho\ust+\rho\ust\rho)=\rho\ust\rho\rho\ust, \]
which shows {\rm (q3)}. Then multiplying $\rho\ust$ to $(\ref{eq_cycle_rho})$ from the right, we obtain $\rho\rho\ust\rho\ust=0$. Multiplying $\rho\ust$ to $(\ref{eq_cycle_rho})$ from the both sides, at last we obtain {\rm (q4)}.
Thus the existence of $\rho\ust$ in Case 2 is shown.

\smallskip

Let us show the uniqueness. Suppose that both $\rho\ust$ and $\rho^{\star\prime}$ satisfies the conditions. Then $\rho\ust-\rho^{\star\prime}\in e_aAe_a$ follows from {\rm (q2)}, and thus
\[ \rho\ust-\rho^{\star\prime}=m\rho+ne_a \]
holds for some $m,n\in K$. Then we have
\begin{eqnarray*}
\rho\ust&=&\rho\ust\rho\rho\ust\ =\ (\rho^{\star\prime}+m\rho+ne_a)\rho(\rho^{\star\prime}+m\rho+ne_a)\\
&=&\rho^{\star\prime}\rho\rho^{\star\prime}+n\rho^{\star\prime}\rho+n\rho\rho^{\star\prime}+n^2\rho\ =\ \rho^{\star\prime}+ne_a+n^2\rho
\end{eqnarray*}
by {\rm (q3)} and {\rm (q4)}.
Interchanging the roles of $\rho\ust$ and $\rho^{\star\prime}$, from $\rho^{\star\prime}-\rho\ust=-m\rho-ne_a$ we obtain
\begin{equation}\label{eq_mn}
\rho^{\prime\star}=\rho\ust-ne_a+(-n)^2\rho.
\end{equation}
These two equations show $2n^2\rho=0$, and thus we have $n=0$ since $\Char(K)\ne2$. Thus $(\ref{eq_mn})$ implies $\rho\ust=\rho^{\prime\star}$.
\end{proof}

\medskip

In the rest of this section, we assume $\Char(K)\ne2$.
\begin{dfn}\label{DefForBasis}
For each maximal path $\rho\in A$, we have a unique quasi-inverse $\rho\ust\in e_bVe_a$ satisfying the properties in Proposition \ref{PropQuasiInverse}. Using them, we define as follows.
\begin{enumerate}
\item Let $a\in Q_0$ be any vertex. If a pair of paths $\mu,\nu$ (possibly $\mu=e_a$ or $\nu=e_a$, but not at the same time) satisfies $t(\mu)=a=s(\nu)$ and if $\mu\nu$ is maximal, we denote this pair by $\langle\mu,\nu\rangle_a$ or simply by $\langle\mu,\nu\rangle$. For any such pair $\mn_a=\mn$, define an idempotent element $\emn\in e_aVe_a$ by
\[ \emn=\nu(\mu\nu)\ust\mu. \]
Since $\pi(\emn)\ne0$, in particular we have $\emn\ne0$.
We denote the set of such pairs by
\[ E_a=\{\mn_a\mid t(\mu)=s(\nu)=a,\ \mu\nu\in\Mfr_A\}. \]
By definition, $\mn=\mnp$ means that $\mu=\mu\ppr$ and $\nu=\nu\ppr$ hold. Otherwise we write $\mn\ne\mnp$.
\item Let $\xi\in e_aAe_b$ be any path of positive length. Take unique paths $\lam,\kappa\in A$ which makes $\lam\xi\kappa$ maximal (possibly $\lam=e_a$ or $\kappa=e_b$), and define $\xi\ust\in e_bVe_a$ by
\[ \xi\ust=\kappa(\lam\xi\kappa)\ust\lam. \]
\end{enumerate}
\end{dfn}

\begin{rem}\label{Rem_DefForBasis}
The following is immediate from the definition.
\begin{enumerate}
\item For any $\mn_a\in E_a$, we have
\begin{equation}\label{rem_e1}
\mu\emn\nu=\mu\nu(\mu\nu)\ust\mu\nu=\mu\nu.
\end{equation}
\item Let $\xi\in e_aAe_b$ be any path of positive length, and let $\lam,\kappa$ be paths such that $\lam\xi\kappa$ is maximal. We have
\begin{equation}\label{rem_e2}
\xi\xi\ust=\ep_{\langle\lam,\xi\kappa\rangle_a}\quad\text{and}\quad \xi\ust\xi=\ep_{\langle\lam\xi,\kappa\rangle_b}
\end{equation}
by definition.
\item By {\rm (2)}, any $\mn_a$ satisfies
\[ \emn=\begin{cases}
\mu\ust\mu&\text{if}\ \mu\ne e_a,\\
\nu\nu\ust&\text{if}\ \nu\ne e_a.
\end{cases} \]
\end{enumerate}
\end{rem}

\begin{rem}\label{Rem2munu}
For each $a\in Q_0$, we have $a\in\Tfr_A$ if and only if $|E_a|=1$.
Moreover, the following {\rm (1)} and {\rm (2)} are equivalent.
\begin{enumerate}
\item There exist $\mn_a\ne\mnp_a$ in $E_a$ satisfying $\mu\nu=\mu\ppr\nu\ppr$.
\item There exists a non-maximal path $\thh\in e_aAe_a$ of positive length, which is necessarily unique and satisfies $\thh^2=0$.
\end{enumerate}
Indeed, in this case $\mn,\mnp\in E_a$ are related by
\[ \mu\ppr\thh=\mu\quad\text{and}\quad \nu\ppr=\thh\nu \]
or
\[ \mu\thh=\mu\ppr\quad\text{and}\quad \nu=\thh\nu\ppr. \]
\end{rem}

\begin{lem}\label{Lem6_1}
Let $\al\in Q_1$ be any arrow. Let $\rho\in e_aAe_b$ be any maximal path which does not start with $\al$. Then we have $\rho\ust\al=0$ in $V$.
\end{lem}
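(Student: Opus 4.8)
The plan is to reduce the claim to the maximal-path lifts already studied in Lemma~\ref{LemTildeVanish}. Write $\rho=\al_1\cdots\al_l\in e_aAe_b$ with $\al_1\ne\al$, and let $\lam,\kappa$ be the (unique) paths such that $\lam\rho\kappa$ is a maximal path in $A$; by Condition~\ref{CondMaxLength1} and the gentleness of $A$ these are forced. Actually, since $\rho$ is already maximal, we have $\lam=e_a$ and $\kappa=e_b$, so $\rho\ust=\rho\ust$ in the sense of Definition~\ref{DefForBasis} agrees with the $\rho\ust$ produced by Proposition~\ref{PropQuasiInverse}. So the statement becomes: for a maximal path $\rho\in e_aAe_b$ not starting with $\al$, one has $\rho\ust\al=0$ in $V$.

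First I would observe that if $s(\al)\ne a$ then $\rho\ust\al=\rho\ust e_a\al=0$ trivially, so assume $s(\al)=a$. Since $\rho$ is maximal and does not start with $\al$, the product $\al'\rho$ is never a path for $\al'$ an arrow, and in particular $\al$ is the \emph{other} outgoing arrow at $a$ (there are at most two by gentleness). Next I would use the defining relation $c_\rho\rho\ust=\pi$-lift data: by Proposition~\ref{PropQuasiInverse}~{\rm (q2)}, $\rho\ust$ is (up to the nonzero scalar $c_\rho\iv$) a lift $\wt{\rho}$ of $\rho^{\dag}\in E$, so it suffices to show $\wt{\rho}\al=0$ for such a lift. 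Now I would appeal to the computation inside Lemma~\ref{LemTildeVanish}: there it is shown (see the proof of part~{\rm (2)} and of Lemma~\ref{LemNonMaxVanish}~{\rm (1)}) that for a lift $\wt{\rho}\in e_bVe_a$ of $\rho^{\dag}$, the element $\wt{\rho}\al$ lies in $e_bAe_{t(\al)}$ because $\pi(\wt{\rho}\al)=\rho^{\dag}\al=0$ (here $\rho^{\dag}\al=0$ in $E$ precisely because $\rho$ is maximal, hence $\rho^{\dag}$ is killed by every positive-length path on the right in an almost standard dual). So $\wt{\rho}\al$ is a $K$-linear combination of genuine paths $\ze\in e_bAe_{t(\al)}$ in $A$.

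To finish I would pin down which paths $\ze\in e_bAe_{t(\al)}$ can occur and show the coefficient must vanish. Since $\rho$ is maximal and $a\ne t(\al)$ (Condition~\ref{CondMaxLength1}: there is no arrow from $a$ back to $a$, and $\rho$ maximal forbids $\ze$ ending with $\al$ when $\al\ne\al_1$), any such $\ze$ does not end with $\al$; by gentleness there is at most one such $\ze$, and multiplying the would-be identity $\wt{\rho}\al=x\ze$ on the right by the next arrow $\be$ with $\ze\be\ne 0$ (or using that $\ze$ cannot be extended, in which case one inverts $\ze$ via a quasi-inverse exactly as in the proof of Lemma~\ref{LemTildeVanish}~{\rm (1)}) forces $x=0$. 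Concretely: either $\ze$ is non-maximal, so there is an arrow $\be$ with $\ze\be\ne 0$ but $\rho\al\be$-considerations give $\wt{\rho}\al\be=\wt{\rho}(\al\be)$ which can be rewritten through a lift of $(\al\be)^{\dag}$ and shown to vanish, forcing $x\ze\be=0$ hence $x=0$; or $\ze$ is maximal, and then taking a quasi-inverse $\ze\dia$ and computing $x\ze=x\ze\ze\dia\ze=\wt{\rho}\al\ze\dia\ze$ with $\al\ze\dia$ supported on paths that $\rho$ kills yields $x=0$.

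The main obstacle is the case analysis in the last step: ruling out the single candidate path $\ze$ requires carefully reproducing the quasi-inverse argument of Lemma~\ref{LemTildeVanish}~{\rm (1)}, including the subcase where $\ze$ itself is maximal and cannot be extended, which is where the semisimplicity of $V$ (existence of quasi-inverses) and Condition~\ref{CondMaxLength1} both get used. Everything else — reducing to a lift $\wt{\rho}$, using $\rho^{\dag}\al=0$ in $E$, and the gentleness bound $|\Bp^{b,t(\al)}|\le 1$ — is routine given the preceding lemmas.
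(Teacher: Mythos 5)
Your proposal misses the short algebraic argument the paper uses, and the route you sketch instead has genuine gaps. The paper's proof is one line once the setup is in place: since $\pi(\rho\ust\al)=c_\rho\iv\rho^\dag\al=0$, we have $\rho\ust\al\in e_bAe_c$; Condition~\ref{CondMaxLength1} gives $b\ne c$, so maximality of $\rho$ kills every path in $e_bAe_c$, giving $\rho\rho\ust\al=0$; and then property {\rm (q3)} of Proposition~\ref{PropQuasiInverse} (namely $\rho\ust\rho\rho\ust=\rho\ust$) finishes it: $\rho\ust\al=(\rho\ust\rho\rho\ust)\al=\rho\ust(\rho\rho\ust\al)=0$. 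Nowhere in your proposal do you invoke {\rm (q3)}, and without it your approach cannot close. Indeed, you try to deduce vanishing purely from $\rho\ust$ being (a scalar multiple of) a lift of $\rho^\dag$, but the conclusion is genuinely false for a general lift: if $\wt\rho,\wt\rho\ppr$ are two lifts, their difference $\delta\in e_bAe_a$ can satisfy $\delta\al\ne0$ (take the unique path $\ze\ppr\in\Bp^{b,a}$, when it exists, and suppose $\ze\ppr\al\ne0$), so $\wt\rho\al$ depends on the choice of lift. The lemma holds only for the specially normalised $\rho\ust$, and the proof must exploit that normalisation; {\rm (q3)} is exactly what does so.

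Two further concrete issues. First, you justify $a\ne t(\al)$ by appeal to Condition~\ref{CondMaxLength1}, but that condition controls arrows from $s(\rho)$ to $t(\rho)$, i.e.\ it gives $b\ne t(\al)$, not $a\ne t(\al)$ --- and it is $b\ne c$ that the argument actually needs (to exclude the length-zero path $e_b$ from $e_bAe_c$ and ensure $\rho\ze=0$ for every path $\ze\in e_bAe_c$). Second, your claim that \emph{gentleness gives at most one} path $\ze\in e_bAe_c$ is not correct in general --- gentleness only bounds $|\Bp^{b,c}|\le 4$ --- and the subsequent two-case argument (extend $\ze$ by an arrow $\be$, or take a quasi-inverse $\ze\dia$) is not carried out: the assertion ``$\wt\rho(\al\be)$ can be rewritten through a lift of $(\al\be)^\dag$'' has no basis, since $\wt\rho$ is a lift of $\rho^\dag$, not of $(\al\be)^\dag$, and $\rho^\dag(\al\be)=0$ only tells you $\wt\rho\al\be\in A$, which you already knew. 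In short: the case analysis modelled on Lemma~\ref{LemTildeVanish} is both unnecessary and, as written, does not go through; you should instead observe that you don't need to identify $\rho\ust\al$ at all --- it suffices that $\rho$ annihilates it, and then {\rm (q3)} absorbs it.
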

\begin{proof}
Since $\rho\ust\in e_bVe_a$, this is obvious if $s(\al)\ne a$. Thus we may assume $s(\al)=a$. Put $t(\al)=c$. By Condition \ref{CondMaxLength1}, it satisfies $b\ne c$. Since $\pi(\rho\ust\al)=c_{\rho}\iv\rho^{\dag}\al=0$, we have $\rho\ust\al\in e_bAe_c$. By $b\ne c$, any path $\ze\in e_bAe_c$ should satisfy $\rho\ze=0$, because of the maximality of $\rho$. This shows $\rho\rho\ust\al=0$, which implies
$\rho\ust\al=(\rho\ust\rho\rho\ust)\al=\rho\ust(\rho\rho\ust\al)=0$.
\end{proof}

\begin{lem}\label{LemMaxOrtho}
Let $a\in Q_0$ be any vertex. For any $\mn_a\ne\mnp_a$ in $E_a$, the associated idempotents $\emn,\emnp\in e_aVe_a$ are orthogonal to each other.
\end{lem}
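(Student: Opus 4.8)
The plan is to compute both products $\emn\emnp$ and $\emnp\emn$ straight from the formula $\emn=\nu\rho\ust\mu$ with $\rho=\mu\nu$ (and $\emnp=\nu\ppr\rho\ust\mu\ppr$ with $\rho\ppr=\mu\ppr\nu\ppr$), and in each case to exhibit a factor that already vanishes in $A$ or in $V$. Recall $|E_a|\le 2$, so the statement concerns two distinct elements $\mn\ne\mnp$ of $E_a$; I split on whether the underlying maximal paths $\rho,\rho\ppr$ coincide.

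Suppose first $\rho=\rho\ppr$. If $\rho$ is a maximal cyclic path at $a$, the pair must be $\{\langle e_a,\rho\rangle,\langle\rho,e_a\rangle\}$, with $\ep_{\langle e_a,\rho\rangle}=\rho\rho\ust$ and $\ep_{\langle\rho,e_a\rangle}=\rho\ust\rho$; then $\rho\ust\rho\ust=0$ (Proposition \ref{PropQuasiInverse}{\rm (q4)}) and $\rho^2=0$ (maximality) give $(\rho\rho\ust)(\rho\ust\rho)=\rho(\rho\ust\rho\ust)\rho=0$ and $(\rho\ust\rho)(\rho\rho\ust)=\rho\ust\rho^2\rho\ust=0$. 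Otherwise Remark \ref{Rem2munu} supplies the unique positive-length $\vt\in e_aAe_a$ with $\vt^2=0$, and, after possibly interchanging the two elements, $\mu\ppr=\mu\vt$ and $\nu=\vt\nu\ppr$, so $\rho=\mu\vt\nu\ppr$. One product is then $\emnp\emn=\nu\ppr\rho\ust\mu(\vt^2)\nu\ppr\rho\ust\mu=0$; for the other, $\emn\emnp=\vt\nu\ppr\rho\ust(\mu\nu\ppr)\rho\ust\mu\vt$, in which $\mu\nu\ppr=0$ in $A$ by the combinatorial point below whenever $\mu,\nu\ppr\ne e_a$. If $\mu=e_a$ this equals $\rho\rho\ust\nu\ppr\rho\ust\vt$ with $\rho\ust\nu\ppr=0$ by Lemma \ref{Lem6_1} (since $\nu\ppr$ does not begin with the first arrow of $\rho$); the case $\nu\ppr=e_a$ is symmetric, using the left--right mirror of Lemma \ref{Lem6_1} to get $\mu\rho\ust=0$.

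Now suppose $\rho\ne\rho\ppr$. Then $e_aAe_a=Ke_a$: a positive-length path in $e_aAe_a$ would be maximal (reducing to the previous paragraph) or non-maximal, forcing by Remark \ref{Rem2munu} that the two elements of $E_a$ share a maximal path --- both contradicting $\rho\ne\rho\ppr$. Hence, from the exact sequence $0\to e_aAe_a\to e_aVe_a\to e_aEe_a\to 0$, the corner $e_aVe_a$ (with $e_a$ acting through $\eta\ppr$) is $2$-dimensional; being a corner of the semisimple algebra $V$ it is semisimple, so isomorphic to $K\ti K$, hence contains no nonzero nilpotent. It therefore suffices to kill \emph{one} of the two products, since if $\emnp\emn=0$ then $g:=\emn\emnp$ satisfies $g^2=\emn(\emnp\emn)\emnp=0$, whence $g=0$. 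Running through the few possibilities for which of $\mu,\nu,\mu\ppr,\nu\ppr$ equal $e_a$ --- and using Remark \ref{Rem_DefForBasis}{\rm (3)} to rewrite $\emn$ as $\mu\ust\mu$ or $\nu\nu\ust$ when one of $\mu,\nu$ is trivial --- one of the two products always acquires an explicitly zero factor: $\mu\nu\ppr=0$ or $\mu\ppr\nu=0$ in $A$ when the relevant pieces are nontrivial; or $\rho\cdot(-)=0$ by maximality of $\rho$ (resp.\ of $\rho\ppr$); or $\rho\ust(-)=0$, $(-)\rho\ust=0$ by Lemma \ref{Lem6_1} and its mirror (applied to $\rho$ or to $\rho\ppr$).

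The one step requiring genuine work --- and the main obstacle --- is the combinatorial claim that $\mu\nu\ppr=0$ in $A$ when $\mu,\nu\ppr\ne e_a$, together with the arrow-level vanishings $\rho\ust\nu\ppr=0$ and $\mu\rho\ust=0$ used in the endpoint subcases. Here gentleness enters: the last arrow of $\mu$ enters $a$ along the half-edge that $\rho$ uses at one of its occurrences of $a$, and the first arrow of $\nu\ppr$ leaves $a$ along the half-edge that $\rho\ppr$ uses; if their composite lay outside $I$, the unique-continuation property of a gentle bound quiver would propagate forward and force $\rho$ and $\rho\ppr$ --- or, when $\rho=\rho\ppr$, the two tails of $\rho$ beyond its two occurrences of $a$ --- to agree arrow by arrow, making $\rho$ eventually periodic; following the period to the end of $\rho$ then exhibits either an oriented cycle in $A$ or a legal prolongation of the maximal path $\rho$, each impossible. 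With this local fact in hand, all the vanishings above are routine, and the two cases complete the proof.
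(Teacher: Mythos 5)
Your proof is correct, and it takes a genuinely different route from the paper's. The paper splits into four cases according to whether $\mu=e_a$ and whether $\nu\ppr=e_a$, and shows $\emn\emnp=0$ directly in each; its hardest case is $\mu=\nu\ppr=e_a$, where it must control $\rho\ust\rho^{\prime\star}$ and, when $s(\rho\ppr)\ne t(\rho)$, runs a basis argument in $e_{t(\rho)}Ve_{s(\rho\ppr)}$ using Lemma~\ref{LemTildeVanish}. You instead split on whether the underlying maximal paths coincide. When $\rho\ne\rho\ppr$ your key observation is that $e_aAe_a=Ke_a$ (via Remark~\ref{Rem2munu}), so the corner $e_aVe_a$ is a $2$-dimensional semisimple $K$-algebra, hence $\cong K\times K$ since $K=\ovl{K}$ and therefore reduced; this converts orthogonality into a one-sided claim via $(\emn\emnp)^2=\emn(\emnp\emn)\emnp$, and whichever of the two products has its middle factor ($\mu\nu\ppr$ or $\mu\ppr\nu$) made of two nontrivial pieces vanishes by gentleness, while the remaining sub-subcases fall to maximality ($\rho\ppr\rho=0$) or to Lemma~\ref{Lem6_1} and its left--right mirror. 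In particular the nilpotency trick lets you dispose of the paper's $\mu=\nu\ppr=e_a$ case just from $\rho\ppr\rho=0$, neatly sidestepping the paper's most delicate computation. When $\rho=\rho\ppr$ you use {\rm (q4)} of Proposition~\ref{PropQuasiInverse} in the cyclic case and Remark~\ref{Rem2munu} otherwise, in line with the paper's treatment.

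Two small remarks. First, both your argument and the paper's Case~(iii) silently use the ``mirror'' of Lemma~\ref{Lem6_1} (i.e.\ $\al\rho\ust=0$ when $\rho$ does not end with $\al$); it holds by the symmetric argument but is not stated in the text. Second, your final paragraph justifying $\mu\nu\ppr=0$ when $\mu,\nu\ppr\ne e_a$ is more roundabout than needed: if $\mu\nu\ppr\ne0$ then the last arrow of $\mu$ and the first arrow of $\nu\ppr$ both sit on the same permitted half-edge pair at $a$, so unique forward/backward continuation in a gentle bound quiver forces $\nu=\nu\ppr$ and then $\mu=\mu\ppr$, contradicting $\mn\ne\mnp$ directly --- no appeal to eventual periodicity or to cycles is required (this is also how the paper's Case~(i) should be read, despite the word ``immediately'').
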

\begin{proof}
Put $\rho=\mu\nu$ and $\rho\ppr=\mu\ppr\nu\ppr$ for simplicity.
By symmetry, it suffices to show $\emn\emnp=0$. There are the following 4 cases.
\[ \mathrm{(i)}\ \mu\ne e_a\ \text{and}\ \nu\ppr\ne e_a,%
\ \ \mathrm{(ii)}\ \mu=e_a\ \text{and}\ \nu\ppr\ne e_a,%
\ \ \mathrm{(iii)}\ \mu\ne e_a\ \text{and}\ \nu\ppr=e_a,%
\ \ \mathrm{(iv)}\ \mu=\nu\ppr=e_a. \]
Let us show $\emn\emnp=0$ in each of the above cases.

\medskip

{\rm (i)} In this case, $\emn\emnp=\nu\rho\ust\mu\nu\ppr\rho^{\prime\star}\nu^{\prime}=0$ immediately follows from $\mu\nu\ppr=0$. 

\medskip

{\rm (ii)} In this case, $\nu=\rho$ is maximal. Let $\al\in Q_1$ be the arrow with which $\nu\ppr$ starts. Then $\rho$ does not start with $\al$. By Lemma \ref{Lem6_1} we obtain $\rho\ust\al=0$, which implies
$\emn\emnp=\rho(\rho\ust\nu\ppr)\rho^{\prime\star}\mu\ppr=0$.

%

\medskip

{\rm (iii)} This case can be shown in a similar way as {\rm (ii)}, using the fact that $\mu$ and $\mu\ppr$ do not end with a common arrow.

\medskip

{\rm (iv)} In this case, both $\nu=\rho$ and $\mu\ppr=\rho\ppr$ are maximal. We have $\emn\emnp=\rho\rho\ust\rho^{\prime\star}\rho\ppr$ by definition. Put $s(\rho\ppr)=b$ and $t(\rho)=c$ for simplicity.
If $b\ne c$, we see that
\begin{itemize}
\item any path $\thh\in e_cAe_b$ satisfies $\rho\thh=0$ and $\thh\rho\ppr=0$,
\item for any path $\om\in e_bAe_c$, it does not start with a common arrow with $\rho\ppr$ or does not end with a common arrow with $\rho$, which means that either $\wt{\om}\rho\ppr=0$ or $\rho\wt{\om}=0$ holds by Lemma \ref{LemTildeVanish}.
\end{itemize}
Since $\rho\ust\rho^{\prime\star}$ is a $K$-linear combination of such $\thh$ and $\wt{\om}$, we have $\rho(\rho\ust\rho^{\prime\star})\rho\ppr=0$.

If $b=c$, since $\rho,\rho\ppr$ are maximal we have $\rho\rho\ppr=\rho\ppr\rho=0$. If moreover $a=b$, we have $\rho=\rho\ppr$ since $|\Bp^{a,a}|\le 1$, and the equality $\emn\emnp=0$ follows from $\rho\ust\rho\ust=0$. Suppose $a\ne b$. In this case we have $\rho\ne\rho\ppr$, and
\begin{itemize}
\item $\rho$ is the unique path in $e_aAe_b$,
\item $\rho\ppr$ is the unique path in $e_bAe_a$,
\item $\Bp^{a,a}=\Bp^{b,b}=\emptyset$.
\end{itemize}
Let $f_b\in e_bVe_b$ be a lift of $e_b^{\dag}$. Since $\pi(\rho f_b)=0$, we have $\rho f_b\in e_aAe_b$. Hence
\begin{equation}\label{Eq_rhofxrho}
\rho f_b=x\rho
\end{equation}
holds for some $x\in K$. Since $e_bAe_b=Ke_b$ and $e_bEe_b=Ke_b^{\dag}$, we have a $K$-basis $\{ e_b,f_b\}$ of $e_bVe_b$ by Remark \ref{RemUseOften}. Thus $\rho\ust\rho^{\prime\star}=ye_b+zf_b$ holds for some $y,z\in K$. Then $(\ref{Eq_rhofxrho})$ shows
\[ \emn\emnp=\rho(\rho\ust\rho^{\prime\star})\rho\ppr=\rho(ye_b+zf_b)\rho\ppr=(y+xz)\rho\rho\ppr=0. \]
\end{proof}

\begin{lem}\label{LemIdempBasis}
Let $a\in Q_0$ be any vertex. Suppose there exist two $\mn\ne\mnp$ in $E_a$.
The following holds.
\begin{enumerate}
\item $\emn,\emnp\in e_aVe_a$ are linearly independent over $K$.
\item If they moreover satisfies
\begin{equation}\label{Eq_exeye}
e_a=x\emn+y\emnp
\end{equation}
for some $x,y\in K$, then it follows $x=y=1$.
\end{enumerate}
In particular if $\dim_K(e_aVe_a)=2$, then $\{\emn,\emnp\}$ gives a $K$-basis.
\end{lem}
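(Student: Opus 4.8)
The plan is to exploit the short exact sequence $0\to e_aAe_a\to e_aVe_a\to e_aEe_a\to 0$ from Remark \ref{RemUseOften} together with the fact that the idempotents $\ep_{\mn}$ project to nonzero elements of $e_aEe_a$. First I would observe that since $\mn\ne\mnp$, Remark \ref{Rem2munu} tells us exactly which situation we are in: either $\mu\nu\ne\mu\ppr\nu\ppr$ (so there are at least two distinct maximal paths through $a$), or $\mu\nu=\mu\ppr\nu\ppr$ is a single maximal path and there is a unique non-maximal path $\thh\in e_aAe_a$ of positive length with $\thh^2=0$. In either case I would pin down the structure of $e_aEe_a$: the almost standard dual $E$ has $K$-basis $\{\xi^\dag\mid \xi\text{ a path}\}$, so $e_aEe_a$ is spanned by $e_a^\dag$ together with $\thh^\dag$ for each non-maximal path $\thh\in e_aAe_a$ of positive length (of which there is at most one, by gentleness). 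Correspondingly $e_aAe_a$ is spanned by $e_a$ and at most one path $\thh$.

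For part (1), I would compute $\pi(\ep_{\mn})$ and $\pi(\ep_{\mnp})$ in $e_aEe_a$. Using Remark \ref{Rem_DefForBasis}, each $\ep_{\mn}$ equals $\mu\ust\mu$ or $\nu\nu\ust$, and property (q2) of Proposition \ref{PropQuasiInverse} gives $\pi(c_{\mu\nu}(\mu\nu)\ust)=(\mu\nu)^\dag$; from this and the almost-standard-dual relations one reads off that $\pi(\ep_{\mn})$ is (a nonzero scalar times) $e_a^\dag$ plus possibly a scalar multiple of $\thh^\dag$, and similarly for $\ep_{\mnp}$. The key point is that when $\mu\nu\ne\mu\ppr\nu\ppr$ the two idempotents already project to the same nonzero $e_a^\dag$-component but I have Lemma \ref{LemMaxOrtho} saying $\ep_{\mn}\ep_{\mnp}=0$, so they cannot be proportional; when $\mu\nu=\mu\ppr\nu\ppr$ the two idempotents differ already modulo $e_aAe_a$ in their $\thh^\dag$-components (one picks up $\thh^\dag$, the other does not, because $\ep_{\mn}=\mu\ust\mu=\ep_{\langle\mu,\nu\rangle}$ versus $\ep_{\langle\mu\ppr,\nu\ppr\rangle}$ and $\mu\ppr\thh=\mu$, $\nu\ppr=\thh\nu$ by Remark \ref{Rem2munu}). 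Either way the images under $\pi$ are linearly independent in $e_aEe_a$, hence $\ep_{\mn},\ep_{\mnp}$ are linearly independent in $e_aVe_a$. I expect this bookkeeping of the $\thh^\dag$-coefficients — verifying that exactly one of the two idempotents has a nonzero $\thh^\dag$-component — to be the main obstacle; the orthogonality case is essentially immediate from Lemma \ref{LemMaxOrtho}.

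For part (2), suppose $e_a=x\ep_{\mn}+y\ep_{\mnp}$. Multiplying on the left and right by $\mu$ and $\nu$ (when $\mu,\nu\ne e_a$; the degenerate cases are handled symmetrically), and using $\mu\ep_{\mn}\nu=\mu\nu$ from $(\ref{rem_e1})$ together with $\mu\ep_{\mnp}\nu=0$ (which follows from orthogonality-type vanishing: $\mu\nu$ and $\mu\ppr\nu\ppr$ do not start/end with the same arrow in the first case, and in the second case one uses Lemma \ref{Lem6_1}), I get $\mu\nu=x\,\mu\nu$, forcing $x=1$; the symmetric computation with $\mu\ppr,\nu\ppr$ gives $y=1$. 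When $\mu=e_a$ or $\nu=e_a$ one of the relevant paths is itself maximal and I would instead multiply by that maximal path $\rho$ and use $\rho\rho\ust\rho=c_\rho\rho\ne 0$ together with the vanishing $\rho\ep_{\mnp}=0$ established in the proof of Lemma \ref{LemMaxOrtho}. Finally, if $\dim_K(e_aVe_a)=2$, then two linearly independent elements automatically form a basis, giving the last sentence.
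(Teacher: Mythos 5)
The paper's own proof is a single step: by Lemma \ref{LemMaxOrtho} the elements $\emn,\emnp$ are \emph{orthogonal} idempotents, and by Definition \ref{DefForBasis} they are nonzero; multiplying the relation $x\emn+y\emnp=0$ (resp.\ $e_a=x\emn+y\emnp$) by $\emn$ immediately gives $x\emn=0$ (resp.\ $\emn=x\emn$), hence $x=0$ (resp.\ $x=1$), and symmetrically for $y$. You do cite Lemma \ref{LemMaxOrtho}, but you don't exploit it this directly, which makes your argument considerably longer than necessary and introduces some imprecision.

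For part (1), your detour through $\pi\colon V\to E$ contains a wrong assertion: when $\mu\nu\ne\mu\ppr\nu\ppr$ you correctly note that both $\pi(\emn)$ and $\pi(\emnp)$ are nonzero multiples of $e_a^{\dag}$, so the images in $e_aEe_a$ (which is $1$-dimensional in this case) are in fact linearly \emph{dependent}; the sentence ``Either way the images under $\pi$ are linearly independent'' is therefore false in the first case. You patch this by invoking orthogonality, but that patch is the whole proof, and the projection bookkeeping adds nothing. For part (2) you take a genuinely different route: instead of multiplying by $\emn$ you multiply by $\mu$ on the left and $\nu$ on the right. This can be made to work, but it requires the extra vanishing $\mu\emnp\nu=0$, which is not as automatic as ``orthogonality-type vanishing'' suggests. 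You need a case analysis: when $\mu\nu\ne\mu\ppr\nu\ppr$, gentleness of $A$ gives $\mu\nu\ppr=0$ or $\mu\ppr\nu=0$; when $\mu\nu=\mu\ppr\nu\ppr$, use the description of Remark \ref{Rem2munu} and $\thh^2=0$; and the degenerate cases need Lemma \ref{Lem6_1} or maximality. You must also avoid deducing $\mu\emnp\nu=0$ from $\emn\emnp=0$ via $\mu\mu\ust\mu=\mu$, since that equality for non-maximal $\mu$ is Lemma \ref{LemForThm1}(3), whose proof relies on Proposition \ref{PropIdempBasis} and hence on the present lemma. Finally, a small slip: by (q1) one has $\rho\rho\ust\rho=\rho$, not $c_\rho\rho$.
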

\begin{proof}
By Lemma \ref{LemMaxOrtho}, these $\emn,\emnp$ are orthogonal idempotents. Both {\rm (1)} and {\rm (2)} are immediate from this fact. 
\end{proof}

\begin{prop}\label{PropIdempBasis}
Let $a\in Q_0$ be any vertex. The following gives a $K$-basis of $e_aVe_a$.
\begin{enumerate}
\item If there is only one $\mn_a$ in $E_a$, then
$\{ \emn,\ e_a-\emn\}$
gives a $K$-basis consisting of orthogonal idempotents.
\item If there are $\mn_a\ne\mnp_a$ in $E_a$ and if $\mu\nu\ne\mu\ppr\nu\ppr$, then
$\{ \emn,\ \emnp\}$
gives a $K$-basis consisting of orthogonal idempotents, which satisfies $e_a=\emn+\emnp$.
\item Suppose there are $\mn_a\ne\mnp_a$ in $E_a$ satisfying $\mu\nu=\mu\ppr\nu\ppr$. By symmetry we may assume there is a unique non-maximal path $\thh\in e_aAe_a$ satisfying $\thh^2=0$ such that $\mu=\mu\ppr\thh$ and $\nu\ppr=\thh\nu$ hold, as in Remark \ref{Rem2munu}. In this case,
$\{ \emn,\emnp,\thh,\thh\ust \}$
gives a $K$-basis of $e_aVe_a$, which satisfies the following conditions.
\begin{itemize}
\item[{\rm (i)}] $\emn=\thh\ust\thh$ and $\emnp=\thh\thh\ust$ hold, as seen in Remark \ref{Rem2munu}.
\item[{\rm (ii)}] $e_a=\emn+\emnp$.
\item[{\rm (iii)}] $\thh\thh\ust\thh\ne0$. (The equality $\thh\thh\ust\thh=\thh$ for any (not necessarily maximal) path $\thh$ will be shown later, in Lemma \ref{LemForThm1}.)
\end{itemize}
\end{enumerate}
\end{prop}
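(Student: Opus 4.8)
The plan is to determine $\dim_K(e_aVe_a)$ from the exact sequence $(\ref{ExBimodV})$, and then to recognise the claimed elements as a complete set of orthogonal idempotents --- together, in case (3), with two square-zero elements --- which are forced to be a basis by dimension count. Since $E$ is an almost standard dual of $A$, its distinguished basis restricts to a basis $\{\xi^{\dag}\mid\xi\ \text{a path in}\ e_aAe_a\}$ of $e_aEe_a$, so applying $(\ref{ExBimodV})$ to the $(e_a,e_a)$-corner gives $\dim_K(e_aVe_a)=2\dim_K(e_aAe_a)$. As $A$ is gentle, $|\Bp^{a,a}|\le1$; in cases (1) and (2) there is no path of positive length in $e_aAe_a$ (in case (1) because $a\in\Tfr_A$ by Remark \ref{Rem2munu}, and in case (2) because a positive path at $a$ would, by Remark \ref{Rem2munu}, produce a pair in $E_a$ exhibiting the forbidden coincidence of products), so $\dim_K(e_aVe_a)=2$; in case (3) the unique positive path $\thh\in\Bp^{a,a}$ from Remark \ref{Rem2munu} makes $\dim_K(e_aVe_a)=4$.

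For (1): $\emn$ is a nonzero idempotent of $e_aVe_a$ by Definition \ref{DefForBasis}, hence $e_a-\emn$ is an idempotent orthogonal to it, and $e_a-\emn\ne0$ because $\pi(e_a-\emn)=-\pi(\emn)\ne0$ (note $\pi(e_a)=0$ since $e_a\in\eta\ppr(A)=\Ker\pi$). Two nonzero orthogonal idempotents in the $2$-dimensional algebra $e_aVe_a$ are linearly independent, hence a basis. For (2): by Lemma \ref{LemMaxOrtho} the idempotents $\emn,\emnp$ are orthogonal, so by Lemma \ref{LemIdempBasis}~(1) they form a $K$-basis of the $2$-dimensional $e_aVe_a$; writing $e_a=x\emn+y\emnp$ and multiplying by $\emn$ and by $\emnp$ forces $x=y=1$ (Lemma \ref{LemIdempBasis}~(2)), so $e_a=\emn+\emnp$.

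Case (3) is the substantial one, and I would first record several identities in $V$. Writing $\rho=\mu\nu=\mu\ppr\nu\ppr\in\Mfr_A$ and using $\mu=\mu\ppr\thh$, $\nu\ppr=\thh\nu$ (Remark \ref{Rem2munu}), Definition \ref{DefForBasis}~(2) with $\lam=\mu\ppr,\kappa=\nu$ gives $\thh\ust=\nu\rho\ust\mu\ppr$, whence
\[ \thh\ust\thh=\nu\rho\ust\mu=\emn,\qquad \thh\thh\ust=\nu\ppr\rho\ust\mu\ppr=\emnp; \]
this is (i). One has $\thh^2=0$ (part of Remark \ref{Rem2munu}) and $\thh\ust\thh\ust=\nu\rho\ust(\mu\ppr\nu)\rho\ust\mu\ppr=0$, the latter because $\mu\ppr\nu=0$ in $A$ --- at the vertex $a$, which is traversed twice by $\rho$, the gentleness axiom forces the composite of the last arrow of $\mu\ppr$ with the first arrow of $\nu$ to lie in $I$ (when $\mu\ppr=e_a$ or $\nu=e_a$ one argues instead with Lemma \ref{Lem6_1} and its left-hand analogue). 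Using $\rho\rho\ust\rho=\rho$ (Proposition \ref{PropQuasiInverse}~(q1)) one computes $\mu\ppr(\thh\thh\ust\thh)\nu=\mu\,\thh\ust\,\nu\ppr=\mu\nu\rho\ust\mu\ppr\nu\ppr=\rho\rho\ust\rho=\rho\ne0$, so $\thh\thh\ust\thh\ne0$, which is (iii); moreover $\pi(\thh\thh\ust\thh)=c_{\rho}\iv\thh\thh^{\dag}\thh=0$, so $\thh\thh\ust\thh\in e_aAe_a=Ke_a\oplus K\thh$, its $e_a$-coefficient vanishes since $\thh\cdot(\thh\thh\ust\thh)=\thh^2\thh\ust\thh=0$, and its $\thh$-coefficient equals $1$ by the displayed computation, i.e. $\thh\thh\ust\thh=\thh$. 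Finally $\emn\thh\ust=\nu\rho\ust(\mu\nu)\rho\ust\mu\ppr=\nu\rho\ust\rho\rho\ust\mu\ppr=\nu\rho\ust\mu\ppr=\thh\ust$ by (q3).

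Granting these identities, the rest is routine. For linear independence of $\{\emn,\emnp,\thh,\thh\ust\}$: from $p\emn+q\emnp+r\thh+s\thh\ust=0$, conjugating by $\thh$ annihilates every term except $s\,\thh\thh\ust\thh=s\thh$, so $s=0$; left multiplication by $\emn=\thh\ust\thh$ then kills $q\emnp$ (orthogonality, Lemma \ref{LemMaxOrtho}) and $r\thh$ (since $\emn\thh=\thh\ust\thh^2=0$), giving $p=0$; right multiplication by $\emnp$ kills $r\thh$ (since $\thh\emnp=\thh^2\thh\ust=0$), giving $q=0$, hence $r=0$. As $\dim_K(e_aVe_a)=4$, these four elements form a basis. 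For (ii), put $g=e_a-\emn-\emnp$, an idempotent orthogonal to $\emn$ and $\emnp$, and write $g=a_1\emn+a_2\emnp+a_3\thh+a_4\thh\ust$; then $g\emn=0$ gives $a_1\emn+a_3\thh=0$ (using $\thh\emn=\thh\thh\ust\thh=\thh$ and $\thh\ust\emn=(\thh\ust)^2\thh=0$), so $a_1=a_3=0$; next $\emn g=0$ gives $a_4\thh\ust=0$ (using $\emn\thh\ust=\thh\ust$), so $a_4=0$; then $g\emnp=0$ gives $a_2\emnp=0$, so $a_2=0$ and $g=0$, i.e. $e_a=\emn+\emnp$. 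I expect the main obstacle to be assembling the identities of case (3), in particular $\mu\ppr\nu=0$ (hence $\thh\ust\thh\ust=0$), which needs a careful local analysis at the doubly-traversed vertex together with the boundary applications of Lemma \ref{Lem6_1}; the ensuing linear algebra is mechanical once these are in hand.
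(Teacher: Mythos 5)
Your argument is correct and reaches the same conclusion, but it takes a noticeably different route from the paper in case~(3). The paper is careful to prove Proposition~\ref{PropIdempBasis} \emph{without} first establishing the identities $\thh\ust\thh\ust=0$ or $\thh\thh\ust\thh=\thh$; those are in Lemma~\ref{LemForThm1}, and the proof of Lemma~\ref{LemForThm1}~(3) explicitly cites Proposition~\ref{PropIdempBasis}, so the authors needed to avoid a circularity. Consequently, for (ii) they write $e_a=x\emn+y\emnp+z\thh+w\thh\ust$, kill $w$ by conjugating by $\thh$, and then kill $z$ via the squaring identity $(e_a-z\thh)^2=(x\emn+y\emnp)^2$, which yields $z=2z$; linear independence of the four proposed basis elements is likewise deduced from the weaker facts $\emn\thh=\thh\emnp=0$ and $\thh\emn=\emnp\thh\ne0$. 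You instead prove $\thh\ust\thh\ust=0$ (through the clean observation that $\mu\ppr\nu=0$, forced by gentleness at the vertex $a$ crossed twice by $\rho$, with the boundary cases $\mu\ppr=e_a$ or $\nu=e_a$ handled by Lemma~\ref{Lem6_1}) and $\thh\thh\ust\thh=\thh$ (by computing the $\pi$-image and then comparing coefficients in $e_aAe_a$), and these make the subsequent linear algebra mechanical. Both proofs are sound; yours front-loads more identities and is more self-contained, while the paper's is terser precisely because it wants to defer those identities. Two small points worth filling in on your side: the \emph{left-hand analogue} of Lemma~\ref{Lem6_1} (for $\gamma\rho\ust=0$ when $\rho$ does not end in $\gamma$) is not stated in the paper and should be noted as following by symmetry; and the degenerate subcase $\mu\ppr=\nu=e_a$, in which $\thh=\rho$ is maximal and $\thh\ust\thh\ust=0$ must be read off from (q4) of Proposition~\ref{PropQuasiInverse}, is not covered by either of your two mentioned escapes and deserves a one-line remark (though the hypothesis ``non-maximal $\thh$'' arguably rules it out by fiat).
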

\begin{proof}
Let $f_a\in e_aVe_a$ be any lift of $e_a^{\dag}$. In the cases  {\rm (1)} and {\rm (2)}, there is no path $\thh\in e_aAe_a$ of positive length, as seen in Remark \ref{Rem2munu}. Thus $\{e_a,f_a\}$ gives a $K$-basis of $e_aVe_a$. In particular we have $\dim_K(e_aVe_a)=2$, hence {\rm (2)} follows immediately from Lemma \ref{LemIdempBasis}.

\medskip

{\rm (1)} We have
\begin{equation}\label{Eq_exeyf}
\emn=xe_a+yf_a
\end{equation}
for some $x,y\in K$. Since $0\ne\pi(\emn)=ye_a^{\dag}$, it satisfies $y\ne0$. Thus $\emn$ and $e_a$ are linearly independent, which shows that $\{ \emn,e_a-\emn\}$ gives a $K$-basis. 

\bigskip

{\rm (3)} In this case, we have a $K$-basis $\{ e_a,\thh,f_a,\thh\ust\}$ of $e_aVe_a$. {\rm (i)} is already seen in Remark \ref{Rem2munu}. {\rm (iii)} follows from
\[ \mu\ppr(\thh\thh\ust\thh)\nu=\mu\emn\nu=\mu\nu\ne0. \]
Thus {\rm (i)} and {\rm (iii)} hold. From these, we have
$\emn\thh=\thh\emnp=\thh^2=0$ and $\thh\emn=\emnp\thh=\thh\thh\ust\thh\ne0$. This implies that $\{\emn,\emnp,\thh,\thh\ust\}$ is linearly independent over $K$, and thus forms a $K$-basis of $e_aVe_a$.

Let us show {\rm (ii)}. This is a slight generalization of Claim \ref{Claim_rhorho_basis}. Since $\{\emn,\emnp,\thh,\thh\ust\}$ is a $K$-basis, in particular
\[ e_a=x\emn+y\emnp+z\thh+w\thh\ust \]
holds for some $x,y,z,w\in K$. This implies
\[ 0=\thh^2=\thh(x\emn+y\emnp+z\thh+w\thh\ust)\thh=w\thh\thh\ust\thh, \]
hence $w=0$. 
Then $e_a=x\emn+y\emnp+z\thh$ holds. From
\[ e_a-2z\thh=(e_a-z\thh)^2=(x\emn+y\emnp)^2=x^2\emn+y^2\emnp, \]
we also have
\[ e_a=x^2\emn+y^2\emnp+2z\thh. \]
Comparing the coefficients we obtain $z=2z$, which means $z=0$.
Thus we have $e_a=x\emn+y\emnp$, and Lemma \ref{LemIdempBasis} {\rm (2)} shows {\rm (ii)}.
\end{proof}

\begin{dfn}\label{DefUpa}
Accordingly to the cases given in Proposition \ref{PropIdempBasis}, we define as follows, for each vertex $a\in Q_0$.
\begin{enumerate}
\item If there is only one $\mn_a$ in $E_a$, put $B_a=\{ \emn,\ e_a-\emn\}$. Also, put $e_a-\emn=\ifr_a$.
\item If there are $\mn_a\ne\mnp_a$ in $E_a$, put $B_a=\{ \emn,\emnp\}$.
\end{enumerate}
They are orthogonal idempotents.
\end{dfn}

\begin{rem}\label{RemUandUp}
Remark that if $a\ne b$ in $Q_0$, then $e_aVe_b$ has a $K$-basis
\[ \{\thh\in V\mid \thh\in e_aAe_b\ \text{is a path}\} \amalg \{\ze\ust\in V\mid \ze\in e_bAe_a\ \text{is a path}\}. \]
Thus by Proposition \ref{PropIdempBasis}, we have the following $K$-basis of the whole $V$.
\begin{equation}\label{BasisForV}
(\underset{a\in Q_0}{\amalg}B_a)
\amalg \{ \thh\in V\mid \thh\in A\ \text{is a path},\, l(\thh)>0 \}
\amalg \{ \thh\ust\in V\mid \thh\in A\ \text{is a path},\, l(\thh)>0 \}.
\end{equation}
\end{rem}

We prepare some lemmas to prove Theorem \ref{ThmCharacterizeVA}.
\begin{lem}\label{LemForThm1}
For any paths $\thh$ and $\tau$ in $A$ of positive lengths, the following holds in $V$.
\begin{enumerate}
\item If $\thh\tau\ne0$, then $\tau\ust\thh\ust=(\thh\tau)\ust$. Otherwise $\tau\ust\thh\ust=0$.
\item $\thh\ust\thh\thh\ust=\thh\ust$.
\item $\thh\thh\ust\thh=\thh$.
\item If $\thh\tau\ne0$, then $\tau\ust\thh\ust\thh=\tau\ust$ and $\tau\tau\ust\thh\ust=\thh\ust$.
\end{enumerate}
\end{lem}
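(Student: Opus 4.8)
The plan is to derive all four identities from the properties of the quasi-inverses $\rho\ust$ of \emph{maximal} paths established in Proposition~\ref{PropQuasiInverse}, above all {\rm (q1)} $\rho\rho\ust\rho=\rho$ and {\rm (q3)} $\rho\ust\rho\rho\ust=\rho\ust$, by unwinding the definition $\thh\ust=\kappa\rho\ust\lam$, where $\rho=\lam\thh\kappa$ is the maximal path through $\thh$ (and likewise $\rho_\xi=\lam_\xi\xi\kappa_\xi$ for other paths $\xi$). The first point I would record is that this factorization is \emph{unique}: since $A$ is gentle and has no oriented cycle, each arrow lies on a unique maximal path and occurs on it at most once (otherwise the walk joining two of its occurrences would close up into an oriented cycle), so a path of positive length sits inside a maximal path in exactly one way. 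In particular, if $\thh\tau\ne0$ then $\thh$ and $\tau$ lie on one and the same maximal path $\rho$, with $\lam_\tau=\lam_\thh\thh$, $\kappa_\thh=\tau\kappa_\tau$, $\lam_{\thh\tau}=\lam_\thh$ and $\kappa_{\thh\tau}=\kappa_\tau$.

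Given this, {\rm (1)}, {\rm (2)} and {\rm (4)} are short computations. When $\thh\tau\ne0$, using $\lam_\tau\kappa_\thh=\lam_\thh\thh\tau\kappa_\tau=\rho$ and {\rm (q3)},
\[ \tau\ust\thh\ust=\kappa_\tau\rho\ust(\lam_\tau\kappa_\thh)\rho\ust\lam_\thh=\kappa_\tau(\rho\ust\rho\rho\ust)\lam_\thh=\kappa_\tau\rho\ust\lam_\thh=(\thh\tau)\ust, \]
which is {\rm (1)}; the same substitutions yield both equalities of {\rm (4)}, and {\rm (2)} is $\thh\ust\thh\thh\ust=\kappa(\rho\ust\rho\rho\ust)\lam=\thh\ust$. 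For the vanishing clause of {\rm (1)}: if $t(\thh)\ne s(\tau)$ it follows from orthogonality of the vertex idempotents, and if $t(\thh)=s(\tau)$ but $\thh\tau\in I$, then by gentleness the initial arrow of $\tau$ is not the unique non-zero continuation of the terminal arrow of $\thh$, and the gentleness relations together with Lemma~\ref{Lem6_1} --- and, in the case where $\thh$ already ends the maximal path through it, the orthogonality of the idempotents $\ep_{\mn}$ from Lemma~\ref{LemMaxOrtho} applied after rewriting $\rho_\thh\ust$ via {\rm (q3)} --- force $\tau\ust\thh\ust=0$.

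The substantial part is {\rm (3)}, $\thh\thh\ust\thh=\thh$. By Remark~\ref{Rem_DefForBasis}{\rm (2)}, with $a=s(\thh)$ one has $\thh\thh\ust=\ep_{\langle\lam,\thh\kappa\rangle_a}$, which is one of the orthogonal idempotents appearing in the decomposition of $e_a$ in Proposition~\ref{PropIdempBasis}; so it is enough to show that the remaining summand(s) of $e_a$ annihilate $\thh$ on the left. I would first verify that for every $\mnp_a\in E_a$ with $\mnp_a\ne\langle\lam,\thh\kappa\rangle_a$ one has $\ep_{\mnp}\thh=0$: if $\mu\ppr\thh=0$ this is immediate, and otherwise gentleness forces $\nu\ppr$ to begin with the initial arrow of $\thh$, so $\mu\ppr\nu\ppr$ is the unique maximal path through that arrow, whence $\mu\ppr\nu\ppr=\rho$ and then $\mnp_a=\langle\lam,\thh\kappa\rangle_a$ --- a contradiction; the leftover possibility $\mu\ppr=e_a$ is dealt with by Lemma~\ref{Lem6_1}. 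Therefore, if $a\notin\Tfr_A$ (so $E_a$ has two elements whose idempotents sum to $e_a$), $\thh=e_a\thh=\ep_{\langle\lam,\thh\kappa\rangle_a}\thh=\thh\thh\ust\thh$; and if instead $t(\thh)\notin\Tfr_A$ the symmetric argument at the right-hand endpoint, via $\thh\ust\thh=\ep_{\langle\lam\thh,\kappa\rangle_{t(\thh)}}$, finishes it.

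The only case left is $s(\thh),t(\thh)\in\Tfr_A$, which I would instead settle by a dimension count. When both endpoints lie in $\Tfr_A$, gentleness and the absence of oriented cycles force the maximal path through $\thh$ to visit each endpoint exactly once and preclude any path from $t(\thh)$ to $s(\thh)$ (in particular $s(\thh)\ne t(\thh)$, since otherwise a power of $\thh$ would be an oriented cycle), so $\Bp^{s(\thh),t(\thh)}=\{\thh\}$ and $\Bp^{t(\thh),s(\thh)}=\emptyset$; by Remark~\ref{RemUandUp} the space $e_{t(\thh)}Ve_{s(\thh)}$ is then one-dimensional, spanned by $\thh$. Hence $\thh\thh\ust\thh=c\,\thh$ for some $c\in K$, and multiplying on the left by $\lam$ and on the right by $\kappa$ gives $c\,\rho=\rho\rho\ust\rho=\rho$ by {\rm (q1)}, so $c=1$. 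The main obstacle, as this layout shows, is {\rm (3)} --- identifying $\thh\thh\ust$ as the correct idempotent summand of $e_a$ and showing the complementary summands kill $\thh$, with the $\Tfr_A$-$\Tfr_A$ corner handled separately by the one-dimensionality argument; everything else is forced by {\rm (q1)} and {\rm (q3)}.
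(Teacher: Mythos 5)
Your proof is correct, and while parts (1) (the product formula), (2), and (4) coincide with the paper's computations once the uniqueness of the factorization $\thh\ust=\kappa\rho\ust\lam$ is noted, you take a genuinely different route through the two harder points. For the vanishing clause of (1), the paper passes to $e_a=\ep_{\langle\lam\thh,\kappa\rangle}+\ep_{\langle\xi,\tau\ze\rangle}$ (via Proposition~\ref{PropIdempBasis}) and extracts the conclusion from a single identity of the form $\om\ust\xi\kappa\rho\ust=2\,\om\ust\xi\kappa\rho\ust$, whereas you split on whether $\kappa$ or $\xi$ is trivial and invoke Lemma~\ref{Lem6_1} and Lemma~\ref{LemMaxOrtho}; both are valid, but yours needs the case split spelled out a bit more carefully (in particular, what you call the use of orthogonality together with (q3) is cleanest when phrased as $\tau\ust\thh\ust=(\tau\ust\tau\tau\ust)(\thh\ust\thh\thh\ust)=\tau\ust\,\ep_{\langle\xi,\tau\ze\rangle}\,\ep_{\langle\lam\thh,\kappa\rangle}\,\thh\ust=0$, which in fact handles all subcases uniformly, using (2) and Lemma~\ref{LemMaxOrtho} alone). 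For (3), the paper reduces $\thh\thh\ust\thh=\thh$ to the arrow case $\al\al\ust\al=\al$ and runs a case analysis on the in/out arrows at $s(\al)$ and $t(\al)$, with a dimension count in the final branch; you work directly with the path $\thh$, identify $\thh\thh\ust=\ep_{\langle\lam,\thh\kappa\rangle_{s(\thh)}}$ as a summand of $e_{s(\thh)}$, kill $\thh$ by the complementary summand (using uniqueness of maximal paths through an arrow, and Lemma~\ref{Lem6_1} for the $\mu\ppr=e_a$ case), and settle the corner case $s(\thh),t(\thh)\in\Tfr_A$ by showing $e_{s(\thh)}Ve_{t(\thh)}$ is one-dimensional. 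Your version is arguably more conceptual, since it isolates exactly where $\Tfr_A$ matters; the paper's reduction to arrows keeps each step local but requires tracking more subcases. Both rely on the same underlying inputs (Proposition~\ref{PropIdempBasis}, Lemma~\ref{Lem6_1}, Condition~\ref{CondMaxLength1}).
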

\begin{proof}
{\rm (1)} If $\thh\tau\ne0$, there exist paths $\lam$ and $\kappa$ with which $\lam\thh\tau\kappa=\rho$ is maximal. By definition, we have
\[ (\thh\tau)\ust=\kappa\rho\ust\lam,\ \ \thh\ust=\tau\kappa\rho\ust\lam,\ \ \tau\ust=\kappa\rho\ust\lam\thh. \]
Thus we obtain $\tau\ust\thh\ust=(\kappa\rho\ust\lam\thh)(\tau\kappa\rho\ust\lam)=\kappa\rho\ust\rho\rho\ust\lam=\kappa\rho\ust\lam=(\thh\tau)\ust$.

Suppose $\thh\tau=0$. If $t(\thh)\ne s(\tau)$, then $\tau\ust\thh\ust=0$ is obvious. We may assume $t(\thh)=s(\tau)=a$. Take paths $\lam,\kappa,\xi,\ze$ so that $\lam\thh\kappa=\rho$ and $\xi\tau\ze=\om$ are maximal. By definition, we have $\thh\ust=\kappa\rho\ust\lam$ and $\tau\ust=\ze\om\ust\xi$. By assumption we have $\langle\lam\thh,\kappa\rangle_a\ne\langle\xi,\tau\ze\rangle_a$, and thus
\[ e_a=\ep_{\langle\lam\thh,\kappa\rangle}+\ep_{\langle\xi,\tau\ze\rangle}=\kappa\rho\ust\lam\thh+\tau\ze\om\ust\xi \]
follows from Proposition \ref{PropIdempBasis}. Multiplying $\om\ust\xi$ from the left and $\kappa\rho\ust$ from the right, we obtain
\begin{eqnarray*}
\om\ust\xi\kappa\rho\ust&=&\om\ust\xi(\kappa\rho\ust\lam\thh+\tau\ze\om\ust\xi)\kappa\rho\ust\\
&=&\om\ust\xi\kappa\rho\ust\rho\rho\ust+\om\ust\om\om\ust\xi\kappa\rho\ust\ =\ 2\om\ust\xi\kappa\rho\ust,
\end{eqnarray*}
which shows $\om\ust\xi\kappa\rho\ust=0$. Thus $\tau\ust\thh\ust=\ze(\om\ust\xi\kappa\rho\ust)\lam=0$ follows.

{\rm (2)} Take paths $\lam$ and $\kappa$ so that $\rho=\lam\thh\kappa$ becomes maximal. Then we have
\[ \thh\ust\thh\thh\ust=(\kappa\rho\ust\lam)\thh(\kappa\rho\ust\lam)=\kappa\rho\ust\rho\rho\ust\lam=\kappa\rho\ust\lam=\thh\ust. \]

{\rm (3)} Take paths $\lam$ and $\kappa$ so that $\rho=\lam\thh\kappa$ becomes maximal. If $\thh=\al\thh\ppr$ for some arrow $\al$, then we have
$\thh\thh\ust\thh=\thh\kappa\rho\ust\lam\thh=(\al\al\ust\al)\thh\ppr$.
Thus it suffices to show $\al\al\ust\al=\al$ for any arrow $\al$. Put $s(\al)=a$ and $t(\al)=b$.

If there is an arrow $\gam$ with $t(\gam)=a$ and $\gam\al=0$, then we have $e_a=\al\al\ust+\gam\ust\gam$ by Proposition \ref{PropIdempBasis}, and thus $\al=\al\al\ust\al+\gam\ust\gam\al=\al\al\ust\al$ follows. Similarly, if there is an arrow $\upsilon$ with $s(\upsilon)=b$ and $\al\upsilon=0$, we obtain $\al\al\ust\al=\al$.
Thus we may assume that
\begin{itemize}
\item[{\rm (i)}] there is no arrow $\gam$ with $t(\gam)=a$ and $\gam\al=0$,
\item[{\rm (ii)}]  there is no arrow $\upsilon$ with $s(\upsilon)=b$ and $\al\upsilon=0$.
\end{itemize}
Suppose that there is an arrow $\be\ne\al$ with $s(\be)=a$. Then by {\rm (i)}, there is a maximal path $\om\in e_aAe_c$ starting with $\be$. In this case we have $e_a=\om\om\ust+\al\al\ust$ by Proposition \ref{PropIdempBasis}. By Condition \ref{CondMaxLength1}, we have $b\ne c$. Since $\pi(\om\ust\al)=c_{\om}\om^{\dag}\al=0$, we have $\om\ust\al\in A$. By $b\ne c$ and the maximality of $\om$, we obtain $\om\om\ust\al=0$, which shows $\al\al\ust\al=\al$. Thus we may also assume that
\begin{itemize}
\item[{\rm (iii)}] there is no arrow $\be$ with $s(\be)=a$ other than $\al$.
\end{itemize}
By {\rm (i)} and {\rm (ii)}, there is no path in $e_bAe_a$. By {\rm (iii)}, there is no path in $e_aAe_b$ either, other than $\al$. Then, since $\al\al\ust\al\in e_aVe_b$, we have $\al\al\ust\al=x\al$ for some $x\in K$. Multiplying $\al\ust$ from the left, we obtain $\al\ust\al=(\al\ust\al\al\ust)\al=x\al\ust\al$ by {\rm (2)}, which means $x=1$.

{\rm (4)} Take paths $\lam$ and $\kappa$ so that $\lam\thh\tau\kappa=\rho$ becomes maximal. Then we have $\tau\ust\thh\ust\thh=(\thh\tau)\ust\thh=(\kappa\rho\ust\lam)\thh=\tau\ust$. Similarly for $\tau\tau\ust\thh\ust=\thh\ust$.
\end{proof}

\begin{lem}\label{LemForThm2}
Let $\thh,\tau\in A$ be any pair of paths of positive lengths. The following holds.
\begin{enumerate}
\item If $\thh\ust\tau\ne0$, then $\thh$ and $\tau$ start with a common arrow.
\item Conversely if $\thh$ and $\tau$ start with a common arrow, there is a (unique) maximal path $\rho=\al_1\cdots\al_l\in A$ and $1\le u\le l$ with which
\[ \thh=\al_u\cdots\al_v\quad\text{and}\quad \tau=\al_u\cdots\al_w \]
hold for some $u\le v,w\le l$. With this expression, we have
\begin{equation}\label{Eq_thhtau}
\thh\ust\tau=\begin{cases}
(\al_{w+1}\cdots\al_v)\ust&\ \text{if}\ v>w,\\
\ep_{\langle\al_1\cdots\al_v,\al_{v+1}\cdots\al_l\rangle}&\ \text{if}\ v=w,\\
\al_{v+1}\cdots\al_w&\ \text{if}\ v<w.
\end{cases}
\end{equation}
In particular we have $\thh\ust\tau\ne0$.
\end{enumerate}
\end{lem}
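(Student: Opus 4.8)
The plan is to prove (1) by contraposition and (2) by trapping both $\thh$ and $\tau$ inside a single maximal path $\rho$ and then running the $(-)\ust$-calculus of Lemma~\ref{LemForThm1} together with Remark~\ref{Rem_DefForBasis}.

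For (1): since $\thh\ust\tau=0$ automatically when $s(\thh)\ne s(\tau)$, it suffices to show that if $s(\thh)=s(\tau)$ but $\thh$ and $\tau$ do not begin with the same arrow, then $\thh\ust\tau=0$. I would let $\rho=\al_1\cdots\al_l$ be the unique maximal path extending $\thh$ (every positive-length path in a gentle algebra extends uniquely to a maximal one, as used in the proof of Proposition~\ref{PropVAdim}), write $\thh=\al_u\cdots\al_v$, so that by Definition~\ref{DefForBasis} $\thh\ust=\kappa\rho\ust\lam$ with $\lam=\al_1\cdots\al_{u-1}$ and $\kappa=\al_{v+1}\cdots\al_l$. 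Let $\be\ne\al_u$ be the first arrow of $\tau$. If $u\ge 2$, then $\al_{u-1}\al_u\notin I$ (a subpath of $\rho$), so by the last clause of the gentle condition $\al_u$ is the only arrow $\gam$ with $s(\gam)=t(\al_{u-1})$ and $\al_{u-1}\gam\notin I$; hence $\al_{u-1}\be\in I$, so $\lam\tau=0$ in $A$ and $\thh\ust\tau=\kappa\rho\ust(\lam\tau)=0$. If $u=1$, then $\lam=e_{s(\rho)}$, so $\thh\ust\tau=\kappa\rho\ust\tau$; since $\rho$ is maximal and does not start with $\be$, Lemma~\ref{Lem6_1} gives $\rho\ust\be=0$, hence $\rho\ust\tau=0$ and $\thh\ust\tau=0$.

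For (2): if $\thh$ and $\tau$ share a first arrow $\al$, the maximal paths extending $\thh$ and $\tau$ both contain $\al$, hence coincide; call the common one $\rho=\al_1\cdots\al_l$. A maximal path of a gentle algebra has no repeated arrow (a repetition would produce an oriented cycle), so $\al=\al_u$ for a unique $u$, and $\thh=\al_u\cdots\al_v$, $\tau=\al_u\cdots\al_w$; this is the asserted (unique) data. I would then split into three cases. If $v=w$ then $\thh=\tau$ and $\thh\ust\tau=\thh\ust\thh=\ep_{\langle\al_1\cdots\al_v,\al_{v+1}\cdots\al_l\rangle}$ by Remark~\ref{Rem_DefForBasis}(2). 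If $v<w$, set $\sig=\al_{v+1}\cdots\al_w$, so $\tau=\thh\sig$; computing both idempotents from $\rho$ via Remark~\ref{Rem_DefForBasis}(2) shows that $\thh\ust\thh$ and $\sig\sig\ust$ are the \emph{same} element $\ep_{\langle\al_1\cdots\al_v,\al_{v+1}\cdots\al_l\rangle}$ of $V$, whence $\thh\ust\tau=(\thh\ust\thh)\sig=\sig\sig\ust\sig=\sig$ by Lemma~\ref{LemForThm1}(3). If $v>w$, set $\om=\al_{w+1}\cdots\al_v$, so $\thh=\tau\om$; then $\thh\ust=\om\ust\tau\ust$ by Lemma~\ref{LemForThm1}(1), and $\thh\ust\tau=\om\ust(\tau\ust\tau)=\om\ust$ by Lemma~\ref{LemForThm1}(4) applied to the product $\tau\om\ne0$. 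In every case the outcome is an $\ep_{\langle\cdot,\cdot\rangle}$, a path in $A$, or some $(\cdot)\ust$, all nonzero, which gives the final assertion.

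The proof is essentially mechanical once Lemma~\ref{LemForThm1} and Lemma~\ref{Lem6_1} are available, and I expect the only real obstacle to be the bookkeeping of which idempotent $\ep_{\langle\mu,\nu\rangle}$ occurs at each step. In particular, the case $v<w$ collapses precisely because $\thh\ust\thh$ and $\sig\sig\ust$ turn out to be the same element of $V$, so that Lemma~\ref{LemForThm1}(3) applies verbatim. A direct attempt to evaluate $\thh\ust\tau=\kappa\rho\ust\lam\tau$ instead by simplifying $\rho\ust$ runs in circles, since the only structural relations available are $\rho\rho\ust\rho=\rho$ and its formal consequences, which by themselves do not evaluate $\rho\ust(\al_1\cdots\al_w)$.
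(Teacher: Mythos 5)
Your proof is correct and follows essentially the same lines as the paper's: part (1) splits on whether the prefix $\lam$ with $\lam\thh\kap$ maximal is trivial, handling the nontrivial case via the gentle relations forcing $\lam\tau=0$ and the trivial case via Lemma~\ref{Lem6_1}, while part (2) reduces each of the three subcases to Lemma~\ref{LemForThm1} and Remark~\ref{Rem_DefForBasis}. The only stylistic deviation is in the case $v<w$, where you first identify $\thh\ust\thh$ with $\sig\sig\ust$ before invoking Lemma~\ref{LemForThm1}\,(3), whereas the paper rewrites $\thh\ust\tau$ directly into the form $\sig\sig\ust\sig$; the two routes are mathematically identical and end at the same step.
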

\begin{proof}
{\rm (1)} Suppose that $\thh$ and $\tau$ start with different arrows, and let us show $\thh\ust\tau=0$. Take paths $\lam,\kap\in A$ so that $\rho=\lam\thh\kap$ is maximal. By assumption, we have $\lam\tau=0$ in $A$. By definition, we have
$\thh\ust\tau=\kap\rho\ust\lam\tau$.

If $\lam\ne e_a$, then $\thh\ust\tau=0$ follows from $\lam\tau=0$. If $\lam=e_a$, then Lemma \ref{Lem6_1} shows $\rho\ust\tau=0$. Thus $\thh\ust\tau=\kap\rho\ust\tau=0$ follows also in this case.

{\rm (2)} Existence of $\rho,u,v,w$ with the mentioned properties, is obvious. Let us show $(\ref{Eq_thhtau})$. The last assertion $\thh\ust\tau\ne0$ is an immediate consequence.

If $v>w$, then
\[ \thh\ust\tau=(\tau(\al_{w+1}\cdots\al_v))\ust\tau=(\al_{w+1}\cdots\al_v)\ust\tau\ust\tau=(\al_{w+1}\cdots\al_v)\ust \]
follows from Lemma \ref{LemForThm1} {\rm (1),(4)}.
If $v=w$, then we have
\[ \thh\ust\tau=\thh\ust\thh=(\al_{v+1}\cdots\al_l)\rho\ust(\al_1\cdots\al_v)=\ep_{\langle\al_1\cdots\al_v,\al_{v+1}\cdots\al_l\rangle}. \]
If $v<w$, then for $\tau\ppr=\al_{v+1}\cdots\al_w$, we have
\begin{eqnarray*}
\thh\ust\tau&=&(\al_{v+1}\cdots\al_l)\rho\ust(\al_1\cdots\al_{u-1})(\al_u\cdots\al_v)(\al_{v+1}\cdots\al_w)\\
&=&\tau\ppr(\al_{w+1}\cdots\al_l)\rho\ust(\al_1\cdots\al_v)\tau\ppr\ =\ \tau\ppr\tau^{\prime\star}\tau\ppr\ =\ \tau\ppr
\end{eqnarray*}
by Lemma \ref{LemForThm1} {\rm (3)}.
\end{proof}

\begin{dfn}\label{Defvarphi}
Define a $K$-linear map $\vp\co V\to V(A)$, by extending $K$-linearly the following map from the basis $(\ref{BasisForV})$ of $V$ to the basis $(\ref{BasisForVA})$ of $V(A)$ in Remark \ref{RemBasisVA}.
\begin{enumerate}
\item For any path $\thh\in A$ of positive length, we may find a unique maximal path $\rho=\al_1\cdots\al_l\in A$ and $1\le u\le v\le l$ satisfying $\thh=\al_u\cdots\al_v$. Put
\[ \vp(\thh)=\rho_{u-1, v}=\eta_A^{\rho}(\thh)\in\Mcal_{\rho}\se V(A) \]
and put
\[ \vp(\thh\ust)=\rho_{v, u-1}\in\Mcal_{\rho}\se V(A). \]
\item Let $a\in Q_0$ be any vertex.
\begin{itemize}
\item[{\rm (i)}] For any $\mn_a\in E_a$, by definition there uniquely exist a maximal path $\rho=\al_1\cdots\al_l$ and $0\le u\le l$ such that
$\mu=\al_1\cdots\al_u$ and $\nu=\al_{u+1}\cdots\al_l$ hold. Put
\[ \vp(\emn)=\rho_{u,u}=\ep_u^{\rho}\in\Mcal_{\rho}\se V(A). \]
\item[{\rm (ii)}] If $|E_a|=1$, equivalently if $a\in\Tfr_A$, put
\[ \vp(\ifr_a)=\efr_a\in K\efr_a\se V(A). \]
\end{itemize}
\end{enumerate}
\end{dfn}

\begin{lem}\label{Lemphieta}
$\vp\ci\eta\ppr=\eta_A$ holds.
\end{lem}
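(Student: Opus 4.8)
The plan is to check the identity $\vp\ci\eta\ppr=\eta_A$ on a $K$-basis of $A$. Since $\eta\ppr$ is the inclusion $A\hookrightarrow V$ and both $\vp\ci\eta\ppr$ and $\eta_A$ are $K$-linear, it is enough to verify $\vp(\xi)=\eta_A(\xi)$ for every path $\xi$ in $A$, and these split into two families: the paths of positive length, and the idempotents $e_a$ $(a\in Q_0)$.

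For a path $\thh$ of positive length the verification should be immediate from the definitions. Since $A$ is gentle, $\thh$ extends uniquely to a maximal path $\rho=\al_1\cdots\al_l\in\Mfr_A$, say $\thh=\al_u\cdots\al_v$, so $\thh$ is one of the basis vectors in $(\ref{BasisForV})$; Definition \ref{Defvarphi}{\rm (1)} gives $\vp(\thh)=\rho_{u-1,v}$, which by Definition \ref{DefSA}{\rm (1)(ii)} equals $\eta_A^{\rho}(\thh)$, while the uniqueness of $\rho$ forces $\eta_A^{\pi}(\thh)=0$ for $\pi\in\Mfr_A\setminus\{\rho\}$ and $\eta_A^b(\thh)=0$ for $b\in\Tfr_A$. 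Hence $\vp(\thh)=\eta_A(\thh)$.

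For $\xi=e_a$ the argument is a little more involved, and I would organise it around a bijection between $E_a$ and the set of pairs $(\rho,w)$ with $\rho\in\Mfr_A$ and $0\le w\le\length(\rho)$ a position at which $\rho$ passes through $a$, given by $\mn_a\mapsto(\mu\nu,\length(\mu))$. Under this bijection Definition \ref{Defvarphi}{\rm (2)(i)} shows that $\vp(\emn)$ is exactly the $(w,w)$-matrix unit of $\Mcal_{\rho}$, so summing over $E_a$ and comparing with Definition \ref{DefSA}{\rm (1)(i)} gives $\sum_{\mn_a\in E_a}\vp(\emn)=\sum_{\rho\in\Mfr_A}\eta_A^{\rho}(e_a)$ in $V(A)$. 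On the other side, Proposition \ref{PropIdempBasis}, Definition \ref{DefUpa} and Remark \ref{Rem2munu} together say that in the basis $(\ref{BasisForV})$ one has $e_a=\sum_{\mn_a\in E_a}\emn$, with an additional summand $\ifr_a$ precisely when $|E_a|=1$, i.e.\ when $a\in\Tfr_A$. Applying $\vp$, using $\vp(\ifr_a)=\efr_a$ from Definition \ref{Defvarphi}{\rm (2)(ii)}, and invoking the displayed identity then yields $\vp(e_a)=\sum_{\rho\in\Mfr_A}\eta_A^{\rho}(e_a)+\sum_{b\in\Tfr_A}\eta_A^b(e_a)=\eta_A(e_a)$.

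The one place that needs genuine care is the last step of the idempotent case: one must confirm that the trichotomy of Proposition \ref{PropIdempBasis} — a trivial permitted thread, two distinct maximal paths through $a$, or a single maximal path meeting $a$ twice — lines up with the corresponding shapes of $\eta_A(e_a)$ coming from Definition \ref{DefSA}, which is essentially the same case split used in the proof of Proposition \ref{PropVAdim}{\rm (1)}. Once the bijection $E_a\leftrightarrow\{(\rho,w)\}$ is pinned down this matching is routine, but it is the crux of the argument; the reduction to a basis and the positive-length case are purely formal.
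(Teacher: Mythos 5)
Your proof is correct and follows essentially the same route as the paper's: both handle positive-length paths by uniqueness of the ambient maximal path, and both reduce the idempotent case to the decomposition $e_a=\sum_{\mn_a\in E_a}\emn\,(+\,\ifr_a$ when $a\in\Tfr_A)$ from Proposition \ref{PropIdempBasis} together with Remark \ref{Rem2munu}. The only difference is presentational — the paper splits into the three cases of Proposition \ref{PropIdempBasis} while you package them into a single sum via the bijection $E_a\leftrightarrow\{(\rho,w)\}$, which is a mild tidying of the same argument.
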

\begin{proof}
For any path $\thh\in A$ of positive length, it should be a subpath of a unique maximal path $\rho\in A$, and $\vp\ci\eta\ppr(\thh)=\eta_A^{\rho}(\thh)\in\Mcal_{\rho}\se V(A)$ by definition. Since the other components of $\eta_A(\thh)$ are
\[ \eta_A^{\pi}(\thh)=0\ (\rho\ne\pi\in\Mfr_A)\ \ \text{and}\ \ \eta_A^a(\thh)=0\ (a\in\Tfr_A), \] 
the above equality means that $\vp\ci\eta\ppr(\thh)=\eta_A(\thh)$ holds in $V(A)$.

It remains to show $\vp\ci\eta\ppr(e_a)=\eta_A(e_a)$ for $a\in Q_0$. By Proposition \ref{PropIdempBasis} and the definition of $\eta\ppr,\eta_A$, the following holds.
\begin{enumerate}
\item If $|E_a|=1$, equivalently if $a\in\Tfr_A$, then we have
\[ \vp\ci\eta\ppr(e_a)=\vp(\emn+\ifr_a)=\ep_u^{\rho}+\efr_a=\eta_A(e_a), \]
where $E_a=\{\mn_a \}$ and
\[\rho=\al_1\cdots\al_l,\ \ \mu=\al_1\cdots\al_u,\ \ \nu=\al_{u+1}\cdots\al_l. \]
\item If there are $\mn_a\ne\mnp_a$ in $E_a$ with $\mu\nu=\rho\ne\rho\ppr=\mu\ppr\nu\ppr$, then we have
\[ \vp\ci\eta\ppr(e_a)=\vp(\emn+\emnp)=\ep_u^{\rho}+\ep_{u\ppr}^{\rho\ppr}=\eta_A(e_a), \]
where
\begin{eqnarray*}
&\rho=\al_1\cdots\al_l,\ \ \mu=\al_1\cdots\al_u,\ \ \nu=\al_{u+1}\cdots\al_l,&\\
&\rho\ppr=\al\ppr_1\cdots\al\ppr_{l\ppr},\ \ \mu\ppr=\al\ppr_1\cdots\al\ppr_{u\ppr},\ \ \nu\ppr=\al\ppr_{u\ppr+1}\cdots\al\ppr_{l\ppr}.&
\end{eqnarray*}
\item If there are $\mn_a\ne\mnp_a$ in $E_a$ with $\mu\nu=\rho=\mu\ppr\nu\ppr$, then we have
\[ \vp\ci\eta\ppr(e_a)=\vp(\emn+\emnp)=\ep_u^{\rho}+\ep_{v}^{\rho}=\eta_A(e_a), \]
where
\begin{eqnarray*}
&\rho=\al_1\cdots\al_l,\ \ \mu=\al_1\cdots\al_u,\ \ \nu=\al_{u+1}\cdots\al_l,&\\
&u\ne v,\ \ \mu\ppr=\al_1\cdots\al_{v},\ \ \nu\ppr=\al_{v+1}\cdots\al_l.&
\end{eqnarray*}
\end{enumerate}
Thus in either case, we have $\vp\ci\eta\ppr(e_a)=\eta_A(e_a)$ for any $a\in Q_0$.
\end{proof}

Now we are ready to prove the theorem.
\begin{proof}[Proof of Theorem \ref{ThmCharacterizeVA}]
It can be easily checked that $\vp$ sends elements in $(\ref{BasisForV})$ to elements in $(\ref{BasisForVA})$ surjectively. Since $\dim_KV=2\dim_KA=\dim_K V(A)$, this means that $\vp\co V\to V(A)$ is an isomorphism of $K$-modules. Moreover, by Lemma \ref{Lemphieta}, it satisfies $\vp\ci\eta\ppr=\eta_A$. It remains to show that $\vp$ respects the multiplication for these basis elements.

First we remark that for any $a\in Q_0$ and any $\mn_a\in E_a$, by definition we have $\emn=\nu(\mu\nu)\ust\mu$. If we put $\mu\nu=\rho=\al_1\cdots\al_l$ and take $0\le u\le l$ which gives $\mu=\al_1\cdots\al_u$ and $\nu=\al_{u+1}\cdots\al_l$, then we have
\[ \vp(\ep_{\mn})=\ep_u^{\rho}=\rho_{u,u}=\rho_{u,l}\rho_{l,0}\rho_{0,u}=\vp(\nu)\vp((\mu\nu)\ust)\vp(\mu), \]
which shows that the definition of $\vp$ is compatible with the equation $\emn=\nu(\mu\nu)\ust\mu$, with respect to the multiplication.
Since the subset
\[ \{\ifr_a\mid a\in \Tfr_A\}\amalg\{\thh\mid \thh\in A\ \text{is a path},\, l(\thh)>0\}\amalg\{\thh\ust\mid \thh\in A\ \text{is a path},\, l(\thh)>0\} \]
of $(\ref{BasisForV})$ generates $V$ as a $K$-algebra, it suffices to show the following {\rm (1),\ldots,(9)} for any vertices $a,b\in\Tfr_A$ and any path $\thh,\tau\in A$ of positive lengths.
\[ \begin{array}{lll}
\mathrm{(1)}\ \ \vp(\ifr_a\ifr_b)=\vp(\ifr_a)\vp(\ifr_b)
&\mathrm{(2)}\ \ \vp(\thh\ifr_b)=\vp(\thh)\vp(\ifr_b)
&\mathrm{(3)}\ \ \vp(\thh\ust\ifr_b)=\vp(\thh\ust)\vp(\ifr_b)\\
\mathrm{(4)}\ \ \vp(\ifr_a\tau)=\vp(\ifr_a)\vp(\tau)
&\mathrm{(5)}\ \ \vp(\thh\tau)=\vp(\thh)\vp(\tau)
&\mathrm{(6)}\ \ \vp(\thh\ust\tau)=\vp(\thh\ust)\vp(\tau)\\
\mathrm{(7)}\ \ \vp(\ifr_a\tau\ust)=\vp(\ifr_a)\vp(\tau\ust)
&\mathrm{(8)}\ \ \vp(\thh\tau\ust)=\vp(\thh)\vp(\tau\ust)
&\mathrm{(9)}\ \ \vp(\thh\ust\tau\ust)=\vp(\thh\ust)\vp(\tau\ust)
\end{array} \]

\smallskip

{\rm (1)} follows from
\[ \ifr_a\ifr_b=\begin{cases}
\ifr_a&\ \text{if}\ a=b\\
0&\ \text{otherwise}
\end{cases}
\quad, \quad
\efr_a\efr_b=\begin{cases}
\efr_a&\ \text{if}\ a=b\\
0&\ \text{otherwise}
\end{cases}
\]
and the definition $\vp(\ifr_a)=\efr_a$, $\vp(\ifr_b)=\efr_b$.

{\rm (2)} follows from $\thh\ifr_b=0$ and $\vp(\thh)\vp(\ifr_b)=\vp(\thh)\efr_b=0$. Similarly for {\rm (3),(4),(7)}.
Also, {\rm (5)} follows from Lemma \ref{Lemphieta}.

To show the other equalities, take maximal paths $\rho=\al_1\cdots\al_l,\om=\be_1\cdots\be_m\in A$ with which $\thh=\al_u\cdots\al_v$ and $\tau=\be_y\cdots\be_w$ holds for some $1\le u\le v\le l$ and $1\le y\le w\le m$.
Let us show {\rm (6)}. We use Lemma \ref{LemForThm2}.

If $\thh\ust\tau\ne0$, then $\thh$ and $\tau$ start with a common arrow, which implies $\rho=\om$, $u=y$ and $\tau=\al_u\cdots\al_w$. Thus we have
\[ \vp(\thh\ust)\vp(\tau)=\rho_{v,u-1}\rho_{u-1,v}=\rho_{v,w}=\vp(\thh\ust\tau) \]
by $(\ref{Eq_thhtau})$.
If $\thh\ust\tau=0$, then $\thh$ and $\tau$ do not start with a common arrow. This means that one of the following holds. In either case, we obtain $\vp(\thh\ust)\vp(\tau)=0=\vp(\thh\ust\tau)$.
\begin{itemize}
\item $\rho\ne\om$, in which case $\vp(\thh)\in\Mcal_{\rho}$ and $\vp(\tau)\in\Mcal_{\om}$ satisfy $\vp(\thh\ust)\vp(\tau)=0$ obviously.
\item $\rho=\om$ and $u\ne y$, in which case we have $\vp(\thh\ust)\vp(\tau)=\rho_{v,u-1}\rho_{y-1,w}=0$ in $\Mcal_{\rho}$.
\end{itemize}
{\rm (8)} can be shown in a similar way.

It remains to show {\rm (9)}. By Lemma \ref{LemForThm1}, we have
\[ \thh\ust\tau\ust=\begin{cases}
(\tau\thh)\ust=(\al_y\cdots\al_v)\ust&\ \text{if}\ \rho=\om\ \text{and}\ u-1=w,\\0&\ \text{otherwise}.
\end{cases} \]
On the other hand, we have
\[ \vp(\thh\ust)\vp(\tau\ust)=\rho_{v,u-1}\om_{w,y-1}=\begin{cases}
\rho_{v,y-1}&\ \text{if}\ \rho=\om\ \text{and}\ u-1=w,\\0&\ \text{otherwise}.
\end{cases} \]
Since $\vp((\al_y\cdots\al_v)\ust)=\rho_{v,y-1}$ by definition, it follows $\vp(\thh\ust\tau\ust)=\vp(\thh\ust)\vp(\tau\ust)$.
\end{proof}

\section{A case preserving derived equivalences}\label{section_Preservation}

Throughout this section, let $A,B$ be gentle algebras, and suppose that a tilting complex $T^{\bullet}\in K^b(\proj A)$ gives an isomorphism $\End_{K^b(\proj A)}\cong B$ of $K$-algebras, as in \cite{Ri}. Let $k$ be any positive integer.
Under the assumption of some conditions, we will show that $T^{\bullet}$ induces a tilting complex which gives derived equivalence $\Ak\sim B^{(k)}$. The proof is analogous to the one in \cite{As1}.

As before, we identify $\Ak$ with $\Akp$.
\begin{dfn}\label{Def_mui}
For each $1\le i\le k$, let $(-)^{[i]}\co A\hookrightarrow \Ak$ denote the inclusion to the $i$-th diagonal component. Especially, the unit $1\in A$ is sent to an idempotent element $1^{[i]}\in \Ak$ by this map. Put $N_i=1^{[i]}\Ak\in\proj \Ak$. This is an $A$-$\Ak$-bimodule, on which $A$ acts through $(-)^{[i]}$.

Since there is a natural isomorphism $e_aA\otimes_A N_i\cong e_a^{[i]}\Ak=e_{a^{[i]}}\Ak$ of right $\Ak$-modules for any $a\in Q_0$, we have an additive functor
\[ F_i=-\otimes_AN_i\co \proj A\to\proj \Ak.  \]
By a degreewise tensor product on complexes, it naturally extends to an additive functor between the categories of bounded complexes
\[ F_i\co C^b(\proj A)\to C^b(\proj \Ak), \]
and induces a triangle functor between homotopy categories
\[ F_i\co K^b(\proj A)\to K^b(\proj \Ak), \]
which we denote by the same symbol $F_i$.
\end{dfn}

\begin{dfn}\label{Def_Ttilde}
Let $T^{\bullet}$ be as above. Put $T^{\bullet}_i=F_i(T^{\bullet})\in\Ob(C^b(\proj \Ak))=\Ob(K^b(\proj A^{(k)}))$ for each $1\le i\le k$, and put $\wt{T}^{\bullet}=\displaystyle\bigoplus_{1\le i\le k}T_i^{\bullet}$.
\end{dfn}

\begin{lem}\label{LemThick}
$\thick(\wt{T}^{\bullet})=K^b(\proj \Ak)$ holds.
\end{lem}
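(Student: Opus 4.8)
The statement to prove is $\thick(\wt{T}^{\bullet})=K^b(\proj \Ak)$, where $\wt{T}^{\bullet}=\bigoplus_{1\le i\le k}T_i^{\bullet}$ and $T_i^{\bullet}=F_i(T^{\bullet})$. The natural strategy is to show that the thick subcategory generated by $\wt{T}^{\bullet}$ contains a full set of indecomposable projective $\Ak$-modules, i.e.\ all $e_v\Ak$ for $v\in Q_0^{(k)}$, since these generate $K^b(\proj\Ak)$ as a thick subcategory. Because $\Ak=\Akp$ is an upper triangular matrix algebra over $A$ with off-diagonal entries $V(A)$, its indecomposable projectives are indexed by pairs $(i,a)$ with $1\le i\le k$, $a\in Q_0$, and as a right $\Ak$-module $e_{a^{[i]}}\Ak=e_aA\otimes_A N_i=F_i(e_aA)$. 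So it suffices to show each $F_i(e_aA)$ lies in $\thick(\wt T^{\bullet})$.

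First I would record the key input: since $T^{\bullet}$ is a tilting complex for $A$, we have $\thick(T^{\bullet})=K^b(\proj A)$, hence in particular $e_aA\in\thick(T^{\bullet})$ for every $a\in Q_0$. Each functor $F_i\co K^b(\proj A)\to K^b(\proj \Ak)$ is a triangle functor (it is $-\otimes_A N_i$ applied degreewise, and $N_i=1^{[i]}\Ak$ is projective as a right $\Ak$-module, so $F_i$ preserves projectives, direct summands, shifts and mapping cones), and it is additive. Consequently $F_i$ sends $\thick(T^{\bullet})$ into $\thick(F_i(T^{\bullet}))=\thick(T_i^{\bullet})$. Therefore $F_i(e_aA)\in\thick(T_i^{\bullet})\se\thick(\wt T^{\bullet})$ for every $a\in Q_0$ and every $1\le i\le k$. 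Since $\{F_i(e_aA)\mid 1\le i\le k,\ a\in Q_0\}=\{e_v\Ak\mid v\in Q_0^{(k)}\}$ is the complete list of indecomposable projective $\Ak$-modules, and these generate $K^b(\proj \Ak)$ as a thick subcategory, we conclude $\thick(\wt T^{\bullet})=K^b(\proj \Ak)$.

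The only point requiring care — the ``main obstacle'', though it is a mild one — is verifying that $F_i$ is genuinely a triangle functor and that $\thick$ is preserved under it; this rests on $N_i$ being projective as a right $\Ak$-module so that $F_i$ is exact on $\proj A$ and takes split short exact sequences / mapping-cone triangles of complexes of projectives to the corresponding triangles in $K^b(\proj \Ak)$, together with the elementary fact that the image of a thick subcategory under a triangle functor is contained in the thick subcategory generated by the image of a generator. Both are standard and are already implicitly used in the setup of Definition \ref{Def_mui}. I would also explicitly invoke the isomorphism $e_aA\otimes_A N_i\cong e_{a^{[i]}}\Ak$ noted there to identify $F_i(e_aA)$ with the indecomposable projective $e_{a^{[i]}}\Ak$, and note that by Proposition \ref{PropBasisSheets} (or directly from the block structure of $\Akp$) every vertex of $Q^{(k)}$ is of the form $a^{[i]}$, so these exhaust the indecomposable projectives of $\Ak$.
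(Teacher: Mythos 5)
Your proof is correct and uses essentially the same underlying mechanism as the paper: the fact that each $F_i$ is a triangle functor and therefore transports the generating property of $T^{\bullet}$ from $K^b(\proj A)$ to $\thick(\wt T^{\bullet})$. The paper phrases this via the preimages $F_i^{-1}(\thick(\wt T^{\bullet}))$, intersects them, and concludes $N_i=F_i(A)\in\thick(\wt T^{\bullet})$ so that $\Ak=\bigoplus_i N_i$ lies there, whereas you pass to indecomposable projectives $F_i(e_aA)=e_{a^{[i]}}\Ak$ — a cosmetic difference only.
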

\begin{proof}
For any $1\le i\le k$, since $F_i\co K^b(\proj A)\to K^b(\proj \Ak)$ is a triangle functor, we see that $F_i\iv(\thick(\wt{T}^{\bullet}))\se K^b(\proj A)$ is a thick subcategory. Thus their intersection
\[ \Ccal=\displaystyle\bigcap_{1\le i\le k} F_i\iv(\thick(\wt{T}^{\bullet}))\se K^b(\proj A) \]
also becomes a thick subcategory. Since $T^{\bullet}\in\Ccal$, it follows $\Ccal=K^b(\proj A)$. In particular we have $A\in\Ccal$, which means that $N_i=F_i(A)\in\thick(\wt{T}^{\bullet})$ holds for any $1\le i\le k$. This shows
\[ \Ak=\displaystyle\bigoplus_{1\le i\le k}N_i\in\thick(\wt{T}^{\bullet}), \]
and thus $\thick(\wt{T}^{\bullet})=K^b(\proj \Ak)$ follows.
\end{proof}

We consider the following condition.
\begin{cond}\label{CondRestrictive}
Cokernel of $\eta_A\co A\hookrightarrow V(A)$ admits an isomorphism of $A$-$A$-bimodules $\Cok\eta_A\cong DA$. In other words, we have a short exact sequence
\begin{equation}\label{Seq_AVADA}
0\to A\ov{\eta_A}{\lra}V(A)\ov{\pi_A}{\lra}DA\to0
\end{equation}
of $A$-$A$-bimodules.
\end{cond}

In the following, we denote the category $K^b(\modd A)$ by $\Kcal$ for simplicity.
\begin{lem}\label{LemPartialTilting}
Assume that $A$ satisfies Condition \ref{CondRestrictive}. Then the following holds for $n\in\mathbb{Z}$.
\begin{enumerate}
\item For any $n\ne0$, we have $\Kcal(T^{\bullet},(T^{\bullet}\otimes_AV(A))[n])=0$.
\item Sequence $(\ref{Seq_AVADA})$ induces a short exact sequence of $B$-$B$-bimodules
\[ 0\to B\ov{\eta\ppr}{\lra}V\ppr\ov{\pi\ppr}{\lra}DB\to0 \]
with $V\ppr=\End_{K^b(\modd V(A))}(T^{\bullet}\otimes_AV(A))$, which satisfies the following.
\begin{itemize}
\item[{\rm (i)}] $V\ppr$ is a semisimple $K$-algebra.
\item[{\rm (ii)}] $\eta\ppr$ is a morphism of $K$-algebras.
\item[{\rm (iii)}] $DB$ is the standard dual of $B$.
\end{itemize}
\end{enumerate}
Here, $T^{\bullet}\otimes_A V(A)\in\Ob(K^b(\modd V(A)))$ is given by the degreewise tensor product.
\end{lem}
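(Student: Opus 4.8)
The plan is to prove both parts by transporting the structure along the derived equivalence given by $T^\bullet$. Recall that $V(A)$ is semisimple, so it is in particular a self-injective $K$-algebra, and the functor $-\otimes_A V(A)\colon K^b(\modd A)\to K^b(\modd V(A))$ makes sense because $V(A)$ is a flat (indeed projective) right $A$-module via $\eta_A$ (one can see projectivity from the matrix description, or simply note $V(A)$ is a finite-dimensional $K$-algebra and $A$ acts through the algebra map $\eta_A$, so restriction along $\eta_A$ of a projective $V(A)$-module need not be free but we only need $-\otimes_A V(A)$ to be exact, which holds since $V(A)$ is free as a left $A$-module — again visible from the basis in Proposition~\ref{PropBasisSheets} for $k=2$, or from $\dim_K V(A)=2\dim_K A$ together with the bimodule decomposition in the proof of Proposition~\ref{PropVAdim}). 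Given this, the key observation is that $T^\bullet\otimes_A V(A)$ is a tilting complex for $V(A)$: applying the exact functor $-\otimes_A V(A)$ to $T^\bullet$ and using that $V(A)$ is free over $A$, one deduces that $T^\bullet\otimes_A V(A)$ generates $K^b(\proj V(A))$ as a thick subcategory (same argument as Lemma~\ref{LemThick}, restricted to one sheet) and that its self-extensions vanish in nonzero degrees, because $V(A)$ being semisimple forces $\End_{K^b(\modd V(A))}$ to be concentrated in degree $0$ on any complex of projectives; in fact for a semisimple algebra every bounded complex of (projective) modules is isomorphic in the homotopy category to its homology, so $K^b(\proj V(A))$ is semisimple as a triangulated category and all higher Hom-spaces vanish automatically.

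For part~(1), I would argue as follows. The short exact sequence $(\ref{Seq_AVADA})$ of $A$-$A$-bimodules, being a sequence of $A$-$A$-bimodules with $V(A)$ free on each side, yields upon applying $T^\bullet\otimes_A(-)$ a distinguished triangle
\[ T^\bullet\lra T^\bullet\otimes_A V(A)\lra T^\bullet\otimes_A DA\lra T^\bullet[1] \]
in $K^b(\modd A)$, where $T^\bullet\otimes_A DA\simeq D(\Hom_{K^b(\proj A)}(T^\bullet,A))$ is, up to the standard identifications, a shift-free model of $\nu T^\bullet$ with $\nu$ the Nakayama functor; in particular $\Kcal(T^\bullet, (T^\bullet\otimes_A DA)[n])\cong D\,\Kcal(T^\bullet, T^\bullet[-n])=0$ for $n\ne 0$ by the tilting property of $T^\bullet$ and a standard duality computation. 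Since also $\Kcal(T^\bullet,T^\bullet[n])=0$ for $n\ne 0$, the long exact sequence obtained by applying $\Kcal(T^\bullet,-[n])$ to the above triangle gives $\Kcal(T^\bullet,(T^\bullet\otimes_A V(A))[n])=0$ for all $n\ne 0$. (One must take a little care to see that $T^\bullet\otimes_A V(A)$, though a priori living in $K^b(\modd V(A))$, can be computed inside $K^b(\modd A)$ via restriction along $\eta_A$; this is harmless because the forgetful functor is exact and faithful.)

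For part~(2), set $V'=\End_{K^b(\modd V(A))}(T^\bullet\otimes_A V(A))$. Since $T^\bullet\otimes_A V(A)$ is a tilting complex for the semisimple algebra $V(A)$ — here I use part~(1) to see it is a partial tilting complex and the thick-generation argument above to see it tilts — Rickard's theorem gives a derived equivalence $K^b(\proj V(A))\simeq K^b(\proj V')$; but a ring derived equivalent to a semisimple ring is semisimple, so $V'$ is semisimple, giving (i). The map $\eta'\colon B\to V'$ is obtained by applying the functor $\Kcal(-,-\otimes_A V(A))$ to $\eta_A$: more precisely, tensoring a morphism of $T^\bullet$-complexes with $\eta_A\colon A\to V(A)$ induces $\End_{K^b(\modd A)}(T^\bullet)\to\End_{K^b(\modd V(A))}(T^\bullet\otimes_A V(A))$, which is a $K$-algebra homomorphism because tensoring is functorial and monoidal-compatible, and it is identified with $B\to V'$ through $\End_{K^b(\proj A)}(T^\bullet)\cong B$; this gives (ii). Finally, applying the (exact) derived functor $T^\bullet\otimes_A(-)$ to the bimodule sequence $(\ref{Seq_AVADA})$, taking endomorphism rings, and using the self-orthogonality established in part~(1) to guarantee that the relevant $\Hom$-functor is exact on this triangle, yields a short exact sequence $0\to B\to V'\to DB\to 0$ of $B$-$B$-bimodules, where the cokernel is identified with $\End_{K^b(\modd A)}(T^\bullet, T^\bullet\otimes_A DA)$; since $T^\bullet\otimes_A DA$ models $\nu T^\bullet$ and $T^\bullet$ is tilting, this endomorphism-bimodule is canonically $D\,\End_{K^b(\proj A)}(T^\bullet)=DB$ as a $B$-$B$-bimodule, which is exactly (iii).

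The main obstacle I anticipate is the careful bookkeeping in part~(2) of the identifications of the three bimodules ($B$, $V'$, and the cokernel) with $B$, a semisimple algebra, and $DB$ respectively, compatibly with the $B$-$B$-bimodule structures — in particular verifying that the cokernel of $\eta'$ is $DB$ \emph{as a bimodule} and not merely as a vector space, which requires tracking how the tilting equivalence intertwines $-\otimes_A DA$ on the $A$-side with the Nakayama twist on the $B$-side (this is the part most closely parallel to the argument in \cite{As1}). The verification that $-\otimes_A V(A)$ is exact and preserves the tilting property, while conceptually routine, also needs the freeness of $V(A)$ over $A$ on both sides, which I would extract cleanly from the bimodule computation in the proof of Proposition~\ref{PropVAdim} rather than re-deriving it.
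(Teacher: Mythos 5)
Your proof takes essentially the same route as the paper: tensor the bimodule sequence $(\ref{Seq_AVADA})$ degreewise with $T^\bullet$, form the total Hom-complex from $T^\bullet$, take cohomology to get a long exact sequence, kill the $DA$-terms via the duality $\Kcal(T^\bullet,(T^\bullet\otimes_A DA)[n])\cong D\Kcal(T^\bullet,T^\bullet[-n])$ (the paper cites \cite[Lemma~2.12]{As1}), and then read off both parts; the identification of $\eta'$ with the map induced by the functor $-\otimes_A V(A)$ via the adjoint isomorphism also matches the paper. Two things, though, should be flagged. First, your opening claim that $V(A)$ is flat (even free or projective) as a left (or right) $A$-module via $\eta_A$ is false — already for $A=K(1\to 2)$ one checks, for instance by comparing socle dimensions, that $V(A)$ is not projective on either side. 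Fortunately this is harmless, because you do not actually need $-\otimes_A V(A)$ to be exact on $\modd A$: what the argument uses is that each $T^n$ is projective (hence flat) as a right $A$-module, so that tensoring the short exact sequence of bimodules with the fixed complex $T^\bullet$ stays exact degreewise. Second, the detour through Rickard's theorem to conclude that $V'$ is semisimple is correct but unnecessarily heavy; the direct observation you already make in your first paragraph — that over a semisimple ring every bounded complex is isomorphic in $K^b$ to its graded cohomology, so endomorphism rings of objects of $K^b(\modd V(A))$ are automatically semisimple — is exactly the one-line argument the paper uses and makes the tilting/Rickard machinery superfluous here.
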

\begin{proof}
By the degreewise tensor product with $T^{\bullet}$, the sequence $(\ref{Seq_AVADA})$ induces an extension of complexes
\begin{equation}\label{ExaTensor}
0\to T^{\bullet}\otimes_AA\ov{T^{\bullet}\otimes_A\eta_A}{\lra}T^{\bullet}\otimes_AV(A)\ov{T^{\bullet}\otimes_A\pi_A}{\lra}T^{\bullet}\otimes_ADA\to0
\end{equation}
in $C^b(\modd A)$. As we have a natural isomorphism $T^{\bullet}\otimes_AA\cong T^{\bullet}$, this induces an extension of total $\mathrm{Hom}$-complexes
\[ 0\to \HHom^{\bullet}_A(T^{\bullet},T^{\bullet})\to \HHom^{\bullet}_A(T^{\bullet},T^{\bullet}\otimes_AV(A))\to \HHom^{\bullet}_A(T^{\bullet},T^{\bullet}\otimes_ADA)\to0 \]
in $C^b(\modd K)$. By taking their cohomologies, we obtain a long exact sequence
\begin{equation}\label{LongExact}
\cdots\to \Kcal(T^{\bullet},T^{\bullet}[n])\to
 \Kcal(T^{\bullet},(T^{\bullet}\otimes_AV(A))[n])\to
 \Kcal(T^{\bullet},(T^{\bullet}\otimes_ADA)[n])\to
 \Kcal(T^{\bullet},T^{\bullet}[n+1])\to\cdots.
\end{equation}

\medskip

{\rm (1)} Since $T^{\bullet}$ is a tilting complex, we have $\Kcal(T^{\bullet},T^{\bullet}[n])=0$ for $n\ne0$. Moreover, by \cite[Lemma 2.12]{As1} we have $\Kcal(T^{\bullet},(T^{\bullet}\otimes_ADA)[n])\cong D\Kcal(T^{\bullet}[n],T^{\bullet})=0$. Exactness of $(\ref{LongExact})$ shows $\Kcal(T^{\bullet},(T^{\bullet}\otimes_AV(A))[n])=0$ for any $n\ne0$.

\medskip

{\rm (2)} As a part of $(\ref{LongExact})$ we have a short exact sequence
\[ 0\to\Kcal(T^{\bullet},T^{\bullet}\otimes_A A)\ov{(T^{\bullet}\otimes _A\eta_A)\ci-}{\lra}\Kcal(T^{\bullet},T^{\bullet}\otimes_AV(A))\ov{(T^{\bullet}\otimes _A\pi_A)\ci-}{\lra}\Kcal(T^{\bullet},T^{\bullet}\otimes_ADA)\to0, \]
which can be obtained from $(\ref{ExaTensor})$ by applying $\Kcal(T^{\bullet},-)$. This sequence is functorial in $T^{\bullet}$ in both of the 1st and 2nd components. Through the natural isomorphisms $\Kcal(T^{\bullet},T^{\bullet}\otimes_AA)\cong\Kcal(T^{\bullet},T^{\bullet})\cong B$ and $\Kcal(T^{\bullet},T^{\bullet}\otimes_ADA)\cong D(\Kcal (T^{\bullet},T^{\bullet}))\cong DB$, we obtain a short exact sequence of $B$-$B$-bimodules
\begin{equation}\label{ExMid}
0\to B\ov{\eta\pprr}{\lra}\Kcal(T^{\bullet},T^{\bullet}\otimes_AV(A))\ov{\pi\pprr}{\lra} DB\to 0.
\end{equation}
Let $\vartheta\co\Kcal(T^{\bullet},T^{\bullet}\otimes_AV(A))\ov{\cong}{\lra}K(\modd V(A))(T^{\bullet}\otimes_AV(A),T^{\bullet}\otimes_AV(A))=V\ppr$ be the adjoint isomorphism. The composition of 
\[ \End_{\Kcal}(T^{\bullet})\ov{\cong}{\lra}\Kcal(T^{\bullet},T^{\bullet}\otimes_AA)\ov{(T^{\bullet}\otimes _A\eta_A)\ci-}{\lra}\Kcal(T^{\bullet},T^{\bullet}\otimes_AV(A))\un{\cong}{\ov{\vartheta}{\lra}}V\ppr
\]
is nothing but the homomorphism on the endomorphism rings, induced by the $K$-linear functor
\[ -\otimes_AV(A)\co K^b(\modd A)\to K^b(\modd V(A)). \]
Thus in the short exact sequence 
\[ 0\to B\ov{\vartheta\ci\eta\pprr}{\lra}V\ppr\ov{\vartheta\iv\ci\pi\pprr}{\lra} DB\to 0 \]
obtained from $(\ref{ExMid})$, we see that $\vartheta\ci\eta\pprr$ is a homomorphism of $K$-algebras. Since $V(A)$ is semisimple, so is $V\ppr=\End_{K^b(\modd V(A))}(T^{\bullet}\otimes_AV(A))$.
\end{proof}

\begin{thm}\label{ThmDerEq}
Assume that $A$ satisfies Condition \ref{CondRestrictive}. The following holds.
\begin{enumerate}
\item $\wt{T}^{\bullet}\in K^b(\proj \Ak)$ is a tilting complex.
\item If moreover $B$ satisfies Condition \ref{CondMaxLength1} and $\Char(K)\ne 2$, then $\wt{T}^{\bullet}$ gives a derived equivalence $\Ak\sim B^{(k)}$.
\end{enumerate}
\end{thm}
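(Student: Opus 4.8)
\textbf{Proof plan for Theorem \ref{ThmDerEq}.}

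The plan is to exhibit $\wt T^{\bullet}$ as a tilting complex over $\Ak$ and then identify its endomorphism algebra with $B^{(k)}$, following the template of the analogous argument in \cite{As1}. For part (1), I first note that Lemma \ref{LemThick} already gives $\thick(\wt T^{\bullet})=K^b(\proj\Ak)$, so the only thing to check is that $\Hom_{K^b(\proj\Ak)}(\wt T^{\bullet},\wt T^{\bullet}[n])=0$ for all $n\ne0$, i.e.\ that $\Hom_{K^b(\proj\Ak)}(T_i^{\bullet},T_j^{\bullet}[n])=0$ for all $1\le i,j\le k$ and $n\ne0$. The key device is the description $\Ak\cong\Akp$ from Proposition \ref{PropApK}: since $1^{[i]}\Ak1^{[j]}=0$ for $i>j$, equals $A$ for $i=j$, and equals $V(A)$ for $i<j$ (with the bimodule structure through $\eta_A$), one has $N_j\otimes_A^{?}\cdots$; more precisely, for the right $\Ak$-modules $N_i=1^{[i]}\Ak$ one computes $\Hom_{\Ak}(N_i,N_j)\cong 1^{[i]}\Ak1^{[j]}$, which is $0$, $A$, or $V(A)$ as above. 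Consequently, applying $F_j$ and using that $F_i(P)\otimes$-type computations reduce to the $A$-module $\Hom$'s, one gets
\[
\Hom_{K^b(\proj\Ak)}(T_i^{\bullet},T_j^{\bullet}[n])\cong
\begin{cases}
0 & i>j,\\
\Hom_{\Kcal}(T^{\bullet},T^{\bullet}[n]) & i=j,\\
\Hom_{\Kcal}(T^{\bullet},(T^{\bullet}\otimes_AV(A))[n]) & i<j.
\end{cases}
\]
The diagonal terms vanish for $n\ne0$ because $T^{\bullet}$ is tilting, and the strictly-upper terms vanish for $n\ne0$ by Lemma \ref{LemPartialTilting}(1), which is exactly where Condition \ref{CondRestrictive} enters. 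This proves (1).

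For part (2), by Rickard's theorem it suffices to identify $\End_{K^b(\proj\Ak)}(\wt T^{\bullet})$ with $B^{(k)}$ as a $K$-algebra. Using the same reduction as above, the endomorphism ring is an upper triangular $k\times k$ matrix algebra whose $(i,i)$-entry is $\End_{\Kcal}(T^{\bullet})\cong B$, whose $(i,j)$-entry for $i<j$ is $\Hom_{\Kcal}(T^{\bullet},T^{\bullet}\otimes_AV(A))$, and whose $(i,j)$-entry for $i>j$ is $0$; the multiplication is induced by composition, hence by the $B$-$B$-bimodule structure on the off-diagonal piece. By Lemma \ref{LemPartialTilting}(2) we have a short exact sequence of $B$-$B$-bimodules $0\to B\to V\ppr\to DB\to 0$ with $V\ppr\cong\Hom_{\Kcal}(T^{\bullet},T^{\bullet}\otimes_AV(A))$, where $V\ppr=\End_{K^b(\modd V(A))}(T^{\bullet}\otimes_AV(A))$ is semisimple, $B\hookrightarrow V\ppr$ is a $K$-algebra map, and $DB$ is the standard dual. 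Thus $\End_{K^b(\proj\Ak)}(\wt T^{\bullet})$ is precisely the upper triangular matrix algebra $[B\;V\ppr\;\cdots\;V\ppr;\,0\;B\;\cdots]$ built from such a bimodule extension.

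It remains to recognize this algebra as $B^{(k)}$. Here I invoke the characterization theorem: since $B$ is gentle (gentleness is preserved under derived equivalence) and by hypothesis satisfies Condition \ref{CondMaxLength1}, and since $\Char(K)\ne2$, Theorem \ref{ThmCharacterizeVA} applies to the sequence $0\to B\to V\ppr\to DB\to 0$ — note $DB$ is an almost standard dual of $B$ (property (v3)) — and yields an isomorphism of $K$-algebras $\vp\co V\ppr\xrightarrow{\ \cong\ }V(B)$ with $\vp\circ\eta\ppr=\eta_B$. This isomorphism is compatible with the $B$-$B$-bimodule structures (which are induced by $\eta\ppr$ and $\eta_B$ respectively), so it induces an isomorphism of the upper triangular matrix algebras, and by Proposition \ref{PropApK} the latter is exactly $B^{(k)}\cong B^{(k)\prime}$. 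Composing gives $\End_{K^b(\proj\Ak)}(\wt T^{\bullet})\cong B^{(k)}$, so $\wt T^{\bullet}$ gives a derived equivalence $\Ak\sim B^{(k)}$.

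\textbf{Main obstacle.} The routine part is the matrix-algebra bookkeeping for $\Hom$'s of the $T_i^{\bullet}$; the genuine work is making the identification of the off-diagonal endomorphism space with $V\ppr$ canonical enough that all the $B$-$B$-bimodule structures and the $K$-algebra structure on $V\ppr$ match up simultaneously, and checking that the almost-standard-dual hypothesis (v3) on $DB$ is genuinely satisfied by the standard dual — i.e.\ that the dual basis really does verify the conditions in Definition \ref{DefAlmostStandard}. Once those compatibilities are in place, Theorem \ref{ThmCharacterizeVA} does the rest.
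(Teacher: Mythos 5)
Your proposal follows essentially the same route as the paper's proof: compute $\HHom^\bullet_{\Ak}(T_j^\bullet,T_i^\bullet)$ via tensor–hom adjunction against $1^{[i]}\Ak 1^{[j]}$, deduce the tilting property from Lemma \ref{LemThick} and Lemma \ref{LemPartialTilting}(1), and identify $\End(\wt T^\bullet)$ as the upper triangular matrix algebra over $B$ and $V'$ which Theorem \ref{ThmCharacterizeVA} and Proposition \ref{PropApK} recognize as $B^{(k)}$. The only quibble is a bookkeeping slip in the direction of the Hom-identification (you wrote $\Hom_{\Ak}(N_i,N_j)\cong 1^{[i]}\Ak 1^{[j]}$, but the correct identity is $\Hom_{\Ak}(1^{[i]}\Ak,1^{[j]}\Ak)\cong 1^{[j]}\Ak 1^{[i]}$, so the $i>j$ and $i<j$ cases should be swapped); this does not affect the substance, since the direct sum over all pairs $(i,j)$ and the resulting triangular shape are unchanged.
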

\begin{proof}
For any $1\le i,j\le k$, we have natural isomorphisms
\begin{eqnarray*}
\HHom_{\Ak}^{\bullet}(T_j^{\bullet},T^{\bullet}_i)
&=&\HHom_{\Ak}^{\bullet}(T^{\bullet}\otimes_AN_j,T^{\bullet}\otimes_AN_i)\\
&\cong&\HHom_A^{\bullet}(T^{\bullet},\HHom_{\Ak}^{\bullet}({}_AN_j,T^{\bullet}\otimes_AN_i))\\
&=& \HHom_A^{\bullet}(T^{\bullet},\HHom_{\Ak}^{\bullet}(1^{[j]}\Ak,T^{\bullet}\otimes_A1^{[i]}\Ak))\\
&\cong&\HHom_A^{\bullet}(T^{\bullet},T^{\bullet}\otimes_A (1^{[i]}\Ak1^{[j]}))\\
&\cong&
\begin{cases}
\ 0&\text{if}\ i>j\\
\ \HHom_A^{\bullet}(T^{\bullet},T^{\bullet})&\text{if}\ i=j\\
\ \HHom_A^{\bullet}(T^{\bullet},T^{\bullet}\otimes_AV(A))&\text{if}\ i<j
\end{cases}
\end{eqnarray*}
in $C^b(\modd K)$. 

\medskip

{\rm (1)} For any $n\ne0$ and any $1\le i,j\le k$, taking cohomologies in the above isomorphism, we obtain
\begin{eqnarray*}
K^b(\proj \Ak)(T_j^{\bullet},T_i^{\bullet}[n])
&=&H^n(\HHom_{\Ak}^{\bullet}(T_j^{\bullet},T_i^{\bullet}))\\
&\cong&
\begin{cases}
\ 0&\text{if}\ i>j\\
\ H^n(\HHom_A^{\bullet}(T^{\bullet},T^{\bullet}))&\text{if}\ i=j\\
\ H^n(\HHom_A^{\bullet}(T^{\bullet},T^{\bullet}\otimes_AV(A))&\text{if}\ i<j
\end{cases}\\
&\cong&
\begin{cases}
\ 0&\text{if}\ i>j\\
\ \Kcal(T^{\bullet},T^{\bullet}[n])&\text{if}\ i=j\\
\ \Kcal(T^{\bullet},(T^{\bullet}\otimes_AV(A))[n]))&\text{if}\ i<j
\end{cases}\\
&=0&
\end{eqnarray*}
by Lemma \ref{LemPartialTilting}. This means
\[ K^b(\proj \Ak)(\wt{T}^{\bullet},\wt{T}^{\bullet}[n])\cong\un{1\le i,j\le k}{\bigoplus}K^b(\proj \Ak)(T_j^{\bullet},T_i^{\bullet}[n])=0 \]
for $n\ne0$, and Lemma \ref{LemThick} shows that $T^{\bullet}\in K^b(\proj \Ak)$ is a tilting complex.
\end{proof}

{\rm (2)} For $n=0$, similarly we have
\begin{eqnarray*}
K^b(\proj \Ak)(T_j^{\bullet},T_i^{\bullet})
&=&H^0(\HHom_{\Ak})^{\bullet}(T_j^{\bullet},T_i^{\bullet})\\
&\cong&
\begin{cases}
\ 0&\text{if}\ i>j\\
\ \Kcal(T^{\bullet},T^{\bullet}))\cong B&\text{if}\ i=j\\
\ \Kcal(T^{\bullet},(T^{\bullet}\otimes_AV(A))))\cong V\ppr&\text{if}\ i<j
\end{cases}
\end{eqnarray*}
for any $1\le i,j\le k$. This induces the following isomorphism of $K$-algebras.
\begin{equation}\label{EqDerMat}
\End_{K^b(\proj \Ak)}(\wt{T}^{\bullet})
\cong\big[K^b(\proj \Ak)(T_j^{\bullet},T_i^{\bullet})\big]_{i,j}=\left[
\begin{array}{ccccc}
B&V\ppr&V\ppr&\cdots&V\ppr\\
0&B&V\ppr&\cdots&V\ppr\\
0&0&B&\ddots&\vdots\\
0&\cdots&&\ddots&V\ppr\\
0&\cdots&\cdots&0&B\\
\end{array}
\right]
\end{equation}
By Theorem \ref{ThmCharacterizeVA} we have an isomorphism of $K$-algebras
$\vp\co V(B)\ov{\cong}{\lra}V\ppr$
which is compatible with $\eta\ppr$ and $\eta_B\co B\hookrightarrow V(B)$, and thus $(\ref{EqDerMat})$ induces an isomorphism
$\End_{K^b(\proj \Ak)}(\wt{T}^{\bullet})\cong B^{(k)}$.

\end{document}